\documentclass[psamsfont,a4paper,11pt]{amsart}
\usepackage{fullpage, amsmath, amssymb, amsthm, amscd, setspace, bm, graphicx, indentfirst, multirow, tikz, enumerate, verbatim, appendix}
\usepackage{adjustbox,amsfonts,array,graphicx,booktabs,tabularx,multirow,multicol,stmaryrd, tabu}
\usepackage[utf8]{inputenc}
\usepackage{cite}
\usepackage{hyperref}
\usepackage{mathabx,epsfig}
\title{Lattice Models, Hamiltonian Operators, and Symmetric Functions}
\author{Andrew Hardt}
\email{ahardt@illinois.edu}
\date{\today}

\newtheorem{theorem}{Theorem}[section]

\newtheorem{lemma}[theorem]{Lemma}
\newtheorem{proposition}[theorem]{Proposition}
\newtheorem{corollary}[theorem]{Corollary}
\newtheorem{remark}[theorem]{Remark}

\newcommand{\Z}{\mathbb{Z}}
\newcommand{\R}{\mathbb{R}}

\newcommand{\C}{\mathbb{C}}

\newcommand{\ba}{\overline}

\newcommand{\wt}{\text{wt}}
\newcommand{\vx}[1]{\mathtt{#1}}
\begin{document} \maketitle

\begin{abstract}
We give general conditions for the existence of a Hamiltonian operator whose discrete time evolution matches the partition function of certain solvable lattice models. In particular, we examine two classes of lattice models: the classical six-vertex model and a generalized family of $(2n+4)$-vertex models for each positive integer $n$. These models depend on a statistic called \emph{charge}, and are associated to the quantum group $U_q(\widehat{\mathfrak{gl}}(1|n))$ \cite{BBBG-metaplectic-solvability}. Our results show a close and unexpected connection between Hamiltonian operators and the Yang-Baxter equation.

The six-vertex model can be associated with Hamiltonians from classical Fock space, and we show that such a correspondence exists precisely when the Boltzmann weights are free fermionic. This allows us to prove that the free fermionic partition function is always a (skew) supersymmetric Schur function and then use the Berele-Regev formula to correct a result from \cite{BBF-Schur-polynomials} on the free fermionic domain-wall partition function. In this context, the supersymmetric function involution takes us between two lattice models that generalize the \emph{vicious walker} and \emph{osculating walker} models. We also observe that the supersymmetric Jacobi-Trudi formula, through Wick's Theorem, can be seen as a six-vertex analogue of the Lindstr\"{o}m-Gessel-Viennot Lemma.

Then, we prove a sharp solvability criterion for the six-vertex model with charge that provides the proper analogue of the free fermion condition. Building on results in \cite{BBBG-Hamiltonian}, we show that this criterion exactly dictates when a charged model has a Hamiltonian operator acting on a Drinfeld twist of $q$-Fock space. The resulting partition function is then a (skew) supersymmetric LLT polynomial, and almost all supersymmetric LLT polynomials appear as partition functions of our lattice models. We also prove a Cauchy identity for skew supersymmetric LLT polynomials. (see 2024 author's note below)
\end{abstract}

\section{Introduction}

This paper discusses the connections between two types of integrability phenomena arising from quantum groups: solvable lattice models and Hamiltonian operators coming from Heisenberg algebras. Solvable lattice models are parametrized by modules of quantum groups, and are sources of many special functions from geometry, representation theory, and symmetric function theory. The word ``solvable'' refers to the existence of a solution to the Yang-Baxter equation, and such a solution guarantees that the partition function of the model has symmetries or functional equations. Examples of special functions that have been studied using lattice models include Macdonald polynomials \cite{GarbaliDeGierWheeler-Macdonald, GarbaliWheeler-modified-Macdonald}, Grothendieck polynomials \cite{ZJWheeler, KnutsonZJ-schubert-puzzles-I, KnutsonZJ-schubert-puzzles-II, FrozenPipes, BuciumasScrimshaw-bumpless-model}, LLT polynomials \cite{Corteel-LLT, Aggarwal-Borodin-Wheeler, CFYWZZ-super-LLT}, and metaplectic Whittaker functions \cite{BBCFG-metaplectic,BBBG-metaplectic-solvability, BBBG-metahori}.

Hamiltonian operators, sometimes called half-vertex operators, have long been studied in physics, including in soliton theory and hierarchies of differential equations \cite{Kac-book, JimboMiwa-solitons}. Discrete time evolution Hamiltonians have been used to study special functions like Schur polynomials \cite{ZinnJustin-six-vertex} and Hall-Littlewood polynomials \cite{FodaWheeler-Hall-Littlewood, Jing-HL-vertex, Korff-cylindric-Hecke}. A general framework for such constructions from a combinatorial perspective was given by Lam \cite{Lam}. He considered an arbitrary Heisenberg algebra representation and proved a generalized boson-fermion correspondence, obtaining two dual families of polynomials associated to each Hamiltonian operator. Furthermore, Lam showed these functions have Pieri and Cauchy identities determined by the structure constants of the Heisenberg algebra. Both Macdonald polynomials and LLT polynomials fit into this framework, as do many related polynomials and specializations.

We consider Heisenberg algebra representations with bases indexed by partitions, along with dual representations whose dual basis is also indexed by partitions. One obtains families of ``skew'' functions from these representations by choosing an element of the Heisenberg algebra, acting on a basis vector in the representation and pairing it with a dual basis vector. We use bra-ket notation to express this. If $h$ is an element of the Heisenberg algebra, then \[\langle\mu|h|\lambda\rangle = \langle\mu|h\otimes 1|\lambda\rangle = \langle\mu|1\otimes h|\lambda\rangle\] refers to the pairing of $\langle\mu|$ with the vector obtained from the action of $h$ on $|\lambda\rangle$, or equivalently, the pairing of $\lambda\rangle$ with the vector obtained from the action of $h$ on $\langle\mu|$.

Our Hamiltonians are exponential operators involving infinite sums in the Heisenberg algebra of the form \[H = \sum_{k\ge 1} s_kJ_k, \hspace{20pt} e^H = \sum_{m=1}^\infty \frac{H^m}{m!},\] where $J_k$ is the $k$th \emph{current operator} which in the case of \emph{Fock space} acts on particles by displacing them by $k$ units. We thus obtain a family of functions \[\sigma_{\lambda/\mu} := \langle\mu|e^H|\lambda\rangle,\] which are sometimes called \emph{$\tau$-functions} in the literature and are related to the KP hierarchy of differential equations \cite{Kac-book, JimboMiwa-solitons, ZinnJustin-six-vertex}.

Since up to isomorphism there is only one Heisenberg algebra and only one highest-weight representation of this algebra \cite[Proposition~2.1]{Kac-Raina}, our functions really depend on the \emph{realization} of the representation i.e. our choice of basis. There are two ways to modify our family of polynomials:
\begin{itemize}
    \item Change the Heisenberg algebra generators via a substitution;
    \item Change the (realization of the) representation of the Heisenberg algebra;
\end{itemize}
We will see that the first type of modification corresponds to changing the Boltzmann weights of our lattice model, while the second corresponds (at least in the cases we consider) to choosing a lattice model with a different combinatorial and representation-theoretic structure. In this paper, we look at two well known representations of Heisenberg algebras. The first, (classical) Fock space, is a representation of the Lie algebra $\mathfrak{gl}_\infty$ that has been studied in many places e.g. \cite{Kac-book, Kac-Raina, JimboMiwa-solitons}. The second, $q$-Fock space, is a quotient of the tensor algebra of the standard $U_q(\widehat{\mathfrak{sl}}_n)$ evaluation module. At $n=1$ or $q=1$, $q$-Fock space degenerates into classical Fock space.

Our goal is to understand when the $\tau$-function associated to a Hamiltonian operator equals the partition function of a rectangular lattice model. These lattice models are grids of vertices; each edge in the grid is assigned  a \emph{spin}, and the spins around a vertex must have one of several \emph{admissible} configurations, in which case the vertex is assigned a nonzero \emph{Boltzmann weight}. If every vertex is admissible, we call this global configuration an \emph{admissible state}. Then the partition function is defined as \[Z := \sum_{\text{state } \mathfrak{s}} \prod_{\text{vertex } v} \wt(v).\] Our lattice models have fixed side boundary conditions, and we allow the top and bottom boundaries to each depend on a partition. As shorthand, we'll often write $\mathfrak{S}$ to denote the set $\{\mathfrak{S}_{\lambda/\mu}|\lambda,\mu\}$ of lattice models with fixed Boltzmann weights and fixed side boundary conditions, but where the top and bottom boundaries can vary.

We say that a lattice model $\mathfrak{S}$ and a Hamiltonian operator $e^H$ \emph{match} if \begin{align*}Z(\mathfrak{S}_{\lambda/\mu}) =  {\Large{*}} \cdot\langle\mu|e^H|\lambda\rangle \hspace{20pt} \text{for all strict partitions $\lambda,\mu$,}\end{align*} where ${\Large{*}}$ represents any function of the Boltzmann weights of $\mathfrak{S}$ independent of $\lambda$ and $\mu$. In practice, these are easily computable as simple products involving the Boltzmann weights. Our condition for a Hamiltonian to match a lattice model is equivalent to requiring that the time-evolution of the Hamiltonian equals the row transfer matrix of the lattice model as an operator on the set of partitions.

Zinn-Justin \cite{ZinnJustin-six-vertex} gives a nice exposition of this connection in the case of the five-vertex model, and the resulting $\tau$-functions are Schur functions. Subsequently, Brubaker and Schultz \cite{Brubaker-Schultz} find Hamiltonians for certain six-vertex lattice models associated to Whittaker functions, and prove that for these models, the partition function is a supersymmetric Schur function. Korff \cite{Korff-cylindric-Hecke} studies another case of the six-vertex model, proves that his partition functions are characters of the Hecke algebra and cylindrical analogues, and connects the lattice model to vertex operators of Jing \cite{Jing-HL-vertex} for Hall-Littlewood polynomials.

Our results take steps towards turning these examples into a theory. The following is our main result for six-vertex models.

\begin{theorem}[Theorems \ref{Delta-lattice-Hamiltonian} and \ref{Gamma-lattice-Hamiltonian}, and Corollary \ref{Partition-Function-is-Supersymmetric-Schur}] \label{main-result-classical}
\leavevmode
\begin{enumerate}
\item[(a)] Given a six-vertex model $\mathfrak{S}$ as above, it matches a Hamiltonian from classical Fock space precisely when the Boltzmann weights of the lattice model are free fermionic.
\item[(b)] In this case, the partition function is (up to a simple factor) a supersymmetric Schur function.
\end{enumerate}
\end{theorem}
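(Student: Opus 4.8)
The plan is to pass through the boson--fermion correspondence, identifying strict partitions with semi-infinite Maya diagrams and hence with a basis of charged fermionic Fock space $\mathcal{F}$. Under this dictionary, the assertion that $\mathfrak{S}$ matches a Hamiltonian from classical Fock space unwinds to the statement that the single-row transfer matrix $T$ of $\mathfrak{S}$, viewed as an operator on $\mathcal{F}$, coincides with a vertex operator $\Gamma(c)=\exp\bigl(\sum_{k\ge 1}c_kJ_k\bigr)$ for scalars $c_k$ built from the Boltzmann weights; the rectangular partition function is then $\langle\mu|\prod_i\Gamma(c^{(i)})|\lambda\rangle$, one factor per row, with the prefactor $\Large{*}$ absorbing the per-row normalizations. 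The two admissibility patterns $\Gamma$-ice and $\Delta$-ice correspond to the two chiralities of vertex operator, which is why the statement splits into Theorems~\ref{Gamma-lattice-Hamiltonian} and~\ref{Delta-lattice-Hamiltonian}; I would treat $\Gamma$-ice in detail and indicate the mirror-image changes for $\Delta$-ice.

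For the forward direction (free fermionic implies matching), I would write $T$ explicitly in fermionic modes $\psi_i,\psi_i^*$ by reading off the six admissible local configurations: a horizontal edge carrying a path contributes a creation or annihilation, a vertical edge records occupation, and the $c$-weights mediate the hops. Imposing the free fermion relation $a_1a_2+b_1b_2=c_1c_2$ on the weights is exactly what makes the resulting operator a bosonic exponential: it collapses the fermion-bilinear expression for $T$ onto the principal Heisenberg subalgebra, so that $T=(\text{scalar})\cdot\Gamma(c)$ with each $c_k$ a rational monomial in the weights. This is where Wick's theorem enters and where the earlier computation of the free fermionic partition function does the real work; concretely one can instead compute $Z(\mathfrak{S}_{\lambda/\mu})$ in closed form over admissible states and recognize it as a skew Schur function in the $c_k$.

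The converse (matching implies free fermionic) I would prove by matching low-order matrix coefficients. Compute $\langle\mu|T|\lambda\rangle=Z(\mathfrak{S}_{\lambda/\mu})$ directly from the admissible states for a short list of small strict partitions (for instance $\lambda,\mu$ differing by one box, then by two boxes in adjacent or separated positions), and compare with the corresponding coefficients of $\Gamma(c)$, which are the Schur functions $s_{\lambda/\mu}$ evaluated at the $c_k$. Since $\Gamma(c)$ carries only as many free parameters as the $c_k$ that appear, while the six-vertex weights carry additional gauge freedom that cancels from every matrix coefficient, equality across these small cases forces a single polynomial identity among $a_1,a_2,b_1,b_2,c_1,c_2$, which I expect to reduce precisely to the free fermion condition. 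For part (b), once $\mathfrak{S}$ is free fermionic the rectangular partition function becomes $\langle\mu|\Gamma_-(x)\,\Delta_-(y)|\lambda\rangle$ up to the product of row normalizations, where $x$ and $y$ collect the per-row parameters according to chirality; the classical boson--fermion correspondence gives $\langle\mu|\Gamma_-(x)|\lambda\rangle=s_{\lambda/\mu}(x)$, the opposite-chirality factor contributes the dual specialization, and together they yield the skew supersymmetric Schur function $s_{\lambda/\mu}(x/y)$ of Corollary~\ref{Partition-Function-is-Supersymmetric-Schur}, with the simple factor read off from the row normalizations.

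The main obstacle will be the converse implication in (a): upgrading ``$T$ is some bosonic exponential'' to the exact, sharp algebraic constraint on the weights. The forward direction is essentially a reorganization of the free fermionic partition function already in hand, but the converse requires ruling out that a non-free-fermionic choice of weights accidentally lands in the Heisenberg group, which means carefully bookkeeping the gauge symmetries of the six-vertex model and verifying that the residual constraint from matching is necessary \emph{and} sufficient. Running the $\Delta$-ice case in parallel and checking that the fixed side boundary conditions introduce no spurious freedom is the secondary technical point.
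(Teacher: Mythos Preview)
Your overall architecture matches the paper's: establish the forward direction by explicit construction of the Hamiltonian parameters, handle the converse by matching small matrix coefficients, and deduce (b) by identifying the resulting $\tau$-function with a supersymmetric Schur function via the Brubaker--Schultz result. But the mechanism you propose for the forward direction has a genuine gap.

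The claim that the row transfer matrix $T$ is naturally a ``fermion-bilinear expression'' which the free fermion condition ``collapses onto the principal Heisenberg subalgebra'' is not how this works, and is not something you can simply read off from the six local configurations. The transfer matrix of a single row is a sum over admissible states of a product of local Boltzmann weights; it is not a priori an element of the Clifford algebra at all, let alone a bilinear in $\psi_i,\psi_i^*$, and the free fermion relation $a_1a_2+b_1b_2=c_1c_2$ does not function as a projection onto the Heisenberg subalgebra. Making sense of $T$ as a Clifford operator would require a separate argument (an $L$-operator factorization and a Bogoliubov transformation, say), which is substantially more work than your sketch suggests and is not what the paper does.

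The paper's route is the one you mention as an aside (``concretely one can instead compute $Z(\mathfrak{S}_{\lambda/\mu})$ in closed form''), but carried out in a specific order that makes Wick's theorem do the heavy lifting. First, for a single row and a single particle, the unique admissible state of $\mathfrak{S}_{(r+p)/(r)}$ gives $Z=(x+y)x^{p-1}$ for $p\ge1$; packaging these into $H(t)=\sum_p h_p t^p=(1+yt)/(1-xt)$ and taking $\log$ determines $s_k=\tfrac{1}{k}(x^k+(-1)^{k-1}y^k)$. Second, for a single row and $\ell$ particles, Wick's theorem gives $\langle\mu|e^{H_+}|\lambda\rangle=\det(h_{\lambda_i-\mu_j})$, and this determinant is checked against the unique admissible state of $\mathfrak{S}_{\lambda/\mu}$ by a block-by-block computation over interleaving pairs $\lambda,\mu$ (non-interleaving pairs give zero on both sides). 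Third, multiple rows follow by branching on both the lattice and Hamiltonian sides. The converse reruns the single-row computation with arbitrary weights $\vx{a_1},\vx{a_2},\vx{b_1},\vx{b_2},\vx{c_1},\vx{c_2}$ (normalizing $\vx{a_1}=\vx{b_1}=\vx{c_2}=1$) and finds that matching forces $\vx{a_2}=\vx{c_1}-\vx{b_2}$, which is exactly the free fermion condition; this is your low-order matching made explicit, but the key point is that the constraint already appears at the level of two interleaving particles in one row, not in some more elaborate comparison.

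For part (b), the paper does not split the Hamiltonian into two chiral vertex operators $\Gamma_-(x)\Delta_-(y)$ as you suggest; it keeps a single $e^{H_+}$ with $s_k^{(j)}=\tfrac{1}{k}(x_j^k+(-1)^{k-1}y_j^k)$ and invokes the Brubaker--Schultz identification of that $\tau$-function with $s_{\lambda/\mu}[\boldsymbol{x}|\boldsymbol{y}]$ directly. Your two-operator picture is equivalent but would require you to prove the factorization and the dual specialization yourself.
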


The free fermion point of the six-vertex model has long been known to be associated with special phenomena \cite{FanWu1, FanWu2}. In the five-vertex non-intersecting path model, free fermionic models are those with no attraction or repulsion between lattice paths, and their evolution is governed by entropy. Their partition functions can be expressed as determinants via the Lindstr\"{o}m-Gessel-Viennot Lemma \cite{Lindstrom-LGV, GesselViennot-LGV}. The free fermion point is also central to the solvability of the six-vertex model \cite{Baxter, KorepinBogoliubovIzergin, BBF-Schur-polynomials}.

Our proof of the first part of Theorem \ref{main-result-classical} uses Wick's Theorem \cite{AlexandrovZabrodin}, along with generating function manipulations. The second part follows directly from the first part using a result by Brubaker and Schultz \cite{Brubaker-Schultz}, and corrects a result from \cite{BBF-Schur-polynomials}.

The free fermionic six-vertex model is known to always be solvable \cite[Theorem~1]{BBF-Schur-polynomials} (after Baxter \cite{Baxter, Baxter-YBE} in the case where the weights are symmetric). Having a Hamiltonian operator for these models gives an alternative method to explore the partition functions and prove identities.

We work with two types of six-vertex models, which we call $\mathfrak{S}$ and $\mathfrak{S}^*$, and which are in a sense dual models. Both are four-parameter families which parametrize the free fermionic Boltzmann weights, and we show that both models match with Hamiltonians. The models are generalizations of the $\Delta$ and $\Gamma$ models explored in \cite{BBF-Schur-polynomials}. In addition, they each generalize the five-vertex vicious and osculating models defined in \cite{Fisher-vicious-walks, Brak-osculating}. One specialization sends $\mathfrak{S}$ to the vicious model and $\mathfrak{S}^*$ to the osculating model, while another specialization does the reverse.

This pair of five-vertex specializations is a special case of a more general phenomenon. We define an involution on our lattice models consisting of simple geometric manipulations that send $\mathfrak{S}$ and $\mathfrak{S}^*$ to each other, while doing the same to the vicious and osculating models. On the Hamiltonian side, this involution exactly matches a generalization of the symmetric function involution explored by Zinn-Justin \cite{ZinnJustin-six-vertex} that arises from particle-hole duality.

In addition, the existence of a correspondence between lattice models and Hamiltonians means that we can use the structure of one to prove identities for the other. In particular, our use of Wick's theorem for classical Fock space provides a Jacobi-Trudi identity for the free fermionic partition function. This can be seen as a generalization of the lattice version of the Lindstr\"{o}m-Gessel-Viennot Lemma and reduces to that result in the case of vicious walkers. Somewhat serendipitously, we are also able to use the Edrei-Thoma theorem to prove a positivity result for the free fermionic partition function.

The models $\mathfrak{S}$ require a particular choice of side boundary conditions. In Section \ref{boundary-conditions-section}, we use creation and deletion operators to give operator definitions for the free fermionic partition function with arbitrary boundary conditions. In general, this operator is not nicely behaved, but in the case of domain-wall boundary conditions, we get the following result:

\begin{theorem}[Theorem \ref{partition-function-all-boundaries}, Corollary \ref{domain-wall-corollary}] \label{domain-wall-proposition}
The partition function of the free fermionic lattice model with domain wall boundary conditions can be expressed as:
\begin{itemize}
    \item A supersymmetric Schur polynomial times an extra factor, and as
    \item A Schur polynomial times a (different) extra factor.
\end{itemize}
In both cases, the extra factors are (easily-computed) functions of the Boltzmann weights independent of $\lambda$ and $\mu$.
\end{theorem}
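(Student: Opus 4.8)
The plan is to obtain both statements from the operator formula for the free fermionic partition function with arbitrary boundary conditions (Theorem~\ref{partition-function-all-boundaries}), which rewrites $Z$ as a matrix element of $e^H$ in classical Fock space between two states built from the creation and deletion operators $\psi_j,\psi_j^*$ according to the boundary data on the top and bottom of the grid; here $H=\sum_{k\ge 1}s_kJ_k$ is the Hamiltonian produced by Theorem~\ref{main-result-classical}(a), so the $s_k$ are the explicit symmetric polynomials in the Boltzmann weights coming from the free fermion condition. The first step is to specialize this formula to domain-wall boundary conditions and observe that, because those conditions rigidly prescribe every top and bottom spin, the two boundary states are not genuine superpositions but are, up to an overall scalar $c$ depending only on the Boltzmann weights, the basis vectors $|\lambda\rangle$ and $\langle\mu|$ for an explicit pair $(\lambda,\mu)$ of partitions determined by the dimensions of the grid. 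Thus $Z_{\mathrm{DWBC}}=c\cdot\langle\mu|e^H|\lambda\rangle=c\cdot\sigma_{\lambda/\mu}$.

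For the first bullet I would then invoke Corollary~\ref{Partition-Function-is-Supersymmetric-Schur}: $\sigma_{\lambda/\mu}$ is a skew supersymmetric Schur function in the two variable sets read off from $H$ via the Berele-Regev formula. Because for domain-wall boundary conditions $\mu$ is trivial (empty, or complementary to the ambient rectangle, in which case one first applies the complementation symmetry of supersymmetric Schur functions), this skew function collapses to a straight supersymmetric Schur polynomial $s_\lambda(\mathbf x/\mathbf y)$, and $c$ is the asserted extra factor.

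For the second bullet I would instead evaluate the same matrix element $\langle\mu|e^H|\lambda\rangle$ directly as a bare fermionic correlator. Writing $|\lambda\rangle$ and $\langle\mu|$ as finite products of creation operators applied to the vacuum and conjugate vacuum, one commutes these operators past $e^H$ using the vertex-operator relations (the classical analogue of $e^H\psi(z)e^{-H}=\big(\prod_{k\ge 1}e^{s_kz^k}\big)\psi(z)$ and its dual); each commutation contributes a scalar that is a product in the Boltzmann weights, and once every deletion operator has been moved to the right it annihilates the vacuum. What survives is, by Wick's Theorem, a determinant of two-point functions, which is exactly the Weyl-character / Jacobi-Trudi form of an \emph{ordinary} Schur polynomial $s_\lambda$ in the variables determined by the Boltzmann weights; the accumulated scalar is the (different) extra factor. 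This simultaneously explains why the two extra factors differ by the ratio of a supersymmetric Schur polynomial to an ordinary one at these specializations, which is itself a product.

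The main obstacle is the first step: identifying precisely which partition vectors the domain-wall boundary states reduce to, and verifying that the general-boundary operator --- which the paper warns is ``not nicely behaved'' --- genuinely collapses to a scalar multiple of a single basis vector in this case rather than to a superposition. This amounts to carefully tracking how creation versus deletion operators are assigned to the forced boundary spins and checking the conservation bookkeeping; once that is pinned down, both expansions are essentially mechanical applications of the tools already developed.
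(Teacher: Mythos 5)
There is a genuine gap, and it is in your derivation of the second bullet. Applying Wick's theorem to $\langle\mu|e^{H_+}|\lambda\rangle$ only yields the Jacobi--Trudi determinant $\det\left(h_{\lambda_i-\mu_j-i+j}\right)$ in the two-point functions $h_k=\langle 0|\psi_{-1/2}\,e^{H_+}\psi^*_{k-1/2}|0\rangle$, and for free fermionic weights these $h_k$ are the \emph{supersymmetric} complete homogeneous functions $h_k[\boldsymbol{x}|\boldsymbol{y}]$. So your ``bare correlator'' computation just reproduces the supersymmetric Schur function of the first bullet; Wick's theorem cannot by itself produce an ordinary Schur polynomial. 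The ordinary-Schur expression is a special feature of the domain-wall shape, not a formal consequence of fermionic calculus: in the paper one starts from the skew supersymmetric Schur function $s_{\nu/\lambda}$ with $\nu$ the full $N\times(M+1)$ rectangle (Theorem \ref{partition-function-all-boundaries}B), rotates the skew shape by $180^\circ$ using the supertableau description (Lemma \ref{tableau-rotation-lemma}) to get a straight shape all of whose parts are at least $N$, and only then applies the Berele--Regev factorization, which splits off $\prod_{i,j}(x_j+y_i)$ and leaves an ordinary Schur polynomial in the $x$-variables (Corollary \ref{domain-wall-corollary}). You invoke Berele--Regev, but in the first bullet, where it is not needed (that bullet is Corollary \ref{Partition-Function-is-Supersymmetric-Schur} combined with Theorem \ref{partition-function-all-boundaries}); it is precisely the missing ingredient for the second bullet, together with the containment of the $N\times N$ square that makes it applicable.

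Your first step is also not correct as stated. With domain-wall boundaries the general-boundary operator is $\prod_i(x_i+y_i)^{-1}e^{\phi_i}\psi^*_{M+1/2}$ row by row, and the interleaved creation operators do not move past the $e^{\phi_i}$ up to scalars: conjugation gives $e^{\phi_i}\psi^*_{M+1/2}e^{-\phi_i}=\sum_{j\ge 0}h_j\,\psi^*_{M+1/2-j}$, a genuine superposition, so the boundary data does not collapse to a scalar multiple of a single basis vector. The paper sidesteps the operator formula entirely for this case: it embeds the domain-wall model as the middle block of a wider model with empty side boundaries (Proposition \ref{extendedlatticemodelproposition}), uses particle conservation to see that only $\alpha=\rho^+_N$ contributes, so that $Z$ becomes $\langle\mu|e^{H_+}|\tilde{\lambda}\rangle$ divided by $R(N)$, and then computes $R(N)=\prod_k \vx{c_1^{(k)}}\prod_{i<j}\bigl(\vx{a_1^{(i)}}\vx{a_2^{(j)}}+\vx{b_1^{(j)}}\vx{b_2^{(i)}}\bigr)$ by the symmetry lemma of \cite{BBF-Schur-polynomials} plus a degree count. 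Since the theorem asserts the extra factor is explicitly computable and independent of the boundary partitions, some argument of this kind is required; your proposal acknowledges the difficulty but does not resolve it, and the commutation scalars you describe would not produce this product.
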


The second half of the paper relates Hamiltonians associated to Drinfeld twists of $q$-Fock space to six-vertex lattice models with \emph{charge}, an extra parameter giving a ``mod $n$'' behavior to the Boltzmann weights. This connection was first explored by Brubaker, Bump, Buciumas, and Gustafsson \cite{BBBG-Hamiltonian} in their study of metaplectic Whittaker functions. The resulting model has $2n+4$ vertices, a substantial increase in complexity from the standard six-vertex model. For a certain set of Boltzmann weights, Brubaker, Bump, Buciumas, and Gustafsson showed that the model is solvable via a module of $U_q(\widehat{\mathfrak{gl}}(1|n))$.

Reshetikhin \cite{Reshetikhin-twist} defined a large class of Drinfeld twists of quantum groups. When applied to the $q$-Fock space studied by Kashiwara, Miwa, and Stern \cite{Kashiwara-Miwa-Stern}, a certain subset of these twists are \emph{shift invariant} in the sense that their defining \emph{wedge relations} only depend on the difference between two indices and not the indices themselves. Brubaker, Bump, Buciumas, and Gustafsson show that the associated Hamiltonians match the metaplectic lattice models from \cite{BBCFG-metaplectic}, and so we will call these spaces \emph{metaplectic Fock spaces}. The rank of the quantum group is the same as the modulus on the charged models, and in the case of the models in \cite{BBBG-Hamiltonian}, the Drinfeld twist gives the Gauss sums for the metaplectic Whittaker functions.

First, we prove a criterion for solvability of charged lattice models. Solvability turns out to be closely related to a condition we call the \emph{generalized free fermion condition}, which consists of the free fermion condition at ``zero charge'' and an additional \emph{charge condition}. In the case $n=1$, these conditions reduce to the classical free fermion condition.

\begin{theorem}[Theorem \ref{charge-solvability-theorem}]
The six-vertex model with charge is solvable in precisely two cases: \begin{enumerate}
    \item The Boltzmann weights satisfy the generalized free fermion condition associated to a metaplectic Fock space.
    \item The Boltzmann weights satisfy the conditions (\ref{independence-condition}) and (\ref{non-free-fermion-charge-equation}).
\end{enumerate}
\end{theorem}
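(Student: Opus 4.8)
The plan is to confront the Yang–Baxter equation directly, as a system of polynomial constraints on the $2n+4$ Boltzmann weights. Recall that (in the sense fixed in the body) the charged six-vertex model $\mathfrak{S}$ is \emph{solvable} when there is an auxiliary vertex $R$, whose horizontal edges carry spin-and-charge labels, for which the ``train'' relation $R_{12}\,\mathfrak{S}_{13}\,\mathfrak{S}_{23} = \mathfrak{S}_{23}\,\mathfrak{S}_{13}\,R_{12}$ holds, with the two copies of $\mathfrak{S}$ differing in their parameters as prescribed. Running over all admissible edge labelings of the three-vertex train converts this into a finite list of trilinear equations whose unknowns are the entries of $R$; the content of the theorem is a clean description of exactly when a nonzero $R$ exists.

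The organizing principle is that charge is conserved modulo $n$ at every vertex, so the Yang–Baxter equations decouple according to the charge flux through a configuration. The zero-flux equations form the Yang–Baxter system of the uncharged six-vertex model, which is classically understood \cite{Baxter, BBF-Schur-polynomials}; its solution locus splits into the free fermion piece $a_1 a_2 + b_1 b_2 = c_1 c_2$ (up to the relevant signs) and a complementary non-free-fermionic piece, the two pieces requiring $R$-matrices of visibly different shape. I would then push each branch through the nonzero-flux equations, which govern how the charge threads the model's weights. In the free fermion branch those equations turn out to be precisely the wedge relations of a shift-invariant Drinfeld twist of $q$-Fock space --- that is, they say the weights arise from a metaplectic Fock space; \cite{BBBG-Hamiltonian} already produces the matching $R$-matrix there, so the residual task is to verify that their family is the \emph{entire} solution set of this branch. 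In the complementary branch the charge equations collapse to exactly $(\ref{independence-condition})$ together with $(\ref{non-free-fermion-charge-equation})$, and I would exhibit the intertwining $R$-matrix by hand to confirm this case is nonvacuous.

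The crux --- and the main obstacle --- is exhaustiveness of the case analysis, especially around the non-free-fermionic branch. Discarding spurious solutions means enumerating, pattern by pattern, which Boltzmann weights are permitted to vanish and checking that each pattern either collapses into one of the two stated families or is inconsistent with the train relations; this is an overdetermined elimination best handled by selecting good pivot equations (with a Gr\"{o}bner-basis run as an independent check) rather than by blunt expansion. A secondary, purely bookkeeping, difficulty is arranging the signs and mod-$n$ indices so that ``the generalized free fermion condition associated to a metaplectic Fock space'' is recorded in the precise form matching the wedge relations of \cite{Kashiwara-Miwa-Stern, Reshetikhin-twist}. Once the whole system is reorganized around the charge decomposition, the two branches separate cleanly, and comparison with \cite{BBBG-Hamiltonian} identifies the first branch with the metaplectic Hamiltonian picture.
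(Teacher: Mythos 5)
Your overall strategy coincides with the paper's: impose conservation of decorated spin on the $R$-vertices, expand the Yang--Baxter equation (\ref{YBE-diagram}) over all admissible boundary labelings into a finite trilinear system (this is exactly what Appendix \ref{appendix-equations} records), and solve that system for the $R$-weights while extracting the constraints on the rectangular weights. However, the dichotomy is not located where you put it. The zero-charge-sector equations are indeed those of the classical six-vertex model, but that classical system is solvable whenever the quantity $\Delta$ is constant across rows, not only at the free fermion point; so the uncharged solution locus does not by itself ``split into a free fermion piece and a complementary piece.'' The split into the branches $\Delta=0$ and $\Delta\ne 0$ is created only by the nonzero-charge equations: these first force the row-independence conditions (\ref{independence-condition}) (hence constancy of $\Delta$), and then, according to whether $\Delta$ vanishes, force either the charge condition (\ref{charge-condition}) — which together with the zero-charge relation (\ref{zero-charge-free-fermion-condition}) is the $\mathcal{F}$-free fermion condition for a single metaplectic Fock space — or the very restrictive relation (\ref{non-free-fermion-charge-equation}) with the weights $\vx{B}_1,\vx{B}_2,\vx{A}_2^\times(k,m)$ all zero. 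So you cannot classify first at zero flux and then push each classical branch through the charge equations; the case analysis has to be run on the coupled system, as in the paper.

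The second repair concerns your free fermionic branch, where you propose to quote \cite{BBBG-Hamiltonian} for the $R$-matrix and only verify completeness. Their $R$-matrix is attached to one particular metaplectic specialization, whereas the theorem asserts solvability for the entire generalized free fermionic family (arbitrary $x_i,y_i,A_i,B_i,f,h$ subject to (\ref{zero-charge-free-fermion-condition}) and (\ref{charge-condition})); sufficiency therefore requires exhibiting $R$-weights as explicit functions of the general rectangular weights, which is what Table \ref{R-vertex-weights-free-fermion} supplies (and Table \ref{R-vertex-weights-other} in the $\Delta\ne 0$ case). This construction is within reach of your own ``solve the trilinear system for $R$'' step, but it must actually be carried out rather than cited. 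With these two corrections — running the elimination on the full charged system and producing the $R$-weights for the general weights — your outline becomes the paper's proof; your concern about exhaustiveness over vanishing-weight patterns is legitimate and is exactly the necessity direction that the paper leaves to a verification against the equations of Appendix \ref{appendix-equations}.
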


The conditions in the second case are quite restrictive and the solution is not very interesting as many of the R-vertex weights are 0. Therefore, for practical purposes, solvability is equivalent to the generalized free fermion condition.

Remarkably, the generalized free fermion condition is also precisely the condition required for a six-vertex model with charge to match a Hamiltonian from a metaplectic Fock space. It was quite unexpected to see the same condition arise naturally from these two very different computations, and we do not currently have an explanation for this phenomenon.

We define charged models $\mathfrak{S}^q$ and $\mathfrak{S}^{*,q}$ which parametrize the generalized free fermionic models. These models generalize $\mathfrak{S}$ and $\mathfrak{S}^*$, as well as the charged lattice models in \cite{BBBG-Hamiltonian}. We show that $\mathfrak{S}^q$ and $\mathfrak{S}^{*,q}$ match Hamiltonian operators. More precisely,

\begin{theorem}[Theorems \ref{charged-Delta-lattice-Hamiltonian}, \ref{charged-Gamma-lattice-Hamiltonian}, and \ref{Partition-Function-is-Supersymmetric-LLT}] \label{main-result-quantum}
\leavevmode
\begin{enumerate}
\item[(a)] The six-vertex model with charge matches a Hamiltonian operator on $q$-Fock space precisely when its Boltzmann weights satisfy the generalized free fermion condition.
\item[(b)] In this case, the partition function is (up to a simple factor) a supersymmetric LLT polynomial. All supersymmetric LLT polynomials with nonzero parameters appear as such partition functions.
\end{enumerate}
\end{theorem}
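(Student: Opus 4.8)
The plan is to mirror the proof of Theorem~\ref{main-result-classical}, with classical Fock space and the ordinary Wick theorem replaced by a \emph{metaplectic Fock space} --- a shift-invariant Drinfeld twist of the Kashiwara--Miwa--Stern $q$-Fock space --- together with a twisted analogue of Wick's theorem. The first step is to recast the matching condition as an operator identity: $e^H$ should equal the row transfer matrix $T$ of the charged model $\mathfrak{S}^q$ (respectively $\mathfrak{S}^{*,q}$), acting on the span of strict partitions carrying charge data. Writing $H=\sum_{k\ge 1}s_kJ_k$ for the twisted current operators $J_k$ on metaplectic Fock space, one expands $e^H$ and applies the twisted Wick theorem to write $\langle\mu|e^H|\lambda\rangle$ as a sum over pairings, in which the Drinfeld twist contributes a Gauss-sum cocycle factor to each contraction; this is where the ``mod $n$'' behaviour of the charged weights must come from.

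For the forward implication of part~(a), I would begin with Boltzmann weights satisfying the generalized free fermion condition, i.e.\ the ordinary free fermion condition on the charge-zero weights together with the extra charge condition. Following \cite{BBBG-Hamiltonian}, the charge-zero part fixes the parameters $s_k$ and the untwisted current operators and reproduces, in the bulk, the generating-function computation already carried out for Theorem~\ref{main-result-classical}; the charge condition is then exactly the compatibility needed for the wedge relations defining the twist to be shift invariant in Reshetikhin's sense and to reproduce the charged weights after twisting. Matching $\langle\mu|e^H|\lambda\rangle$ with $Z(\mathfrak{S}^q_{\lambda/\mu})$ then becomes a generating-function identity in which each lattice-path segment is read as a contraction and each ``mod $n$'' weight as a twist factor. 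The families $\mathfrak{S}^q$ and $\mathfrak{S}^{*,q}$ are set up precisely so that this parameter family exhausts the generalized free fermionic weights, and $\mathfrak{S}^{*,q}$ is handled by the same particle-hole duality / geometric-flip involution used for $\mathfrak{S}^*$ in the classical half of the paper.

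For the converse, I would suppose $e^H$ matches $\mathfrak{S}^q$ and extract constraints from small configurations: a single row with two or three columns already forces the local transition amplitudes of $T$ to factor in the rigid manner imposed by the exponential-of-currents form of $e^H$, and --- as in the classical case --- that factorization is equivalent to the free fermion condition on the charge-zero weights. Tracking the charge labels through the same small cases then forces the additional charge condition, so together with the forward direction this yields ``precisely when.'' Part~(b) follows essentially formally: once $Z(\mathfrak{S}^q_{\lambda/\mu})$ is identified, up to a product of Boltzmann weights, with a $\tau$-function $\langle\mu|e^H|\lambda\rangle$ of a Hamiltonian on a metaplectic Fock space, I would invoke the identification of such $\tau$-functions --- via Lam's framework and the ribbon-tableau combinatorics of $q$-Fock space --- with skew LLT polynomials, and then use the two-alphabet structure coming from the two path colours, exactly as in the supersymmetric Schur case, to upgrade this to a skew supersymmetric LLT polynomial, with a Berele--Regev-type argument supplying the last step.

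The main obstacle is the twisted Wick computation. Because the Drinfeld twist deforms the commutation relations of the current operators, each contraction acquires a cocycle factor, and the delicate point is to show that the product of these factors over an entire pairing collapses exactly to the charged Boltzmann-weight product --- and, for the converse, that nothing weaker than the charge condition makes this collapse occur. Once the twisted contraction formula is pinned down, the remaining work is bookkeeping parallel to the treatment of the six-vertex case in the first half of the paper.
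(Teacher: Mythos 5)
There is a genuine gap at the heart of your plan: the ``twisted Wick theorem'' you rely on for the forward direction does not exist in this setting, and the paper is explicit about this --- its proof of Theorem \ref{charged-Delta-lattice-Hamiltonian} begins by noting that Wick's theorem is \emph{not} available for the charged case and therefore switches to an entirely different induction argument borrowed from \cite{BBBG-Hamiltonian}. The obstruction is structural, not a matter of bookkeeping. Metaplectic ($q$-)Fock space carries no Clifford algebra: there are creation operators $\psi^*_j$ (wedging), but no annihilation operators with canonical anticommutation relations for generic $q$, and the normal-ordering (straightening) relations for $u_l\wedge u_m$ produce an infinite tail of correction terms rather than a single sign. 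Consequently a matrix element $\langle\mu|e^{H_+}|\lambda\rangle$ does not decompose into pairwise contractions, and no cocycle-decorated determinant formula can hold: if it did, the charged partition function --- hence the supersymmetric LLT polynomial --- would satisfy a Jacobi--Trudi-type determinant identity in one-row partition functions, which fails for $n>1$. So the ``delicate point'' you flag as the main obstacle is in fact the missing ingredient, and the collapse you hope for cannot occur.

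The paper's route around this is worth absorbing: for a single row it introduces $\rho^*_k(t)=\psi^*_k-t\psi^*_{k-n}$ and proves two commutation statements --- $e^{H_+}\rho^*_k(\zeta)e^{-H_+}=\rho^*_k(\tau)$ on the Fock-space side (Lemma \ref{rho-H-commutation-lemma}), and the matching relation $\langle\mu|\hat{T}_k\rho^*_k(\zeta)|\lambda\rangle=\langle\mu|\rho^*_k(\tau)\hat{T}_k|\lambda\rangle$ for a column-restricted transfer matrix, verified by a finite case analysis (Lemma \ref{rho-T-hat-commutation-lemma}) --- and then inducts on the number of columns; multiple rows are handled by the same branching/Miwa-transform argument you describe, which is fine. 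Your converse sketch (extracting the zero-charge free fermion condition and the charge condition from small configurations) is close in spirit to the paper's actual argument, and your part~(b) is essentially right once you replace the ribbon-tableau/Berele--Regev detour by a direct appeal to the formula $\mathcal{G}_{\lambda/\mu}[\boldsymbol{x}|\boldsymbol{y}]=\langle\mu+\rho|e^{L_+(\boldsymbol{x}^n)-L_+(\boldsymbol{y}^n)}|\lambda+\rho\rangle$ of \cite{BBBG-Hamiltonian} together with the explicit substitution identifying $H_\pm$ with a difference of $L$-operators. But as written, the forward direction of part~(a) rests on a tool that is unavailable, so the proposal does not go through.
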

Brubaker, Buciumas, Bump, and Gustafsson showed that their model gives a supersymmetric LLT polynomial; our contribution here is to show that this is true of \emph{all} generalized free fermionic models, and that these are precisely the models associated to Hamiltonians on metaplectic Fock spaces. The generalized free fermionic models give us most, although not all, values of supersymmetric LLT polynomials as their partition functions. Finally, we use Hamiltonians to prove a Cauchy identity for (skew) supersymmetric LLT polynomials. This generalizes results of Lam \cite{Lam-LLT} and Brubaker, Bump, Buciumas, and Gustafsson \cite{BBBG-Hamiltonian}.

In general, the relationship between Hamiltonian operators and solvability is unclear. Both are phenomena involving representations of quantum groups; however, they are different quantum groups! The Hamiltonian operator is associated to a quantum affine algebra, while the \emph{$R$-matrix} involved in the solvability of our models arises from a quantum affine \emph{super}algebra. Furthermore, the ways that these representations interact with the lattice model are quite different. A vector in Fock space in a sense represents all the vertical edges in a single row of the lattice model at once, while the $R$-matrix method gives an intertwiner at each vertex of a pair of quantum group modules (and a module interpretation for the vertical edges in \cite{BBBG-metaplectic-solvability} is not known).

One nice potential consequence of the relationship of solvability with Hamiltonians is that Hamiltonians could provide a better method for generating algebraic conditions for solvability. It is often difficult to determine whether a complicated lattice model is solvable, and in the case of charged models our proof of solvability is substantially more computationally intense than our proof of a matching Hamiltonian. It is unclear whether a connection between these phenomena exists more generally

Other interesting directions are the study of vertex operators associated to these lattice models, which can be obtained from our half-vertex operators using the procedure in \cite[Chapter~14]{Kac-book}, and cylindrical boundary conditions, using our results from Section \ref{side-boundary-part-fun-section}.

Sections \ref{background-section}-\ref{boundary-conditions-section} are on the topic of classical Fock space and the six-vertex model without charge. Section \ref{background-section} gives preliminaries on Fock space, Hamiltonian operators, and symmetric functions corresponding to Hamiltonians. We give several symmetric function identities, many of which are also proved in \cite{Lam} or \cite{ZinnJustin-six-vertex}. Then in Section \ref{six-vertex-section}, we define our lattice models and describe two different relationships between their partition function. In Section \ref{proof-of-classical-main-result-section}, we prove Theorem \ref{main-result-classical}(a). Section \ref{corollaries-section} covers several topics relating to the free fermionic partition function, including a proof of Theorem \ref{main-result-classical}(b), involutions, identities, and positivity. In Section \ref{boundary-conditions-section}, we find a Fock space operator that matches with any boundary conditions, and prove Theorem \ref{domain-wall-proposition}.

Sections \ref{q-Fock-space-section}-\ref{charged-partition-function-section} concern $q$-Fock spaces and six-vertex models with charge. Section \ref{q-Fock-space-section} introduces $q$-Fock space and the action of Hamiltonian operators, while Section \ref{charged-models-section} defines our charged lattice models and the generalized free fermion condition. We present the solvability criterion for charged models in Section \ref{solvability-section}, with some computational details in Appendix \ref{appendix-equations}. The proof of Theorem \ref{main-result-quantum}(a) is in Section \ref{proof-of-quantum-main-result-section}, and the proof of Theorem \ref{main-result-quantum}(b), along with the Cauchy identity for supersymmetric LLT polynomials, is in Section \ref{charged-partition-function-section}.

\textbf{Acknowledgements:} This work was partially supported by NSF RTG grants DMS-1745638 and DMS-1937241, and by the University of Minnesota's Doctoral Dissertation Fellowship. I would like to thank Leonid Petrov, with whom I shared the results from Section \ref{proof-of-classical-main-result-section} in February 2021, for his time and insightful comments. I would also like to thank the members of the Solvable Lattice Seminar for allowing me to share many of these results in June 2021; in particular Daniel Bump, Travis Scrimshaw, Slava Naprienko, and Valentin Buciumas for helpful discussions. Thanks also to Daniel Bump for suggestions relating to Section \ref{berele-regev-section}, to Richard Kenyon for answering questions about the free fermion point, and to Darij Grinberg for catching some typos and confusing wording. This work is part of my Ph.D. thesis research at the University of Minnesota, written under the guidance and support of Ben Brubaker.

\textbf{Author's Note 2021:} In the final stages of writing this paper, the author was informed of another, independent work by Aggarwal, Borodin, Petrov, and Wheeler \cite{AggarwalBorodinPetrovWheeler} which also explores the free fermionic six vertex model. Both this paper and that one study four-parameter families of polynomials, but two of their parameters are column parameters, whereas all of ours are row parameters. This allows our model to have more direct ties to Hamiltonians and emphasize the role of the Heisenberg algebra, although both papers use Wick's theorem in the analysis of the associated partition function. Their paper doesn't consider lattice models with charge or $q$-Fock space Hamiltonians, and instead has many interesting applications involving probability measures and domino tilings.

\textbf{Authors Note 2024:} We give a brief chronology (to the author's best knowledge) on lattice models for supersymmetric LLT polynomials, since there were many independent developments between 2018 and 2021. These polynomials were defined by Lam \cite{Lam-LLT} in 2005, and Brubaker, Buciumas, Bump, and Gustafsson \cite{BBBG-Hamiltonian} gave a lattice model for a particular specialization in 2018. Three groups independently gave the same lattice model for all (ordinary) LLT polynomials: Curran, Yost-Wolff, Zhang, and Zhang \cite{CYWZZ-LLT} (2019), Corteel, Gitlin, Keating, and Meza \cite{Corteel-LLT} (Dec. 2020), and Aggarwal, Borodin, and Wheeler \cite{Aggarwal-Borodin-Wheeler} (Jan. 2021). \cite{CYWZZ-LLT} proved solvability only in special cases, while \cite{Corteel-LLT} has a combinatorial proof, and \cite{Aggarwal-Borodin-Wheeler} has a proof using representation theory.

The present paper was first posted to the ArXiv in September 2021 and contains lattice models for all supersymmetric LLT polynomials with nonzero parameters (Theorem \ref{Partition-Function-is-Supersymmetric-LLT}. This includes almost all specializations, but leaves out the ordinary LLT polynomials. To the author's knowledge, this was the most general such lattice model at the time it was first posted. In addition, the Cauchy identity for supersymmetric LLT polynomials (Proposition \ref{super-LLT-Cauchy}) is fully general, and to the author's knowledge was the first appearance of such a formula.

Very shortly after, Curran, Frechette, Yost-Wolff, Zhang, and Zhang \cite{CFYWZZ-super-LLT} (Oct. 14, 2021) and Gitlin and Keating \cite{GitlinKeating-super-LLT} (Oct. 19, 2021) gave lattice models for all supersymmetric LLT polynomials. Both papers included proofs of the Cauchy identity, the latter as a consequence of the Yang-Baxter equation, while the former used a similar approach to the present paper and then leveraged the Cauchy identity to prove the Yang-Baxter equation.

\section{Fock Space and Hamiltonian Operators} \label{background-section}

In this section, we define (classical) Fock space and its Hamiltonian operators. We explore the ways in which Hamiltonians generalize symmetric and supersymmetric function theory in terms of an involution as well as Jacobi-Trudi, Cauchy, and Pieri rules.

\subsection{Partitions}

A partition $\lambda$ of length $\ell$ is a weakly decreasing sequence of numbers \[\lambda_1\ge \lambda_2\ge\ldots\lambda_\ell\ge 0.\] It is called \emph{strict} if all the inequalities above except the last one are strict.

Notice that $\lambda$ can be padded by trailing zeroes. In fact, we will often need pairs of partitions of the same length, so we will often do this.

Let $\rho := \rho_\ell = (\ell-1,\ell-2,\ldots,1,0)$. We will often \emph{shift} a partition by $\rho$: \[\lambda\pm\rho := \lambda\pm\rho_{\ell(\lambda)} = (\lambda_1\pm (\ell(\lambda)-1), \lambda_2\pm (\ell(\lambda)-2), \ldots, \lambda_{\ell(\lambda)}).\] $\rho$-shifting has the nice property that \[\lambda \text{ is a partition} \hspace{20pt} \text{if and only if} \hspace{20pt} \lambda+\rho \text{ is a strict partition}.\] Let $\lambda'$ be the conjugate partition, $\lambda'_i = |\{k|\lambda_k\ge i\}|$. Notice that the length $\ell(\lambda')$ is not well-defined; this shouldn't be a problem since we may choose $\lambda'$ to have any number of trailing zeroes.

If $\lambda$ is a strict partition, choose an integer $M\ge \lambda_1$. Set $\ba{\lambda}$ to be the partition obtained by reversing $\lambda$ in the range $[0,M]$ and swapping its parts and nonparts: \[\ba{\lambda} = \rho_{M+1} \setminus (M-\lambda_1, M-\lambda_2,\ldots,M-\lambda_{\ell(\lambda)}),\] where the $\setminus$ symbol refers to set subtraction on parts of the partition.

\begin{lemma} \label{partition-lemma}
For all strict partitions $\lambda$ and all $M\ge \lambda_1$, $\ba{\lambda}-\rho = (\lambda-\rho)'$.
\end{lemma}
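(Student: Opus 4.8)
The plan is to recast the statement in the language of $\beta$-sets (first-column hook numbers / Maya diagrams) and reduce it to the classical fact that conjugation of partitions corresponds to reflect-and-complement on $\beta$-sets. For a partition $\mu$ with at most $p$ parts, let $\beta_p(\mu)$ denote the $p$-element subset $\{\mu_i + p - i : 1 \le i \le p\}$ of $\Z_{\ge 0}$; by the very definition of $\rho$-shifting this is just $\mu + \rho_p$ written as its set of parts. First I would record the elementary dictionary: setting $\ell = \ell(\lambda)$ and $\mu := \lambda - \rho$, the hypothesis that $\lambda$ is strict — together with the equivalence ``$\lambda$ a partition $\Leftrightarrow$ $\lambda + \rho$ strict'' noted in the excerpt — shows $\mu$ is a genuine partition and that $\beta_\ell(\mu) = \{\lambda_1, \dots, \lambda_\ell\}$. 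Moreover $\mu_1 = \lambda_1 - \ell + 1 \le M + 1 - \ell$, so $\mu$ fits inside an $\ell \times (M+1-\ell)$ box.

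Next I would invoke the complementation lemma: if $\mu$ has at most $p$ parts and $\mu_1 \le q$, then reflecting the complement of $\beta_p(\mu)$ inside $\{0, 1, \dots, p+q-1\}$ via $x \mapsto (p+q-1) - x$ produces exactly $\beta_q(\mu')$. Applying this with $p = \ell$ and $q = M+1-\ell$ (so $p + q - 1 = M$): the complement of $\beta_\ell(\mu) = \{\lambda_i\}$ in $\{0, \dots, M\}$, reflected via $x \mapsto M - x$, is $\{0,\dots,M\} \setminus \{M - \lambda_1, \dots, M - \lambda_\ell\}$, which by definition is precisely the set of parts of $\ba{\lambda}$. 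Hence $\ba{\lambda}$, viewed as a strict partition of length $M+1-\ell$, equals $\beta_{M+1-\ell}(\mu') = \mu' + \rho_{M+1-\ell}$, and therefore $\ba{\lambda} - \rho = \mu' = (\lambda - \rho)'$, with the usual understanding that the identity holds up to trailing zeros (which the excerpt's conventions permit, in particular since $\ell(\lambda')$ is only defined up to trailing zeros).

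The one substantive step is the complementation lemma, which I would either cite as standard or prove in a line via boundary paths: encode $\mu$ inside the $p \times q$ box as a $\{0,1\}$-word of length $p+q$ tracing its boundary, so that $\beta_p(\mu)$ and its complement record the positions of the two letters; conjugating $\mu$ reverses the word and swaps the two letters, which is exactly reflection of positions composed with complementation. The part requiring genuine care — and essentially the only place the argument can go wrong — is pinning down the reflection index and the two box dimensions so that they line up with the $\rho_{M+1}$ and the $M - \lambda_i$ appearing in the definition of $\ba{\lambda}$, and checking the degenerate cases ($\lambda_\ell = 0$, or $\mu$ or $\mu'$ having strictly fewer parts than its box dimension, which only introduces harmless trailing zeros). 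A direct verification comparing $(\ba{\lambda} - \rho)_j$ with $((\lambda - \rho)')_j = |\{k : m_k \ge j + k - 1\}|$ (where $m_1 < \cdots < m_\ell$ are the parts of $\lambda$ in increasing order) is also possible, using that both of these indexing sets are up-sets in $k$, but it is bookkeeping-heavy and I would prefer the $\beta$-set route.
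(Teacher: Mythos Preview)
Your proof is correct and takes a genuinely different route from the paper's. The paper argues by a direct index computation: it lets $\mu$ denote the partition formed by the \emph{non-parts} of $\lambda$ in $[0,M]$ (so your $\ba{\lambda}$ reversed, essentially), and then unwinds the definition of $(\lambda-\rho)'_i = |\{k : \lambda_k - (\ell-k) \ge i\}|$ through a short chain of equalities, rewriting the count in terms of how many $\mu_j$ lie below a given $\lambda_k$, until it becomes $(\ba{\lambda}-\rho)_i$. Your approach instead packages the whole statement as a single invocation of the complement--reflect dictionary between $\beta$-sets and conjugation, which is arguably more conceptual and meshes nicely with the particle/Maya-diagram picture the paper uses later for Fock space. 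The paper's argument is more self-contained (no external lemma needed), while yours explains \emph{why} the lemma should be true rather than just verifying it; your boundary-path sketch of the complementation lemma is standard and adequate. The one place to be mildly careful in your write-up is exactly the bookkeeping you flag: that $\ell(\ba{\lambda}) = M+1-\ell$ so that the correct $\rho$-shift is $\rho_{M+1-\ell}$, and that $\mu_1 \le M+1-\ell$ so $\mu'$ has at most $M+1-\ell$ parts --- both of which you handle correctly.
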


\begin{proof}
Let $\ell = \ell(\lambda)$, and let $\mu_1 > \mu_2 > \cdots > \mu_{M+1-\ell}$ be the strict partition made up of all the integers in $[0,M]$ that are not parts of $\lambda$. Then we have \begin{align*} (\lambda-\rho)'_i &= |\{k|\lambda_k - (\ell-k) \ge i\}| \\&= |\{k|\text{ there exist $\ge i$ parts $\mu_j$ of $\mu$ with $\lambda_k>\mu_j$}\}| \\&= |\{k|\lambda_k > \mu_{M-\ell+2-i}\}| \\&= M - \mu_{M-\ell+2-i} - (M-\ell+1-i) \\&= (\ba{\lambda}-\rho)_i,\end{align*} where the last equality holds since $\ell(\mu) = M-\ell+1$, $(\ba{\lambda})_i = M - \mu_{M-\ell+2-i}$, and $(\rho_{\ell(\mu)})_i = M-\ell+1-i$.
\end{proof}

Let $\lambda,\mu$ be partitions with the same length such that $\lambda_i\ge \mu_i$ for all $i$. Then we call the pair $(\lambda,\mu)$ a \emph{skew} partition and denote it $\lambda/\mu$.

\subsection{Fock Space}

Let us define the Clifford algebra \[A = \langle \psi_i^*, \psi_i| i\in\Z-\frac{1}{2}\rangle,\] with relations \begin{align*}\psi_i\psi_j + \psi_j\psi_i = 0, \hspace{20pt} \psi^*_i\psi^*_j + \psi^*_j\psi^*_i = 0, \hspace{20pt} \psi_i\psi^*_j + \psi^*_j\psi_i = \delta_{i,j}.\end{align*} Let $\psi^*(t) = \sum_{k\in\Z-1/2} \psi^*_k t^{k+1/2}$.

The Fock space $\mathcal{F}$ and its dual $\mathcal{F}^*$ are both $A$-modules. Let $\mathcal{W} = \left(\oplus_{i\in\Z-1/2} \C\psi_i\right) \oplus \left(\oplus_{i\in\Z-1/2} \C\psi_i^*\right)$. We call elements of $\mathcal{W}$ \emph{free fermions}.

Define subspaces $\mathcal{W}_{ann} = \left(\oplus_{i<0} \C\psi_i\right) \oplus \left(\oplus_{i > 0} \C\psi_i^*\right)$ and $\mathcal{W}_{cr} = \left(\oplus_{i>0} \C\psi_i\right) \oplus \left(\oplus_{i<0} \C\psi_i^*\right)$. Then $\mathcal{F} := A/A\mathcal{W}_{ann}$ is a left $A$-module, while $\mathcal{F}^* := \mathcal{W}_{cr}A\backslash A$ is a right $A$-module.

$\mathcal{F}$ is a cyclic module generated by the vector $|0\rangle := 1 \mod A\mathcal{W}_{ann}$ and $\mathcal{F}^*$ is a cyclic module generated by the vector $\langle 0| := 1 \mod \mathcal{W}_{cr}A$.

One can represent each of these vectors by the particle diagram

\vspace{10pt}

\begin{tikzpicture}
  \draw (0,0)--(14,0);
  \draw (2,0.2)--(2,-0.2);
  \draw (4,0.2)--(4,-0.2);
  \draw (6,0.2)--(6,-0.2);
  \draw (8,0.2)--(8,-0.2);
  \draw (10,0.2)--(10,-0.2);
  \draw (12,0.2)--(12,-0.2);
  \draw[fill=black] (1,0) circle (.3);
  \draw[fill=black] (3,0) circle (.3);
  \draw[fill=black] (5,0) circle (.3);
  \draw[fill=white] (7,0) circle (.3);
  \draw[fill=white] (9,0) circle (.3);
  \draw[fill=white] (11,0) circle (.3);
  \draw[fill=white] (13,0) circle (.3);
  \node at (6,-0.5) {0};
  \node at (-0.5,0) {\ldots};
  \node at (14.5,0) {\ldots};
\end{tikzpicture}

For a strict partition $\lambda$, we define \[|\lambda\rangle := \psi^*_{\lambda_1-\frac{1}{2}}\psi^*_{\lambda_2-\frac{1}{2}}\ldots \psi^*_{\lambda_n-\frac{1}{2}}|0\rangle \hspace{20pt} \text{and} \hspace{20pt} \langle\lambda| := \langle 0|\psi_{\lambda_n-\frac{1}{2}}\ldots \psi_{\lambda_2-\frac{1}{2}}\psi_{\lambda_1-\frac{1}{2}}.\]

We represent the basis vectors of $\mathcal{F}$ and $\mathcal{F}^*$ as particle diagrams as well. For example, the partition $\lambda=(3,1)$ is represented by the following diagram.

\vspace{10pt}

\begin{tikzpicture}
  \draw (0,0)--(14,0);
  \draw (2,0.2)--(2,-0.2);
  \draw (4,0.2)--(4,-0.2);
  \draw (6,0.2)--(6,-0.2);
  \draw (8,0.2)--(8,-0.2);
  \draw (10,0.2)--(10,-0.2);
  \draw (12,0.2)--(12,-0.2);
  \draw[fill=black] (1,0) circle (.3);
  \draw[fill=black] (3,0) circle (.3);
  \draw[fill=black] (5,0) circle (.3);
  \draw[fill=black] (7,0) circle (.3);
  \draw[fill=white] (9,0) circle (.3);
  \draw[fill=black] (11,0) circle (.3);
  \draw[fill=white] (13,0) circle (.3);
  \node at (6,-0.5) {0};
  \node at (-0.5,0) {\ldots};
  \node at (14.5,0) {\ldots};
\end{tikzpicture}

$\{|\lambda\rangle\}$ and $\{\langle\lambda|\}$ form a set of dual bases of $\mathcal{F}$ and $\mathcal{F}^*$ with respect to the bilinear form \[\mathcal{F}^*\otimes_A \mathcal{F} \to \C\] defined by \[\langle\lambda|\otimes_A |\mu\rangle \mapsto \langle \lambda|\mu\rangle := \delta_{\lambda,\mu},\] extended linearly.

We can use a similar ``bra-ket'' notation to write more complicated pairings: we write $\langle\mu|h|\lambda\rangle$ to represent the image under the bilinear form of the quantity $\langle\mu|h \otimes_A 1|\lambda\rangle = \langle\mu|1 \otimes_A h|\lambda\rangle$.

This symmetric bilinear form gives rise to a linear form $\langle\cdot\rangle$ on $A$, called the \emph{vacuum expectation value.} (In many sources, these definitions are done in reverse).

We define: \[\langle a\rangle := \langle 0|a|0\rangle.\] In particular, we have \[\langle 1\rangle = 1, \hspace{10pt} \langle \psi_i\psi_j\rangle = \langle \psi^*_i\psi^*_j\rangle = 0,\] \[ \langle \psi_i\psi^*_j\rangle = \begin{cases} 1, & \text{if $i=j<0$,} \\ 0, & \text{otherwise.}\end{cases} \hspace{10pt} \langle \psi^*_i\psi_j\rangle = \begin{cases} 1, & \text{if $i=j>0$,} \\ 0, & \text{otherwise.}\end{cases}\]

We can use the vacuum expectation value to define the \emph{normal ordering}.

\[:\psi_i\psi^*_j: \hspace{5pt} := \psi_i\psi^*_j - \langle \psi_i\psi^*_j\rangle = \begin{cases} \psi_i\psi^*_j, & \text{if $i>0$,} \\ -\psi^*_j\psi_i, & \text{if $i<0$.}\end{cases}\] \[:\psi_j^*\psi_i: \hspace{5pt} := \psi^*_j\psi_i - \langle \psi^*_j\psi_i\rangle = \begin{cases} \psi^*_j\psi_i, & \text{if $i<0$,} \\ -\psi_i\psi^*_j, & \text{if $i>0$.}\end{cases}\] Note that unless $i=j$, $\psi_i\psi^*_j = -\psi^*_j\psi_i$. On the other hand, $\psi_i\psi^*_i|\lambda\rangle\ne 0$ whenever $\lambda$ does not have a part of size $i+1/2$, whereas $\psi^*_i\psi_i|\lambda\rangle\ne 0$ whenever $i<0$ or $i>0$ and $\lambda$ does have a part of size $i+1/2$. This would lead to trivial infinite quantities in some of our upcoming definitions. However, using $:\psi_i\psi^*_j: = -\psi^*_j\psi_i$ removes this complication since $:\psi_i\psi^*_i:|\lambda\rangle\ne 0$ only for finitely many $i$, precisely those $i>0$ where $\lambda$ has a part of size $i+1/2$.

One can therefore define the Lie algebra, \[\mathfrak{gl}(\infty) := \left\{\sum_{ij} a_{ij} :\psi_i\psi^*_j: | \exists N \text{ such that } a_{ij}=0 \text{ if } |i-j|>N \right\} \oplus \C\cdot 1.\] With this definition, $\mathcal{F}$ and $\mathcal{F}^*$ are both $\mathfrak{gl}(\infty)$-modules. They are reducible, but decompose into irreducible representations \[\mathcal{F} = \bigoplus_{\ell\in\Z} \mathcal{F}_\ell, \hspace{20pt} \mathcal{F}^* = \bigoplus_{\ell\in\Z} \mathcal{F}^*_\ell,\] \[\mathcal{F}_\ell = \mathfrak{gl}(\infty)|\ell\rangle, \hspace{20pt} \mathcal{F}^*_\ell = \langle\ell|\mathfrak{gl}(\infty)\]

Next, we will define important elements in $\mathfrak{gl}(\infty)$ called \emph{Hamiltonian operators}.

For $n\in\Z$, let \[J_n = \sum_{i\in\Z-\frac{1}{2}} :\psi^*_{i-n}\psi_i:.\] These are called \emph{current operators}, and they generate a Heisenberg algebra $\mathcal{H} := \langle J_m|m\in\Z, m\ne 0\rangle$ since $[J_m,J_n] = m\delta_{m,-n}$. Now, let $\{s_k|k\in \Z\setminus\{0\}\}$ be a doubly infinite family of parameters. Let \[H_+ = \sum_{k\ge 1} s_kJ_k, \hspace{20pt} e^{H_+} = \sum_{m\ge 0} \frac{H_+^m}{m!}.\] $e^{H_+}$ is the \emph{Hamiltonian operator} of \cite{JimboMiwa-solitons, Kac-book, Lam, Brubaker-Schultz}. Similarly, we define \[H_- := \sum_{k\ge 1} s_{-k}J_{-k}.\]

We will also sometimes write $s_k = \sum_{j=1}^N s^{(j)}_k$, where the $s^{(j)}_k$ are indeterminates. In this case, we have \[H_{\pm} = \sum_{j=1}^N \phi_{\pm j}, \hspace{20pt} \text{where} \hspace{20pt}\phi_{\pm j} = \sum_{k\ge 1} s^{(j)}_{\pm k} J_{\pm k}.\]  Note that all the $\phi_j$ commute. If we want to make the parameters $s_{\pm k}^{(j)}$ clear for a Hamiltonian operator, we write \[H_\pm = H_\pm(s_{\pm 1}^{(1)},\ldots,s_{\pm 1}^{(N)}, s_{\pm 2}^{(1)},\ldots,s_{\pm 2}^{(N)}, \ldots).\]

The action of a current operator $J_k$ on a vector is to move a particle $k$ spots to the left. The action of $e^{H_+}$ is to move any number of particles any number of spaces to the left. The parameters $s_k$ keep track of which moves we have done. Similarly, the action of $e^{H_-}$  allows us to move any number of particles any number of spaces to the right.

The following result is both useful and classical. It appears in many forms \cite{AlexandrovZabrodin}.

\begin{proposition}[Wick's Theorem] \begin{align*}\langle \psi_{i_1}\ldots \psi_{i_r} e^{H_\pm} \psi^*_{j_1}\ldots \psi^*_{j_s}\rangle = \begin{cases} \det_{1\le p,q\le r} \langle \psi_{i_p}e^{H_\pm}\psi^*_{j_q}\rangle, & \text{if } r=s \\ 0, & \text{otherwise}.\end{cases}\end{align*}
\end{proposition}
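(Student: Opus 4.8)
The plan is to reduce Wick's theorem for the exponential operator $e^{H_\pm}$ to the standard Wick's theorem for a quadratic (Heisenberg) evolution, exploiting that $H_\pm$ is a linear combination of current operators $J_k$, which are themselves normally-ordered bilinear expressions in the fermions. First I would record the basic commutation relations between the currents and the fermion fields: since $J_k = \sum_{i} {:}\psi^*_{i-k}\psi_i{:}$, one computes $[J_k, \psi_m] = \psi_{m+k}$ (up to sign/indexing conventions) and $[J_k,\psi^*_m] = -\psi^*_{m-k}$ for $k \ge 1$ acting the relevant way, and analogously for $H_-$. Exponentiating, conjugation by $e^{H_+}$ acts on a single fermion $\psi_m$ (resp. $\psi^*_m$) by an upper/lower-triangular linear transformation in the field index, i.e. $e^{H_+}\psi_m e^{-H_+}$ is an (infinite) linear combination $\sum_{k \ge 0} c_k(s)\,\psi_{m+k}$ with coefficients given by a generating function in the $s_k$'s (a Schur-polynomial / exponential-type generating series). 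The key structural point is that this conjugation is \emph{linear} in the fermions — it does not mix $\psi$'s with $\psi^*$'s beyond what the Clifford relations already do — so $e^{H_\pm}$ is a Bogoliubov-type (in fact "group-like") operator and the quadratic Wick machinery applies.

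With that in hand, the main argument runs as follows. Write $\langle \psi_{i_1}\cdots\psi_{i_r} e^{H_\pm}\psi^*_{j_1}\cdots\psi^*_{j_s}\rangle = \langle 0| \psi_{i_1}\cdots\psi_{i_r}\, \big(e^{H_\pm}\psi^*_{j_1}e^{-H_\pm}\big)\cdots\big(e^{H_\pm}\psi^*_{j_s}e^{-H_\pm}\big)\, e^{H_\pm}|0\rangle$, and observe $e^{H_+}|0\rangle = |0\rangle$ (since each $J_k$ with $k\ge 1$ annihilates the vacuum, as ${:}\psi^*_{i-k}\psi_i{:}|0\rangle = 0$ for all $i$ — one checks the two cases $i>0$ and $i\le 0$ using the normal ordering); symmetrically $\langle 0|e^{H_-} = \langle 0|$, and for the other sign we instead move $e^{H_\pm}$ to the side it fixes. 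So the whole correlator becomes the vacuum expectation of a product of $2r$ (resp. $r+s$) fermions that are each \emph{linear combinations of the $\psi,\psi^*$ fields}. Now invoke the classical free-fermion Wick theorem for the bare vacuum $\langle\cdot\rangle$: a vacuum expectation of a product of an even number of linear-in-fermion operators equals the Pfaffian of pairwise contractions, and an odd number gives $0$; moreover, since the $\psi_{i_p}$ contribute only $\psi$-type fields and each transformed $\psi^*_{j_q}$ contributes only $\psi^*$-type fields, the $\langle\psi\psi\rangle$ and $\langle\psi^*\psi^*\rangle$ contractions vanish, so the Pfaffian collapses to a determinant of the mixed contractions $\langle \psi_{i_p} (e^{H_\pm}\psi^*_{j_q}e^{-H_\pm})\rangle = \langle \psi_{i_p} e^{H_\pm}\psi^*_{j_q}\rangle$ (again using $e^{H_\pm}$ fixes the adjacent vacuum). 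This also forces $r=s$: with $r\ne s$ the fermion count $r+s$ can be odd, or even if equal-parity but then the determinant is of a non-square array — more cleanly, charge conservation ($H_\pm$ preserves the $\mathfrak{gl}(\infty)$-grading by particle number) kills the correlator unless $r=s$. Assembling these gives exactly the claimed formula.

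The step I expect to be the main obstacle is making the "conjugation acts linearly on fermions, hence $e^{H_\pm}$ is a group-like/Bogoliubov operator" assertion fully rigorous in the presence of the \emph{infinite} sums defining $H_\pm$ and the infinite-dimensional index set $\Z - \tfrac12$. One must check that $e^{H_+}\psi^*_m e^{-H_+}$ is a well-defined element acting on $\mathcal{F}$ — this is where the triangularity ($J_k$ with $k\ge 1$ shifts indices in one direction only) is essential, since it guarantees that on any fixed basis vector $|\lambda\rangle$ only finitely many terms survive — and that the reordering manipulations (pulling $e^{H_\pm}$ through the fermions, discarding the vacuum-fixing copy) are all legitimate, not merely formal. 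Once convergence/well-definedness is secured, the combinatorial heart is the standard fermionic Wick theorem, which I would cite from \cite{AlexandrovZabrodin} rather than reprove; the only genuinely new bookkeeping is tracking that the transformed $\psi^*$'s stay "$\psi^*$-pure" so that the Pfaffian degenerates to a determinant and the $r=s$ constraint drops out. A slicker alternative, which I would mention, is to verify the identity first for $H_\pm$ a finite sum (or even a single $s_k J_k$) where everything is manifestly finite, prove the determinant formula there by induction on $r$ using Wick contractions, and then pass to the general case by noting both sides are formal power series in the $s_k$ whose coefficients are determined by the finite truncations.
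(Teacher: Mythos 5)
The paper does not actually prove this proposition: it is stated as a classical fact and delegated wholesale to \cite{AlexandrovZabrodin}, so your proposal is doing strictly more than the paper does. Your route is the standard one and is essentially sound: $J_k|0\rangle=0$ for $k\ge 1$ (so $e^{H_+}|0\rangle=|0\rangle$, and dually $\langle 0|e^{H_-}=\langle 0|$), conjugation $e^{H_\pm}\psi^*_m e^{\mp H_\pm}$ stays linear and ``$\psi^*$-pure'' with coefficients given by the $h_l$ generating series (this is exactly the mechanism the paper does use later, e.g.\ in Lemma \ref{rho-H-commutation-lemma} for the $q$-deformed case), well-definedness on each $|\lambda\rangle$ because $\psi^*_{m-l}|\lambda\rangle=0$ once $m-l$ lands in the filled part of the sea, and then the bare-vacuum Wick/Pfaffian theorem, which collapses to a determinant because the $\langle\psi\psi\rangle$ and $\langle\psi^*\psi^*\rangle$ contractions vanish; charge conservation gives the vanishing for $r\ne s$. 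Invoking the elementary free-fermion Wick theorem is not circular, since that statement (no $e^H$ insertion) is the genuinely simpler one and can be proved by induction on the Clifford relations, as in your fallback. Two small points to tidy if you write this up: the commutator signs are $[J_k,\psi^*_m]=\psi^*_{m-k}$ and $[J_k,\psi_m]=-\psi_{m+k}$ in the paper's conventions (your hedge ``up to sign'' is fine, but the linearity, not the sign, is what the argument uses), and the Pfaffian-to-determinant step carries an ordering sign that you should check cancels so that the matrix is exactly $\bigl(\langle\psi_{i_p}e^{H_\pm}\psi^*_{j_q}\rangle\bigr)_{p,q}$ with the rows and columns in the stated order; your truncation argument (finitely many $s_k$, then compare coefficients of the formal power series) is a clean way to keep all manipulations finite.
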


One of the best motivations to study Hamiltonian operators is the boson-fermion correspondence. For all $\ell\in\Z$, let $V_\ell\cong \C[z]$. Let $\mathcal{H}$ act on $V_\ell$ by the \emph{bosonic action}: \[J_k\cdot P := \begin{cases} k\cdot\frac{\partial P}{\partial ks_k}, & k>0 \\ s_k\cdot P, & k<0,\end{cases}\]

\begin{proposition}[Boson-Fermion Correspondence] \cite[Theorem~1.1]{JimboMiwa-solitons} \label{boson-fermion-correspondence}
The following map \[\mathcal{F}_\ell\to V_\ell, \hspace{20pt} a|0\rangle \mapsto \langle\ell|e^{H_+} a|0\rangle\] is an isomorphism of $\mathcal{H}$-modules. In other words, \[h\langle \ell| e^{H_+} a|0\rangle = \langle\ell|e^{H_+} h a|0\rangle, \hspace{20pt} a\in A, a|0\rangle\in \mathcal{F}_\ell, h\in \mathcal{H}\] where the action by $h$ on the left side is bosonic, while on the right side it is fermionic.
\end{proposition}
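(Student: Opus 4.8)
The plan is to establish the two halves of the statement more or less independently: that $\sigma\colon \mathcal{F}_\ell\to V_\ell$, $a|0\rangle\mapsto\langle\ell|e^{H_+}a|0\rangle$, is a morphism of $\mathcal{H}$-modules (a short exponential computation), and that it is bijective (Schur's lemma plus a graded-dimension count). Before either, I would check that $\sigma$ is well defined, i.e.\ that $\langle\ell|e^{H_+}a|0\rangle$ is polynomial in the $s_k$: since each $J_k$ preserves charge, the pairing vanishes unless $a|0\rangle\in\mathcal{F}_\ell$, and a monomial $s_{k_1}\cdots s_{k_m}$ from $e^{H_+}=\sum_m H_+^m/m!$ contributes to $\langle\ell|e^{H_+}|\lambda\rangle$ only when $k_1+\cdots+k_m$ equals the fixed total left-displacement carrying the particle diagram of $\lambda$ to that of $|\ell\rangle$; hence only finitely many terms survive and $\sigma(|\lambda\rangle)\in V_\ell$.

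For $\mathcal{H}$-linearity it suffices to treat the generators $J_m$, $m\neq 0$. If $m>0$, then $[J_m,J_k]=0$ for all $k\geq 1$, so $J_m$ commutes with $e^{H_+}$ and $\partial_{s_m}e^{H_+}=J_m e^{H_+}$; therefore $\sigma(J_m a|0\rangle)=\langle\ell|e^{H_+}J_m a|0\rangle=\langle\ell|J_m e^{H_+}a|0\rangle=\partial_{s_m}\sigma(a|0\rangle)$, which is the bosonic action of $J_m$. If $m<0$, then $[J_m,H_+]=s_{-m}[J_m,J_{-m}]=m\,s_{-m}$ is a scalar, so all higher commutators vanish and $e^{-H_+}J_m e^{H_+}=J_m+m\,s_{-m}$; using the highest-weight relation $\langle\ell|J_m=0$ (valid for $m<0$ because $J_{-m}|\ell\rangle=0$ when $-m>0$), this gives $\sigma(J_m a|0\rangle)=\langle\ell|e^{H_+}J_m a|0\rangle=-m\,s_{-m}\,\sigma(a|0\rangle)$, which is the prescribed bosonic action of $J_m$ (multiplication by a scalar multiple of $s_{-m}$) under the identification of parameters used to define $V_\ell$. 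Since $\mathcal{H}$ is generated by the $J_m$, $\sigma$ is an $\mathcal{H}$-module map.

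For bijectivity, first $\sigma(|\ell\rangle)=\langle\ell|e^{H_+}|\ell\rangle=\langle\ell|\ell\rangle=1$ because $H_+|\ell\rangle=0$, so $\sigma\neq 0$. Next, $V_\ell$ is irreducible over $\mathcal{H}$: a nonzero submodule contains an element of minimal weighted degree in the $s_k$, which the annihilation operators $J_k=\partial_{s_k}$ ($k>0$) force to be a nonzero constant, and the constants generate $V_\ell$ under the $J_{-k}$. Hence $\operatorname{im}\sigma$, a nonzero submodule, is all of $V_\ell$, so $\sigma$ is surjective. Finally, in each weighted degree $d$ the source $\mathcal{F}_\ell$ has basis $\{\,|\lambda\rangle : \lambda \text{ strict},\ |\lambda-\rho|=d\,\}$ and the target $V_\ell$ has basis the degree-$d$ monomials in the $s_k$ (with $\deg s_k=k$), both of cardinality $p(d)$; a surjection of finite-dimensional spaces of equal dimension in every degree is an isomorphism, so $\sigma$ is bijective.

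The main obstacle is bookkeeping rather than conceptual: one must keep the index/sign conventions for $H_+$, the bilinear form, and the stated bosonic action mutually consistent so that the scalar produced by Baker--Campbell--Hausdorff is \emph{exactly} the prescribed multiplication operator, and one must justify the annihilation identity $\langle\ell|J_m=0$ ($m<0$) carefully from the normal-ordering conventions. The only genuinely substantive inputs are the irreducibility of $V_\ell$ and the graded-dimension count; the intertwining itself is a single exponential identity in each of the two cases.
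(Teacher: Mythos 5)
The paper offers no proof of this proposition at all: it is quoted directly from Jimbo--Miwa (the citation \cite{JimboMiwa-solitons} in the statement), so there is nothing internal to compare against. Your argument is the standard self-contained proof and it is essentially correct: the intertwining for $m>0$ via $\partial_{s_m}e^{H_+}=J_m e^{H_+}$, the intertwining for $m<0$ via the central commutator $[J_m,H_+]=m\,s_{-m}$ together with $\langle\ell|J_m=0$, and then surjectivity from irreducibility of $V_\ell$ plus injectivity from the graded dimension count ($p(d)$ on both sides, using the degree-preservation you already noted in the well-definedness step). Two small points are worth pinning down. First, your annihilation identity $\langle\ell|J_m=0$ for $m<0$ is most cleanly justified by the adjointness $\langle\ell|J_m|v\rangle=\langle v|J_{-m}|\ell\rangle$, which is exactly the computation in the paper's Lemma \ref{Hamiltonian-duality-lemma}, combined with $J_k|\ell\rangle=0$ for $k>0$; citing that lemma would close the loop using only facts the paper proves. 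Second, your conjugation gives the scalar $-m\,s_{-m}$, i.e.\ $k\,s_k$ with $k=-m>0$, whereas the paper's displayed bosonic action literally reads ``multiplication by $s_m$'' for $m<0$; as stated the paper's formula does not even reproduce $[J_k,J_{-k}]=k$, so the consistent normalization is the one your computation produces (multiplication by $k s_k$), and your hedge ``up to the identification of parameters'' should be replaced by this explicit statement rather than left implicit. With those clarifications the proof is complete, and it has the virtue of resting only on ingredients already present in the paper (the duality lemma and the Kac--Raina-type irreducibility of the bosonic Fock space) rather than on the external reference.
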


There is another way to write $\mathcal{F}_\ell$. Let $W = \bigoplus_{i\in\Z} \langle v_i\rangle$ and \[\bigwedge^\infty W = v_{i_1}\wedge v_{i_2}\wedge\ldots,\] with the usual wedge relation $v_i\wedge v_j = -v_j\wedge v_i$. Then \[\mathcal{F}_\ell = \left\{v_{i_1}\wedge v_{i_2}\wedge\ldots \in \bigwedge^\infty W | i_m = \ell-m \text{ for all } m >> 0\right\},\] and the action of current operators can be expressed as \[J_k\cdot (v_{m_1}\wedge v_{m_2}\wedge\ldots) = \sum_{i\ge 1} (v_{m_1}\wedge\ldots \wedge v_{m_{i-1}} \wedge v_{m_i-k} \wedge v_{m_{i+1}}\wedge\ldots).\] We'll see a more general version of this action in Section \ref{q-Fock-space-section}.

\subsection{Hamiltonians and symmetric functions} \label{Hamiltonian-symmetric-functions-section}

We will work with a set of functions that naturally arise from Hamiltonians and generalize some common symmetric and supersymmetric functions such as power sum, homogeneous, elementary, and Schur polynomials. This approach was first taken by Lam \cite{Lam} and Zinn-Justin \cite{ZinnJustin-six-vertex}, and most of the results in this section were proved by one or both of them. An important reference for symmetric functions computations is \cite[Chapter~I]{Macdonald-symmetric-functions}.

A similar idea is due to Korff \cite{Korff-vicious-osculating} and Gorbounov-Korff \cite{GorbounovKorff}, who used operator analogues of symmetric functions to study quantum cohomology via vicious and osculating walkers.

We will use similar notation for our generalizations as for the classical symmetric functions. To avoid confusion, we will always use parentheses for the generalized functions and square brackets for symmetric and supersymmetric functions.

Fix a set of parameters $\boldsymbol{s}_+ := \{s_k^{(j)}, 1\le j\le N, k\ge 1\}$. We want the negative-index parameters to have a particular relationship with the positive index parameters: \[\boldsymbol{s}_- := \{s_k^{(j)}, 1\le j\le N, k\le -1\}, \hspace{20pt} \text{where} \hspace{20pt} s_{-k}^{(j)} = (-1)^{k-1}s_k^{(j)}.\] We will explore symmetric function analogues in the ring $\C[s_k^{(j)}|k\in\Z\setminus\{0\}, 1\le j\le N]$. In the constructions to follow, specializing $s_k^{(j)} = \frac{1}{k} x_j^k, k>0$ produces the classical symmetric functions, while specializing $s_k^{(j)} = \frac{1}{k} (x_j^k + (-1)^k y_j^k), k>0$ produces the supersymmetric functions.

For a partition $\lambda$, let $z_\lambda = \prod_{i\ge 1} i^{m_i} m_i!$, where $m_i$ is the number of parts of $\lambda$ of size $i$.

Let \[s_{\pm k} := s_{\pm k}(\boldsymbol{s}_\pm) = \sum_{j=1}^N s_{\pm k}^{(j)}, \hspace{20pt} p_{\pm k} := p_{\pm k}(\boldsymbol{s}_\pm) = ks_{\pm k}, \hspace{20pt} k\ne 0,\] \[ s_{\pm\lambda} := s_{\pm\lambda_1}\cdots s_{\pm\lambda_{\ell(\lambda)}}, \hspace{20pt} p_{\pm\lambda} := p_{\pm\lambda_1}\cdots p_{\pm\lambda_{\ell(\lambda)}}, \hspace{20pt} \lambda \text{: partition},\] \[h_{\pm k} := \sum_{\lambda\vdash k} z_\lambda^{-1} p_{\pm\lambda}, \hspace{20pt} e_{\pm k} := \sum_{\lambda\vdash k} (-1)^{|\lambda|-\ell(\lambda)}z_\lambda^{-1} p_{\pm\lambda}, \hspace{20pt} k\ge 1,\] \[s_0=p_0=0, \hspace{20pt} h_0=e_0=1,\] and let $\omega$ be the involution $\omega(s_k^{(j)}) = (-1)^{k-1}s_k^{(j)}$, extended algebraically. In particular, $\omega(h_k) = e_k$.

As above, let $H_{\pm} = H_{\pm}(\boldsymbol{s}_\pm) = \sum_{k\ge 1}\sum_{j=1}^N s_{\pm k}^{(j)}J_{\pm k}$.

\begin{lemma}[Duality] \label{Hamiltonian-duality-lemma}
\begin{align}\omega\left(\langle\mu|e^{H_+}|\lambda\rangle\right) = \langle\lambda|e^{H_-}|\mu\rangle. \label{Hamiltonian-pos-neg-duality}\end{align}
\end{lemma}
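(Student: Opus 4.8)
**Plan for proving Lemma (Duality): $\omega\left(\langle\mu|e^{H_+}|\lambda\rangle\right) = \langle\lambda|e^{H_-}|\mu\rangle$.**

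The strategy is to reduce both sides to explicit polynomials in the parameters via Wick's theorem and then observe that the substitution $\omega$ interchanges them. First I would use Wick's theorem to write $\langle\mu|e^{H_+}|\lambda\rangle$ as a determinant. Recall that $|\lambda\rangle = \psi^*_{\lambda_1-1/2}\cdots\psi^*_{\lambda_\ell-1/2}|0\rangle$ and $\langle\mu| = \langle 0|\psi_{\mu_\ell-1/2}\cdots\psi_{\mu_1-1/2}$ for strict partitions $\lambda,\mu$ of the same length $\ell$ (padding with zeros if necessary, which is harmless). Since $e^{H_+}$ commutes past the vacuum appropriately, $\langle\mu|e^{H_+}|\lambda\rangle = \langle \psi_{\mu_\ell-1/2}\cdots\psi_{\mu_1-1/2}\,e^{H_+}\,\psi^*_{\lambda_1-1/2}\cdots\psi^*_{\lambda_\ell-1/2}\rangle$, so by Wick it equals $\det_{1\le p,q\le \ell}\langle\psi_{\mu_p-1/2}\,e^{H_+}\,\psi^*_{\lambda_q-1/2}\rangle$ (up to a sign from reordering the $\psi$'s, which is the same sign appearing on both sides and cancels). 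The same computation gives $\langle\lambda|e^{H_-}|\mu\rangle = \det_{1\le p,q\le \ell}\langle\psi_{\lambda_p-1/2}\,e^{H_-}\,\psi^*_{\mu_q-1/2}\rangle$.

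The heart of the argument is then the one-particle identity: for any half-integers $a>b$ (or more precisely for the relevant index ranges), $\omega\left(\langle\psi_{a-1/2}\,e^{H_+}\,\psi^*_{b-1/2}\rangle\right) = \langle\psi_{b-1/2}\,e^{H_-}\,\psi^*_{a-1/2}\rangle$. To establish this I would compute the generating function $\langle\psi^*(t)\,e^{H_+}\,\psi(w)\rangle$-type matrix element explicitly. The standard computation (vertex operator calculus) shows that $\langle\psi_{a-1/2}e^{H_+}\psi^*_{b-1/2}\rangle$ is, up to the displacement, the coefficient extracting $h_{a-b}$ where $h_k = \sum_{\lambda\vdash k}z_\lambda^{-1}p_\lambda$ is the homogeneous function built from $\boldsymbol{s}_+$; concretely $e^{H_+}$ acts on fermion currents by the exponential $\exp\left(\sum_{k\ge 1}s_k w^k\right)$ generating the $h_k$. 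Since $\omega(h_k) = e_k$ and $e_k$ built from $\boldsymbol{s}_+$ coincides with $h_k$ built from $\boldsymbol{s}_-$ (because $\omega(s_k^{(j)}) = (-1)^{k-1}s_k^{(j)} = s_{-k}^{(j)}$, by the very definition of $\boldsymbol{s}_-$), applying $\omega$ to the $e^{H_+}$-matrix element produces the $e^{H_-}$-matrix element with the roles of $a$ and $b$ swapped — which is exactly the claimed one-particle identity. Then, since $\omega$ is an algebra homomorphism, it commutes with taking determinants: $\omega(\det(M_{pq})) = \det(\omega(M_{pq}))$, and transposing the matrix (which swaps the roles of $\lambda,\mu$ indices) does not change the determinant. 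This yields the lemma.

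The main obstacle I anticipate is bookkeeping rather than conceptual: tracking the exact index shifts (the $-1/2$'s, the role of $\rho$-shifting, and the sign conventions in normal ordering) so that the one-particle matrix element is identified with precisely $h_{a-b}(\boldsymbol{s}_+)$ and not some off-by-one or sign-twisted variant, and confirming that the reordering signs of the $\psi$'s and $\psi^*$'s genuinely match on the two sides. If the excerpt's earlier sections already record the explicit formula for $\langle\psi_{i}e^{H_+}\psi^*_{j}\rangle$ (as is typical — it should be a known generating-function identity), then that formula can simply be cited and the proof collapses to: apply Wick, apply $\omega$ entrywise using $\omega(h_k)=e_k$ together with $e_k(\boldsymbol{s}_+) = h_k(\boldsymbol{s}_-)$, and transpose. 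Alternatively, one could avoid Wick entirely and argue representation-theoretically: $\omega$ is induced by the automorphism of the Heisenberg algebra sending $J_k\mapsto (-1)^{k-1}J_k$ together with $J_k \leftrightarrow J_{-k}$, and this automorphism is implemented by an anti-involution of Fock space that swaps $|\lambda\rangle$ with $\langle\lambda|$-type vectors; chasing $e^{H_+}$ through it gives $e^{H_-}$. I would present the Wick-theorem version as the primary proof since it is the most self-contained given what precedes.
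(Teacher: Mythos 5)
Your proof is correct, but it takes a genuinely different route from the paper's. The paper argues at the level of a single current operator: it shows combinatorially that $\langle\mu|J_k|\lambda\rangle = \langle\lambda|J_{-k}|\mu\rangle$ (both sides detect the same one-particle move between $\lambda$ and $\mu$), notes that $\omega(s_k) = s_{-k}$ by the very definition of $\boldsymbol{s}_-$, and then extends to $e^{H_\pm}$ by (multi)linearity, with no determinants and no Wick's theorem. You instead pass through the full Wick expansion: $\langle\mu|e^{H_+}|\lambda\rangle = \det\langle\psi\, e^{H_+}\psi^*\rangle$, identify each entry with an $h_k(\boldsymbol{s}_+)$ via the one-particle computation (the content of (\ref{h_k-as-Hamiltonian}) plus translation invariance), apply $\omega$ entrywise using $\omega(h_k(\boldsymbol{s}_+)) = e_k(\boldsymbol{s}_+) = h_k(\boldsymbol{s}_-)$, and transpose. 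This is valid — your one-particle identity checks out (both sides equal $h_{b-a}(\boldsymbol{s}_-)$, the reversal of indices compensating the reversal of the Hamiltonian's direction), the reordering signs are the same on both sides, and there is no circularity since the paper's Proposition \ref{Hamiltonian-Jacobi-Trudi} is proved from Wick's theorem independently of the duality lemma. In effect your argument proves the Jacobi--Trudi identity along the way, so it is heavier machinery for this particular statement, but it has the virtue of making the mechanism ($\omega$ exchanging $h$'s built from $\boldsymbol{s}_+$ with $h$'s built from $\boldsymbol{s}_-$ entrywise in a determinant) completely explicit; the paper's adjointness argument is shorter and works directly on the Heisenberg generators. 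One small point to tidy: when $\ell(\lambda)\neq\ell(\mu)$ you cannot always equalize lengths by padding a strict partition with zeros, but in that case both sides vanish anyway (by charge conservation, or because Wick's theorem gives $0$ when $r\neq s$), so the lemma is unaffected.
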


\begin{proof}
First, if $k\ne 0$, \begin{align*}\langle \mu|J_k|\lambda\rangle &= \sum_{i\in \Z-1/2}\langle \mu|:\psi^*_{i-k}\psi_i:|\lambda\rangle \\&= \sum_{i\in \Z-1/2}\begin{cases} 1, & \text{there exists a partition $\nu$ such that $\nu\cup (i) = \lambda, \nu\cup (i-k) = \mu$} \\ 0, & \text{otherwise}.\end{cases} \\&= \sum_{i\in \Z-1/2}\begin{cases} 1, & \text{there exists a partition $\nu$ such that $\nu\cup (i+k) = \lambda, \nu\cup (i) = \mu$} \\ 0, & \text{otherwise}.\end{cases} \\&= \sum_{i\in \Z-1/2}\langle \lambda|:\psi^*_{i+k}\psi_i:|\mu\rangle \\&= \langle \lambda|J_{-k}|\mu\rangle,\end{align*} and so \[\omega\left(\langle \mu|s_kJ_k|\lambda\rangle\right) = \omega(s_k)\omega\left(\langle \mu|J_k|\lambda\rangle\right) = s_{-k}\langle \lambda|J_{-k}|\mu\rangle = \langle \lambda|s_{-k}J_{-k}|\mu\rangle.\] Then, (\ref{Hamiltonian-pos-neg-duality}) follows by linearity.
\end{proof}

Let \[S(t) := \sum_{k\ge 1} s_kt^k, \hspace{20pt} P(t) := S'(t) = \sum_{k\ge 1} p_k t^{k-1},\] and also \[H(t) := \sum_{k\ge 0} h_kt^k, \hspace{20pt} E(t) := \sum_{k\ge 0} e_kt^k.\]

\begin{lemma}
\begin{align} S(t) = \log H(t), \label{S-equals-log-H}\end{align} \begin{align} h_k = \langle (0)|e^{H_+}|(k)\rangle. \label{h_k-as-Hamiltonian}\end{align} and similarly \begin{align} -S(-t) = \log E(t), \label{S-equals-log-E}\end{align} \begin{align} e_k = \langle (k)|e^{H_-}|(0)\rangle. \label{e_k-as-Hamiltonian}\end{align}
\end{lemma}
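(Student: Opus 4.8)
The plan is to prove the four statements by reducing everything to the exponential-generating-function identity $S(t) = \log H(t)$, which is equivalent to $H(t) = \exp S(t) = \exp\left(\sum_{k\ge 1} s_k t^k\right)$; the other three identities follow either by differentiation or by applying $\omega$ together with Lemma \ref{Hamiltonian-duality-lemma}. First I would establish \eqref{S-equals-log-H}: expand $\exp\left(\sum_{k\ge 1} s_k t^k\right)$ as a sum over monomials in the $s_k$, grouping the coefficient of $t^n$. A monomial $s_{\lambda_1}\cdots s_{\lambda_\ell}$ (with $\lambda \vdash n$) arises with multinomial coefficient $\tfrac{1}{\prod_i m_i!}$ from the exponential, and rewriting $s_{\lambda_j} = p_{\lambda_j}/\lambda_j$ introduces a further factor $\prod_i i^{-m_i}$, so the total coefficient of $p_\lambda$ is $\tfrac{1}{\prod_i i^{m_i} m_i!} = z_\lambda^{-1}$. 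This is exactly the definition $h_n = \sum_{\lambda \vdash n} z_\lambda^{-1} p_\lambda$, giving $H(t) = \exp S(t)$, hence \eqref{S-equals-log-H}. (This is the classical Newton/Macdonald identity; the only thing to check is that our normalization $p_k = k s_k$ matches.)

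Next, for \eqref{h_k-as-Hamiltonian}, I would compute $\langle(0)|e^{H_+}|(k)\rangle$ directly using the fermionic action. Writing $|(k)\rangle = \psi^*_{k-1/2}|0\rangle$ and $\langle(0)| = \langle 0|$, I apply Wick's Theorem (the $r=s=1$ case): $\langle(0)|e^{H_+}|(k)\rangle = \langle \psi_{-1/2} \,e^{H_+}\, \psi^*_{k-1/2}\rangle$ after absorbing the appropriate normalization, or more directly, I would observe that $e^{H_+}$ moves the single particle at position $k$ leftward by various amounts, and the coefficient with which it lands at position $0$ (pairing nontrivially with $\langle(0)|$) is precisely the degree-$k$ part of $\exp\left(\sum_{j} s_j J_j\right)$ acting on a single particle, which is the coefficient of $t^k$ in $\exp S(t) = H(t)$, namely $h_k$. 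This matches the description in the text that "$J_k$ moves a particle $k$ spots to the left" and that the $s_k$ record the moves.

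Finally, \eqref{S-equals-log-E} and \eqref{e_k-as-Hamiltonian} follow formally. Applying $\omega$ to \eqref{S-equals-log-H}: since $\omega(s_k) = (-1)^{k-1} s_k$, we get $\omega(S(t)) = \sum_{k\ge 1}(-1)^{k-1}s_k t^k = -S(-t)$, and $\omega(H(t)) = \sum_k \omega(h_k) t^k = \sum_k e_k t^k = E(t)$ using $\omega(h_k) = e_k$; hence $-S(-t) = \log E(t)$. For \eqref{e_k-as-Hamiltonian}, apply Lemma \ref{Hamiltonian-duality-lemma} (Duality) to \eqref{h_k-as-Hamiltonian} with $\lambda = (k)$, $\mu = (0)$: $\omega\left(\langle(0)|e^{H_+}|(k)\rangle\right) = \langle(k)|e^{H_-}|(0)\rangle$, and the left side is $\omega(h_k) = e_k$. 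The main obstacle, such as it is, is bookkeeping: making sure the normalization conventions ($p_k = ks_k$, the half-integer indexing of fermions, and which direction $J_k$ pushes particles) line up so that the single-particle computation in \eqref{h_k-as-Hamiltonian} genuinely produces $h_k$ rather than $h_k$ up to a sign or a rescaling of $t$; once \eqref{S-equals-log-H} and \eqref{h_k-as-Hamiltonian} are pinned down, the rest is a mechanical application of $\omega$ and duality.
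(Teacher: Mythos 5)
Your proposal is correct. The first two identities are handled exactly as in the paper: the composition-counting argument giving $h_k = \sum_{r\ge 0}\frac{1}{r!}\sum_{q_1+\cdots+q_r=k}s_{q_1}\cdots s_{q_r}$ (i.e. $H(t)=\exp S(t)$), and the single-particle computation showing that pairing $e^{H_+}|(k)\rangle$ against $\langle(0)|$ picks out precisely the coefficient of $t^k$ in $\exp S(t)$, since the sea particles cannot move and only the top particle is displaced. (Your invocation of Wick's theorem there is superfluous — the $r=s=1$ case is vacuous — but your direct argument is the one that matters and it matches the paper's.) Where you diverge is in (\ref{S-equals-log-E}) and (\ref{e_k-as-Hamiltonian}): the paper proves these by redoing the expansion with signs, substituting $s_{-k}=(-1)^{k-1}s_k$ into the single-particle computation for $e^{H_-}$, whereas you deduce them formally by applying $\omega$ to (\ref{S-equals-log-H}) (using $\omega(h_k)=e_k$, which is immediate from the definitions and stated in the setup) and by applying Lemma \ref{Hamiltonian-duality-lemma} with $\lambda=(k)$, $\mu=(0)$ to (\ref{h_k-as-Hamiltonian}). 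This is legitimate — the duality lemma is proved before this lemma and its proof does not use it, so there is no circularity, and the parameter convention $s_{-k}^{(j)}=(-1)^{k-1}s_k^{(j)}$ assumed in the duality lemma is exactly the one in force here. Your route buys a shorter argument that avoids tracking signs through a second composition count; the paper's route is self-contained and does not lean on the duality lemma, which keeps this lemma independent of it. Either way the content is the same, and your bookkeeping concerns (normalization $p_k=ks_k$, direction of $J_k$) are resolved exactly as you anticipate.
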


\begin{proof}
We start by proving (\ref{S-equals-log-H}). Note that $z_\lambda$ is the product of the parts of $\lambda$ times the number of permutations on the parts of $\lambda$ that fix $\lambda$. In other words, $\ell(\lambda)! z_\lambda^{-1}$ is the number of compositions of $|\lambda|$ that rearrange to $\lambda$, divided by the product of the parts of $\lambda$. Using this,\[h_k = \sum_{\lambda\vdash k} z_\lambda^{-1} p_\lambda = \sum_{r\ge 0} \frac{1}{r!}\sum_{q_1+\ldots+q_r = k} s_{q_1}\ldots s_{q_r},\] so \[H(t) = \sum_{k\ge 0} h_kt^k = \sum_{r\ge 0} \frac{1}{r!}\left(\sum_{q_1,\ldots,q_r \ge 1} s_{q_1}\ldots s_{q_r}\right) t^{q_1+\ldots+q_r} = \exp(S(t)),\] by the definition of the formal exponential.

(\ref{h_k-as-Hamiltonian}) follows from (\ref{S-equals-log-H}) since \[\langle (k)|e^{H_-}|(0)\rangle = \sum_{k\ge 0}\sum_{r\ge 0} \frac{1}{r!}\left(\sum_{q_1+\ldots+q_r = k} s_{q_1}\ldots s_{q_r}\right).\]

Similarly, \[e_k = \sum_{\lambda\vdash k} z_\lambda^{-1} (-1)^{k-\ell(\lambda)} p_\lambda = \sum_{r\ge 0} \frac{1}{r!} (-1)^{k-r}\sum_{q_1+\ldots+q_r = k} s_{q_1}\ldots s_{q_r},\] so \[E(t) = \sum_{k\ge 0} e_kt^k = \sum_{r\ge 0} \frac{1}{r!} (-1)^r\left(\sum_{q_1,\ldots,q_r \ge 1} s_{q_1}\ldots s_{q_r}\right) (-t)^{q_1+\ldots+q_r} = \exp(-S(-t)).\]

(\ref{e_k-as-Hamiltonian}) follows from (\ref{S-equals-log-E}) since \begin{align*}\langle (0)|e^{H_+}|(k)\rangle &= \sum_{r\ge 0} \frac{1}{r!}\left(\sum_{q_1+\ldots+q_r = k} (-1)^{q_1-1}s_{q_1}\ldots (-1)^{q_r-1}s_{q_r}\right) \\&= \sum_{r\ge 0} \frac{1}{r!} (-1)^{k-r}\left(\sum_{q_1+\ldots+q_r = k} s_{q_1}\ldots s_{q_r}\right).\end{align*}
\end{proof}

Let \[H := (h_{i-j})_{0\le i,j\le n}, \hspace{20pt} E := ((-1)^{i-j} e_{i-j})_{0\le i,j\le n}.\]

\begin{corollary}
\[H(t)E(-t) = 1,\] and \[\sum_{r=0}^n (-1)^r e_r h_{n-r} = 0 \hspace{20pt} \text{for all $n\ge 1$}\]. Furthermore, $E=H^{-1}$. 
\end{corollary}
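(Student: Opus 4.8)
The plan is to deduce all three assertions from the two logarithmic identities (\ref{S-equals-log-H}) and (\ref{S-equals-log-E}) already in hand. First I would substitute $t\mapsto -t$ in (\ref{S-equals-log-E}) to get $\log E(-t) = -S(t)$, and add this to (\ref{S-equals-log-H}). This yields
\[\log\bigl(H(t)E(-t)\bigr) = \log H(t) + \log E(-t) = S(t) - S(t) = 0,\]
and exponentiating gives $H(t)E(-t) = 1$ as an identity of formal power series in $t$.

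Next I would extract the coefficient of $t^n$ from this identity. Since $H(t) = \sum_{k\ge 0} h_k t^k$ and $E(-t) = \sum_{k\ge 0} (-1)^k e_k t^k$, the coefficient of $t^n$ in the product is $\sum_{r=0}^n (-1)^r e_r h_{n-r}$. Comparing with the right-hand side, this equals $1$ when $n = 0$ (indeed $e_0 h_0 = 1$) and equals $0$ for every $n\ge 1$, which is the second displayed identity.

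For the matrix statement I would adopt the convention $h_m = e_m = 0$ for $m<0$, so that $H$ and $E$ are lower-triangular $(n+1)\times(n+1)$ matrices with $1$'s on the diagonal. Computing the $(i,j)$ entry of the product $HE$, only indices with $i\ge k\ge j$ contribute; writing $k = j+s$ gives
\[(HE)_{ij} = \sum_{s=0}^{i-j}(-1)^s e_s\, h_{(i-j)-s},\]
which is exactly the coefficient extracted above with $n = i-j$. Hence $(HE)_{ij} = \delta_{ij}$, so $HE = I$, and since these are square matrices this forces $E = H^{-1}$ (and $EH = I$ as well).

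I do not expect a genuine obstacle here: the only points requiring care are the indexing convention that makes $H$ and $E$ triangular, and keeping the sign bookkeeping consistent when passing between $E(t)$, $E(-t)$, and the matrix entries $(-1)^{i-j}e_{i-j}$. Everything else is an immediate consequence of the generating-function identities established earlier in this subsection.
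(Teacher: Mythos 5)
Your proposal is correct and follows essentially the same route as the paper: obtain $H(t)E(-t)=\exp(S(t))\exp(-S(t))=1$ from the logarithmic identities, extract coefficients of $t^n$ to get $\sum_{r=0}^n(-1)^r e_r h_{n-r}=0$ for $n\ge 1$, and then verify $HE=I$ entrywise using the triangular structure. The only addition worth noting is that you make explicit the convention $h_m=e_m=0$ for $m<0$ and the $n=0$ case $e_0h_0=1$, which the paper leaves implicit.
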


\begin{proof}
By the previous lemma, \[H(t)E(-t) = \exp(S(t))\exp(-S(t)) = 1,\] and the second equation follows from taking coefficients. The final statement is obtained from a matrix multiplication: \[(H\cdot E)_{i,j} = \sum_{k=0}^n (-1)^{k-j} e_{k-j} h_{i-k} = \sum_{k=j}^i (-1)^{k-j} e_{k-j} h_{i-k} = \sum_{r=0}^{i-j} (-1)^r e_r h_{i-j-r} = \delta_{ij}.\]
\end{proof}

Now let $\lambda$ and $\mu$ be partitions with $\ell(\lambda)=\ell(\mu)$, $\ell(\lambda') = \ell(\mu')$, and $\ell(\lambda)+\ell(\lambda') = n$. (Note that $n$ can be made arbitrarily large, and the partitions can be buffered with trailing zeroes, so this is really no restriction.)

\begin{lemma} We have
\begin{align*}\det\left(h_{\lambda_i-\mu_j-i+j}\right) = \det\left(e_{\lambda_i'-\mu_j'-i+j}\right)\end{align*}
\end{lemma}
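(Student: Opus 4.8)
The plan is to recognize this identity as the classical dual Jacobi--Trudi (Nägelsbach--Kostka) formula for skew Schur functions, and to prove it via the operator interpretation already established in the excerpt rather than by a purely symmetric-function argument. The key observation is that $\det(h_{\lambda_i-\mu_j-i+j})$ is exactly a minor of the semi-infinite Toeplitz-type matrix $H = (h_{i-j})$, with rows indexed by the strict partition $\lambda+\rho$ and columns by $\mu+\rho$ (after the shift $\lambda_i - i = (\lambda+\rho)_i - \ell$, $\mu_j - j = (\mu+\rho)_j - \ell$), and similarly $\det(e_{\lambda_i'-\mu_j'-i+j})$ is a minor of $E = ((-1)^{i-j}e_{i-j})$ indexed by $\lambda'+\rho$ and $\mu'+\rho$. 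By Lemma~\ref{partition-lemma}, $\bar\lambda - \rho = (\lambda-\rho)'$, i.e. conjugation of shifted partitions corresponds to the complementation operation $\lambda \mapsto \bar\lambda$ inside $[0,M]$ for suitable $M$. So the content of the identity is precisely that a minor of $H$ on row/column sets $I,J$ equals (up to sign, which works out to $+1$ here because $\ell(\lambda)=\ell(\mu)$ and $\ell(\lambda')=\ell(\mu')$) the complementary minor of $E = H^{-1}$ on the complementary sets $\bar I, \bar J$.

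First I would set up coordinates carefully: write $\ell = \ell(\lambda) = \ell(\mu)$, $n = \ell + \ell(\lambda')$, pad both partitions to length $\ell$ and both conjugates to length $n-\ell$, and identify the row index set $I = \{(\lambda+\rho_\ell)_i + (n-\ell) : 1\le i\le \ell\} \subseteq \{0,\dots,n-1\}$ inside the $n\times n$ matrix $H = (h_{i-j})_{0\le i,j\le n-1}$ — and likewise $J$ for $\mu$. Then I would invoke Lemma~\ref{partition-lemma} with $M = n-1$ to identify the complement $\{0,\dots,n-1\}\setminus I$ with the index set coming from $\lambda'$, and the same for $J$ and $\mu'$. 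Next I would apply the Jacobi complementary-minor theorem (the identity $\det(A_{I,J}) = \pm \det(A^{-1})_{\bar J, \bar I} \cdot \det A$, and here $\det A = \det H = 1$ by the Corollary, with $H^{-1} = E$). The sign is $(-1)^{\sigma(I)+\sigma(J)}$ where $\sigma(I) = \sum_{i\in I} i$; the hypotheses $\ell(\lambda)=\ell(\mu)$ and $\ell(\lambda')=\ell(\mu')$ force $|I|=|J|$ and, combined with $\sum I + \sum \bar I$ being a fixed constant $\binom{n}{2}$, make $\sigma(I) \equiv \sigma(J) \pmod 2$ — actually I would just check that the two sign contributions cancel, and also match up the $(-1)^{i-j}$ decorations in the definition of $E$ against the $e_{\lambda_i'-\mu_j'-i+j}$ in the claim, which should be automatic since those are the entries of $E$ by construction.

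Alternatively, and perhaps more cleanly given the tools in the paper, I would prove it via Wick's Theorem and the Hamiltonian realization: by (\ref{h_k-as-Hamiltonian}) each $h_m = \langle(0)|e^{H_+}|(m)\rangle = \langle \psi_{-1/2} e^{H_+} \psi^*_{m-1/2}\rangle$ up to normalization, so $\det(h_{\lambda_i-\mu_j-i+j})$ is by Wick's Theorem equal to $\langle \mu | e^{H_+} | \lambda \rangle$ for the strict partitions $\lambda+\rho$, $\mu+\rho$ — this should be an identity proven or immediately derivable from results stated just before. The right-hand side $\det(e_{\lambda_i'-\mu_j'-i+j})$ is similarly, via (\ref{e_k-as-Hamiltonian}) and Wick's Theorem, equal to $\langle \mu' | e^{H_-} | \lambda' \rangle$ for the conjugate partitions. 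Then the identity reduces to $\langle \overline{\mu+\rho}\,|e^{H_+}|\,\overline{\lambda+\rho}\rangle$ type statement combined with Lemma~\ref{Hamiltonian-duality-lemma} (the $\omega$-duality swapping $e^{H_+}$ and $e^{H_-}$) and Lemma~\ref{partition-lemma} (conjugation = complementation on shifted partitions), plus the fact that $\omega$ fixes the determinant expression because $\omega(h_k)=e_k$.

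The main obstacle I expect is bookkeeping of signs and of the $\rho$-shift/complementation dictionary: making sure that the particle-hole (bar) operation on strict partitions, the conjugation on ordinary partitions, and the passage from the $(\lambda_i - i)$ indexing to a genuine subset of $\{0,\dots,n-1\}$ all line up, and that no stray sign survives. The actual "hard mathematics" — that $E = H^{-1}$ and $\det H = 1$, and Wick's theorem — is already available in the excerpt, so the proof is really an exercise in translating between the combinatorial, matrix, and operator pictures; I would present the Wick-theorem route as the main line since it most directly uses the paper's machinery, and remark that it is the classical dual Jacobi--Trudi identity.
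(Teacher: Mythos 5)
Your first route---realizing the two determinants as complementary minors of the mutually inverse matrices $H=(h_{i-j})$ and $E=((-1)^{i-j}e_{i-j})$, translating conjugation into complementation via Lemma \ref{partition-lemma}, and invoking Jacobi's theorem on minors of inverse matrices with $\det H=1$, $E=H^{-1}$ from the preceding corollary---is exactly the argument the paper appeals to: its proof is a citation of Macdonald, pp.~22--23, which is precisely this complementary-minor computation. So that route is fine, and the sign bookkeeping you defer does close (the $(-1)^{i-j}$ decorations in $E$ absorb the Jacobi signs).

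However, the route you elect to present as the main line does not close as described. Wick's theorem does give $\det\left(h_{\lambda_i-\mu_j-i+j}\right)=\langle\mu+\rho|e^{H_+}|\lambda+\rho\rangle$ and $\det\left(e_{\lambda_i'-\mu_j'-i+j}\right)=\langle\lambda'+\rho|e^{H_-}|\mu'+\rho\rangle$, but Lemma \ref{Hamiltonian-duality-lemma} only transposes the bra and ket and swaps $H_+$ with $H_-$ for the \emph{same} pair of partitions: applying it together with $\omega(h_k)=e_k$ to the desired identity merely converts it into the identical statement with $\lambda,\mu$ replaced by $\lambda',\mu'$, so nothing is gained. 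The step that actually relates a partition to its conjugate---your ``$\langle\overline{\mu+\rho}|e^{H_+}|\overline{\lambda+\rho}\rangle$ type statement,'' i.e.\ a genuine particle--hole duality expressing the matrix element of $e^{H_\pm}$ in hole coordinates, with the $\omega$-twisted parameters arising from how the current operators transform under $\psi\leftrightarrow\psi^*$---is neither proved in the paper (it is only mentioned, with a citation to Zinn-Justin, as an alternative proof of the later involution proposition) nor supplied in your sketch; Lemma \ref{partition-lemma} is purely combinatorial and does not provide it. As written, the Wick route therefore assumes a statement essentially equivalent to the lemma being proved. Either carry out the particle--hole duality for the Fock-space matrix elements, or take your complementary-minor argument as the proof, which is what the paper does.
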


\begin{proof}
The proof is exactly the same as in Macdonald \cite[pp.~22-23]{Macdonald-symmetric-functions} 
\end{proof}

Now let $\sigma_{\lambda/\mu} := \langle\mu+\rho|e^{H_+}|\lambda+\rho\rangle$. This is the generalized Schur function, which we denote by $\sigma$ so as to avoid confusion with the $s_k$. Note the $\rho$ shift, since Hamiltonians deal with strict partitions. $\sigma_{\lambda/\mu}$ is 0 unless $\lambda$ and $\mu$ have the same length $\ell$. Let $\sigma_\lambda := \sigma_{\lambda/(0,\ldots,0)} = \langle\rho|e^{H_+}|\lambda+\rho\rangle$. Note in particular that $h_k(\mathbf{s_+}) = s_{(k)/(0)}$. We call $\sigma_{\lambda/\mu}$ the \emph{$\tau$ function} corresponding to $\lambda+\rho$ and $\mu+\rho$.

\begin{proposition}[Jacobi-Trudi, Von N\"{a}gelsbach–Kostka identities] \label{Hamiltonian-Jacobi-Trudi}
\begin{align*}\sigma_{\lambda/\mu} = \det_{1\le i,j\le \ell} h_{\lambda_i-\mu_j-i+j} = \det_{1\le i,j\le \ell} e_{\lambda'_i-\mu'_j-i+j}.\end{align*}
\end{proposition}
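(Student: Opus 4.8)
The plan is to prove the Jacobi-Trudi identity $\sigma_{\lambda/\mu} = \det_{1\le i,j\le\ell} h_{\lambda_i - \mu_j - i + j}$ by a direct Wick's Theorem computation, and then deduce the Von Nägelsbach–Kostka (dual) form from the already-established determinantal identity $\det(h_{\lambda_i - \mu_j - i + j}) = \det(e_{\lambda_i' - \mu_j' - i + j})$ (the Lemma preceding this Proposition). So the only real content to establish is the first equality.

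First I would unpack the definition $\sigma_{\lambda/\mu} = \langle \mu + \rho \, | \, e^{H_+} \, | \, \lambda + \rho\rangle$. Writing $\lambda + \rho = (a_1 > a_2 > \cdots > a_\ell \ge 0)$ with $a_i = \lambda_i + \ell - i$ and similarly $\mu + \rho = (b_1 > \cdots > b_\ell)$ with $b_j = \mu_j + \ell - j$, we have by the definition of the basis vectors $|\lambda+\rho\rangle = \psi^*_{a_1 - 1/2}\cdots\psi^*_{a_\ell - 1/2}|0\rangle$ and $\langle\mu+\rho| = \langle 0|\psi_{b_\ell - 1/2}\cdots\psi_{b_1 - 1/2}$. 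Thus
\[
\sigma_{\lambda/\mu} = \langle 0|\,\psi_{b_\ell - 1/2}\cdots\psi_{b_1 - 1/2}\, e^{H_+}\, \psi^*_{a_1 - 1/2}\cdots\psi^*_{a_\ell - 1/2}\,|0\rangle .
\]
Up to reordering the $\psi$'s into increasing-index order (which only introduces a sign equal to $(-1)^{\binom{\ell}{2}}$, the same sign appearing on the determinant side when one reverses rows), this is exactly the shape to which Wick's Theorem (the Proposition stated earlier) applies: $\langle\psi_{i_1}\cdots\psi_{i_\ell} e^{H_+}\psi^*_{j_1}\cdots\psi^*_{j_\ell}\rangle = \det_{1\le p,q\le\ell}\langle\psi_{i_p} e^{H_+}\psi^*_{j_q}\rangle$. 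Applying it gives $\sigma_{\lambda/\mu} = \pm\det_{1\le i,j\le\ell}\langle\psi_{b_i - 1/2}\, e^{H_+}\,\psi^*_{a_j - 1/2}\rangle$, and I would track the signs carefully so that after reindexing rows/columns they cancel.

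The remaining step is to identify the two-point function: I claim $\langle\psi_{b - 1/2}\, e^{H_+}\,\psi^*_{a - 1/2}\rangle = h_{a - b}$ whenever $a \ge b$ are nonnegative (and $= 0$ when $a < b$, consistent with $h_{<0}=0$). This reduces to the single-particle case of the identity $h_k = \langle(0)|e^{H_+}|(k)\rangle$ already proved in the excerpt as (\ref{h_k-as-Hamiltonian}): indeed $\langle\psi_{b-1/2} e^{H_+} \psi^*_{a-1/2}\rangle$ measures the amplitude for moving a single particle from position $a$ to position $b$ under $e^{H_+}$, and by translation-invariance of the current operators $J_k$ (they displace particles by $k$ regardless of absolute position, as noted after their definition) this depends only on $a-b$ and equals $h_{a-b}$. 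I would make this precise either by a direct computation using $J_k = \sum_i :\psi^*_{i-k}\psi_i:$ and the commutation relations $[\psi_m, e^{H_+}]$, $[e^{H_+},\psi^*_m]$ (which shift the index and pick up the $s_k$-coefficients assembling into $H(t) = \exp S(t)$), or by invoking the boson-fermion correspondence. Substituting $a - b = a_j - b_i = (\lambda_j + \ell - j) - (\mu_i + \ell - i) = \lambda_j - \mu_i - j + i$ and transposing the matrix yields precisely $\det_{1\le i,j\le\ell} h_{\lambda_i - \mu_j - i + j}$.

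Finally, the second equality $\det_{1\le i,j\le\ell} h_{\lambda_i - \mu_j - i + j} = \det_{1\le i,j\le\ell} e_{\lambda_i' - \mu_j' - i + j}$ is exactly the content of the Lemma stated immediately before this Proposition (with the indexing reconciled by padding to length $n = \ell(\lambda) + \ell(\lambda')$ and using that the extra rows/columns contribute $h_0 = e_0 = 1$ on the diagonal and zeros elsewhere), so nothing new is needed there. The main obstacle I anticipate is purely bookkeeping: getting the sign conventions from reversing the orders of the $\psi$ and $\psi^*$ strings to cancel correctly against the row/column reversals on the determinant side, together with carefully verifying the vanishing two-point functions $\langle\psi_{b-1/2} e^{H_+}\psi^*_{a-1/2}\rangle = 0$ for $a < b$ so that the formula remains valid (this is where one uses that $e^{H_+}$ only moves particles \emph{left}).
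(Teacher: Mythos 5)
Your proposal matches the paper's proof essentially step for step: apply Wick's Theorem to $\langle\mu+\rho|e^{H_+}|\lambda+\rho\rangle$ written as a fermionic vacuum expectation, identify each two-point function $\langle\psi_{\mu_j+\ell-j-1/2}\,e^{H_+}\,\psi^*_{\lambda_i+\ell-i-1/2}\rangle$ with $h_{\lambda_i-\mu_j-i+j}$ via the identity $h_k=\langle(0)|e^{H_+}|(k)\rangle$ (the paper uses the same translation-invariance of the $J_k$ that you spell out), and cite the preceding lemma for the $e$-determinant. The sign bookkeeping you worry about is harmless, since permuting rows and columns simultaneously leaves the determinant unchanged.
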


\begin{proof}
The second equality is the previous proposition. For the first equality,\[\sigma_{\lambda/\mu} = \langle\mu+\rho|e^{H_+}|\lambda+\rho\rangle = \langle 0|\psi_{\mu_\ell-1/2}\cdots \psi_{\mu_1+\ell-3/2} \; e^{H_+} \; \psi^*_{\lambda_1+\ell-3/2}\cdots \psi^*_{\lambda_\ell-1/2}|0\rangle,\] and by Wick's Theorem this equals \[\det_{1\le i,j\le \ell} \langle 0|\psi_{\mu_j+\ell-j-1/2} \; e^{H_+} \; \psi^*_{\lambda_i+\ell-i-1/2}|0\rangle = \det_{1\le i,j\le \ell} h_{\lambda_i-\mu_j-i+j},\] by (\ref{h_k-as-Hamiltonian}).
\end{proof}

Paired with results about transformations of lattice models, this can be seen as an analogue of the Lindstr\"{o}m-Gessel-Viennot lemma. See Section \ref{free-fermionic-partition-function-section}.

Now we come to our main result of this section, showing that the $\sigma_{\lambda/\mu}$ obey an involutive identity. The proof is now easy.

\begin{proposition}
\[\omega(\sigma_{\lambda/\mu}) = \sigma_{\lambda'/\mu'},\] and thus \[\sigma_{\lambda'/\mu'} = \langle\lambda+\rho|e^{H_-}|\mu+\rho\rangle.\]
\end{proposition}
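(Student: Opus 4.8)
The plan is to reduce everything to the duality identities already established, namely Lemma~\ref{Hamiltonian-Jacobi-Trudi}'s expression of $\sigma_{\lambda/\mu}$ as a determinant in the $h$'s and $e$'s, together with the involution identity $\omega(h_k) = e_k$ and the negative-index duality Lemma~\ref{Hamiltonian-duality-lemma}. Concretely, starting from $\sigma_{\lambda/\mu} = \det_{1\le i,j\le\ell} h_{\lambda_i-\mu_j-i+j}$, I would apply $\omega$ entrywise: since $\omega$ is an algebra involution and each matrix entry is the single element $h_{\lambda_i-\mu_j-i+j}$, we get $\omega(\sigma_{\lambda/\mu}) = \det_{1\le i,j\le\ell} \omega(h_{\lambda_i-\mu_j-i+j}) = \det_{1\le i,j\le\ell} e_{\lambda_i-\mu_j-i+j}$. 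But the Jacobi-Trudi proposition applied to the pair $(\lambda',\mu')$ (whose conjugates are $\lambda,\mu$, and whose lengths match because $\ell(\lambda')=\ell(\mu')$) says precisely that $\sigma_{\lambda'/\mu'} = \det_{1\le i,j\le\ell'} e_{(\lambda')'_i-(\mu')'_j-i+j} = \det e_{\lambda_i-\mu_j-i+j}$ — wait, here one must be careful about which determinant formula to invoke, so the cleaner route is: $\sigma_{\lambda'/\mu'} = \det_{i,j} h_{\lambda'_i-\mu'_j-i+j}$ by the first Jacobi-Trudi equality, and then the second equality of that same proposition rewrites this as $\det_{i,j} e_{(\lambda')'_i - (\mu')'_j - i + j} = \det_{i,j} e_{\lambda_i-\mu_j-i+j}$, which matches $\omega(\sigma_{\lambda/\mu})$. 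Hence $\omega(\sigma_{\lambda/\mu}) = \sigma_{\lambda'/\mu'}$.

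For the second assertion, I would combine this with Lemma~\ref{Hamiltonian-duality-lemma}. By definition $\sigma_{\lambda/\mu} = \langle\mu+\rho|e^{H_+}|\lambda+\rho\rangle$, and Lemma~\ref{Hamiltonian-duality-lemma} gives $\omega(\langle\mu+\rho|e^{H_+}|\lambda+\rho\rangle) = \langle\lambda+\rho|e^{H_-}|\mu+\rho\rangle$. Therefore $\sigma_{\lambda'/\mu'} = \omega(\sigma_{\lambda/\mu}) = \langle\lambda+\rho|e^{H_-}|\mu+\rho\rangle$, which is exactly the claimed formula. The only subtlety worth a sentence is the bookkeeping on partition lengths: Lemma~\ref{Hamiltonian-Jacobi-Trudi} and the preceding lemma are stated under the hypotheses $\ell(\lambda)=\ell(\mu)$, $\ell(\lambda')=\ell(\mu')$, $\ell(\lambda)+\ell(\lambda')=n$, and since both $\sigma_{\lambda/\mu}$ and $\sigma_{\lambda'/\mu'}$ vanish unless the relevant lengths agree, the identity holds trivially in the degenerate cases and the determinantal argument covers the rest; padding with trailing zeroes makes the common size $n$ available for both.

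I do not expect a genuine obstacle here — as the paper notes, "the proof is now easy." The one place to take care is not to conflate the two conjugation operations: applying $\omega$ to a Jacobi-Trudi determinant swaps $h \leftrightarrow e$ while leaving the index pattern $\lambda_i-\mu_j-i+j$ untouched, whereas the Nägelsbach–Kostka half of the Jacobi-Trudi proposition swaps $h \leftrightarrow e$ \emph{and} replaces the indices by those of the conjugate partitions. It is precisely the compatibility of these two operations — that doing $\omega$ to the $\lambda/\mu$ determinant lands on the $e$-determinant which, read backwards through Nägelsbach–Kostka, is the $\lambda'/\mu'$ function — that makes the argument work, so I would write that chain of equalities out explicitly rather than leaving it implicit.
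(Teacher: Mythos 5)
Your proof is correct and is essentially the paper's argument: both apply $\omega$ entrywise to a Jacobi--Trudi-type determinant (you start from the $h$-determinant and land on the $e$-determinant for $\lambda'/\mu'$, the paper does the mirror-image computation starting from the $e$-determinant), and both deduce the second identity from the first together with Lemma~\ref{Hamiltonian-duality-lemma}. No genuinely different idea is involved, so there is nothing further to compare.
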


\begin{proof}
Apply the involution $\omega$ to the previous identities (\ref{Hamiltonian-Jacobi-Trudi}): \[\omega(\sigma_{\lambda/\mu}) = \omega\left(\det_{1\le i,j\le \ell} e_{\lambda'_i-\mu'_j-i+j}\right) = \det_{1\le i,j\le \ell} h_{\lambda'_i-\mu'_j-i+j} = \sigma_{\lambda'/\mu'}.\] The second equation follows from the first equation and (\ref{Hamiltonian-pos-neg-duality}).
\end{proof}

This can alternatively be shown by a particle-hole duality (see \cite{ZinnJustin-six-vertex}).

There are two specializations of the parameters $s_i^{(j)}$ that we care about in particular. If we set \[s_{\pm k}^{(j)} = (\pm 1)^{k-1}\frac{1}{k} x_j^k, \hspace{20pt} k>0,\] we obtain the classical symmetric functions, and $\sigma_{\lambda/\mu}$ is a skew Schur function. If instead we set \[s_{\pm k}^{(j)} = (\pm 1)^{k-1}\frac{1}{k} (x_j^k + (-1)^{k-1} y_j^k), \hspace{20pt} k>0,\] we obtain supersymmetric functions, and $\sigma_{\lambda/\mu}$ is a skew supersymmetric Schur function.

\subsection{Cauchy, Pieri, and branching rules for Hamiltonians} \label{Hamiltonian-identities-section}

For this section, we will use arbitrary sets of parameters. Let $\boldsymbol{s}_+ := \{s_k^{(j)}, 1\le j\le N, k\ge 1\}$ and $\boldsymbol{t}_- := \{t_{-k}^{(j)}, 1\le j\le N, k\ge 1\}$ be two half-infinite sets of parameters, and let $H_+ = H_+(\boldsymbol{s}_+)$, $H_- = H_-(\boldsymbol{t}_-)$. Let $\sigma'_{\lambda/\mu} := \langle\lambda+\rho|e^{H_-}|\mu+\rho\rangle$.

\begin{proposition}[Cauchy identity] \label{Hamiltonian-Cauchy-identity} For any strict partitions $\lambda$ and $\mu$, \begin{align*}\sum_\nu \sigma_{\lambda/\nu}\sigma'_{\mu/\nu} = \prod_{i,j}\exp\left(\sum_{k\ge 1} k\cdot s_k^{(i)}t_{-k}^{(j)}\right) \cdot \sum_\nu \sigma_{\nu/\mu}\sigma'_{\nu/\lambda},\end{align*} where the sums are over all strict partitions $\nu$.
\end{proposition}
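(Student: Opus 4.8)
The plan is to prove the Cauchy identity by commuting $e^{H_+}$ past $e^{H_-}$ inside a suitable matrix element. Concretely, I would insert a complete set of states and consider the quantity $\sum_\nu \langle\lambda+\rho|e^{H_+}|\nu+\rho\rangle\langle\nu+\rho|e^{H_-}|\mu+\rho\rangle$; recognizing the left factor as $\sigma_{\lambda/\nu}$ (after using the symmetry of the bilinear form, noting $\langle\lambda+\rho|e^{H_+}|\nu+\rho\rangle = \langle\nu+\rho|e^{H_+}|\lambda+\rho\rangle^{\,\omega}$-type relations via Lemma~\ref{Hamiltonian-duality-lemma}, or more directly just reading it as the appropriate $\tau$-function) and the right factor as $\sigma'_{\mu/\nu}$, the sum over the intermediate strict partitions $\nu$ collapses to a single matrix element $\langle\lambda+\rho|e^{H_+}e^{H_-}|\mu+\rho\rangle$, since $\{|\nu+\rho\rangle\}$ ranges over a basis of the relevant charge sector of Fock space as $\nu$ ranges over strict partitions of a fixed length.

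The key computational step is then the commutation relation for the exponentials of current operators. Since $H_+ = \sum_{k\ge 1}\sum_i s_k^{(i)}J_k$ and $H_- = \sum_{k\ge 1}\sum_j t_{-k}^{(j)}J_{-k}$, and the Heisenberg relation gives $[J_k, J_{-k}] = k$ (central) with all other brackets among the $J_{\pm k}$ vanishing, the commutator $[H_+, H_-]$ is a scalar, namely $\sum_{i,j}\sum_{k\ge 1} k\, s_k^{(i)} t_{-k}^{(j)}$. By the Baker–Campbell–Hausdorff formula in the degenerate case where $[H_+,H_-]$ is central, $e^{H_+}e^{H_-} = e^{[H_+,H_-]} e^{H_-}e^{H_+}$, so
\begin{align*}
\langle\lambda+\rho|e^{H_+}e^{H_-}|\mu+\rho\rangle = \prod_{i,j}\exp\left(\sum_{k\ge 1} k\, s_k^{(i)} t_{-k}^{(j)}\right)\cdot \langle\lambda+\rho|e^{H_-}e^{H_+}|\mu+\rho\rangle.
\end{align*}
Finally, inserting a complete set of states again in $\langle\lambda+\rho|e^{H_-}e^{H_+}|\mu+\rho\rangle = \sum_\nu \langle\lambda+\rho|e^{H_-}|\nu+\rho\rangle\langle\nu+\rho|e^{H_+}|\mu+\rho\rangle = \sum_\nu \sigma'_{\nu/\lambda}\,\sigma_{\nu/\mu}$ gives the right-hand side, and comparing the two expansions of $\langle\lambda+\rho|e^{H_+}e^{H_-}|\mu+\rho\rangle$ yields the identity.

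The main obstacle is bookkeeping rather than conceptual: I need to make sure the infinite sums defining $H_\pm$ and their exponentials act in a well-defined way on each $|\lambda+\rho\rangle$ (they do, because on any fixed vector only finitely many terms of each $J_k$ act nontrivially and $e^{H_+}$ lowers no particle below a bounded position, so the intermediate sums over $\nu$ are finite for each power of $H$), and that the "complete set of states" insertion $\sum_\nu |\nu+\rho\rangle\langle\nu+\rho| = \mathrm{id}$ is legitimate on the charge-$\ell$ sector — this is exactly the duality of the bases $\{|\nu+\rho\rangle\}$, $\{\langle\nu+\rho|\}$ established earlier. One should also check the convergence of the scalar prefactor as a formal power series in the parameters, which is immediate since for fixed total degree only finitely many $k$ contribute. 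No genuinely hard estimate is required; the identity is essentially the statement that $[H_+,H_-]$ is central together with resolution of the identity.
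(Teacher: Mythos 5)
Your overall strategy is the same as the paper's: evaluate a single matrix element of a product of $e^{H_+}$ and $e^{H_-}$ in two ways, once by inserting the complete set of states $\{|\nu+\rho\rangle\}$ and once after commuting the two exponentials using the fact that $[H_+,H_-]$ is central. The gap is in the identification of the factors with the $\tau$-functions. By definition $\sigma_{\alpha/\beta}=\langle\beta+\rho|e^{H_+}|\alpha+\rho\rangle$ and $\sigma'_{\alpha/\beta}=\langle\alpha+\rho|e^{H_-}|\beta+\rho\rangle$, so in your first expansion $\langle\lambda+\rho|e^{H_+}|\nu+\rho\rangle$ is $\sigma_{\nu/\lambda}$, not $\sigma_{\lambda/\nu}$, and $\langle\nu+\rho|e^{H_-}|\mu+\rho\rangle$ is $\sigma'_{\nu/\mu}$, not $\sigma'_{\mu/\nu}$. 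These are genuinely different functions: $e^{H_+}$ only moves particles to the left, so $\langle\beta+\rho|e^{H_+}|\alpha+\rho\rangle$ vanishes unless $\beta\subseteq\alpha$; in particular $e^{H_+}$ is not self-adjoint for the pairing, and the ``symmetry of the bilinear form'' you invoke only says $\langle\mu|\lambda\rangle=\langle\lambda|\mu\rangle$. Lemma \ref{Hamiltonian-duality-lemma} does not rescue this either: it relates $\langle\mu|e^{H_+}|\lambda\rangle$ to $\langle\lambda|e^{H_-}|\mu\rangle$ with the involution $\omega$ applied to the parameters, i.e.\ it exchanges $H_+$ with $H_-$ and changes the parameter set, which is not the transposition you need. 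Consequently the matrix element you chose, $\langle\lambda+\rho|e^{H_+}e^{H_-}|\mu+\rho\rangle$, actually expands to the ``outer'' sum $\sum_\nu\sigma_{\nu/\lambda}\sigma'_{\nu/\mu}$, not to the left-hand side of the proposition, and the same transposition error occurs again in your second expansion; the displayed conclusion is reached only because the two misidentifications compensate on paper.

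The repair is to start from the other matrix element, as the paper does: $\sum_\nu\sigma_{\lambda/\nu}\sigma'_{\mu/\nu}=\sum_\nu\langle\mu+\rho|e^{H_-}|\nu+\rho\rangle\langle\nu+\rho|e^{H_+}|\lambda+\rho\rangle=\langle\mu+\rho|e^{H_-}e^{H_+}|\lambda+\rho\rangle$, where each factor is literally one of the defined $\tau$-functions and no transposition is needed; then commute and re-insert the complete set of states to obtain $\sum_\nu\sigma_{\nu/\mu}\sigma'_{\nu/\lambda}$. Your BCH step itself is fine, but once the matrix element is fixed you must track the direction of the central commutator: with $[J_k,J_{-k}]=k$ one has $[H_+,H_-]=\sum_{k\ge1}k\,s_kt_{-k}$ and hence $e^{H_-}e^{H_+}=\exp\bigl(-\sum_{k\ge1}k\,s_kt_{-k}\bigr)e^{H_+}e^{H_-}$, so the sign (equivalently, which side of the identity carries the exponential factor) should be pinned down by a sanity check against the classical skew Cauchy identity, e.g.\ specializing $s_k^{(i)}=\frac1kx_i^k$, $t_{-k}^{(j)}=\frac1ky_j^k$ and $\lambda=\mu=\emptyset$, where the outer sum $\sum_\nu s_\nu(\boldsymbol{x})s_\nu(\boldsymbol{y})$ must acquire the factor $\prod_{i,j}(1-x_iy_j)^{-1}$. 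Your remarks on completeness of $\{|\nu+\rho\rangle\}$ on the relevant charge sector and on formal convergence are fine and match the paper's implicit use of them.
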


\begin{proof}
We evaluate the Hamiltonian $\langle\mu+\rho|e^{H_-}e^{H_+}|\lambda+\rho\rangle$ in two ways. First, \[\langle\mu+\rho|e^{H_-}e^{H_+}|\lambda+\rho\rangle = \sum_\nu \langle\mu+\rho|e^{H_-}|\nu+\rho\rangle \langle \nu+\rho|e^{H_+}|\lambda+\rho\rangle = \sum_\nu \sigma_{\lambda/\nu}\sigma'_{\mu/\nu}.\] Next, we apply the commutation relations between $H_+$ and $H'_-$. \begin{align*}\langle\mu+\rho|e^{H'_-}e^{H_+}|\lambda+\rho\rangle &= \exp\left(\sum_{k\ge 1} k\cdot s_k t_{-k}\right) \cdot \langle\mu+\rho|e^{H_+}e^{H_-}|\lambda+\rho\rangle \\&= \prod_{i,j}\exp\left(\sum_{k\ge 1} k\cdot s_k^{(i)}t_{-k}^{(j)}\right)\cdot \sum_\nu \langle\mu+\rho|e^{H_+}|\nu+\rho\rangle \langle \nu+\rho|e^{H_-}|\lambda+\rho\rangle \\&= \prod_{i,j}\exp\left(\sum_{k\ge 1} k\cdot s_k^{(i)}s_{-k}^{(j)}\right)\cdot\sum_\nu \sigma_{\nu/\mu}\sigma'_{\nu/\lambda}.\end{align*}
\end{proof}

Given a variable set $\boldsymbol{s} = \{s_1^{(1)},\ldots, s_1^{(N)}, s_2^{(1)},\ldots s_2^{(N)},\ldots\}$, and some subset $I$ of $[N]:=\{1,\ldots,N\}$ let $\boldsymbol{s}|_I$ denote the subset $\bigcup_{i\in I}\{s_1^{(i)},s_2^{(i)},\ldots\}$. For example, $\boldsymbol{s}|_{[2,N]} = \boldsymbol{s} \setminus \{s_k^{(1)}\}_{k\ge 1}$.

\begin{proposition}[Branching rule] \label{Hamiltonian-branching-rule}
For all partitions $\lambda,\mu$, \[\sigma_{\lambda/\mu}(\boldsymbol{s}) = \sum_{\nu} \sigma_{\lambda/\nu}(\boldsymbol{s}|_{\{1\}}) \sigma_{\nu/\mu}(\boldsymbol{s}|_{[2,n]}) .\]
\end{proposition}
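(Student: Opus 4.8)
The plan is to exploit the additivity of the Hamiltonian in its parameters, together with the commutativity of the pieces $\phi_j$, to factor $e^{H_+}$ as a product of an operator using only the variables $\boldsymbol{s}|_{\{1\}}$ and an operator using only the variables $\boldsymbol{s}|_{[2,n]}$, and then insert a resolution of the identity. Concretely, write $H_+(\boldsymbol{s}) = \phi_1 + \psi$, where $\phi_1 = \sum_{k\ge 1} s_k^{(1)} J_k$ involves only the variables in $\boldsymbol{s}|_{\{1\}}$ and $\psi = \sum_{k\ge 1}\sum_{j=2}^n s_k^{(j)} J_k$ involves only the variables in $\boldsymbol{s}|_{[2,n]}$. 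Since both $\phi_1$ and $\psi$ are sums of the commuting current operators $J_k$ ($k\ge 1$), they commute, so $e^{H_+(\boldsymbol{s})} = e^{\phi_1} e^{\psi} = e^{H_+(\boldsymbol{s}|_{\{1\}})} \, e^{H_+(\boldsymbol{s}|_{[2,n]})}$, where in each factor the suppressed parameters are simply set to zero.

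Next I would compute $\sigma_{\lambda/\mu}(\boldsymbol{s}) = \langle \mu+\rho | e^{H_+(\boldsymbol{s})} | \lambda + \rho\rangle$ by inserting the identity operator $\sum_\nu |\nu+\rho\rangle\langle\nu+\rho|$ (summed over strict partitions, equivalently over partitions $\nu$ after the $\rho$-shift) between the two exponential factors:
\begin{align*}
\sigma_{\lambda/\mu}(\boldsymbol{s}) &= \langle \mu+\rho | e^{H_+(\boldsymbol{s}|_{[2,n]})}\, e^{H_+(\boldsymbol{s}|_{\{1\}})} | \lambda+\rho\rangle \\
&= \sum_\nu \langle \mu+\rho | e^{H_+(\boldsymbol{s}|_{[2,n]})} | \nu+\rho\rangle \, \langle \nu+\rho | e^{H_+(\boldsymbol{s}|_{\{1\}})} | \lambda+\rho\rangle \\
&= \sum_\nu \sigma_{\nu/\mu}(\boldsymbol{s}|_{[2,n]}) \, \sigma_{\lambda/\nu}(\boldsymbol{s}|_{\{1\}}),
\end{align*}
which is exactly the claimed identity after reordering the two scalar factors. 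Here I am using that $\{|\nu+\rho\rangle\}$ and $\{\langle\nu+\rho|\}$ are dual bases and that $\langle\mu|h|\lambda\rangle$ is well-defined regardless of which side $h$ acts on, both established in the Fock space subsection.

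The only genuine point to check — and the main (mild) obstacle — is that the insertion of the identity is legitimate, i.e. that the sum over $\nu$ is effectively finite for fixed $\lambda,\mu$. This follows because $e^{H_+(\boldsymbol{s}|_{\{1\}})}$ acting on $|\lambda+\rho\rangle$ only moves particles leftward by the action of the $J_k$, so $\langle\nu+\rho|e^{H_+(\boldsymbol{s}|_{\{1\}})}|\lambda+\rho\rangle$ is nonzero only for $\nu$ with $|\nu| \le |\lambda|$ and $\ell(\nu)=\ell(\lambda)$, of which there are finitely many; the same degree bound makes the whole expansion a well-defined element of the polynomial ring. One should also note that $\sigma_{\lambda/\nu}$ vanishes unless $\ell(\lambda)=\ell(\nu)$, so the sum is genuinely over partitions $\nu$ of the common length with $\lambda_i \ge \nu_i$, matching the skew-partition convention. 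With these remarks the proof is complete; no hard computation is required, and the argument is really just the coalgebra-style factorization of the Hamiltonian.
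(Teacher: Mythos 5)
Your proof is correct and follows essentially the same route as the paper: factor $e^{H_+}$ using the commuting pieces $\phi_j$ (you group $\phi_2+\cdots+\phi_n$ into one exponential, the paper writes $e^{\phi_n}\cdots e^{\phi_1}$, which is the same thing) and insert the resolution of the identity $\sum_\nu|\nu+\rho\rangle\langle\nu+\rho|$. Your added remark on the finiteness of the sum is a harmless refinement of the paper's one-line argument, not a different method.
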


\begin{proof}
We have \begin{align*}\sigma_{\lambda/\mu} &= \langle\mu+\rho|e^{H_+}|\lambda+\rho\rangle \\&= \sum_{\nu} \langle\mu+\rho|e^{\phi_n}\cdots e^{\phi_2}|\nu+\rho\rangle \langle\nu+\rho|e^{\phi_1}|\lambda+\rho\rangle\\&= \sum_{\nu} \sigma_{\lambda/\nu}(\boldsymbol{s}|_{\{1\}}) \sigma_{\nu/\mu}(\boldsymbol{s}|_{[2,n]}).\end{align*}
\end{proof}

Let $J_{\mu} := J_{\mu_1}\ldots J_{\mu_{\ell(\mu)}}$, and $J_{-\mu} := J_{-\mu_1}\ldots J_{-\mu_{\ell(\mu)}}$. Let \[D_k = \sum_{\mu\vdash k} z_\mu^{-1} J_\mu, \hspace{20pt} U_k = \sum_{\mu\vdash k} z_\mu^{-1} J_{-\mu}.\]

\begin{proposition}[Pieri rule] \label{Hamiltonian-Pieri-rule}
\[h_k \cdot \sigma_\lambda = \sum_\nu \left\langle\nu+\rho\left|U_k\right|\lambda+\rho\right\rangle \sigma_\nu.\]
\end{proposition}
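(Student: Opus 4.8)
The plan is to prove the Pieri rule $h_k\cdot\sigma_\lambda = \sum_\nu \langle\nu+\rho|U_k|\lambda+\rho\rangle\,\sigma_\nu$ by recognizing that $h_k$, when acting on the generalized Schur functions, is realized on Fock space as the operator $U_k = \sum_{\mu\vdash k} z_\mu^{-1} J_{-\mu}$, and then expanding in the basis $\{\sigma_\nu\}$. Concretely, I would start from $\sigma_\lambda = \langle\rho|e^{H_+}|\lambda+\rho\rangle$ and compute $h_k\cdot\sigma_\lambda$ using the definition $h_k = \sum_{\mu\vdash k} z_\mu^{-1} p_\mu$ together with $p_j = j s_j$. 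Since the bosonic action has $J_{-j}$ acting as multiplication by $s_j$ (up to the sign convention tying $\boldsymbol{s}_-$ to $\boldsymbol{s}_+$ via $s_{-j} = (-1)^{j-1}s_j$), multiplying by $h_k$ is the same as applying the operator $U_k$ in its bosonic incarnation. The boson-fermion correspondence (Proposition~\ref{boson-fermion-correspondence}) then transfers this to the fermionic side: acting by $U_k$ (built from negative current operators, hence an element of $\mathcal{H}$) commutes through $e^{H_+}$ in the sense that $h_k\cdot\langle\rho|e^{H_+}|\lambda+\rho\rangle = \langle\rho|e^{H_+}\,U_k|\lambda+\rho\rangle$.

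The next step is to insert a resolution of the identity on the strict-partition basis: $U_k|\lambda+\rho\rangle = \sum_{\nu}|\nu+\rho\rangle\langle\nu+\rho|U_k|\lambda+\rho\rangle$, where $\nu$ ranges over partitions (equivalently $\nu+\rho$ over strict partitions of the appropriate length). This is legitimate because $U_k$ raises the total "energy"/size by exactly $k$, so only finitely many $\nu$ contribute and all resulting $\nu+\rho$ are again strict partitions. Substituting gives
\begin{align*}
h_k\cdot\sigma_\lambda = \langle\rho|e^{H_+}\,U_k|\lambda+\rho\rangle = \sum_\nu \langle\nu+\rho|U_k|\lambda+\rho\rangle\,\langle\rho|e^{H_+}|\nu+\rho\rangle = \sum_\nu \langle\nu+\rho|U_k|\lambda+\rho\rangle\,\sigma_\nu,
\end{align*}
which is exactly the claimed identity.

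The main obstacle — and the step that needs genuine care rather than bookkeeping — is justifying that multiplication by $h_k$ on the ring of generalized symmetric functions corresponds precisely to the Fock-space operator $U_k$ acting on the right of $e^{H_+}$, with all sign conventions consistent. This requires checking that under the bosonic action $J_{-j}\cdot P = s_{-j}P = (-1)^{j-1}s_j P$, and that $h_k = \sum_{\mu\vdash k} z_\mu^{-1} p_\mu$ with $p_\mu = \prod_i p_{\mu_i}$ and $p_j = j s_j$ matches $U_k = \sum_{\mu\vdash k} z_\mu^{-1} J_{-\mu}$ after accounting for the $(-1)$ signs built into the $\boldsymbol{s}_-$ versus $\boldsymbol{s}_+$ relationship; one must confirm the signs cancel (as they did in the proof of (\ref{e_k-as-Hamiltonian})) so that the operator really is $U_k$ and not $\omega(U_k)$ or a signed variant. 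Once that identification is pinned down, the rest is a formal application of the boson-fermion correspondence and insertion of the identity, both of which are available from the results above. Alternatively, one could bypass the bosonic picture entirely and argue directly: expand $h_k$ via (\ref{h_k-as-Hamiltonian}) and Wick's theorem, or use the Jacobi-Trudi formula of Proposition~\ref{Hamiltonian-Jacobi-Trudi} together with a Laplace/cofactor expansion, but the boson-fermion route is cleanest and most conceptual.
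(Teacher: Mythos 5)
Your proposal is correct and follows essentially the same route as the paper: the paper's proof likewise observes that the bosonic action of $U_k$ is multiplication by $h_k$ (via $U_k\cdot 1=\sum_{\mu\vdash k}z_\mu^{-1}p_\mu=h_k$), applies the boson-fermion correspondence to write $h_k\cdot\sigma_\lambda=\langle\rho|e^{H_+}U_k|\lambda+\rho\rangle$, and then inserts the resolution of the identity over strict partitions. The sign bookkeeping you flag is harmless here, since the current operators in $U_k$ are the negative ones and act by multiplication by the power sums $p_j=js_j$ exactly as in the paper's computation.
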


\begin{proof} First note that \[U_k\cdot 1 = \sum_{\mu\vdash k} z_\mu^{-1} J_{-\mu} \cdot 1 = \sum_{\mu\vdash k} z_\mu^{-1} p_\mu = h_k.\] Apply the boson-fermion correspondence (Proposition \ref{boson-fermion-correspondence}) to obtain
\begin{align*} h_k\cdot \sigma_\lambda &= h_k\cdot \langle \rho|e^{H_+}|\lambda+\rho\rangle \\&= \langle \rho| e^{H_+} U_k |\lambda+\rho\rangle \\&= \sum_\nu \langle \rho| e^{H_+} |\nu+\rho\rangle \langle \nu+\rho | U_k |\lambda+\rho\rangle \\&= \sum_\nu \langle \nu+\rho | U_k |\lambda+\rho\rangle \sigma_\nu.\end{align*}
\end{proof}

\section{The six-vertex model} \label{six-vertex-section}

In this section, we will define two related six-vertex models, called $\mathfrak{S}$ and $\mathfrak{S}^*$. Both of them parametrize the space of free fermionic six-vertex models i.e. their Boltzmann weights satisfy the condition \[\vx{a_1^{(i)}}\vx{a_2^{(i)}} + \vx{b_1^{(i)}}\vx{b_2^{(i)}} = \vx{c_1^{(i)}} \vx{c_2^{(i)}}, \hspace{20pt} \text{for all $i$}.\] The Boltzmann weights of these models are a simultaneous generalization of the weights of two pairs of six-vertex models. The first pair are the $\Gamma$ and $\Delta$ models in \cite{BBF-Schur-polynomials}. The second are the \emph{vicious} and \emph{osculating} models that appear in \cite{Korff-vicious-osculating}. There is a duality between the vicious and osculating models that we will generalize. Furthermore, this duality is equivalent to the duality for Hamiltonian operators proven in Lemma \ref{Hamiltonian-duality-lemma}.

Our lattice models are finite rectangular grids of intersecting lines, with vertices at the intersection points of each pair of lines. Each edge is assigned a \emph{spin} from a fixed set. An \emph{admissible vertex} is a vertex around which the spins satisfy one of several admissible configurations. If every vertex in the grid is admissible, we call the resulting configuration an \emph{admissible state}.

Each admissible vertex is assigned a \emph{Boltzmann weight}. These weights often depend a row parameter. In this paper, we will be slightly more general: weights will depend on several row parameters that parametrize certain sets of weights. Our weights will not depend on any column parameters since Hamiltonians do not behave well with respect to column parameters.

By convention, a non-admissible vertex has weight 0. The Boltzmann weight of a state is defined to be the product of all the Boltzmann weights of its constituent vertices, so non-admissible states always have weight 0. The \emph{partition function} $Z$ is the sum of the weights of all admissible states: \[Z = \sum_{\text{state } \mathfrak{s}} \wt(\mathfrak{s}) =  \sum_{\text{state } \mathfrak{s}} \prod_{\text{vertex } v} \wt(v).\]

The partition function $Z$ is a function of its row parameters. A surprising number of special functions occur as partition functions of a lattice model.

For the six-vertex model, our spin set is $\{+,-\}$. We imagine a $-$ spin to indicate the presence of a particle, and a $+$ spin to the indicate lack of a particle. The admissible vertices for $\mathfrak{S}$ and $\mathfrak{S}^*$ are slightly different, but are chosen in a way so that we can draw paths through the $-$ spins that start and end at the boundary. For $\mathfrak{S}$, these paths move up and left, and for $\mathfrak{S}^*$ they move down and left.

Let $x_i,y_i,z_i,w_i,A_i,B_i$ be arbitrary parameters, depending on a row $i$. Then the $\mathfrak{S}$ vertices and Boltzmann weights are given in Figure \ref{Delta-vertex-weights}, and the $\mathfrak{S}^*$ vertices and weights are given in Figure \ref{Gamma-vertex-weights}. We will sometimes refer to the vertex weights using the symbols for the vertices themselves--for instance, writing $\vx{a_1^{(i)}} = A_i$ for the $\mathfrak{S}$ weights below. When there is potential for confusion, we will make clear whether we are talking about the vertex itself or its weight, and which set of weights we are using.

\begin{figure}[h]
\centering
\scalebox{.85}{$
\begin{array}{c@{\hspace{8pt}}c@{\hspace{8pt}}c@{\hspace{5pt}}c@{\hspace{5pt}}c@{\hspace{8pt}}c@{\hspace{8pt}}c@{\hspace{5pt}}c}
\toprule
\vx{a_1^{(i)}} & \vx{a_2^{(i)}} & \vx{b_1^{(i)}} & \vx{b_2^{(i)}} & \vx{c_1^{(i)}} & \vx{c_2^{(i)}}\\
\midrule
\begin{tikzpicture}
\coordinate (a) at (-.75, 0);
\coordinate (b) at (0, .75);
\coordinate (c) at (.75, 0);
\coordinate (d) at (0, -.75);
\coordinate (aa) at (-.75,.5);
\coordinate (cc) at (.75,.5);
\draw (a)--(0,0);
\draw (b)--(0,0);
\draw (c)--(0,0);
\draw (d)--(0,0);
\draw[fill=white] (a) circle (.25);
\draw[fill=white] (b) circle (.25);
\draw[fill=white] (c) circle (.25);
\draw[fill=white] (d) circle (.25);
\node at (0,1) { };
\node at (a) {$+$};
\node at (b) {$+$};
\node at (c) {$+$};
\node at (d) {$+$};
\end{tikzpicture}
&
\begin{tikzpicture}
\coordinate (a) at (-.75, 0);
\coordinate (b) at (0, .75);
\coordinate (c) at (.75, 0);
\coordinate (d) at (0, -.75);
\coordinate (aa) at (-.75,.5);
\coordinate (cc) at (.75,.5);
\draw (a)--(0,0);
\draw (b)--(0,0);
\draw (c)--(0,0);
\draw (d)--(0,0);
\draw[fill=white] (a) circle (.25);
\draw[fill=white] (b) circle (.25);
\draw[fill=white] (c) circle (.25);
\draw[fill=white] (d) circle (.25);
\node at (0,1) { };
\node at (a) {$-$};
\node at (b) {$-$};
\node at (c) {$-$};
\node at (d) {$-$};
\end{tikzpicture}
&
\begin{tikzpicture}
\coordinate (a) at (-.75, 0);
\coordinate (b) at (0, .75);
\coordinate (c) at (.75, 0);
\coordinate (d) at (0, -.75);
\coordinate (aa) at (-.75,.5);
\coordinate (cc) at (.75,.5);
\draw (a)--(0,0);
\draw (b)--(0,0);
\draw (c)--(0,0);
\draw (d)--(0,0);
\draw[fill=white] (a) circle (.25);
\draw[fill=white] (b) circle (.25);
\draw[fill=white] (c) circle (.25);
\draw[fill=white] (d) circle (.25);
\node at (0,1) { };
\node at (a) {$+$};
\node at (b) {$-$};
\node at (c) {$+$};
\node at (d) {$-$};
\end{tikzpicture}
&
\begin{tikzpicture}
\coordinate (a) at (-.75, 0);
\coordinate (b) at (0, .75);
\coordinate (c) at (.75, 0);
\coordinate (d) at (0, -.75);
\coordinate (aa) at (-.75,.5);
\coordinate (cc) at (.75,.5);
\draw (a)--(0,0);
\draw (b)--(0,0);
\draw (c)--(0,0);
\draw (d)--(0,0);
\draw[fill=white] (a) circle (.25);
\draw[fill=white] (b) circle (.25);
\draw[fill=white] (c) circle (.25);
\draw[fill=white] (d) circle (.25);
\node at (0,1) { };
\node at (a) {$-$};
\node at (b) {$+$};
\node at (c) {$-$};
\node at (d) {$+$};
\end{tikzpicture}
&
\begin{tikzpicture}
\coordinate (a) at (-.75, 0);
\coordinate (b) at (0, .75);
\coordinate (c) at (.75, 0);
\coordinate (d) at (0, -.75);
\coordinate (aa) at (-.75,.5);
\coordinate (cc) at (.75,.5);
\draw (a)--(0,0);
\draw (b)--(0,0);
\draw (c)--(0,0);
\draw (d)--(0,0);
\draw[fill=white] (a) circle (.25);
\draw[fill=white] (b) circle (.25);
\draw[fill=white] (c) circle (.25);
\draw[fill=white] (d) circle (.25);
\node at (0,1) { };
\node at (a) {$-$};
\node at (b) {$+$};
\node at (c) {$+$};
\node at (d) {$-$};
\end{tikzpicture}
&
\begin{tikzpicture}
\coordinate (a) at (-.75, 0);
\coordinate (b) at (0, .75);
\coordinate (c) at (.75, 0);
\coordinate (d) at (0, -.75);
\coordinate (aa) at (-.75,.5);
\coordinate (cc) at (.75,.5);
\draw (a)--(0,0);
\draw (b)--(0,0);
\draw (c)--(0,0);
\draw (d)--(0,0);
\draw[fill=white] (a) circle (.25);
\draw[fill=white] (b) circle (.25);
\draw[fill=white] (c) circle (.25);
\draw[fill=white] (d) circle (.25);
\node at (0,1) { };
\node at (a) {$+$};
\node at (b) {$-$};
\node at (c) {$-$};
\node at (d) {$+$};
\end{tikzpicture}
\\
   \midrule
   A_i & y_iA_iB_i & A_iB_i & x_iA_i & (x_i+y_i)A_iB_i & A_i \\
   \bottomrule
\end{array}$}
\caption{The Boltzmann weights for $\mathfrak{S}$. Here, $x_i, y_i, A_i$, and $B_i$ are parameters associated to each row.}
    \label{Delta-vertex-weights}
\end{figure}

\begin{figure}[h]
\centering
\scalebox{.85}{$
\begin{array}{c@{\hspace{8pt}}c@{\hspace{8pt}}c@{\hspace{5pt}}c@{\hspace{5pt}}c@{\hspace{8pt}}c@{\hspace{8pt}}c@{\hspace{5pt}}c}
\toprule
\vx{a_1^{(i)}} & \vx{a_2^{(i)}} & \vx{b_1^{(i)}} & \vx{b_2^{(i)}} & \vx{c_1^{(i)}} & \vx{c_2^{(i)}}\\
\midrule
\begin{tikzpicture}
\coordinate (a) at (-.75, 0);
\coordinate (b) at (0, .75);
\coordinate (c) at (.75, 0);
\coordinate (d) at (0, -.75);
\coordinate (aa) at (-.75,.5);
\coordinate (cc) at (.75,.5);
\draw (a)--(0,0);
\draw (b)--(0,0);
\draw (c)--(0,0);
\draw (d)--(0,0);
\draw[fill=white] (a) circle (.25);
\draw[fill=white] (b) circle (.25);
\draw[fill=white] (c) circle (.25);
\draw[fill=white] (d) circle (.25);
\node at (0,1) { };
\node at (a) {$+$};
\node at (b) {$+$};
\node at (c) {$+$};
\node at (d) {$+$};
\end{tikzpicture}
&
\begin{tikzpicture}
\coordinate (a) at (-.75, 0);
\coordinate (b) at (0, .75);
\coordinate (c) at (.75, 0);
\coordinate (d) at (0, -.75);
\coordinate (aa) at (-.75,.5);
\coordinate (cc) at (.75,.5);
\draw (a)--(0,0);
\draw (b)--(0,0);
\draw (c)--(0,0);
\draw (d)--(0,0);
\draw[fill=white] (a) circle (.25);
\draw[fill=white] (b) circle (.25);
\draw[fill=white] (c) circle (.25);
\draw[fill=white] (d) circle (.25);
\node at (0,1) { };
\node at (a) {$-$};
\node at (b) {$-$};
\node at (c) {$-$};
\node at (d) {$-$};
\end{tikzpicture}
&
\begin{tikzpicture}
\coordinate (a) at (-.75, 0);
\coordinate (b) at (0, .75);
\coordinate (c) at (.75, 0);
\coordinate (d) at (0, -.75);
\coordinate (aa) at (-.75,.5);
\coordinate (cc) at (.75,.5);
\draw (a)--(0,0);
\draw (b)--(0,0);
\draw (c)--(0,0);
\draw (d)--(0,0);
\draw[fill=white] (a) circle (.25);
\draw[fill=white] (b) circle (.25);
\draw[fill=white] (c) circle (.25);
\draw[fill=white] (d) circle (.25);
\node at (0,1) { };
\node at (a) {$+$};
\node at (b) {$-$};
\node at (c) {$+$};
\node at (d) {$-$};
\end{tikzpicture}
&
\begin{tikzpicture}
\coordinate (a) at (-.75, 0);
\coordinate (b) at (0, .75);
\coordinate (c) at (.75, 0);
\coordinate (d) at (0, -.75);
\coordinate (aa) at (-.75,.5);
\coordinate (cc) at (.75,.5);
\draw (a)--(0,0);
\draw (b)--(0,0);
\draw (c)--(0,0);
\draw (d)--(0,0);
\draw[fill=white] (a) circle (.25);
\draw[fill=white] (b) circle (.25);
\draw[fill=white] (c) circle (.25);
\draw[fill=white] (d) circle (.25);
\node at (0,1) { };
\node at (a) {$-$};
\node at (b) {$+$};
\node at (c) {$-$};
\node at (d) {$+$};
\end{tikzpicture}
&
\begin{tikzpicture}
\coordinate (a) at (-.75, 0);
\coordinate (b) at (0, .75);
\coordinate (c) at (.75, 0);
\coordinate (d) at (0, -.75);
\coordinate (aa) at (-.75,.5);
\coordinate (cc) at (.75,.5);
\draw (a)--(0,0);
\draw (b)--(0,0);
\draw (c)--(0,0);
\draw (d)--(0,0);
\draw[fill=white] (a) circle (.25);
\draw[fill=white] (b) circle (.25);
\draw[fill=white] (c) circle (.25);
\draw[fill=white] (d) circle (.25);
\node at (0,1) { };
\node at (a) {$+$};
\node at (b) {$+$};
\node at (c) {$-$};
\node at (d) {$-$};
\end{tikzpicture}
&
\begin{tikzpicture}
\coordinate (a) at (-.75, 0);
\coordinate (b) at (0, .75);
\coordinate (c) at (.75, 0);
\coordinate (d) at (0, -.75);
\coordinate (aa) at (-.75,.5);
\coordinate (cc) at (.75,.5);
\draw (a)--(0,0);
\draw (b)--(0,0);
\draw (c)--(0,0);
\draw (d)--(0,0);
\draw[fill=white] (a) circle (.25);
\draw[fill=white] (b) circle (.25);
\draw[fill=white] (c) circle (.25);
\draw[fill=white] (d) circle (.25);
\node at (0,1) { };
\node at (a) {$-$};
\node at (b) {$-$};
\node at (c) {$+$};
\node at (d) {$+$};
\end{tikzpicture}
\\
   \midrule
   A_i^{-1} & x_iA_i^{-1}B_i^{-1} & A_i^{-1}B_i^{-1} & y_iA_i^{-1} & (x_i+y_i)A_i^{-1} & A_i^{-1}B_i^{-1} \\
   \bottomrule
\end{array}$}
\caption{The Boltzmann weights for $\mathfrak{S}^*$. Here, $x_i, y_i, A_i$, and $B_i$ are parameters associated to each row.}
    \label{Gamma-vertex-weights}
\end{figure}

The $\mathfrak{S}$ weights are given in Figure \ref{Delta-vertex-weights}, and the $\mathfrak{S}^*$ weights are given in Figure \ref{Gamma-vertex-weights}. Both sets of Boltzmann weights parametrize the free fermionic weights (with a small caveat; see Remark \ref{c-weights-remark})).

First, notice that for any values of $x_i,y_i,A_i,B_i$, we have \[\vx{a_1^{(i)}}\vx{a_2^{(i)}} + \vx{b_1^{(i)}}\vx{b_2^{(i)}} - \vx{c_1^{(i)}}\vx{c_2^{(i)}} = y_iA_i^2B_i + x_iA_i^2B_i - (x_i+y_i)A_i^2B_i = 0,\] so the $\mathfrak{S}$ weights satisfy the free fermionic condition. Conversely, given a set of free fermionic weights, we can set \[A_i:= \vx{a_1^{(i)}}, \hspace{10pt} B_i = \frac{\vx{b_1^{(i)}}}{\vx{a_1^{(i)}}}, \hspace{10pt} x_i := \frac{\vx{b_2^{(i)}}}{\vx{a_1^{(i)}}}, \hspace{10pt} y_i := \frac{\vx{a_2^{(i)}}}{\vx{b_1^{(i)}}}.\] Then the free fermionic condition ensures that $\vx{c_1^{(i)}}\vx{c_2^{(i)}} = (x_i+y_i)A_i^2B_i$, as in Figure \ref{Delta-vertex-weights}. The $\mathfrak{S}^*$ weights are similar.

Next, we define the boundary conditions for the two models. Let the lattice model \[\mathfrak{S}_{\lambda/\mu} := \mathfrak{S}_{\lambda/\mu}(\boldsymbol{x}, \boldsymbol{y}, \boldsymbol{A}, \boldsymbol{B}) = \mathfrak{S}_{\lambda/\mu}(x_1, \ldots x_N; y_1, \ldots y_N; A_1, \ldots A_N; B_1, \ldots B_N)\] be defined as follows.

\begin{itemize}
    \item $N$ rows, labelled $1,\ldots, N$ from bottom to top;
    \item $M+1$ columns, where $M\ge \max(\lambda_1,\mu_1)$, labelled $0,\ldots,M$ from left to right;
    \item Left and right boundary edges all $+$;
    \item Bottom boundary edges $-$ on parts of $\lambda$; $+$ otherwise;
    \item Top boundary edges $-$ on parts of $\mu$; $+$ otherwise.
    \item Boltzmann weights from Figure \ref{Delta-vertex-weights} (in row $i$, we assign the weights $\vx{a_1^{(i)}}, \vx{a_2^{(i)}}, \vx{b_1^{(i)}}, \vx{b_2^{(i)}}$, $\vx{c_1^{(i)}}, \vx{c_2^{(i)}}$ from that figure).
\end{itemize}

See Figure \ref{Delta-six-vertex-figure} for an example of these boundary conditions. Despite the model's dependence on $M$ and $N$, we suppress them from our notation. Note that the top and bottom boundaries are arbitrary, while the side boundary conditions are all $+$. These ``empty'' side boundary conditions turn out to be most directly related to Hamiltonian operators. We will consider other side boundary conditions, including domain-wall, in Section \ref{boundary-conditions-section}.

\begin{figure}[h]
\begin{center}
\scalebox{0.8}{
\begin{tikzpicture}
  \coordinate (ab) at (1,0);
  \coordinate (ad) at (3,0);
  \coordinate (af) at (5,0);
  \coordinate (ah) at (7,0);
  \coordinate (aj) at (9,0);
  \coordinate (al) at (11,0);
  \coordinate (ba) at (0,1);
  \coordinate (bc) at (2,1);
  \coordinate (be) at (4,1);
  \coordinate (bg) at (6,1);
  \coordinate (bi) at (8,1);
  \coordinate (bk) at (10,1);
  \coordinate (bm) at (12,1);
  \coordinate (cb) at (1,2);
  \coordinate (cd) at (3,2);
  \coordinate (cf) at (5,2);
  \coordinate (ch) at (7,2);
  \coordinate (cj) at (9,2);
  \coordinate (cl) at (11,2);
  \coordinate (da) at (0,3);
  \coordinate (dc) at (2,3);
  \coordinate (de) at (4,3);
  \coordinate (dg) at (6,3);
  \coordinate (di) at (8,3);
  \coordinate (dk) at (10,3);
  \coordinate (dm) at (12,3);
  \coordinate (eb) at (1,4);
  \coordinate (ed) at (3,4);
  \coordinate (ef) at (5,4);
  \coordinate (eh) at (7,4);
  \coordinate (ej) at (9,4);
  \coordinate (el) at (11,4);
  \coordinate (fa) at (0,5);
  \coordinate (fc) at (2,5);
  \coordinate (fe) at (4,5);
  \coordinate (fg) at (6,5);
  \coordinate (fi) at (8,5);
  \coordinate (fk) at (10,5);
  \coordinate (fm) at (12,5);
  \coordinate (gb) at (1,6);
  \coordinate (gd) at (3,6);
  \coordinate (gf) at (5,6);
  \coordinate (gh) at (7,6);
  \coordinate (gj) at (9,6);
  \coordinate (gl) at (11,6);
  \draw (ab)--(gb);
  \draw (ad)--(gd);
  \draw (af)--(gf);
  \draw (ah)--(gh);
  \draw (aj)--(gj);
  \draw (al)--(gl);
  \draw (ba)--(bm);
  \draw (da)--(dm);
  \draw (fa)--(fm);
  \draw[fill=white] (ab) circle (.25);
  \draw[fill=white] (ad) circle (.25);
  \draw[fill=white] (af) circle (.25);
  \draw[fill=white] (ah) circle (.25);
  \draw[fill=white] (aj) circle (.25);
  \draw[fill=white] (al) circle (.25);
  \draw[fill=white] (ba) circle (.25);
  \draw[fill=white] (bc) circle (.25);
  \draw[fill=white] (be) circle (.25);
  \draw[fill=white] (bg) circle (.25);
  \draw[fill=white] (bi) circle (.25);
  \draw[fill=white] (bk) circle (.25);
  \draw[fill=white] (bm) circle (.25);
  \draw[fill=white] (cb) circle (.25);
  \draw[fill=white] (cd) circle (.25);
  \draw[fill=white] (cf) circle (.25);
  \draw[fill=white] (ch) circle (.25);
  \draw[fill=white] (cj) circle (.25);
  \draw[fill=white] (cl) circle (.25);
  \draw[fill=white] (da) circle (.25);
  \draw[fill=white] (dc) circle (.25);
  \draw[fill=white] (de) circle (.25);
  \draw[fill=white] (dg) circle (.25);
  \draw[fill=white] (di) circle (.25);
  \draw[fill=white] (dk) circle (.25);
  \draw[fill=white] (dm) circle (.25);
  \draw[fill=white] (eb) circle (.25);
  \draw[fill=white] (ed) circle (.25);
  \draw[fill=white] (ef) circle (.25);
  \draw[fill=white] (eh) circle (.25);
  \draw[fill=white] (ej) circle (.25);
  \draw[fill=white] (el) circle (.25);
  \draw[fill=white] (fa) circle (.25);
  \draw[fill=white] (fc) circle (.25);
  \draw[fill=white] (fe) circle (.25);
  \draw[fill=white] (fg) circle (.25);
  \draw[fill=white] (fi) circle (.25);
  \draw[fill=white] (fk) circle (.25);
  \draw[fill=white] (fm) circle (.25);
  \draw[fill=white] (gb) circle (.25);
  \draw[fill=white] (gd) circle (.25);
  \draw[fill=white] (gf) circle (.25);
  \draw[fill=white] (gh) circle (.25);
  \draw[fill=white] (gj) circle (.25);
  \draw[fill=white] (gl) circle (.25);
  \node at (-2,5) {row:};
  \node at (-1.2,5) {3};
  \node at (-1.2,3) {2};
  \node at (-1.2,1) {1};
  \node at (gb) {$-$};
  \node at (gd) {$-$};
  \node at (gf) {$+$};
  \node at (gh) {$-$};
  \node at (gj) {$+$};
  \node at (gl) {$+$};
  \node at (fa) {$+$};
  \node at (fc) {$+$};
  \node at (fe) {$+$};
  \node at (fg) {$+$};
  \node at (fi) {$+$};
  \node at (fk) {$+$};
  \node at (fm) {$+$};
  \node at (eb) {$-$};
  \node at (ed) {$-$};
  \node at (ef) {$+$};
  \node at (eh) {$-$};
  \node at (ej) {$+$};
  \node at (el) {$+$};
  \node at (da) {$+$};
  \node at (dc) {$-$};
  \node at (de) {$-$};
  \node at (dg) {$+$};
  \node at (di) {$-$};
  \node at (dk) {$-$};
  \node at (dm) {$+$};
  \node at (cb) {$+$};
  \node at (cd) {$-$};
  \node at (cf) {$-$};
  \node at (ch) {$+$};
  \node at (cj) {$+$};
  \node at (cl) {$-$};
  \node at (ba) {$+$};
  \node at (bc) {$+$};
  \node at (be) {$+$};
  \node at (bg) {$-$};
  \node at (bi) {$+$};
  \node at (bk) {$+$};
  \node at (bm) {$+$};
  \node at (ab) {$+$};
  \node at (ad) {$-$};
  \node at (af) {$+$};
  \node at (ah) {$-$};
  \node at (aj) {$+$};
  \node at (al) {$-$};
  \node at (0,7) {column:};
  \node at (1,7) {$0$};
  \node at (3,7) {$1$};
  \node at (5,7) {$2$};
  \node at (7,7) {$3$};
  \node at (9,7) {$4$};
  \node at (11,7) {$5$};
\end{tikzpicture}}
\end{center}
\caption{A state of the lattice model $\mathfrak{S}_{\lambda/\mu}$, where $\lambda = (5,3,1)$ and $\mu = (3,1,0)$.}
\label{Delta-six-vertex-figure}
\end{figure}
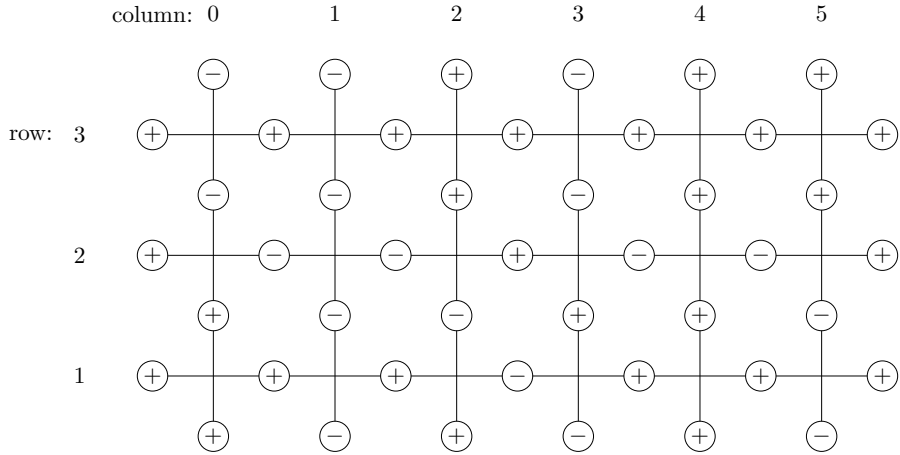

Additionally, we will define $\ba{\mathfrak{S}}_{\lambda/\mu}$ to be the same model as $\mathfrak{S}_{\lambda/\mu}$, but with arbitrary Boltzmann weights. We will use this more general model when we don't want to assume free fermionic weights.

It is often useful to consider the ensemble of lattice models $\{\mathfrak{S}_{\lambda/\mu}\}$ for all (valid) $\lambda,\mu,M,N$ at once. When we want to think of the model in this way, we will simply denote the model $\mathfrak{S}$.

The dual model $\mathfrak{S}^*$ is defined similarly: \[\mathfrak{S}^*_{\lambda/\mu} := \mathfrak{S}^*_{\lambda/\mu}(\boldsymbol{x}, \boldsymbol{y}, \boldsymbol{A}, \boldsymbol{B}) = \mathfrak{S}^*_{\lambda/\mu}(x_1, \ldots x_N; y_1, \ldots y_N; A_1, \ldots A_N; B_1, \ldots B_N)\]  is the following lattice model.

\begin{itemize}
    \item $N$ rows, labelled $1,\ldots, N$ from top to bottom;
    \item $M+1$ columns, where $M\ge \max(\lambda_1,\mu_1)$, labelled $0,\ldots,M$ from left to right;
    \item Left and right boundary edges all $+$;
    \item Bottom boundary edges $-$ on parts of $\mu$; $+$ otherwise;
    \item Top boundary edges $-$ on parts of $\lambda$; $+$ otherwise.
    \item Boltzmann weights from Figure \ref{Gamma-vertex-weights} (in row $i$, we assign the weights $\vx{a_1^{(i)}}, \vx{a_2^{(i)}},\vx{b_1^{(i)}},\vx{b_2^{(i)}}$, $\vx{c_1^{(i)}},\vx{c_2^{(i)}}$ from that figure).
\end{itemize}

\begin{remark} \label{c-weights-remark}
To fully parametrize sets of free fermionic weights, we would need a fifth parameter representing $\vx{c_2}/\vx{c_1}$. However, this turns out not to be necessary. With our boundary conditions, the number of $\vx{c_1^{(i)}}$ vertices and the number of $\vx{c_2^{(i)}}$ vertices in any state are equal. Thus, changing the relative weights of $\vx{c_1^{(i)}}$ and $\vx{c_2^{(i)}}$ without changing their product does not change the partition function.

For general side boundary conditions, the number of $\vx{c_1^{(i)}}$ vertices and the number of $\vx{c_2^{(i)}}$ vertices always differ by a constant that depends only on the boundary conditions. In this case, changing the relative weights of $\vx{c_1^{(i)}}$ and $\vx{c_2^{(i)}}$ multiplies the partition function by a factor which is easily computed.
\end{remark}

\subsection{Relating $\mathfrak{S}$ and $\mathfrak{S}^*$}

We will see in Corollary \ref{Partition-Function-is-Supersymmetric-Schur} that the partition function of both models is a supersymmetric Schur function. For now, note that there are two close relationships between the models. First, let us do the following transformation.

\begin{itemize}
    \item Rotate the model $\mathfrak{S}_{\lambda/\mu}(\boldsymbol{x}, \boldsymbol{y}, \boldsymbol{A}, \boldsymbol{B})$ $180^\circ$.
    \item Flip the vertical spins.
    \item Reverse the ordering on the columns.
    \item Divide the Boltzmann weights by $A_i^2B_i$.
\end{itemize}

This new model has rows labelled from top to bottom, and columns labelled from left to right. If $\lambda$ is the bottom boundary of the original model, then the top boundary of the new model is the partition $\ba{\lambda}$ obtained by reversing $\lambda$ and swapping its parts and non-parts. By Lemma \ref{partition-lemma}, $\ba{\lambda}-\rho = (\lambda-\rho)'$.

The $180^\circ$ rotation along with the flip of vertical spins sends each of the six $\mathfrak{S}$ vertices to its counterpart $\mathfrak{S}^*$ vertex, swapping types $\vx{a}$ and $\vx{b}$ vertices. Dividing by $A_i^2B_i$ sends the $\mathfrak{S}$ Boltzmann weights to the $\mathfrak{S}^*$ Boltzmann weights, again, with vertex types $\vx{a}$ and $\vx{b}$ swapped. Therefore, we have obtained the model $\mathfrak{S}^*_{\ba{\lambda}/\ba{\mu}}$.

Since each of the first three steps is a weight-preserving bijection of states, this allows us to relate the $\mathfrak{S}$ and $\mathfrak{S}^*$ partition functions.

\begin{proposition} \label{first-Gamma-Delta-relationship}
\begin{align*}Z(\mathfrak{S}^*_{\ba{\lambda}/\ba{\mu}}(\boldsymbol{x}, \boldsymbol{y}, \boldsymbol{A}, \boldsymbol{B})) = \prod_{i=1}^N (A_i^{-2M-2} B_i^{-M-1}) \cdot Z(\mathfrak{S}_{\lambda/\mu}(\boldsymbol{x}, \boldsymbol{y}, \boldsymbol{A}, \boldsymbol{B})).\end{align*}
\end{proposition}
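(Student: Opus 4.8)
The plan is to turn the four-step transformation described above into an explicit weight-preserving bijection between the admissible states of $\mathfrak{S}_{\lambda/\mu}$ and those of $\mathfrak{S}^*_{\ba{\lambda}/\ba{\mu}}$, after dividing the weights, and then sum over states. Encode a vertex configuration as the quadruple $(W,N,E,S)$ of its west, north, east, and south spins. A $180^\circ$ rotation of the grid carries the vertex in row $r$, column $c$ to the one in row $N+1-r$, column $M-c$ and replaces its configuration by $(E,S,W,N)$; composing with the flip of the vertical (i.e.\ north and south) spins yields the map $(W,N,E,S)\mapsto(E,-S,W,-N)$. The first, and most fiddly, step is the entry-by-entry check that this map is a bijection from the six admissible $\mathfrak{S}$-configurations onto the six admissible $\mathfrak{S}^*$-configurations, interchanging the $\vx{a}$- and $\vx{b}$-type vertices, and that dividing the $\mathfrak{S}$ weight of row $i$ by $A_i^2B_i$ gives exactly the matching $\mathfrak{S}^*$ weight of Figure~\ref{Gamma-vertex-weights} --- for instance $(+,+,+,+)=\vx{a_1^{(i)}}$ has weight $A_i$, maps to $(+,-,+,-)=\vx{b_1^{(i)}}$, and $A_i/(A_i^2B_i)=A_i^{-1}B_i^{-1}$ is indeed the $\mathfrak{S}^*$ weight of $\vx{b_1^{(i)}}$, and likewise for the other five.

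Next I would track the boundary data. Since $\mathfrak{S}$ numbers rows from the bottom and $\mathfrak{S}^*$ numbers them from the top, the rotation sends row $i$ to row $i$, so the parameters $x_i,y_i,A_i,B_i$ remain attached to the same row and the division by $A_i^2B_i$ is applied consistently. The all-$+$ left and right boundaries are preserved. The bottom boundary of $\mathfrak{S}_{\lambda/\mu}$, which is $-$ in column $c$ iff $c$ is a part of $\lambda$, becomes the top boundary after the rotation; the column reversal turns column $c$ into column $M-c$ and the vertical spin flip turns its $-$ into a $+$, so the new top boundary is $-$ in column $k$ iff $M-k$ is \emph{not} a part of $\lambda$ --- which is exactly the defining condition for $k$ to be a part of $\ba{\lambda}$ (cf.\ the definition of $\ba{\lambda}$ and Lemma~\ref{partition-lemma}). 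The same computation turns the top boundary $\mu$ of $\mathfrak{S}_{\lambda/\mu}$ into the bottom boundary $\ba{\mu}$. Thus the transformed model is precisely $\mathfrak{S}^*_{\ba{\lambda}/\ba{\mu}}$, and the first three steps constitute a weight-preserving bijection of states.

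Finally, the scalar is just a vertex count: row $i$ of $\mathfrak{S}_{\lambda/\mu}$ has $M+1$ vertices (columns $0,\dots,M$), so dividing every weight of row $i$ by $A_i^2B_i$ multiplies the weight of each state by $\prod_{i=1}^N(A_i^2B_i)^{-(M+1)}=\prod_{i=1}^N A_i^{-2M-2}B_i^{-M-1}$. Summing the weight-preserving bijection over all admissible states gives
\[Z(\mathfrak{S}^*_{\ba{\lambda}/\ba{\mu}})=\prod_{i=1}^N A_i^{-2M-2}B_i^{-M-1}\cdot Z(\mathfrak{S}_{\lambda/\mu}).\]
The one place that really demands care is the bookkeeping in the first two paragraphs: one must confirm the $\mathfrak{S}\leftrightarrow\mathfrak{S}^*$ vertex-and-weight dictionary line by line, and in particular make sure that the column reversal combined with the vertical spin flip produces the ``reverse-and-complement'' partition $\ba{\lambda}$ rather than the plain complement of $\lambda$; the rest is routine.
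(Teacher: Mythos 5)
Your proposal is correct and follows essentially the same route as the paper, which establishes this proposition by exactly the rotate--flip--relabel--rescale transformation described before its statement: a weight-preserving bijection of states (with the $\vx{a}\leftrightarrow\vx{b}$ vertex dictionary and the $\ba{\lambda},\ba{\mu}$ boundary identification via the complement construction) followed by the per-row factor $(A_i^2B_i)^{-(M+1)}$. Your line-by-line verification of the vertex/weight dictionary and the boundary bookkeeping is precisely the content the paper leaves to the reader.
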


For our second relationship, we do the following transformation to $\mathfrak{S}$:

\begin{itemize}
    \item Flip the model vertically (over a horizontal axis).
    \item Swap the $\vx{c_1}$ and $\vx{c_2}$ vertices.
    \item Replace $A_i$ with $A_i^{-1}$ and $B_i$ with $B_i^{-1}$.
    \item Swap $x_i$ and $y_i$.
    \item Rebalance the $\vx{c_1}$ and $\vx{c_2}$ vertices by multiplying the former and dividing the latter by $x_i+y_i$.
\end{itemize}

We have again obtained a $\mathfrak{S}^*$ lattice model, this time simply $\mathfrak{S}^*_{\lambda/\mu}$. Note that only the third and fourth steps change the partition function.

\begin{proposition} \label{second-Gamma-Delta-relationship}
\begin{align}Z(\mathfrak{S}^*_{\lambda/\mu}(\boldsymbol{x}, \boldsymbol{y}, \boldsymbol{A}, \boldsymbol{B})) = Z(\mathfrak{S}_{\lambda/\mu}(\boldsymbol{y}, \boldsymbol{x}, \boldsymbol{A}^{-1}, \boldsymbol{B}^{-1})). \label{Simple-Gamma-Delta-relationship}\end{align}
\end{proposition}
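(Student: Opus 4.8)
The plan is to prove the identity by exhibiting, via the five-step recipe described just above, a weight-preserving bijection between the states of $\mathfrak{S}_{\lambda/\mu}(\boldsymbol{x},\boldsymbol{y},\boldsymbol{A},\boldsymbol{B})$ and the states of $\mathfrak{S}^*_{\lambda/\mu}(\boldsymbol{y},\boldsymbol{x},\boldsymbol{A}^{-1},\boldsymbol{B}^{-1})$; summing over states then gives equality of partition functions, and the displayed statement follows from the purely cosmetic relabeling of parameters $\boldsymbol{x}\leftrightarrow\boldsymbol{y}$, $\boldsymbol{A}\to\boldsymbol{A}^{-1}$, $\boldsymbol{B}\to\boldsymbol{B}^{-1}$. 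A direct combinatorial argument of this kind is the natural one at this point in the paper, since the Hamiltonian interpretations of $Z(\mathfrak{S})$ and $Z(\mathfrak{S}^*)$ are not yet available; the promised match with Lemma~\ref{Hamiltonian-duality-lemma} will then be a consequence rather than an input.

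First I would analyze the horizontal reflection (step 1) one vertex at a time. Reflecting over a horizontal axis swaps the top and bottom spins of each vertex and fixes the left and right spins, so it fixes the spin patterns of the $\vx{a_1^{(i)}},\vx{a_2^{(i)}},\vx{b_1^{(i)}},\vx{b_2^{(i)}}$ vertices and interchanges those of $\vx{c_1^{(i)}}$ and $\vx{c_2^{(i)}}$; reading off Figures~\ref{Delta-vertex-weights} and~\ref{Gamma-vertex-weights}, the reflected $\mathfrak{S}$ patterns are exactly the admissible $\mathfrak{S}^*$ patterns, which is why step 2 (swapping the $\vx{c_1}$ and $\vx{c_2}$ labels) is purely bookkeeping. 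Next I would track the boundary data: reflecting $\mathfrak{S}_{\lambda/\mu}$ sends its bottom boundary $\lambda$ to the top and its top boundary $\mu$ to the bottom, and after relabeling the rows of the reflected grid from top to bottom (the $\mathfrak{S}^*$ convention), row $i$ retains its parameters $(x_i,y_i,A_i,B_i)$, so no reindexing is needed and the result has precisely the $\mathfrak{S}^*_{\lambda/\mu}$ boundary and indexing conventions. Finally I would do the weight comparison: applying $x_i\leftrightarrow y_i$, $A_i\to A_i^{-1}$, $B_i\to B_i^{-1}$ to the $\mathfrak{S}^*$ weight table of Figure~\ref{Gamma-vertex-weights} and matching against the reflected $\mathfrak{S}$ weights, I expect the $\vx{a}$- and $\vx{b}$-type weights to agree term by term, while the two $\vx{c}$-weights disagree individually but satisfy $\vx{c_1^{(i)}}\vx{c_2^{(i)}}=(x_i+y_i)A_i^2B_i$ in both models; the rescaling in step 5 (multiply one $\vx{c}$-weight by $x_i+y_i$, divide the other) then makes them match outright.

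The argument is essentially bookkeeping, so the one point genuinely requiring care is that step 5 leaves the partition function unchanged: the rescaling multiplies the weight of each state by a product over the rows of powers of $x_i+y_i$, the exponent in row $i$ being the difference between the number of $\vx{c_2^{(i)}}$ vertices and the number of $\vx{c_1^{(i)}}$ vertices, and I would need every such exponent to vanish. This is the content of Remark~\ref{c-weights-remark}, which I would verify row by row: within a single row only the $\vx{c}$-type vertices change the horizontal spin, one type switching it from $+$ to $-$ and the other switching it back, and since both the left and right boundary edges are $+$, the two types must occur equally often in each row. With that established, the composite of steps 1--5 is a weight-preserving bijection of states, and the proposition follows.
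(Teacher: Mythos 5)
Your proposal is correct and follows essentially the paper's own argument: the paper proves the proposition by exactly this five-step transformation (vertical flip matching the $\mathfrak{S}$ vertex patterns to the $\mathfrak{S}^*$ ones with $\vx{c_1}\leftrightarrow\vx{c_2}$, the parameter substitution $x_i\leftrightarrow y_i$, $A_i\mapsto A_i^{-1}$, $B_i\mapsto B_i^{-1}$, and the $\vx{c}$-rebalancing justified by Remark \ref{c-weights-remark}), noting that only the substitution steps change the partition function. You have simply written out the vertex-by-vertex and boundary bookkeeping that the paper leaves implicit.
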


We obtain similar identities by doing similar transformations starting from $\mathfrak{S}^*$. We can now combine these identities to relate the partition functions of $\mathfrak{S}$ and $\mathfrak{S}^*$ to themselves.

\begin{proposition} \label{Functional-equations-proposition}
\leavevmode
\begin{enumerate}
    \item[(a)] \begin{align*} Z(\mathfrak{S}^*_{\ba{\lambda}/\ba{\mu}}(\boldsymbol{x}, \boldsymbol{y}, \boldsymbol{A}, \boldsymbol{B})) = \prod_{i=1}^N (A_i^{-2M-2} B_i^{-M-1}) \cdot Z(\mathfrak{S}^*_{\lambda/\mu}(\boldsymbol{y}, \boldsymbol{x}, \boldsymbol{A}^{-1}, \boldsymbol{B}^{-1})).\end{align*}
    \item[(b)] \begin{align*} Z(\mathfrak{S}_{\ba{\lambda}/\ba{\mu}}(\boldsymbol{x}, \boldsymbol{y}, \boldsymbol{A}, \boldsymbol{B})) = \prod_{i=1}^N (A_i^{2M+2} B_i^{M+1}) \cdot Z(\mathfrak{S}_{\lambda/\mu}(\boldsymbol{y}, \boldsymbol{x}, \boldsymbol{A}^{-1}, \boldsymbol{B}^{-1})).\end{align*}
\end{enumerate}
\end{proposition}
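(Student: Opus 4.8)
The plan is to derive Proposition~\ref{Functional-equations-proposition} by composing the two relationships already established, Propositions~\ref{first-Gamma-Delta-relationship} and \ref{second-Gamma-Delta-relationship}, together with their analogues for $\mathfrak{S}^*$. For part (a), I would start from the first relationship,
\[
Z(\mathfrak{S}^*_{\ba{\lambda}/\ba{\mu}}(\boldsymbol{x}, \boldsymbol{y}, \boldsymbol{A}, \boldsymbol{B})) = \prod_{i=1}^N (A_i^{-2M-2} B_i^{-M-1}) \cdot Z(\mathfrak{S}_{\lambda/\mu}(\boldsymbol{x}, \boldsymbol{y}, \boldsymbol{A}, \boldsymbol{B})),
\]
and then rewrite the right-hand $\mathfrak{S}$ partition function in terms of $\mathfrak{S}^*$ using the second relationship \eqref{Simple-Gamma-Delta-relationship}. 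Since \eqref{Simple-Gamma-Delta-relationship} reads $Z(\mathfrak{S}^*_{\lambda/\mu}(\boldsymbol{x}, \boldsymbol{y}, \boldsymbol{A}, \boldsymbol{B})) = Z(\mathfrak{S}_{\lambda/\mu}(\boldsymbol{y}, \boldsymbol{x}, \boldsymbol{A}^{-1}, \boldsymbol{B}^{-1}))$, applying it with the parameters $(\boldsymbol{y},\boldsymbol{x},\boldsymbol{A}^{-1},\boldsymbol{B}^{-1})$ in place of $(\boldsymbol{x},\boldsymbol{y},\boldsymbol{A},\boldsymbol{B})$ gives $Z(\mathfrak{S}_{\lambda/\mu}(\boldsymbol{x},\boldsymbol{y},\boldsymbol{A},\boldsymbol{B})) = Z(\mathfrak{S}^*_{\lambda/\mu}(\boldsymbol{y},\boldsymbol{x},\boldsymbol{A}^{-1},\boldsymbol{B}^{-1}))$, which is exactly the substitution needed to land on the claimed right-hand side of (a).

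For part (b), the strategy is symmetric: one needs the ``first relationship'' run in the $\mathfrak{S}^*$-to-$\mathfrak{S}$ direction. The text remarks after Proposition~\ref{second-Gamma-Delta-relationship} that ``we obtain similar identities by doing similar transformations starting from $\mathfrak{S}^*$,'' so I would explicitly record the analogue of Proposition~\ref{first-Gamma-Delta-relationship}, namely $Z(\mathfrak{S}_{\ba{\lambda}/\ba{\mu}}(\boldsymbol{x},\boldsymbol{y},\boldsymbol{A},\boldsymbol{B})) = \prod_{i=1}^N (A_i^{2M+2} B_i^{M+1}) \cdot Z(\mathfrak{S}^*_{\lambda/\mu}(\boldsymbol{x},\boldsymbol{y},\boldsymbol{A},\boldsymbol{B}))$ — the exponents being inverted because the $\mathfrak{S}^*$ weights are the reciprocals (up to the $A_i^2B_i$ rescaling) of the $\mathfrak{S}$ weights and the rotation/flip step works identically. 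Then I substitute \eqref{Simple-Gamma-Delta-relationship} once more, in the form $Z(\mathfrak{S}^*_{\lambda/\mu}(\boldsymbol{x},\boldsymbol{y},\boldsymbol{A},\boldsymbol{B})) = Z(\mathfrak{S}_{\lambda/\mu}(\boldsymbol{y},\boldsymbol{x},\boldsymbol{A}^{-1},\boldsymbol{B}^{-1}))$, to convert the right side back into an $\mathfrak{S}$ partition function, yielding (b).

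The bookkeeping obstacle — and really the only place care is needed — is tracking the scalar prefactors and parameter substitutions through the composition. In particular one must check that the rescaling factor $\prod_i A_i^{-2M-2}B_i^{-M-1}$ coming from the ``divide weights by $A_i^2B_i$'' step is genuinely $\lambda,\mu$-independent (it depends only on $M$, the common number of columns, which is legitimate since both sides use the same grid) and that the parameter swap $x_i\leftrightarrow y_i$, $A_i\to A_i^{-1}$, $B_i\to B_i^{-1}$ in \eqref{Simple-Gamma-Delta-relationship} is applied consistently to \emph{all} occurrences, including inside the prefactor — which is why the prefactor in (a) stays $\prod_i A_i^{-2M-2}B_i^{-M-1}$ rather than its reciprocal (the prefactor sits outside the $Z(\mathfrak{S}^*_{\ba\lambda/\ba\mu})$ term that is being computed, not inside the substituted $Z$), whereas in (b) it is $\prod_i A_i^{2M+2}B_i^{M+1}$. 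One should also invoke Lemma~\ref{partition-lemma} to justify that $\ba{\ba\lambda} = \lambda$ when (as here) the reversal is taken in the same range $[0,M]$, so that no residual bar operations are left over. Once these substitutions are laid out side by side, both identities follow by direct composition with no further computation.
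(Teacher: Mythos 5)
Your proposal is correct and is exactly the argument the paper intends: Proposition \ref{Functional-equations-proposition} is obtained by composing Proposition \ref{first-Gamma-Delta-relationship} (and its analogue with the roles of $\mathfrak{S}$ and $\mathfrak{S}^*$ reversed, with the prefactor inverted) with the substituted form of (\ref{Simple-Gamma-Delta-relationship}), and your bookkeeping of the prefactors and parameter swaps matches the stated identities. The appeal to Lemma \ref{partition-lemma} for $\ba{\ba{\lambda}}=\lambda$ is not even needed on your main route, so no gap remains.
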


Note that similar identities hold for the general models $\mathfrak{S}$ and $\mathfrak{S}^*$.

The above propositions can be seen as giving us an involution on the partition functions. Let us make that more explicit. Define \[\tilde{\omega}\left(\mathfrak{S}^*_{\lambda/\mu}(\boldsymbol{x}, \boldsymbol{y}, \boldsymbol{A}, \boldsymbol{B})\right) := \mathfrak{S}_{\lambda/\mu}(\boldsymbol{x}, \boldsymbol{y}, \boldsymbol{A}, \boldsymbol{B}), \hspace{20pt} \tilde{\omega}\left(\mathfrak{S}_{\lambda/\mu}(\boldsymbol{x}, \boldsymbol{y}, \boldsymbol{A}, \boldsymbol{B})\right) := \mathfrak{S}^*_{\lambda/\mu}(\boldsymbol{x}, \boldsymbol{y}, \boldsymbol{A}, \boldsymbol{B}),\] and further define \[\tilde{\omega}\left(Z(\mathfrak{S}^*_{\lambda/\mu})\right) := Z\left(\tilde{\omega}(\mathfrak{S}^*_{\lambda/\mu})\right),\hspace{20pt} \tilde{\omega}\left(Z(\mathfrak{S}_{\lambda/\mu})\right) := Z\left(\tilde{\omega}(\mathfrak{S}_{\lambda/\mu})\right).\]

Therefore, by Propositions \ref{first-Gamma-Delta-relationship} and \ref{second-Gamma-Delta-relationship}, we have the following description of the action of $\omega$ on our partition functions:

\begin{corollary} \label{involution-action-partition-function}
For all strict partitions $\lambda,\mu$, \[ \tilde{\omega}\left(Z(\mathfrak{S}^*_{\lambda/\mu}(\boldsymbol{x}, \boldsymbol{y}, \boldsymbol{A}, \boldsymbol{B})))\right) = \prod_{i=1}^N A_i^{2M+2} B_i^{M+1} Z(\mathfrak{S}^*_{\ba{\lambda}/\ba{\mu}}(\boldsymbol{x}, \boldsymbol{y}, \boldsymbol{A}, \boldsymbol{B})) = Z(\mathfrak{S}^*_{\lambda/\mu}(\boldsymbol{y}, \boldsymbol{x}, \boldsymbol{A}^{-1}, \boldsymbol{B}^{-1}))\] and \[ \tilde{\omega}\left(Z(\mathfrak{S}_{\lambda/\mu}(\boldsymbol{x}, \boldsymbol{y}, \boldsymbol{A}, \boldsymbol{B}))\right) = \prod_{i=1}^N A_i^{-2M-2} B_i^{-M-1} Z(\mathfrak{S}_{\ba{\lambda}/\ba{\mu}}(\boldsymbol{x}, \boldsymbol{y}, \boldsymbol{A}, \boldsymbol{B})) = Z(\mathfrak{S}_{\ba{\lambda}/\ba{\mu}}(\boldsymbol{y}, \boldsymbol{x}, \boldsymbol{A}^{-1}, \boldsymbol{B}^{-1})).\]
\end{corollary}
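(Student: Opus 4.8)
The plan is to chase definitions: Corollary \ref{involution-action-partition-function} is a bookkeeping consequence of Propositions \ref{first-Gamma-Delta-relationship}, \ref{second-Gamma-Delta-relationship}, and \ref{Functional-equations-proposition}, with no new geometric input. First I would unwind the outermost expressions. By the definition of $\tilde\omega$ on models and then on partition functions, $\tilde\omega(Z(\mathfrak{S}^*_{\lambda/\mu}(\boldsymbol{x},\boldsymbol{y},\boldsymbol{A},\boldsymbol{B}))) = Z(\mathfrak{S}_{\lambda/\mu}(\boldsymbol{x},\boldsymbol{y},\boldsymbol{A},\boldsymbol{B}))$ and $\tilde\omega(Z(\mathfrak{S}_{\lambda/\mu}(\boldsymbol{x},\boldsymbol{y},\boldsymbol{A},\boldsymbol{B}))) = Z(\mathfrak{S}^*_{\lambda/\mu}(\boldsymbol{x},\boldsymbol{y},\boldsymbol{A},\boldsymbol{B}))$, so each line of the corollary reduces to a pair of equalities relating a plain $Z(\mathfrak{S}_{\lambda/\mu})$ or $Z(\mathfrak{S}^*_{\lambda/\mu})$ to its $\ba{(\cdot)}$-reversed and parameter-inverted partners.

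For the first line: the equality of the leftmost and middle expressions is just Proposition \ref{first-Gamma-Delta-relationship} with its scalar prefactor $\prod_{i=1}^N A_i^{-2M-2}B_i^{-M-1}$ transposed to the left-hand side, giving $Z(\mathfrak{S}_{\lambda/\mu}(\boldsymbol{x},\boldsymbol{y},\boldsymbol{A},\boldsymbol{B})) = \prod_{i=1}^N A_i^{2M+2}B_i^{M+1}\,Z(\mathfrak{S}^*_{\ba\lambda/\ba\mu}(\boldsymbol{x},\boldsymbol{y},\boldsymbol{A},\boldsymbol{B}))$. The equality of the leftmost and rightmost expressions is Proposition \ref{second-Gamma-Delta-relationship} under the substitution $(\boldsymbol{x},\boldsymbol{y},\boldsymbol{A},\boldsymbol{B})\mapsto(\boldsymbol{y},\boldsymbol{x},\boldsymbol{A}^{-1},\boldsymbol{B}^{-1})$, and consistency of the middle and rightmost expressions is then exactly Proposition \ref{Functional-equations-proposition}(a) (the $A_i,B_i$ powers cancelling). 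The second line is handled the same way with the roles of $\mathfrak{S}$ and $\mathfrak{S}^*$ interchanged, now using the $\mathfrak{S}^*$-version of Proposition \ref{first-Gamma-Delta-relationship} for the first equality — equivalently, Proposition \ref{first-Gamma-Delta-relationship} applied with $\lambda,\mu$ replaced by $\ba\lambda,\ba\mu$ and $\ba{\ba\lambda}=\lambda$, $\ba{\ba\mu}=\mu$ — which yields $Z(\mathfrak{S}^*_{\lambda/\mu}(\boldsymbol{x},\boldsymbol{y},\boldsymbol{A},\boldsymbol{B})) = \prod_{i=1}^N A_i^{-2M-2}B_i^{-M-1}\,Z(\mathfrak{S}_{\ba\lambda/\ba\mu}(\boldsymbol{x},\boldsymbol{y},\boldsymbol{A},\boldsymbol{B}))$, together with Proposition \ref{second-Gamma-Delta-relationship} and Proposition \ref{Functional-equations-proposition}(b) for the remaining equality.

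The only place that requires care is the combinatorics of the reversal $\lambda\mapsto\ba\lambda$ and the attendant exponents of $A_i$ and $B_i$. Concretely one must keep track of the fact that each row of $\mathfrak{S}_{\lambda/\mu}$ contains $M+1$ vertices, so the per-vertex normalization by $A_i^{2}B_i$ (resp.\ $A_i^{-2}B_i^{-1}$) appearing in the $180^\circ$-rotation transformation contributes $A_i^{\pm(2M+2)}B_i^{\pm(M+1)}$ per row to the partition function; that $\ba{(\cdot)}$ sends a strict partition of length $\ell$ contained in $\rho_{M+1}$ to one of length $M+1-\ell$, so the hypothesis $\ell(\lambda)=\ell(\mu)$ needed for a nonzero partition function is preserved; and that $\max(\ba\lambda_1,\ba\mu_1)\le M$, so the same $M$ remains admissible throughout. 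Once these are in hand the chain of equalities is purely formal, and no ideas beyond the three cited propositions are needed.
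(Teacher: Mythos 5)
Your proposal is correct and is essentially the paper's own argument: the corollary is presented there as an immediate consequence of Propositions \ref{first-Gamma-Delta-relationship} and \ref{second-Gamma-Delta-relationship} (with Proposition \ref{Functional-equations-proposition} giving the consistency of the middle and right expressions), which is exactly the definition-chase you carry out, including the substitution $(\boldsymbol{x},\boldsymbol{y},\boldsymbol{A},\boldsymbol{B})\mapsto(\boldsymbol{y},\boldsymbol{x},\boldsymbol{A}^{-1},\boldsymbol{B}^{-1})$ and the use of $\ba{\ba{\lambda}}=\lambda$. Note only that your chain yields $Z(\mathfrak{S}_{\lambda/\mu}(\boldsymbol{y},\boldsymbol{x},\boldsymbol{A}^{-1},\boldsymbol{B}^{-1}))$ as the final term of the second display, matching the pattern of the first display, so the bars on $\ba{\lambda}/\ba{\mu}$ in that last term of the stated corollary appear to be a typo which your (correct) argument silently corrects.
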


We will see in Section \ref{free-fermionic-partition-function-section} that when we have a Hamiltonian matching a lattice model, the action of $\omega$ is the same as the action of $\tilde{\omega}$.

We conclude this section with a discussion of two specializations. \emph{Vicious walkers} are sets of non-intersecting lattice paths that were introduced by Fisher \cite{Fisher-vicious-walks}. These walkers can be interpreted as states of a solvable lattice model, and there is also a dual model made up of \emph{osculating walkers} \cite{Brak-osculating}. Through tableaux combinatorics or the Yang-Baxter equation, one can show that the partition functions of these models are Schur polynomials. The Lindstr\"{o}m-Gessel-Viennot (LGV) Lemma \cite{Lindstrom-LGV, GesselViennot-LGV} expresses the number of sets of non-intersecting paths on a graph as a determinantal formula, and when applied to vicious walkers, the LGV Lemma gives the Jacobi-Trudi formula for Schur polynomials. In addition, the Von N\"{a}agelsbach–Kostka formula (dual Jacobi-Trudi) follows from a similar argument by particle-hole duality (see \cite{ZinnJustin-six-vertex}).

The specialization of the model $\mathfrak{S}_{\lambda/\mu}(\boldsymbol{x}, \boldsymbol{0}, \boldsymbol{1}, \boldsymbol{1}))$ gives the vicious model from \cite{Korff-vicious-osculating} after reversing the row indices, while $\mathfrak{S}^*_{\lambda/\mu}(\boldsymbol{x}, \boldsymbol{0}, \boldsymbol{1}, \boldsymbol{1}))$ gives the osculating model, after a vertical flip and a rebalancing of the $\vx{c_1}$ and $\vx{c_2}$ weights. On the other hand, $\mathfrak{S}_{\lambda/\mu}(\boldsymbol{0}, \boldsymbol{x}, \boldsymbol{1}, \boldsymbol{1}))$ gives the osculating model, while $\mathfrak{S}^*_{\lambda/\mu}(\boldsymbol{0}, \boldsymbol{x}, \boldsymbol{1}, \boldsymbol{1}))$ gives the vicious model. By Corollary \ref{involution-action-partition-function}, $\tilde{\omega}$ interchanges these models.

A second set of specializations obtains the $\Delta$ and $\Gamma$ models from \cite{Brubaker-Schultz}. In Figure 2 of that paper, interpret left and up arrows as $-$ and interpret right and down arrows as $+$. Then $\mathfrak{S}_{\lambda/\mu}(\boldsymbol{x}, \boldsymbol{xt}, \boldsymbol{1}, \boldsymbol{1}))$ is the $\Delta$ model from that paper. On the other hand, if we flip $\mathfrak{S}^*$ vertically and rebalance the $\vx{c_1}$ and $\vx{c_2}$ vertices, then the $\Gamma$ model in \cite{Brubaker-Schultz} matches $\mathfrak{S}^*_{\lambda/\mu}(\boldsymbol{x}, \boldsymbol{x/t}, \boldsymbol{1}, \boldsymbol{t}))$.

\section{Six-vertex models and Hamiltonians} \label{proof-of-classical-main-result-section}

The purpose of this section is to prove formulas for the partition functions of $\mathfrak{S}$ and $\mathfrak{S}^*$ in terms of a Hamiltonian operator.

Let $T$ (resp. $T^*$) be the \emph{row transfer matrix} for $\mathfrak{S}$ (resp. $\mathfrak{S}^*$): \[\langle\mu|T|\lambda\rangle := Z(\mathfrak{S}_{\lambda/\mu}), \hspace{20pt} \langle\lambda|T^*|\mu\rangle := Z(\mathfrak{S}^*_{\lambda/\mu}),\] where for both lattice models, $N=1$ and $M\ge \max(\lambda_1,\mu_1)$.

Another way to interpret the main result of this section (Theorem \ref{Delta-lattice-Hamiltonian}) is that the action of the operator $e^\phi$ equals (up the a simple factor) the action of $T$.

\subsection{The $\mathfrak{S}$ lattice model}

We will say that the lattice model $\mathfrak{S}$ and the Hamiltonian operator $e^{H_+}$ \emph{match} if the following condition holds: \begin{align}Z(\mathfrak{S}_{\lambda/\mu}) = \prod_{i=1}^{N} A_i^{M+1} B_i^{\ell(\lambda)} \cdot\langle\mu|e^{H_+}|\lambda\rangle \hspace{20pt} \text{for all strict partitions $\lambda,\mu$ and all $M,N$.} \label{Delta-Lattice-Hamiltonian-correspondence-equation}\end{align} More generally, we will say that a lattice model $\mathfrak{S}$ and a Hamiltonian operator $e^H$ match if \begin{align}Z(\mathfrak{S}_{\lambda/\mu}) =  {\Large{*}} \cdot\langle\mu|e^H|\lambda\rangle \hspace{20pt} \text{for all strict partitions $\lambda,\mu$ and all $M,N$,} \label{General-Lattice-Hamiltonian-correspondence-equation}\end{align} where ${\Large{*}}$ represents any easily computable function of the Boltzmann weights of $\mathfrak{S}$.

In a sense, this condition tells us that the Hamiltonian operator $e^{H_+}$ has the \emph{same} structure as the lattice model $\mathfrak{S}$. Although the procedures for calculating the partition function and $\tau$ function are different, the result is always the same.

\begin{theorem} \label{Delta-lattice-Hamiltonian}
\leavevmode
\begin{enumerate}
    \item[(a)] (\ref{Delta-Lattice-Hamiltonian-correspondence-equation}) holds precisely when \begin{align} s_k^{(j)} = \frac{1}{k}\left(x_i^k + (-1)^{k-1}y_i^k\right) \hspace{20pt} \text{for all } k\ge 1, j\in [1,N]. \label{Delta-Hamiltonian-parameter}\end{align}
    \item[(b)] If the Boltzmann weights are not free fermionic, (\ref{Delta-Lattice-Hamiltonian-correspondence-equation}) does not hold for any choice of the $s_k^{(j)}$.
\end{enumerate}
\end{theorem}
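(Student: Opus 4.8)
The plan is to reduce the matching identity \eqref{Delta-Lattice-Hamiltonian-correspondence-equation} to a single row and there compare the (completely explicit) row transfer matrix of $\mathfrak{S}$ with the operator $e^{\phi}$, $\phi=\sum_{k\ge1}s_kJ_k$, using Wick's Theorem and the generating-function identity $H(t)=\exp S(t)$ of \eqref{S-equals-log-H}. Both sides of \eqref{Delta-Lattice-Hamiltonian-correspondence-equation} are multiplicative in the rows: on the lattice side, summing over the spin pattern $\nu$ carried by the horizontal line between row $1$ and rows $2,\dots,N$ gives $Z(\mathfrak{S}^{(N)}_{\lambda/\mu})=\sum_{\nu}Z(\mathfrak{S}^{(\mathrm{rows}\,2..N)}_{\nu/\mu})\,Z(\mathfrak{S}^{(\mathrm{row}\,1)}_{\lambda/\nu})$, and on the operator side the same decomposition is the branching rule (Proposition~\ref{Hamiltonian-branching-rule}), obtained by inserting $\sum_\nu|\nu\rangle\langle\nu|$ between $e^{\phi_1}$ and $e^{\phi_2}\cdots e^{\phi_N}$. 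Since the side edges are $+$, every $-$-path in a single row enters from below and leaves above, so the particle number is conserved row by row; thus only $\nu$ with $\ell(\nu)=\ell(\lambda)$ contribute and the prefactor $\prod_i A_i^{M+1}B_i^{\ell(\lambda)}$ splits correctly over the rows. Hence it suffices to prove (a) and (b) for $N=1$, i.e.\ that the row transfer matrix $T$ of $\mathfrak{S}$ equals $A^{M+1}B^{\widehat{\ell}}e^{\phi}$ (with $\widehat{\ell}$ the number operator) precisely for the stated $\phi$, and for no $\phi$ when the weights are not free fermionic.

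The heart of the argument is that the single-row transfer matrix is essentially ``deterministic.'' Scanning a row from right to left, one checks that for a six-vertex vertex the left spin is uniquely determined by the top, right, and bottom spins whenever any admissible configuration exists at all (and for two of the eight sign patterns none does). Consequently $\mathfrak{S}_{\lambda/\mu}$ with $N=1$ has at most one admissible state, so $Z(\mathfrak{S}_{\lambda/\mu})$ is a single explicit monomial in the six weights, nonzero exactly when $\ell(\lambda)=\ell(\mu)$, $\mu_i\le\lambda_i$ for all $i$, and the de-shifted skew shape $(\lambda-\rho)/(\mu-\rho)$ contains no $2\times2$ block. Reading off two families of entries and dividing by the prefactor: the single-particle state for $\lambda=(a)$, $\mu=(b)$ with $a>b$ is the chain $c_1,b_2,\dots,b_2,c_2$, whose normalized weight is $h_k:=u\,v^{k-1}$ with $k=a-b$, where $u=\vx{c_1}\vx{c_2}/(\vx{a_1}\vx{b_1})$ and $v=\vx{b_2}/\vx{a_1}$ (and $h_0=1$); the state for $\lambda=(2,1)$, $\mu=(1,0)$ is the chain $c_1,a_2,c_2$, whose normalized weight is $\vx{c_1}\vx{c_2}\vx{a_2}/(\vx{a_1}\vx{b_1}^{2})$.

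On the operator side, Wick's Theorem --- equivalently Jacobi--Trudi, Proposition~\ref{Hamiltonian-Jacobi-Trudi} --- gives $\langle\mu|e^{\phi}|\lambda\rangle=\det_{1\le i,j\le\ell}h_{\lambda_i-\mu_j}$ for strict partitions $\lambda,\mu$ of length $\ell$, with $h_k=\langle(0)|e^{\phi}|(k)\rangle$ and $\sum_{k\ge0}h_kt^k=\exp\sum_{k\ge1}s_kt^k$ by \eqref{S-equals-log-H}. Matching the $\ell\le1$ entries with the previous step forces $h_k=u\,v^{k-1}$, which (after taking a logarithm) determines $\phi$ uniquely in terms of $u$ and $v$; matching the $(2,1)/(1,0)$ entry then forces $\vx{c_1}\vx{c_2}\vx{a_2}/(\vx{a_1}\vx{b_1}^2)=h_1^2-h_2=u(u-v)$, which simplifies to $\vx{a_1}\vx{a_2}=\vx{c_1}\vx{c_2}-\vx{b_1}\vx{b_2}$: the free fermion condition. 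This already establishes (b), since if the weights are not free fermionic no choice of $s_k^{(j)}$ can satisfy even these few instances. For the remaining (``only if'') half of (a) the weights of $\mathfrak{S}$ are free fermionic by construction, so substituting $\vx{a_1}=A$, $\vx{b_1}=AB$, $\vx{b_2}=xA$, $\vx{c_1}=(x+y)AB$, $\vx{c_2}=A$ gives $u=x+y$, $v=x$, hence $\sum_kh_kt^k=(1+yt)/(1-xt)$ and $s_k=\tfrac1k\bigl(x^k+(-1)^{k-1}y^k\bigr)$, which is \eqref{Delta-Hamiltonian-parameter}.

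Conversely, one must check that the choice \eqref{Delta-Hamiltonian-parameter} makes \eqref{Delta-Lattice-Hamiltonian-correspondence-equation} hold for \emph{all} $\lambda,\mu$. Having reduced to $N=1$, this amounts to the identity: the weight of the unique forced single-row state equals $A^{M+1}B^{\ell}$ times $\det_{1\le i,j\le\ell}h_{\lambda_i-\mu_j}$ with $h_k=(x+y)x^{k-1}$. The latter determinant is the supersymmetric skew Schur polynomial $s_{(\lambda-\rho)/(\mu-\rho)}$ in the single variables $x$ and $y$; it vanishes unless that skew shape has no $2\times2$ block, and when nonzero it is a single monomial whose $x$-exponent counts the horizontal ($b_2$) steps and whose $y$-exponent counts the crossing ($a_2$) steps of the forced state --- so the two sides agree. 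This combinatorial identification (a Lindstr\"om--Gessel--Viennot-type statement, with the twist that a single row supports exactly one non-intersecting path system), together with keeping the $A,B$-bookkeeping and the Jacobi--Trudi shift/sign conventions in order, is the step I expect to require the most care; the degenerate weights with $\vx{c_1}\vx{c_2}=0$, for which several of the test entries above vanish, need a short separate argument.
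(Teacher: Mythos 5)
Your outline is essentially the paper's proof: the same reduction to a single row by inserting $\sum_\nu|\nu\rangle\langle\nu|$ (the Miwa-transform step in the proof of Proposition \ref{Reduced-Delta-lattice-Hamiltonian}), the same observation that the one-row model has at most one admissible state, the same one-particle generating-function computation giving $h_k$ as a geometric-type series and hence (\ref{Delta-Hamiltonian-parameter}) by taking $\log H(t)$, and the same use of Wick's Theorem/Jacobi--Trudi for general $\lambda,\mu$. Two points of comparison are worth recording. First, your route to part (b) is genuinely leaner than the paper's: you force the free fermion condition from the single family $\lambda=(2,1)$, $\mu=(1,0)$, by matching $h_1^2-h_2=u(u-v)$ against the normalized state weight $\vx{c_1}\vx{c_2}\vx{a_2}/(\vx{a_1}\vx{b_1}^2)$, whereas the paper re-runs the entire interleaving determinant with arbitrary weights and compares the exponent of $\vx{a_2}$ with $\vx{c_1}-\vx{b_2}$; your test-case argument gives necessity with far less computation, at the price of the degenerate case $\vx{c_1}\vx{c_2}=0$ that you rightly flag (the paper's normalization $\vx{c_2}=1$ plus Remark \ref{c-weights-remark} makes this painless). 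Second, the step you defer is exactly where the paper does its real work: Lemma \ref{one-row-lemma} evaluates $\det_{i,j}h_{\lambda_i-\mu_j}$ for interleaving $\lambda,\mu$ as a product of block determinants, each block contributing $\tfrac{x+y}{x}\bigl(\tfrac{y}{x}\bigr)^{j-i-1}x^{\lambda_i-\mu_j}$, so the value is $x^{\#\vx{b_2}}\,y^{\#\vx{a_2}}\,(x+y)^{\#\{\vx{c_1},\vx{c_2}\ \text{pairs}\}}$ --- not a single monomial in $x$ and $y$ alone, as your phrasing suggests; the $(x+y)$ factors from the $\vx{c}$-pairs are essential to the match. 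Likewise the vanishing for non-interleaving shapes is proved in the paper by column operations using that $h_{p+r}/h_r$ is independent of $r\ge 1$, rather than by the tableau/no-$2\times2$-block criterion you invoke (which is also correct, but would need the supertableau evaluation stated precisely). So the skeleton is right and part (b) is arguably improved, but to be a complete proof the concluding interleaving identity needs either the explicit block-determinant computation or a correctly stated one-pair supersymmetric Schur evaluation in place of the asserted monomial identification.
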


There are a few technical reasons that we look for a relationship of the form (\ref{Delta-Lattice-Hamiltonian-correspondence-equation}). First, the $\tau$ function is independent of $M$, and increasing $M$ without changing $\lambda$ or $\mu$ adds more $\vx{a_1}$ vertices to each state. In other words, the Hamiltonian doesn't ``see'' $\vx{a_1}$ vertices. This means that the $\tau$ function is independent of $A_i$.

Similarly, the Hamiltonian doesn't ``see'' $\vx{b_1}$ vertices. For instance, let $M > \max(\lambda_1,\mu_1)$, and let $\tilde{\lambda} = (M,\lambda_1,\lambda_2,\ldots)$, $\tilde{\mu} = (M,\mu_1,\mu_2\ldots)$. Then $Z(\mathfrak{S}_{\tilde{\lambda}/\tilde{\mu}}) = \prod_{i=1}^N B_i\cdot Z(\mathfrak{S}_{\lambda/\mu})$, but the $\tau$ function is unchanged. Thus, the $\tau$ function is independent of $B_i$ as well.

This observation allows us to simplify our analysis. Since $A_i$ appears in every vertex in row $i$ of the lattice, the weight of every state must have the factor $\prod_i A_i^{M+1}$, and therefore so must the partition function. Similarly, note that $B_i$ appears in the weight of precisely the vertices with a $-$ spin on their bottom edge. By particle conservation, there must be the same number of vertical $-$ spins in each row or else both the partition function and $\tau$ function are 0. This means that in each admissible state of the model $B_i$ appears precisely $\ell(\lambda)$ times, and we must have $\ell(\lambda)=\ell(\mu)$.

Therefore, we have reduced Part a of Theorem \ref{Delta-lattice-Hamiltonian} to the following proposition.

\begin{proposition} \label{Reduced-Delta-lattice-Hamiltonian}
Set $A_i=B_i=1$ for all $i$. Then for all strict partitions $\lambda$ and $\mu$ with $\ell(\lambda) = \ell(\mu) = \ell$, \begin{align}Z(\mathfrak{S}_{\lambda/\mu}) = \cdot\langle\mu|e^{H_+}|\lambda\rangle \hspace{20pt} \text{for all strict partitions $\lambda,\mu$ and all $M,N$.} \label{Reduced-Lattice-Hamiltonian-correspondence-equation}\end{align} precisely when the Hamiltonian parameters are defined by (\ref{Delta-Hamiltonian-parameter}).
\end{proposition}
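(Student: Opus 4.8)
The plan is to strip the problem down, first to a single row and then to a single particle, using Wick's Theorem on the Hamiltonian side and the fact that a free-fermionic one-row transfer matrix is ``Gaussian'' (so that Wick's Theorem applies to it as well) on the lattice side.

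\textit{Reduction to one row.} Since the $\phi_j$ commute, $e^{H_+}=e^{\phi_N}\cdots e^{\phi_1}$, and the partition function of the $N$-row model is the composite $\langle\mu|\,T_N\cdots T_1\,|\lambda\rangle$ of the one-row transfer matrices $T_i$. Because (\ref{Reduced-Lattice-Hamiltonian-correspondence-equation}) is asserted for every pair of strict partitions, it holds for all $N$ if and only if $T_i=e^{\phi_i}$ as operators for each $i$ (necessity is seen by taking $N=1$). So it is enough to treat a single row with one parameter pair $(x,y)$ and $A=B=1$, and to show that $Z(\mathfrak{S}_{\lambda/\mu})=\langle\mu|e^{\phi}|\lambda\rangle$ for all strict $\lambda,\mu$ exactly when $\phi=\sum_{k\ge1}\tfrac1k\bigl(x^{k}+(-1)^{k-1}y^{k}\bigr)J_k$.

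\textit{The one-particle case pins down the parameters.} For $\lambda=(a)$, $\mu=(b)$ (a single path) the one-row states are immediate: $Z(\mathfrak{S}_{(a)/(b)})$ is $0$ for $a<b$, is $1$ for $a=b$, and is $x^{a-b-1}(x+y)$ for $a>b$ (one $\vx{c_1}$ vertex, a string of $\vx{b_2}$ vertices, one $\vx{c_2}$ vertex, everything else of weight $1$). Hence $\sum_{k\ge0}Z(\mathfrak{S}_{(b+k)/(b)})\,t^{k}=\tfrac{1+yt}{1-xt}$. On the Hamiltonian side, (\ref{h_k-as-Hamiltonian}) and (\ref{S-equals-log-H}) give $\langle(b)|e^{\phi}|(b+k)\rangle=h_k$ and $\sum_{k\ge0}h_k t^k=\exp S(t)$. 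So the one-particle matrix elements of $T$ and $e^{\phi}$ agree if and only if $\exp S(t)=\tfrac{1+yt}{1-xt}$, i.e. $S(t)=\log(1+yt)-\log(1-xt)$, i.e. $s_k=\tfrac1k\bigl(x^{k}+(-1)^{k-1}y^{k}\bigr)$, which is (\ref{Delta-Hamiltonian-parameter}). Specializing (\ref{Reduced-Lattice-Hamiltonian-correspondence-equation}) to $\ell=1$ already gives the ``only if'' direction and forces (\ref{Delta-Hamiltonian-parameter}).

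\textit{From one particle to many, and the obstacle.} For the converse, assume (\ref{Delta-Hamiltonian-parameter}). Wick's Theorem (the proposition above) gives $\langle\mu|e^{\phi}|\lambda\rangle=\det_{1\le i,j\le\ell}\langle(\mu_i)|e^{\phi}|(\lambda_j)\rangle=\det_{1\le i,j\le\ell}Z(\mathfrak{S}_{(\lambda_j)/(\mu_i)})$ (up to a fixed sign coming from the ordering of the fermions, which is absorbed by the corresponding ordering sign on the lattice side). The remaining, genuinely lattice-model input is the matching identity $Z(\mathfrak{S}_{\lambda/\mu})=\det_{1\le i,j\le\ell}Z(\mathfrak{S}_{(\lambda_j)/(\mu_i)})$. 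This is where the free-fermion relation $\vx{a_1}\vx{a_2}+\vx{b_1}\vx{b_2}=\vx{c_1}\vx{c_2}$ is essential: expanding the right-hand determinant over $S_\ell$ turns it into a signed sum over $\ell$ coloured lattice paths, and one builds a weight-preserving, sign-reversing involution that locally reroutes two coloured paths meeting at a vertex (the free-fermion relation being precisely what makes such a local swap weight-preserving), leaving only the honest states of $\mathfrak{S}_{\lambda/\mu}$, each with sign $+1$. Equivalently, at the free-fermion point the one-row transfer matrix is the exponential of a fermion bilinear, so it obeys Wick's Theorem on the same footing as $e^{\phi}$, and the two operators, having equal one-particle matrix elements, are equal; this is the one-row shadow of the theorem of \cite{BBF-Schur-polynomials} that free-fermionic partition functions are Schur functions. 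I expect this determinantal collapse to be the main obstacle --- showing that the free-fermion condition makes the ``osculating''/crossing corrections cancel so that the many-particle one-row partition function reduces to a determinant of one-particle ones; the rest is bookkeeping with generating functions and the boson--fermion dictionary of Section~\ref{background-section}.
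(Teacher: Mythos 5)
Your overall architecture (reduce to a single row by branching, pin down the $s_k$ via the one-particle generating function $\tfrac{1+yt}{1-xt}=\exp S(t)$, then use Wick's Theorem for the many-particle matrix element) matches the paper's proof through Lemma \ref{one-particle-lemma}, and your ``only if'' direction via the $\ell=1$ case is fine. The gap is in the step you yourself flag as ``the main obstacle'': the lattice-side determinantal collapse $Z(\mathfrak{S}_{\lambda/\mu})=\det_{i,j}Z(\mathfrak{S}_{(\lambda_i)/(\mu_j)})$ for one row is never proved. You offer two sketches --- a sign-reversing rerouting involution whose local weight-preservation is supposed to come from the free-fermion relation, and the assertion that the one-row transfer matrix is the exponential of a fermion bilinear and hence obeys Wick's Theorem --- but neither is carried out, and both are genuinely substantive claims (the second amounts to re-deriving a Gaussianity statement for $T$ that is nowhere established in this setting). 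Note also that the paper obtains exactly this determinant identity (Proposition \ref{six-vertex-LGV}) as a \emph{corollary} of the matching theorem, not as an input, so your plan uses as a lemma something that here is downstream of the result.

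The missing step is avoidable, and this is where the paper's route differs: with all-$+$ side boundaries a single row of $\mathfrak{S}_{\lambda/\mu}$ has at most one admissible state (exactly one when $\lambda$ and $\mu$ interleave, none otherwise), so the lattice side is read off directly; the work is then entirely on the Hamiltonian side, where the Wick/Jacobi--Trudi determinant $\det_{1\le i,j\le\ell} h_{\lambda_i-\mu_j}$ is evaluated explicitly using its block near-triangular structure (entries vanish for $j\ge i+1$ unless $\mu_i=\lambda_{i+1}$, and each block determinant is a closed-form monomial in $x$, $y$, $x+y$), with the non-interleaving case killed by column operations. If you want to salvage your plan, either carry out that determinant evaluation (which is the paper's Lemma \ref{one-row-lemma}) or actually construct the claimed involution; as written, asserting it does not close the argument. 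A minor further point: there is no sign to ``absorb on the lattice side'' --- the partition function is a signless sum of monomials, and Wick's Theorem set up as in Proposition \ref{Hamiltonian-Jacobi-Trudi} already produces the determinant $\det\langle(\mu_j)|e^{\phi}|(\lambda_i)\rangle$ with no extraneous ordering sign, so that parenthetical should be removed rather than relied upon.
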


Our proof will involve cases of increasing generality. We will first prove the proposition in the case that $N=1$ and $\ell=1$, and then move on to the case where $N=1$ and $\ell$ is arbitrary. This second step involves Wick's theorem in a way that will make Part b of Theorem \ref{Delta-lattice-Hamiltonian} easy to prove. When $N=1$, we only have one set of parameters $x_1,y_1$, and $s_k = s_k^{(1)}$; we will often leave off the index in this case.

Finally, we will use a simple branching argument to prove Proposition \ref{Reduced-Delta-lattice-Hamiltonian} for arbitrary $N$.

\begin{lemma} \label{one-particle-lemma}
Proposition \ref{Reduced-Delta-lattice-Hamiltonian} is true in the case where $N=1$, $\lambda = (r+p)$, and $\mu = (r)$.
\end{lemma}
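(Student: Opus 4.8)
The plan is to evaluate both sides of the asserted identity explicitly as functions of $p$ — working with $A_1=B_1=1$, $N=1$, and writing $x=x_1$, $y=y_1$, $s_k=s_k^{(1)}$ — and then to compare generating functions. So the desired statement becomes: $Z(\mathfrak{S}_{(r+p)/(r)})=\langle (r)|e^{H_+}|(r+p)\rangle$ for all $r\ge 0$, all $p$, and all $M\ge r+p$, precisely when $s_k=\tfrac1k\bigl(x^k+(-1)^{k-1}y^k\bigr)$ for every $k\ge 1$.

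First I would handle the lattice side. In the single row of $\mathfrak{S}_{(r+p)/(r)}$ a lone particle enters the bottom boundary at column $r+p$ and must leave the top boundary at column $r$; since every horizontal move in $\mathfrak{S}$ is leftward and there is only one row, the particle's path — hence the whole state — is completely forced, so the model has exactly one admissible state when $p\ge 0$ and none when $p<0$. Reading the vertices off Figure \ref{Delta-vertex-weights}, for $p\ge 1$ that state has $\vx{a_1^{(1)}}$ vertices (weight $1$) to the right of column $r+p$ and to the left of column $r$, one $\vx{c_1^{(1)}}$ vertex (weight $x+y$) at column $r+p$, a $\vx{b_2^{(1)}}$ vertex (weight $x$) at each of the $p-1$ columns strictly between $r$ and $r+p$, and a $\vx{c_2^{(1)}}$ vertex (weight $1$) at column $r$; hence $Z(\mathfrak{S}_{(r+p)/(r)})=(x+y)x^{p-1}$, which is visibly independent of $M$ (enlarging $M$ only adds more $\vx{a_1^{(1)}}$ vertices). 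For $p=0$ the unique state has a single $\vx{b_1^{(1)}}$ vertex and $Z=1$, and for $p<0$ we get $Z=0$.

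Next I would pin down the Hamiltonian side. Here $\langle (r)|e^{H_+}|(r+p)\rangle=\langle 0|\psi_{r-1/2}\,e^{H_+}\,\psi^*_{r+p-1/2}|0\rangle$, which I would evaluate exactly as in the proof of Proposition \ref{Hamiltonian-Jacobi-Trudi}: conjugating gives $e^{H_+}\psi^*_m e^{-H_+}=\sum_{j\ge 0}h_j\psi^*_{m-j}$ with the $h_j$ of (\ref{S-equals-log-H}), and then $e^{H_+}|0\rangle=|0\rangle$ yields $\langle (r)|e^{H_+}|(r+p)\rangle=h_p$, which vanishes for $p<0$ and equals $1$ for $p=0$. (The case $r=0$ is degenerate: one interprets $\mu=(0)$ as the empty partition, and both sides then collapse to $\delta_{p,0}$.)

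Finally I would compare: by the two computations, (\ref{Reduced-Lattice-Hamiltonian-correspondence-equation}) holds for these boundary data exactly when $h_p=(x+y)x^{p-1}$ for all $p\ge 1$ (the cases $p\le 0$ being automatic). Passing to the generating function $H(t)=\sum_{p\ge 0}h_p t^p$ and using (\ref{S-equals-log-H}), this is equivalent to $H(t)=(1+yt)/(1-xt)$, i.e. to $S(t)=\log(1+yt)-\log(1-xt)=\sum_{k\ge 1}\tfrac1k\bigl(x^k+(-1)^{k-1}y^k\bigr)t^k$, which is exactly (\ref{Delta-Hamiltonian-parameter}); both implications fall out at once. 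I do not anticipate a genuine obstacle here, since this is the base case — the only points needing care are the bookkeeping of which vertex sits at each column (in particular the two ``corner'' vertices $\vx{c_1^{(1)}}$ and $\vx{c_2^{(1)}}$) and the degenerate behaviour at $r=0$; the real work in Proposition \ref{Reduced-Delta-lattice-Hamiltonian}, namely the appeal to Wick's Theorem, only enters once $\ell$ is allowed to exceed $1$.
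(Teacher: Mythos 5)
Your main line of argument is the same as the paper's: identify the unique admissible one-row state and its weight $(x+y)x^{p-1}$ (with $Z=1$ for $p=0$, $Z=0$ for $p<0$), recognize the matrix element $\langle(r)|e^{H_+}|(r+p)\rangle$ as $h_p$, and then compare generating functions via $\log H(t)=\log(1+yt)-\log(1-xt)$ to get the equivalence with (\ref{Delta-Hamiltonian-parameter}) in both directions. The only cosmetic difference is that you obtain $h_p$ from the conjugation $e^{H_+}\psi^*_m e^{-H_+}=\sum_j h_j\psi^*_{m-j}$ rather than by expanding $e^{H_+}$ directly and citing (\ref{h_k-as-Hamiltonian}); that is equivalent. (Your placement of $\vx{c_1}$ at column $r+p$ and $\vx{c_2}$ at column $r$ is in fact the correct reading of Figure \ref{Delta-vertex-weights}, and since only the product of the two weights enters, nothing changes.)

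The one genuine flaw is your parenthetical treatment of $r=0$. In this paper $(0)$ is a length-one strict partition, not the empty partition: a zero part puts a $-$ spin at column $0$ of the top boundary (see Figure \ref{Delta-six-vertex-figure}, where $\mu=(3,1,0)$ has a $-$ at column $0$), and $|(0)\rangle$ is a genuine basis vector distinct from the vacuum. Consequently $\mathfrak{S}_{(p)/(0)}$ has the particle entering at column $p$ and exiting at column $0$, so $Z(\mathfrak{S}_{(p)/(0)})=(x+y)x^{p-1}$ for $p\ge1$, and on the operator side $\langle(0)|e^{H_+}|(p)\rangle=h_p$ — this is precisely equation (\ref{h_k-as-Hamiltonian}). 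Nothing ``collapses to $\delta_{p,0}$''; as written, your claim asserts a false value of the partition function in that case, and it is not a harmless omission, because the $r=0$ instance $h_p=Z(\mathfrak{S}_{(p)/(0)})$ is exactly the one invoked later in the proof of Lemma \ref{one-row-lemma}. The fix is simply to delete the special-casing: your argument for general $r$ already covers $r=0$ verbatim.
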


\begin{proof}
If $p<0$, both sides of (\ref{Reduced-Lattice-Hamiltonian-correspondence-equation}) are zero. Otherwise, $\mathfrak{S}_{\lambda/\mu}$ has exactly one admissible state. If $p=0$, then column $r$ has a vertex of type $\vx{b_1}$, and all other vertices are type $\vx{a_1}$. If $p>0$, then column $r$ has a vertex of type $\vx{c_1}$, columns $r+1,\ldots,r+p-1$ have vertices of type $\vx{b_2}$, column $r+p$ has a vertex of type $\vx{c_2}$, and all other vertices are type $\vx{a_1}$.

This gives \[Z(\mathfrak{S}_{\lambda/\mu}) = \begin{cases} (x+y)x^{p-1}, & \text{if } p\ge 1 \\ 1, & \text{if } p=0.\end{cases}\]

Now, we show that the Hamiltonian matches the partition function. Let \[H(t) = 1 + \sum_{p\ge 1} (x+y)x^{p-1} t^p.\] On the other hand, \[\langle\mu|e^{H_+}|\lambda\rangle = \langle 0|\psi_r e^{H_+} \psi^*_{p+r}|0\rangle = \langle 0|\psi_0 e^{H_+} \psi^*_p|0\rangle = \langle 0|\psi_0 e^{H_+} \psi^*(t)|0\rangle|_{t^p},\] so the result will follow once we can show that \[H(t) = \tilde{H}(t) := \langle 0|\psi_0 e^{H_+} \psi^*(t)|0\rangle.\]

By Lemma \ref{h_k-as-Hamiltonian}, \begin{align*}h_p = \langle 0|\psi_0 e^{H_+} \psi^*(t)|0\rangle|_{t^p} &= \left\langle 0\left|\psi_0\sum_{k\ge 0} \frac{1}{k!} \sum_{q_1+\ldots+q_k = p} s_{q_1}\ldots s_{q_k} J_{q_1}\ldots J_{q_k} \psi^*_p\right|0\right\rangle \notag\\&= \sum_{k\ge 0} \frac{1}{k!} \sum_{q_1+\ldots+q_k = p} s_{q_1}\ldots s_{q_k},\end{align*} so \begin{align*}\tilde{H}(t) &= \sum_{p\ge 0}\left(\sum_{k\ge 0} \frac{1}{k!} \sum_{q_1+\ldots+q_k = p} s_{q_1}\ldots s_{q_k}\right) t^p = \exp\left(\sum_{m\ge 1} s_m t^m\right).\end{align*}

Now, we can sum $H(t)$ as a geometric series: \begin{align*} H(t) = \frac{1+((x+y)-x)t}{1-xt} = \frac{1+yt}{1-xt},\end{align*} so \[\log H(t) = \log(1+yt) - \log(1-xt),\] and \begin{align*} \left. \left(\frac{d^n}{dt^n} \log H(t)\right)\right|_{t=0} &= \left.\left(-\frac{(n-1)!(-y)^n}{(1+yt)^n} + \frac{(n-1)!x^n}{(1-xt)^n}\right)\right|_{t=0} \\&= (n-1)!(x^n + (-1)^{n-1}y^n) \\&= n! s_n.\end{align*} Therefore, $H(t) = \tilde{H}(t)$.
\end{proof}

\begin{lemma} \label{one-row-lemma}
Proposition \ref{Reduced-Delta-lattice-Hamiltonian} is true when $N=1$, for arbitrary $\lambda,\mu$.
\end{lemma}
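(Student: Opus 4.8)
The plan is to establish the two implications of Proposition~\ref{Reduced-Delta-lattice-Hamiltonian} in the case $N=1$ separately. With $A_i=B_i=1$ the single row carries weights $\vx{a_1}=1$, $\vx{a_2}=y$, $\vx{b_1}=1$, $\vx{b_2}=x$, $\vx{c_1}=x+y$, $\vx{c_2}=1$. The backward implication is a formality: if (\ref{Reduced-Lattice-Hamiltonian-correspondence-equation}) holds for all strict $\lambda,\mu$, then it holds for $\lambda=(r+p)$, $\mu=(r)$, and Lemma~\ref{one-particle-lemma} already forces the parameters to be those of (\ref{Delta-Hamiltonian-parameter}). For the forward implication, assuming (\ref{Delta-Hamiltonian-parameter}), I would show that both sides of (\ref{Reduced-Lattice-Hamiltonian-correspondence-equation}) equal a common determinant built from the one-particle partition functions $h_p:=Z(\mathfrak{S}_{(p)/(0)})$ (so $h_p=0$ for $p<0$, $h_0=1$, $h_p=(x+y)x^{p-1}$ for $p\ge1$, by Lemma~\ref{one-particle-lemma}).

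On the Hamiltonian side, write $|\lambda\rangle=\psi^*_{\lambda_1-\frac{1}{2}}\cdots\psi^*_{\lambda_\ell-\frac{1}{2}}|0\rangle$ and $\langle\mu|=\langle 0|\psi_{\mu_\ell-\frac{1}{2}}\cdots\psi_{\mu_1-\frac{1}{2}}$, and apply Wick's Theorem:
\[\langle\mu|e^{H_+}|\lambda\rangle = \pm\det_{1\le i,j\le\ell}\langle 0|\psi_{\mu_i-\frac{1}{2}}\,e^{H_+}\,\psi^*_{\lambda_j-\frac{1}{2}}|0\rangle,\]
the sign depending only on $\ell$. Each entry is a one-particle matrix element, equal by Lemma~\ref{one-particle-lemma} to $Z(\mathfrak{S}_{(\lambda_j)/(\mu_i)})=h_{\lambda_j-\mu_i}$. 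This is exactly the step that makes Theorem~\ref{Delta-lattice-Hamiltonian}(b) transparent, since Wick's Theorem turns the left-hand side into such a determinant for \emph{every} choice of the $s_k$.

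On the lattice side, I would prove $Z(\mathfrak{S}_{\lambda/\mu})=\pm\det_{1\le i,j\le\ell}h_{\lambda_j-\mu_i}$ (same sign) by a Lindstr\"{o}m--Gessel--Viennot argument adapted to the six-vertex model. Expanding the determinant, and then each $h_p$ as a sum over one-path states of a single row, rewrites the $\sigma$-summand as a signed overlay of $\ell$ single-row lattice paths, the $j$th going from bottom column $\lambda_j$ to top column $\mu_{\sigma(j)}$. When two overlaid paths collide, the collision is confined to a single column and can be completed to an admissible vertex in exactly two ways --- via a $\vx{b_1}/\vx{b_2}$ pair or a $\vx{c_1}/\vx{c_2}$ pair --- coming from overlays related by a transposition, hence of opposite sign; the free fermion identity $\vx{c_1}\vx{c_2}-\vx{b_1}\vx{b_2}=\vx{a_1}\vx{a_2}$ then makes their signed sum collapse to the weight of a genuine $\vx{a_2}$ vertex. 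Carrying this out at every collision leaves contributions in weight-preserving bijection with the admissible states of $\mathfrak{S}_{\lambda/\mu}$, with $\mathrm{sgn}(\sigma)$ recording the parity of the number of $\vx{a_2}$ vertices. Combining the two sides yields (\ref{Reduced-Lattice-Hamiltonian-correspondence-equation}).

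I expect the lattice-side identity to be the main obstacle: organizing the overlays so that each collision is local, checking that $\mathrm{sgn}(\sigma)$ tracks the count of crossing vertices, and verifying that the resulting bijection is weight-preserving. This is also the only place the free fermion condition is used, which is precisely why Theorem~\ref{Delta-lattice-Hamiltonian}(b) comes out for free: for $\ba{\mathfrak{S}}$ the relation $\vx{c_1}\vx{c_2}-\vx{b_1}\vx{b_2}=\vx{a_1}\vx{a_2}$ fails, so $Z(\ba{\mathfrak{S}}_{\lambda/\mu})$ is not a determinant of one-particle partition functions already for small $\lambda,\mu$, whereas Wick's Theorem forces the Hamiltonian side to be one for any $s_k$, so no matching can occur.
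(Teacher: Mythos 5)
Your Hamiltonian side is the paper's: Wick's Theorem applied to $\langle\mu|e^{H_+}|\lambda\rangle$ is exactly Proposition \ref{Hamiltonian-Jacobi-Trudi}, and the entries are identified with $h_{\lambda_i-\mu_j}$ via Lemma \ref{one-particle-lemma} (with the paper's ordering conventions the sign is $+1$, so your ``$\pm$ depending on $\ell$'' should be pinned down, but that is cosmetic). Where you genuinely diverge is the lattice side. The paper does not need any cancellation argument there: for interleaving $\lambda,\mu$ the one-row model has a \emph{unique} admissible state whose weight is written down directly, the determinant $\det(h_{\lambda_i-\mu_j})$ is evaluated in closed form from its block-Hessenberg structure, and the two expressions are compared; the non-interleaving case is handled separately by column operations (using that $h_{p+r}/h_r$ is independent of $r\ge 1$) to show the determinant vanishes. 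Your plan replaces this with an LGV-style signed-overlay collapse, and its core mechanism is correct and verifiable: at a column with $\mu_i=\lambda_{i+1}$ the two resolutions contribute $\vx{c_1}\vx{c_2}$ and $-\vx{b_1}\vx{b_2}$, and the free fermion identity collapses this to the $\vx{a_2}$ weight. This is an attractive route, and it is essentially the content behind the paper's later remark (Proposition \ref{six-vertex-LGV}) that Jacobi--Trudi plus Wick is a six-vertex LGV lemma.

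However, as written there is a concrete gap, and it sits exactly where you flag ``the main obstacle.'' The assertion that a collision of two overlaid paths ``is confined to a single column'' is false for general permutation terms: if $\mu_a<\lambda_{a+1}$ for some $a$ (the non-interleaving case, where $Z(\mathfrak{S}_{\lambda/\mu})=0$), or for long-range $\sigma$ generally, the two one-row paths occupy overlapping runs of horizontal edges, and such terms do not collapse to an $\vx{a_2}$ weight -- they must cancel in pairs under a separate tail/target-swap involution. That involution needs care: swapping exit columns preserves the weight only while all path lengths stay positive, since $h_ph_q=(x+y)^2x^{p+q-2}$ but $h_0h_{p+q}=(x+y)x^{p+q-1}$; the degenerate swap that would create a length-zero path is precisely the single-column collision, which must be routed to the collapse rather than to the cancellation, so the two mechanisms have to be disentangled cleanly (in the interleaving case one can check, as the paper's block structure shows, that the only nonvanishing terms are the $2^{\#\text{crossings}}$ resolution terms, so no cancellation is needed there). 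Relatedly, the sign bookkeeping is misstated: it is not that $\mathrm{sgn}(\sigma)$ equals the parity of the number of $\vx{a_2}$ vertices of a state; rather each admissible state with $k$ crossing columns receives $2^k$ signed overlay terms, with $\mathrm{sgn}(\sigma)$ equal to the parity of the number of crossings resolved the $\vx{b_1}\vx{b_2}$ way, and the signed sum factors as $(\vx{c_1}\vx{c_2}-\vx{b_1}\vx{b_2})^k$ per state. With these two points repaired the plan should go through, but they are the substance of the lattice-side identity, not routine bookkeeping; the paper sidesteps them entirely by explicit evaluation of both sides.
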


\begin{proof}
First assume that $\lambda$ and $\mu$ \emph{interleave} i.e. $\lambda_i\ge \mu_i\ge \mu_{i+1}$ for all $i$. In this case, $\mathfrak{S}_{\lambda/\mu}$ has exactly one admissible state, with vertices determined by the following table.

\begin{center}
\begin{tabular}{ |c|c| } 
 \hline
 Vertex in column $k$ & Condition ($i$ arbitrary)  \\\hline 
 $\vx{a_2}$ & $\mu_i=\lambda_{i+1}=k$  \\\hline
 $\vx{b_1}$ & $\lambda_i=\mu_i=k$  \\\hline
 $\vx{b_2}$ & $\lambda_i>k>\mu_i$  \\\hline
 $\vx{c_1}$ & $\mu_{i-1}>\lambda_i=k>\mu_i$  \\\hline
 $\vx{c_2}$ & $\lambda_i>k=\mu_i>\lambda_{i+1}$  \\\hline
 $\vx{a_1}$ & else  \\
 \hline
\end{tabular}
\end{center}

By the Jacobi-Trudi formula (Proposition \ref{Hamiltonian-Jacobi-Trudi}; note the $\rho$ shift), \[ \langle\mu|e^{H_+}|\lambda\rangle = \det_{1\le i,j\le \ell} h_{\lambda_i-\mu_j}.\] 

Let $\eta_{i,j} = h_{\lambda_i-\mu_j}$. Since $\lambda_i\ge\mu_i\ge\lambda_{i+1}$ for all $i$, and since $\lambda,\mu$ are strict, $\eta_{i,j} = 0$ whenever $j\ge i+1$, and \[\eta_{i,i+1} = \begin{cases} 1, & \mu_i = \lambda_{i+1} \\ 0, & \mu_i>\lambda_{i+1}.\end{cases}\]

On the other hand, by Lemma \ref{one-particle-lemma}, for all $p>0$, \[h_p = Z(\mathfrak{S}_{(p)/(0)}) = (x+y)\cdot x^{p-1}.\]

Note that $\eta_{i,j}$ with $i\le j$ doesn't appear in the determinant $\det (\eta_{i,j})$ unless $\mu_i = \lambda_{i+1}, \mu_{i+1}=\lambda_{i+1},\ldots \mu_{j-1} = \lambda_j$. Therefore, $\det(\eta_{i,j})$ is a product of blocks of the form \[\begin{bmatrix}\eta_{i,i} & 1 & & & \\ & & 1 & & \\ \ldots & & & \ldots & \\ & & & & 1 \\ \eta_{j,i} & & \ldots & & \eta_{j,j}\end{bmatrix}, \hspace{20pt} \text{where } i\le j,  \mu_i = \lambda_{i+1}, \mu_{i+1}=\lambda_{i+1},\ldots \mu_{j-1} = \lambda_j.\]

For all $i\le b\le a\le j$, $\lambda_b>\mu_a$, so $\eta_{x,y} = (x+y)x^{\lambda_b-\mu_a-1}$, and so the determinant of the block is \[\det \begin{bmatrix} (x+y) x^{\lambda_i-\mu_i-1} & 1 & & & \\ & & 1 & & \\ \ldots & & & \ldots & \\ & & & & 1 \\ (x+y) x^{\lambda_i-\mu_j-1} & & \ldots & & (x+y) x^{\lambda_j-\mu_j-1}\end{bmatrix} = \frac{x+y}{x}\cdot\left(\frac{x+y}{x}-1\right)^{j-i-1} x^{\lambda_i-\mu_j}.\]

Taking the product over all such blocks, we obtain \begin{align*}\det(\eta_{i,j}) &= x^{|\lambda|-|\mu|} \left(\frac{x+y}{x}-1\right)^\ell \left(\frac{x+y}{x+y-x}\right)^{\# \mu_i>\lambda_{i+1}}\cdot \left(\frac{x+y}{x}\right)^{-(\#\lambda_i=\mu_i)} \\&= x^{|\lambda|-|\mu|} \left(\frac{y}{x}\right)^\ell \left(\frac{x+y}{y}\right)^{\# \mu_i>\lambda_{i+1}}\cdot \left(\frac{x+y}{x}\right)^{-(\#\lambda_i=\mu_i)} \\&= x^{|\lambda|-|\mu|-\ell+(\#\lambda_i=\mu_i)} (x+y)^{(\#\mu_i>\lambda_{i+1})-(\#\lambda_i=\mu_i)} y^{\ell - (\#\mu_i>\lambda_{i+1})},\end{align*} where $|\lambda|$ is the sum of the parts of $\lambda$, likewise for $\mu$, and expressions like $\# \mu_i>\lambda_{i+1}$ represent the number of indices $i$ such that the statement is true.

Now, from the lattice model side, \[Z(\mathfrak{S}_{\lambda/\mu}) = x^{|\lambda|-|\mu|-(\#\lambda_i>\mu_i)} \cdot (x+y)^{\ell - (\#\lambda_i=\mu_i) - (\#\mu_i = \lambda_{i+1})} \cdot y^{\#\mu_i=\lambda_{i+1}} = \det(\eta_{i,j}).\]

If $\lambda$ and $\mu$ do not interleave, then it is easy to see that $Z(\mathfrak{S}_{\lambda/\mu})=0$. For some $a$ either $\lambda_a<\mu_a$, in which case $\langle\mu|e^{H_+}|\lambda\rangle=0$, or $\mu_a<\lambda_{a+1}$. Notice that $h_{p+r}/h_r$ is independent of $r$ as long as $r\ge 1$. We can use this fact to do column operations on the matrix $(\eta_{i,j})$, and obtain a matrix $(M_{ij})$ where $M_{ij}=0$ whenever $\lambda_{i+1}>\mu_j$. In particular, every nonzero entry in the first $a$ columns will be in the first $a-1$ rows. But this means that $\det(\eta_{ij}) = \det(M_{ij})=0$, so the result still holds.
\end{proof}

The proof of Proposition \ref{Reduced-Delta-lattice-Hamiltonian} is now formal. Both lattice models and Hamiltonians ``branch'' in the same way, so we simply sum over the intermediate partitions. In similar contexts, this is sometimes called a \emph{Miwa transform}. For clarity, we will write $\mathfrak{S}^N_{\lambda/\mu}$ for the usual $\mathfrak{S}$ lattice model with $N$ rows.

\begin{proof}[Proof of Proposition \ref{Reduced-Delta-lattice-Hamiltonian}]
\begin{align*} \langle\mu|e^{H_+}|\lambda\rangle &= \langle\mu|e^{\phi_N}\ldots e^{\phi_1}|\lambda\rangle \\&= \sum_{\nu_1,\ldots,\nu_{N-1}} \langle\mu|e^{\phi_N}|\nu_{N-1}\rangle \langle\nu_{N-1}|e^{\phi_{N-1}}|\nu_{N-2}\rangle\ldots \langle\nu_1|e^{\phi_1}|\lambda\rangle \\&= \sum_{\nu_1,\ldots,\nu_{N-1}} Z(\mathfrak{S}^1_{\nu_{N-1}/\mu})\ldots Z(\mathfrak{S}^1_{\lambda/\nu_1}) \\&= Z(\mathfrak{S}^N_{\lambda/\mu}).\end{align*}
\end{proof}

This completes the proof of Theorem \ref{Delta-lattice-Hamiltonian}(a).

\begin{proof}[Proof of Theorem \ref{Delta-lattice-Hamiltonian}(b)]
Let both the Boltzmann weights and the Hamiltonian parameters be arbitrary. In other words, we are using the general model $\ba{\mathfrak{S}}_{\lambda/\mu}$. By Remark \ref{c-weights-remark} and the discussion after the statement of Theorem \ref{Delta-lattice-Hamiltonian}, we may set $\vx{a_1^{(j)}}=\vx{b_1^{(j)}}=\vx{c_2^{(j)}}=1$, and check when (\ref{Reduced-Lattice-Hamiltonian-correspondence-equation}) holds.

By performing the calculations in Lemma \ref{one-particle-lemma}, and using the same notation, we get \[H(t) = \frac{1 + (\vx{c_1} - \vx{b_2})t}{1+\vx{b_2}t}, \hspace{20pt} \] so \[ \left. \left(\frac{d^n}{dz^n} \log H(z)\right)\right|_{z=0} =  (n-1)!(\vx{b_2}^n + (-1)^{n-1}(\vx{c_1}-\vx{b_2})^n).\] This means we must have $s_n = \frac{1}{n}(\vx{b_2}^n + (-1)^{n-1}(\vx{c_1}-\vx{b_2})^n)$.

Now, performing the calculations in Lemma \ref{one-row-lemma}, if $\lambda$ and $\mu$ interleave, \begin{align*}\langle\mu|e^{H_+}|\lambda\rangle &= \vx{b_2}^{|\lambda|-|\mu|} \cdot \left(\frac{\vx{c_1}}{\vx{b_2}}-1\right)^\ell \cdot \left(\frac{\vx{c_1}}{\vx{c_1}-\vx{b_2}}\right)^{\# \mu_i>\lambda_{i+1}}\cdot \left(\frac{\vx{c_1}}{\vx{b_2}}\right)^{-(\#\lambda_i=\mu_i)} \\&= \vx{b_2}^{|\lambda|-|\mu|-(\#\lambda_i>\mu_i)} \cdot \vx{c_1}^{\ell - (\#\lambda_i=\mu_i) - (\#\mu_i = \lambda_{i+1})} \cdot (\vx{c_1}-\vx{b_2})^{\#\mu_i=\lambda_{i+1}}.\end{align*}

On the other hand, \[Z(\mathfrak{S}_{\lambda/\mu}) = \vx{b_2}^{|\lambda|-|\mu|-(\#\lambda_i>\mu_i)} \cdot \vx{c_1}^{\ell - (\#\lambda_i=\mu_i) - (\#\mu_i = \lambda_{i+1})} \cdot \vx{a_2}^{\#\mu_i=\lambda_{i+1}},\] so equality holds if and only if $\vx{a_2} = \vx{c_1}-\vx{b_2}$, which is the free fermion condition.

\end{proof}

This completes the proof of Theorem \ref{Delta-lattice-Hamiltonian}. The following corollary is just restatement of the Theorem in terms of the Boltzmann weights, for convenience.

\begin{corollary}
The general $\mathfrak{S}$ lattice model $\ba{\mathfrak{S}}_{\lambda/\mu}$ matches the Hamiltonian $e^{H_+}$ (see (\ref{General-Lattice-Hamiltonian-correspondence-equation})) if and only if:
\begin{enumerate}
    \item[(a)] The Boltzmann weights of $\ba{\mathfrak{S}}_{\lambda/\mu}$ are free fermionic.
    \item[(b)] \begin{align*} s_k^{(j)} = \frac{1}{k}\left(\left(\frac{\vx{b_2^{(j)}}}{\vx{a_1^{(j)}}}\right)^k + (-1)^{k-1}\left(\frac{\vx{a_2^{(j)}}}{\vx{b_1^{(j)}}}\right)^k\right) \hspace{20pt} \text{for all } k\ge 1, j\in [1,N].\end{align*}
    \item[(c)] The extra factor is ${\Large{*}} = \prod_{i=1}^N (\vx{a_1^{(i)}})^{M-\ell} (\vx{b_1^{(i)}})^\ell.$ 
\end{enumerate}
\end{corollary}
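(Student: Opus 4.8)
The plan is to read the corollary off Theorem~\ref{Delta-lattice-Hamiltonian}, passing through the parametrization of free fermionic weights set up in Section~\ref{six-vertex-section}; essentially no new ideas are needed, only a change of variables and a power count. I would check the three items in turn.

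First, necessity of free fermionicity, i.e.\ the ``only if'' for (a). Given any weights, Remark~\ref{c-weights-remark} lets us rescale each row and rebalance the $\vx{c_1}/\vx{c_2}$ pair so that $\vx{a_1^{(j)}}=\vx{b_1^{(j)}}=\vx{c_2^{(j)}}=1$ for all $j$; since every admissible state has equally many $\vx{c_1}$ and $\vx{c_2}$ vertices and a fixed number of vertices carrying $\vx{a_1}$ or $\vx{b_1}$, this changes $Z$ only by an overall factor that is harmless for the notion of ``match.'' In this normalization Theorem~\ref{Delta-lattice-Hamiltonian}(b) forbids a match unless the weights are free fermionic, so no $e^{H_+}$ can match $\ba{\mathfrak{S}}_{\lambda/\mu}$ otherwise.

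Now suppose the weights are free fermionic, and set $A_j:=\vx{a_1^{(j)}}$, $B_j:=\vx{b_1^{(j)}}/\vx{a_1^{(j)}}$, $x_j:=\vx{b_2^{(j)}}/\vx{a_1^{(j)}}$, $y_j:=\vx{a_2^{(j)}}/\vx{b_1^{(j)}}$, exactly as in Section~\ref{six-vertex-section}. The free fermion relation forces $\vx{c_1^{(j)}}\vx{c_2^{(j)}}=(x_j+y_j)A_j^2B_j$, so after the $\vx{c_1}/\vx{c_2}$ rebalancing of Remark~\ref{c-weights-remark} the model $\ba{\mathfrak{S}}_{\lambda/\mu}$ is literally $\mathfrak{S}_{\lambda/\mu}(\boldsymbol{x},\boldsymbol{y},\boldsymbol{A},\boldsymbol{B})$ of Figure~\ref{Delta-vertex-weights}, with the same partition function. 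Theorem~\ref{Delta-lattice-Hamiltonian}(a) then says $e^{H_+}$ matches it, via (\ref{Delta-Lattice-Hamiltonian-correspondence-equation}), exactly when $s_k^{(j)}=\frac{1}{k}\big(x_j^k+(-1)^{k-1}y_j^k\big)$; substituting the definitions of $x_j$ and $y_j$ gives (b). For the prefactor, (\ref{Delta-Lattice-Hamiltonian-correspondence-equation}) supplies $\ast=\prod_i A_i^{M+1}B_i^{\ell(\lambda)}$, and since the partition function vanishes unless $\ell(\lambda)=\ell(\mu)=:\ell$, counting factors of $\vx{a_1^{(i)}}$ (one per vertex, $M+1$ per row) and of $\vx{b_1^{(i)}}/\vx{a_1^{(i)}}$ (one per vertex with a $-$ on the bottom edge, $\ell$ per row) rewrites $\ast$ in the product form stated in (c). Finally, any ``match'' in the loose sense of (\ref{General-Lattice-Hamiltonian-correspondence-equation}) is forced to use this same $\ast$, because every admissible state contributes this common global factor, so pinning it down costs nothing.

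The only place any care is needed is the normalization step: one must check that passing to $\vx{a_1^{(j)}}=\vx{b_1^{(j)}}=\vx{c_2^{(j)}}=1$, and in particular the $\vx{c_1}/\vx{c_2}$ rebalancing, really multiplies $Z$ by a quantity that does not depend on the individual parts of $\lambda$ and $\mu$ (only on the weights and on $M,\ell$). This is a column-by-column particle/hole count, and it is precisely the content of Remark~\ref{c-weights-remark}; everything else is a transcription of Theorem~\ref{Delta-lattice-Hamiltonian}.
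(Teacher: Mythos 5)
Your argument is correct and is exactly the paper's route: the corollary is simply Theorem \ref{Delta-lattice-Hamiltonian} read through the parametrization $A_i=\vx{a_1^{(i)}}$, $B_i=\vx{b_1^{(i)}}/\vx{a_1^{(i)}}$, $x_i=\vx{b_2^{(i)}}/\vx{a_1^{(i)}}$, $y_i=\vx{a_2^{(i)}}/\vx{b_1^{(i)}}$, with the normalization $\vx{a_1^{(j)}}=\vx{b_1^{(j)}}=\vx{c_2^{(j)}}=1$ justified, as in the paper, by Remark \ref{c-weights-remark} together with the particle-conservation count following Theorem \ref{Delta-lattice-Hamiltonian} (every vertex weight carries one factor of $A_i$, and exactly $\ell$ vertices per row carry $B_i$ — note it is these counts, not the numbers of type-$\vx{a_1}$ or type-$\vx{b_1}$ vertices, that are state-independent). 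One caution: your count correctly gives $\ast=\prod_i A_i^{M+1}B_i^{\ell}=\prod_i(\vx{a_1^{(i)}})^{M+1-\ell}(\vx{b_1^{(i)}})^{\ell}$, in agreement with (\ref{Delta-partition-function}), so the exponent $M-\ell$ appearing in item (c) is off by one (the model has $M+1$ columns); you should flag this discrepancy rather than assert that your count reproduces (c) verbatim.
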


\subsection{The $\mathfrak{S}^*$ lattice model}

Now we prove a similar identity for the $\mathfrak{S}^*$ lattice model. We will say that the lattice model $\mathfrak{S}$ and the Hamiltonian operator $e^{H_-}$ \emph{match} if the following condition holds: \begin{align}Z(\mathfrak{S}^*_{\lambda/\mu}) = \prod_{i=1}^{N} A_i^{-(M+1)} B_i^{-\ell(\lambda)} \cdot\langle\lambda|e^{H_-}|\mu\rangle \hspace{20pt} \text{for all strict partitions $\lambda,\mu$ and all $M,N$.} \label{Gamma-Lattice-Hamiltonian-correspondence-equation}\end{align}

\begin{theorem} \label{Gamma-lattice-Hamiltonian}
\leavevmode
\begin{enumerate}
    \item[(a)] (\ref{Gamma-Lattice-Hamiltonian-correspondence-equation}) holds precisely when \begin{align} s_{-k}^{(j)} = \frac{1}{k}\left(y_i^k + (-1)^{k-1}x_i^k\right) \hspace{20pt} \text{for all } k\ge 1, j\in [1,N]. \label{Gamma-Hamiltonian-parameter}\end{align}
    \item[(b)] If the Boltzmann weights are not free fermionic, (\ref{Gamma-Lattice-Hamiltonian-correspondence-equation}) does not hold for any choice of the $s_{-k}^{(j)}$.
\end{enumerate}
\end{theorem}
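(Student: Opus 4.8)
The plan is to deduce Theorem~\ref{Gamma-lattice-Hamiltonian} from its already-proved $\mathfrak{S}$-counterpart Theorem~\ref{Delta-lattice-Hamiltonian}, using two tools already in hand: the model transformation of Proposition~\ref{second-Gamma-Delta-relationship} and the transpose relation $\langle\mu|J_k|\lambda\rangle=\langle\lambda|J_{-k}|\mu\rangle$ from the proof of Lemma~\ref{Hamiltonian-duality-lemma}. First I would record the \emph{un-twisted} form of that relation: since $J_k,J_{k'}$ commute for $k,k'\ge1$, and likewise $J_{-k},J_{-k'}$, the same computation as in Lemma~\ref{Hamiltonian-duality-lemma} gives, for arbitrary coefficients $c_k^{(j)}$,
\[
\left\langle\mu\left|\exp\Big(\textstyle\sum_{k\ge1}\sum_j c_k^{(j)}J_k\Big)\right|\lambda\right\rangle=\left\langle\lambda\left|\exp\Big(\textstyle\sum_{k\ge1}\sum_j c_k^{(j)}J_{-k}\Big)\right|\mu\right\rangle.
\]
This is an equivalence, i.e.\ a bijection between Hamiltonians written in $H_+$-form and in $H_-$-form with the same coefficient data; the right-hand side with $c_k^{(j)}=s_{-k}^{(j)}$ is exactly $\langle\lambda|e^{H_-}|\mu\rangle$.

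Next I would apply Proposition~\ref{second-Gamma-Delta-relationship}, which says $Z(\mathfrak{S}^*_{\lambda/\mu}(\boldsymbol{x},\boldsymbol{y},\boldsymbol{A},\boldsymbol{B}))=Z(\mathfrak{S}_{\lambda/\mu}(\boldsymbol{y},\boldsymbol{x},\boldsymbol{A}^{-1},\boldsymbol{B}^{-1}))$. Combining this with the displayed identity, condition~(\ref{Gamma-Lattice-Hamiltonian-correspondence-equation}) for $\mathfrak{S}^*$ is literally condition~(\ref{Delta-Lattice-Hamiltonian-correspondence-equation}) for the model $\mathfrak{S}_{\lambda/\mu}(\boldsymbol{y},\boldsymbol{x},\boldsymbol{A}^{-1},\boldsymbol{B}^{-1})$ with Hamiltonian parameters $s_k^{(j)}:=s_{-k}^{(j)}$ — note that $\prod_i(A_i^{-1})^{M+1}(B_i^{-1})^{\ell(\lambda)}$ is exactly the extra factor in~(\ref{Gamma-Lattice-Hamiltonian-correspondence-equation}). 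So part~(a) follows from Theorem~\ref{Delta-lattice-Hamiltonian}(a) applied to that $\mathfrak{S}$-model, whose row parameters are $y_j$ (playing the role of $x$) and $x_j$ (playing the role of $y$): the matching condition holds precisely when $s_{-k}^{(j)}=\tfrac1k\big(y_j^k+(-1)^{k-1}x_j^k\big)$, which is exactly~(\ref{Gamma-Hamiltonian-parameter}). For part~(b) I would run the same reduction for the general-weight models: the geometric steps underlying Proposition~\ref{second-Gamma-Delta-relationship} (vertical flip, $\vx{c_1}\leftrightarrow\vx{c_2}$, rebalancing) are weight-preserving up to an explicit scalar and hence make sense for arbitrary weights, and they carry the free-fermion relation to the free-fermion relation; so $Z(\ba{\mathfrak{S}^*}_{\lambda/\mu})$ equals the partition function of a $\ba{\mathfrak{S}}$-type model whose weights are free fermionic iff the $\mathfrak{S}^*$-weights are, and since the bijection above matches $e^{H_-}$ for $\ba{\mathfrak{S}^*}$ with $e^{H_+}$ for that model, Theorem~\ref{Delta-lattice-Hamiltonian}(b) gives the result.

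I expect the main obstacle to be bookkeeping rather than substance: keeping straight the $\mu\leftrightarrow\lambda$ swap between bra and ket, the inversions $\boldsymbol{A}\mapsto\boldsymbol{A}^{-1}$, $\boldsymbol{B}\mapsto\boldsymbol{B}^{-1}$, and the swap $\boldsymbol{x}\leftrightarrow\boldsymbol{y}$, so that the two extra factors and the two parameter specializations genuinely coincide; and, for part~(b), making precise the ``general-weight version'' of Proposition~\ref{second-Gamma-Delta-relationship}. If that last point proves awkward, an alternative is to avoid Proposition~\ref{second-Gamma-Delta-relationship} entirely and repeat the one-particle and one-row arguments of Lemmas~\ref{one-particle-lemma} and~\ref{one-row-lemma} verbatim for $\mathfrak{S}^*$, using the generating function $E(t)$ and the Von N\"agelsbach--Kostka (dual Jacobi--Trudi) determinant of Proposition~\ref{Hamiltonian-Jacobi-Trudi} together with~(\ref{e_k-as-Hamiltonian}) in place of $H(t)$ and the Jacobi--Trudi determinant; the one-row computation then forces equality exactly at $\vx{a_2}=\vx{c_1}-\vx{b_2}$, i.e.\ the free-fermion condition, as in the proof of Theorem~\ref{Delta-lattice-Hamiltonian}(b).
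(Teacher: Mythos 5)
Your proposal is correct and follows essentially the same route as the paper: the paper proves Theorem~\ref{Gamma-lattice-Hamiltonian} by combining Proposition~\ref{second-Gamma-Delta-relationship} with Theorem~\ref{Delta-lattice-Hamiltonian} and the observation that under (\ref{Delta-Hamiltonian-parameter}) and (\ref{Gamma-Hamiltonian-parameter}) one has $s_{-k}^{(j)}=s_k^{(j)}|_{x_i\leftrightarrow y_i}$, hence $\langle\lambda|e^{H_-}|\mu\rangle=\langle\mu|e^{H_+}|\lambda\rangle|_{x_i\leftrightarrow y_i}$, which is exactly your transpose identity for exponentials of commuting currents. Your extra care with the general-weight version for part~(b) and the suggested fallback via $E(t)$ and the dual Jacobi--Trudi determinant are fine but not needed beyond what the paper's two-line chain of equalities already encodes.
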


\begin{proof}
Note that taking (\ref{Delta-Hamiltonian-parameter}) and (\ref{Gamma-Hamiltonian-parameter}), $s_{-k}^{(j)} = s_k^{(j)}|_{x_i\leftrightarrow y_i}$, so $\langle \lambda|e^{H_-}|\mu\rangle = \langle \mu|e^{H_+}|\lambda\rangle|_{x_i\leftrightarrow y_i}$. Then, \begin{flalign*} && Z(\mathfrak{S}^*_{\lambda/\mu}(\boldsymbol{x}, \boldsymbol{y}, \boldsymbol{A}, \boldsymbol{B})) &= Z(\mathfrak{S}_{\lambda/\mu}(\boldsymbol{y}, \boldsymbol{x}, \boldsymbol{A}^{-1}, \boldsymbol{B}^{-1})) && \text{by (\ref{Simple-Gamma-Delta-relationship})} \\& &&= \left.\prod_{i=1}^{N} A_i^{M+1} B_i^{\ell(\lambda)} \cdot\langle\mu|e^{H_+}|\lambda\rangle\right|_{x_i\leftrightarrow y_i, A_i\mapsto A_i^{-1}, B_i\mapsto B_i^{-1}} && \text{by Theorem \ref{Delta-lattice-Hamiltonian}} \\& &&= \prod_{i=1}^{N} A_i^{-(M+1)} B_i^{-\ell(\lambda)} \cdot\langle\lambda|e^{H_-}|\mu\rangle\end{flalign*}
\end{proof}

\begin{corollary}
The general $\mathfrak{S}^*$ lattice model $\ba{\mathfrak{S}}^*_{\lambda/\mu}$ matches the Hamiltonian $e^{H_-}$ (see (\ref{General-Lattice-Hamiltonian-correspondence-equation})) if and only if:
\begin{enumerate}
    \item[(a)] The Boltzmann weights of $\ba{\mathfrak{S}}^*_{\lambda/\mu}$ are free fermionic.
    \item[(b)] \begin{align*} s_{-k}^{(j)} = \frac{1}{k}\left(\left(\frac{\vx{b_2^{(j)}}}{\vx{a_1^{(j)}}}\right)^k + (-1)^{k-1}\left(\frac{\vx{a_2^{(j)}}}{\vx{b_1^{(j)}}}\right)^k\right) \hspace{20pt} \text{for all } k\ge 1, j\in [1,N].\end{align*}
    \item[(c)] The extra factor is ${\Large{*}} = \prod_{i=1}^N (\vx{a_1^{(i)}})^{M-\ell} (\vx{b_1^{(i)}})^\ell.$ 
\end{enumerate}
\end{corollary}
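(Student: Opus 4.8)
This corollary is simply Theorem \ref{Gamma-lattice-Hamiltonian} rewritten in terms of Boltzmann weights rather than the row parameters $(\boldsymbol{x},\boldsymbol{y},\boldsymbol{A},\boldsymbol{B})$, exactly as the corollary immediately following Theorem \ref{Delta-lattice-Hamiltonian} rewrites that theorem; the plan is to fix the weight--parameter dictionary and translate. Reading off Figure \ref{Gamma-vertex-weights}, a free fermionic set of $\mathfrak{S}^*$ weights in row $j$ comes from parameters via $\vx{a_1^{(j)}}=A_j^{-1}$, $\vx{b_1^{(j)}}=A_j^{-1}B_j^{-1}$, $\vx{b_2^{(j)}}=y_jA_j^{-1}$, $\vx{a_2^{(j)}}=x_jA_j^{-1}B_j^{-1}$, so inverting gives $A_j=1/\vx{a_1^{(j)}}$, $B_j=\vx{a_1^{(j)}}/\vx{b_1^{(j)}}$, $y_j=\vx{b_2^{(j)}}/\vx{a_1^{(j)}}$, $x_j=\vx{a_2^{(j)}}/\vx{b_1^{(j)}}$, and the free fermion relation then pins down $\vx{c_1^{(j)}}\vx{c_2^{(j)}}$. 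By Remark \ref{c-weights-remark}, with the side boundary conditions of $\mathfrak{S}^*$ every admissible state has equally many $\vx{c_1^{(j)}}$ and $\vx{c_2^{(j)}}$ vertices, so only the product $\vx{c_1^{(j)}}\vx{c_2^{(j)}}$ enters $Z$; hence the fact that this dictionary sees only the product is harmless.

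The most efficient route is to transport the corollary following Theorem \ref{Delta-lattice-Hamiltonian} through the duality (\ref{Simple-Gamma-Delta-relationship}), which holds for the general models $\ba{\mathfrak{S}}$, $\ba{\mathfrak{S}}^*$ as noted after Proposition \ref{Functional-equations-proposition}: a vertical flip together with the interchange $\vx{c_1}\leftrightarrow\vx{c_2}$ converts $\ba{\mathfrak{S}}^*$ into a $\ba{\mathfrak{S}}$ model, under which matching $e^{H_-}$ corresponds to matching $e^{H_+}$ for the transformed $\ba{\mathfrak{S}}$, so the $\ba{\mathfrak{S}}$-corollary applies. Concretely, $Z(\mathfrak{S}^*_{\lambda/\mu}(\boldsymbol{x},\boldsymbol{y},\boldsymbol{A},\boldsymbol{B}))=Z(\mathfrak{S}_{\lambda/\mu}(\boldsymbol{y},\boldsymbol{x},\boldsymbol{A}^{-1},\boldsymbol{B}^{-1}))$ and $\langle\lambda|e^{H_-}|\mu\rangle=\langle\mu|e^{H_+}|\lambda\rangle|_{x_i\leftrightarrow y_i}$ (as in the proof of Theorem \ref{Gamma-lattice-Hamiltonian}), so applying the $\ba{\mathfrak{S}}$-corollary with $(x_j,y_j,A_j,B_j)$ replaced by $(y_j,x_j,A_j^{-1},B_j^{-1})$ gives all three claims: the match holds iff the weights are free fermionic (part (a)); in that case one checks that the swap sends the $\ba{\mathfrak{S}}$ weight ratios $\vx{b_2^{(j)}}/\vx{a_1^{(j)}}$ and $\vx{a_2^{(j)}}/\vx{b_1^{(j)}}$ to the corresponding $\ba{\mathfrak{S}}^*$ ratios, yielding the formula for $s_{-k}^{(j)}$ in (b); and it sends the $\ba{\mathfrak{S}}$ weights $\vx{a_1^{(i)}},\vx{b_1^{(i)}}$ to the $\ba{\mathfrak{S}}^*$ weights $\vx{a_1^{(i)}},\vx{b_1^{(i)}}$, so the extra factor $\prod_i(\vx{a_1^{(i)}})^{M-\ell}(\vx{b_1^{(i)}})^\ell$ is unchanged in form, giving (c). Alternatively, one can argue directly by mirroring Lemmas \ref{one-particle-lemma}--\ref{one-row-lemma} and the proof of Theorem \ref{Delta-lattice-Hamiltonian}(b) for $\mathfrak{S}^*$: normalizing via Remark \ref{c-weights-remark}, the one-particle generating function and the interleaving-case determinant together force $\vx{a_2}=\vx{c_1}-\vx{b_2}$, i.e. the free fermion condition.

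The only real obstacle is bookkeeping: justifying the duality (\ref{Simple-Gamma-Delta-relationship}) for arbitrary, not necessarily free fermionic, Boltzmann weights. As stated, the transformation behind Proposition \ref{second-Gamma-Delta-relationship} includes a rebalancing of the $\vx{c_1}$ and $\vx{c_2}$ weights phrased via $x_i+y_i$, which literally makes sense only on the parametrized family; but the underlying operation is purely geometric (flip over a horizontal axis, swap the two $\vx{c}$-type vertices), and since only the product $\vx{c_1^{(i)}}\vx{c_2^{(i)}}$ enters $Z$ by Remark \ref{c-weights-remark}, the rebalancing is absorbed and the duality extends verbatim to $\ba{\mathfrak{S}}$ and $\ba{\mathfrak{S}}^*$. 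With that in hand the corollary is immediate.
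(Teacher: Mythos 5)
Your proposal is correct and follows essentially the same route as the paper: the corollary is a restatement of Theorem \ref{Gamma-lattice-Hamiltonian} in terms of the Boltzmann weights, and that theorem is itself proved by transporting Theorem \ref{Delta-lattice-Hamiltonian} through the duality (\ref{Simple-Gamma-Delta-relationship}) together with the substitution $s_{-k}^{(j)}=s_k^{(j)}|_{x_i\leftrightarrow y_i}$, exactly as you describe. Your extra remarks on extending the duality to arbitrary (non--free-fermionic) weights via Remark \ref{c-weights-remark} are a reasonable elaboration of the paper's passing comment that the identities hold for the general models.
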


\section{The free fermionic partition function} \label{corollaries-section}

The main result of this section is that the free fermionic partition function is a (skew) supersymmetric Schur function up to a simple factor. This fact will give us new proofs of Cauchy, Pieri, Jacobi-Trudi, and branching identities for these functions, as well as a six-vertex free fermionic analogue of the Lindstr\"{o}m-Gessel-Viennot (LGV) Lemma and a mysterious positivity result.

\subsection{Supersymmetric Schur functions}

Supersymmetric functions are a generalization of symmetric functions with two sets of parameters $\boldsymbol{x} = x_1,\ldots,x_n$ and $\boldsymbol{y} = y_1,\ldots,y_n$. This exposition is taken from Macdonald \cite[\S 6]{Macdonald-Schur-functions-variations}.

Let $[x|\boldsymbol{y}]^r = (x+y_1)(x+y_2)\cdots (x+y_r)$. If $\lambda$ is a partition, let \[A_\lambda := \det\left((x_i|\boldsymbol{y})^{\lambda_j}\right).\] Then \[s_\lambda[\boldsymbol{x}|\boldsymbol{y}] := \frac{A_{\lambda+\rho}}{A_\rho}\] is a symmetric polynomial in the $x_i$, and $s_\lambda[\boldsymbol{x}|\boldsymbol{0}] = s_\lambda[\boldsymbol{x}]$, the Schur polynomial associated to $\lambda$.

Then for an integer $r\ge 0$ we define \[h_r[\boldsymbol{x}|\boldsymbol{y}] := s_{(r)}[\boldsymbol{x}|\boldsymbol{y}], \hspace{20pt} e_r[\boldsymbol{x}|\boldsymbol{y}] := s_{(1^r)}[\boldsymbol{x}|\boldsymbol{y}].\] Macdonald then defines skew supersymmetric Schur functions via Jacobi-Trudi formulas: if $\ell(\lambda)=\ell(\mu)=\ell$, \[s_{\lambda/\mu}[\boldsymbol{x}|\boldsymbol{y}] := \det_{1\le i,j\le\ell} h_{\lambda_i-\mu_j-i+j}[\boldsymbol{x}|\boldsymbol{y}] = \det_{1\le i,j\le\ell} e_{\lambda_i'-\mu_j'-i+j}[\boldsymbol{x}|\boldsymbol{y}]\]

\subsection{Evaluating the partition function} \label{free-fermionic-partition-function-section}

Brubaker and Schultz showed in \cite{Brubaker-Schultz} that the partition functions of a large class of free fermionic lattice models are supersymmetric Schur functions.

\begin{proposition}\cite[\S A.3.2]{Brubaker-Schultz} \label{Supersymmetric-Schur-Prop}
\leavevmode
\begin{enumerate}
    \item[(a)] If $s_k^{(j)} = \frac{1}{k}(x_j^k + (-1)^{k-1} y_j^k)$, then \[\langle \mu+\rho|e^{H_+}|\lambda+\rho\rangle = s_{\lambda/\mu}[x_1,\ldots,x_n|y_1,\ldots,y_n].\]
    \item[(b)] If $s_{-k}^{(j)} = \frac{1}{k}(y_j^k + (-1)^{k-1} x_j^k)$, then \[\langle \lambda+\rho|e^{H_-}|\mu+\rho\rangle = s_{\lambda/\mu}[y_1,\ldots,y_n|x_1,\ldots,x_n].\]
\end{enumerate}
\end{proposition}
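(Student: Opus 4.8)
The plan is to deduce the proposition from the single scalar identity $h_r(\boldsymbol{s}_+) = h_r[\boldsymbol{x}|\boldsymbol{y}]$ together with the two Jacobi--Trudi formulas already established. For part (a), we may assume $\ell(\lambda)=\ell(\mu)=\ell$, since otherwise both sides vanish. Proposition~\ref{Hamiltonian-Jacobi-Trudi} gives
\[\langle\mu+\rho|e^{H_+}|\lambda+\rho\rangle = \sigma_{\lambda/\mu} = \det_{1\le i,j\le\ell} h_{\lambda_i-\mu_j-i+j}\]
with $h_k = h_k(\boldsymbol{s}_+)$, while by definition $s_{\lambda/\mu}[\boldsymbol{x}|\boldsymbol{y}] = \det_{1\le i,j\le\ell} h_{\lambda_i-\mu_j-i+j}[\boldsymbol{x}|\boldsymbol{y}]$. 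So it is enough to show that, under the specialization $s_k^{(j)} = \frac{1}{k}\big(x_j^k + (-1)^{k-1}y_j^k\big)$, one has $h_r(\boldsymbol{s}_+) = h_r[\boldsymbol{x}|\boldsymbol{y}]$ for every $r\ge 0$.

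First I would prove this entrywise identity by comparing generating functions. By~(\ref{S-equals-log-H}), $\sum_{r\ge 0} h_r(\boldsymbol{s}_+)\,t^r = \exp(S(t))$, and under the specialization
\[S(t) = \sum_{k\ge 1}\sum_{j=1}^n \frac{1}{k}\big(x_j^k + (-1)^{k-1}y_j^k\big)t^k = \sum_{j=1}^n\big(-\log(1-x_j t) + \log(1+y_j t)\big),\]
so $\sum_{r\ge 0} h_r(\boldsymbol{s}_+)\,t^r = \prod_{j=1}^n \frac{1+y_j t}{1-x_j t}$. This is exactly the generating function for the complete supersymmetric homogeneous functions $h_r[\boldsymbol{x}|\boldsymbol{y}]$ (equivalently, $h_r[\boldsymbol{x}|\boldsymbol{y}] = \sum_{k=0}^r h_k[\boldsymbol{x}]\,e_{r-k}[\boldsymbol{y}]$), which one reads off Macdonald's bialternant definition of $s_{(r)}[\boldsymbol{x}|\boldsymbol{y}]$; see~\cite[\S 6]{Macdonald-Schur-functions-variations}. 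Comparing coefficients of $t^r$ gives $h_r(\boldsymbol{s}_+) = h_r[\boldsymbol{x}|\boldsymbol{y}]$, hence part (a).

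For part (b), the standing convention $s_{-k}^{(j)} = (-1)^{k-1}s_k^{(j)}$ makes its hypothesis equivalent to that of (a), so I would combine part (a) with the duality Lemma~\ref{Hamiltonian-duality-lemma}:
\[\langle\lambda+\rho|e^{H_-}|\mu+\rho\rangle = \omega\big(\langle\mu+\rho|e^{H_+}|\lambda+\rho\rangle\big) = \omega\big(s_{\lambda/\mu}[\boldsymbol{x}|\boldsymbol{y}]\big).\]
Since $s_{\lambda/\mu}[\boldsymbol{x}|\boldsymbol{y}]$ is a polynomial in the $s_k^{(j)}$ (through the entries $h_r[\boldsymbol{x}|\boldsymbol{y}]$ and the determinant), and since under this specialization $\omega(s_k^{(j)}) = \frac{1}{k}\big(y_j^k + (-1)^{k-1}x_j^k\big)$ is precisely the value of $s_k^{(j)}$ after swapping $x_j\leftrightarrow y_j$, it follows that $\omega\big(s_{\lambda/\mu}[\boldsymbol{x}|\boldsymbol{y}]\big) = s_{\lambda/\mu}[\boldsymbol{y}|\boldsymbol{x}]$, which is the claim. (Alternatively, (b) follows from (a) and the $\boldsymbol{x}\leftrightarrow\boldsymbol{y}$, $\boldsymbol{A}\leftrightarrow\boldsymbol{A}^{-1}$, $\boldsymbol{B}\leftrightarrow\boldsymbol{B}^{-1}$ symmetry of Proposition~\ref{second-Gamma-Delta-relationship}, in the same way as the proof of Theorem~\ref{Gamma-lattice-Hamiltonian}.)

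I expect the only genuinely non-formal step to be the generating-function identity for the $h_r[\boldsymbol{x}|\boldsymbol{y}]$: one must carefully reconcile Macdonald's factorial-power convention $[x|\boldsymbol{y}]^r = (x+y_1)\cdots(x+y_r)$ (together with the convention $y_i=0$ for $i>n$) with the supersymmetric product $\prod_j(1+y_j t)/(1-x_j t)$, i.e.\ check that the ratio of alternants $A_{(r)+\rho}/A_\rho$ really equals the $r$-th complete supersymmetric homogeneous function. Once that is in place, the two Jacobi--Trudi determinants are literally the same matrix and the rest is bookkeeping.
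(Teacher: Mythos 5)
The paper does not prove this proposition internally: it is imported verbatim from Brubaker--Schultz \cite[\S A.3.2]{Brubaker-Schultz}. So your proposal is supplying an argument where the paper only gives a citation, and its skeleton is sound and in the spirit of how the paper operates elsewhere: Proposition \ref{Hamiltonian-Jacobi-Trudi} turns the left-hand side into $\det h_{\lambda_i-\mu_j-i+j}(\boldsymbol{s}_+)$, Macdonald's skew function is \emph{defined} by the same determinant in the $h_r[\boldsymbol{x}|\boldsymbol{y}]$, and your generating-function computation $\sum_r h_r(\boldsymbol{s}_+)t^r=\exp S(t)=\prod_j\frac{1+y_jt}{1-x_jt}$ under the stated specialization is correct (it is the multi-row version of the computation in Lemma \ref{one-particle-lemma}, using (\ref{S-equals-log-H})). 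Part (b) as you argue it is also fine: the standing convention $s_{-k}^{(j)}=(-1)^{k-1}s_k^{(j)}$ makes the two hypotheses correspond, and then Lemma \ref{Hamiltonian-duality-lemma} together with the observation that $\omega$ acts on the specialized parameters by $x_j\leftrightarrow y_j$ gives the claim; this is literally the manipulation the paper performs in proving Theorem \ref{Gamma-lattice-Hamiltonian} (via Proposition \ref{second-Gamma-Delta-relationship}).

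The one place you must not leave as a pointer is the step you yourself flag: that Macdonald's one-row bialternant $h_r[\boldsymbol{x}|\boldsymbol{y}]=s_{(r)}[\boldsymbol{x}|\boldsymbol{y}]=A_{(r)+\rho}/A_\rho$, with the convention $y_i=0$ for $i>n$, has generating series $\prod_{j=1}^n\frac{1+y_jt}{1-x_jt}$ (equivalently $h_r[\boldsymbol{x}|\boldsymbol{y}]=\sum_k h_k[\boldsymbol{x}]e_{r-k}[\boldsymbol{y}]$). This identity is true but is not a verbatim quote of anything stated in the paper, and without the zero-padding convention it is false, so the proof genuinely turns on it. It closes quickly: since $(x_i|\boldsymbol{y})^{r+n-1}=x_i^{\,r-1}\prod_{m=1}^{n}(x_i+y_m)$ for $r\ge 1$, summing the first column of $A_{(r)+\rho}$ against $t^r$ (and adding the $r=0$ term) produces first-column entries $\prod_{m=1}^{n-1}(x_i+y_m)\cdot\frac{1+y_nt}{1-x_it}$; extract the scalar $(1+y_nt)$ and repeat (a short induction on $n$, or expand the resulting Cauchy-type determinant) to get $\prod_j\frac{1+y_jt}{1-x_jt}\,A_\rho$. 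Alternatively, the one-row case of the supertableau formula (\ref{supertableau-formula}), which the paper already invokes in Section \ref{berele-regev-section}, gives $h_r[\boldsymbol{x}|\boldsymbol{y}]=\sum_k h_k[\boldsymbol{x}]e_{r-k}[\boldsymbol{y}]$ at once. With that lemma written out, your proof is complete, and it is in substance the same Jacobi--Trudi/generating-function argument as the cited source, now derived from the paper's own Hamiltonian machinery.
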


By scaling these Hamiltonians, or by scaling the lattice models in \cite{Brubaker-Schultz}, we can compute the free fermionic partition function.

\begin{corollary}
\label{Partition-Function-is-Supersymmetric-Schur}
\[Z(\mathfrak{S}_{\lambda+\rho/\mu+\rho}) = \prod_{i=1}^{N} A_i^{M+1}B_i^{\ell(\lambda)} \cdot s_{\lambda/\mu}[x_1,\ldots,x_n|y_1,\ldots,y_n],\] and similarly
\[Z(\mathfrak{S}^*_{\lambda+\rho/\mu+\rho}) = \prod_{i=1}^{N} A_i^{-(M+1)}B_i^{-\ell(\lambda)} \cdot s_{\lambda/\mu}[y_1,\ldots,y_n|x_1,\ldots,x_n].\]
\end{corollary}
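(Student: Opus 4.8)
The plan is to combine the Hamiltonian-matching theorems of the previous section with the supersymmetric Schur identity of Brubaker and Schultz (Proposition \ref{Supersymmetric-Schur-Prop}). Both ingredients are already available, so the corollary is essentially a substitution, and the main work is bookkeeping: aligning the parameter specializations and keeping track of the $\rho$-shift.

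First I would note that if $\lambda,\mu$ are partitions, then $\lambda+\rho$ and $\mu+\rho$ are strict partitions, so Theorem \ref{Delta-lattice-Hamiltonian} applies to the model $\mathfrak{S}_{\lambda+\rho/\mu+\rho}$. Taking the specialization $s_k^{(j)} = \frac{1}{k}(x_j^k + (-1)^{k-1}y_j^k)$ from (\ref{Delta-Hamiltonian-parameter}), part (a) of that theorem gives
\[Z(\mathfrak{S}_{\lambda+\rho/\mu+\rho}) = \prod_{i=1}^{N} A_i^{M+1}B_i^{\ell(\lambda)}\cdot\langle\mu+\rho|e^{H_+}|\lambda+\rho\rangle.\]
Proposition \ref{Supersymmetric-Schur-Prop}(a) uses exactly this parameter specialization and identifies $\langle\mu+\rho|e^{H_+}|\lambda+\rho\rangle$ with $s_{\lambda/\mu}[x_1,\ldots,x_n|y_1,\ldots,y_n]$; substituting yields the first formula. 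One should record that $\ell(\lambda)=\ell(\mu)$ is forced whenever the partition function is nonzero (as discussed after Theorem \ref{Delta-lattice-Hamiltonian}), so the exponent $\ell(\lambda)$ on $B_i$ is unambiguous.

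For the $\mathfrak{S}^*$ formula there are two equivalent routes. The direct one mirrors the above: apply Theorem \ref{Gamma-lattice-Hamiltonian}(a) with the specialization (\ref{Gamma-Hamiltonian-parameter}) to obtain $Z(\mathfrak{S}^*_{\lambda+\rho/\mu+\rho}) = \prod_{i=1}^N A_i^{-(M+1)}B_i^{-\ell(\lambda)}\langle\lambda+\rho|e^{H_-}|\mu+\rho\rangle$, then invoke Proposition \ref{Supersymmetric-Schur-Prop}(b) to rewrite the pairing as $s_{\lambda/\mu}[y_1,\ldots,y_n|x_1,\ldots,x_n]$. Alternatively, one can deduce it from the first formula together with the model duality $Z(\mathfrak{S}^*_{\nu/\kappa}(\boldsymbol{x},\boldsymbol{y},\boldsymbol{A},\boldsymbol{B})) = Z(\mathfrak{S}_{\nu/\kappa}(\boldsymbol{y},\boldsymbol{x},\boldsymbol{A}^{-1},\boldsymbol{B}^{-1}))$ of Proposition \ref{second-Gamma-Delta-relationship}, applied with $\nu=\lambda+\rho$, $\kappa=\mu+\rho$; this swaps $\boldsymbol{x}\leftrightarrow\boldsymbol{y}$ in the Schur function and replaces $\prod_i A_i^{M+1}B_i^{\ell(\lambda)}$ by its inverse.

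Since every component has already been proved, there is no genuine obstacle. The only points demanding care are that the parameter specializations in Theorems \ref{Delta-lattice-Hamiltonian} and \ref{Gamma-lattice-Hamiltonian} match those in Proposition \ref{Supersymmetric-Schur-Prop} verbatim, that the $\rho$-shift correctly turns the partitions $\lambda,\mu$ into the strict partitions required by the Fock space formalism, and that the prefactors quoted here are literally the "simple factors" in the cited statements.
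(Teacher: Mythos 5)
Your proposal is correct and follows the same route as the paper, which proves the corollary by combining Theorems \ref{Delta-lattice-Hamiltonian} and \ref{Gamma-lattice-Hamiltonian} with Proposition \ref{Supersymmetric-Schur-Prop}. The extra bookkeeping you note (the $\rho$-shift and the matching of parameter specializations) is exactly what makes the substitution go through.
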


\begin{proof}
This follows from Theorems \ref{Delta-lattice-Hamiltonian} and \ref{Gamma-lattice-Hamiltonian}, and Proposition \ref{Supersymmetric-Schur-Prop}.
\end{proof}

For completeness, we will rewrite these formulas in terms of the vertex weights directly. \begin{align}Z(\mathfrak{S}_{\lambda+\rho/\mu+\rho}) = \prod_{i=1}^{N} \vx{a_1^{(i)}}^{M+1-\ell(\lambda)}\vx{b_1^{(i)}}^{\ell(\lambda)} \cdot s_{\lambda/\mu}\left[\left.\frac{\vx{b_2^{(1)}}}{\vx{a_1^{(1)}}},\ldots,\frac{\vx{b_2^{(N)}}}{\vx{a_1^{(N)}}}\right|\frac{\vx{a_2^{(1)}}}{\vx{b_1^{(1)}}},\ldots,\frac{\vx{a_2^{(N)}}}{\vx{b_1^{(N)}}}\right], \label{Delta-partition-function}\end{align} \begin{align}Z(\mathfrak{S}^*_{\lambda+\rho/\mu+\rho}) = \prod_{i=1}^{N} \vx{a_1^{(i)}}^{M+1-\ell(\lambda)}\vx{b_1^{(i)}}^{\ell(\lambda)} \cdot s_{\lambda/\mu}\left[\left.\frac{\vx{b_2^{(1)}}}{\vx{a_1^{(1)}}},\ldots,\frac{\vx{b_2^{(N)}}}{\vx{a_1^{(N)}}}\right|\frac{\vx{a_2^{(1)}}}{\vx{b_1^{(1)}}},\ldots,\frac{\vx{a_2^{(N)}}}{\vx{b_1^{(N)}}}\right], \label{Gamma-partition-function}\end{align}

The weights in (\ref{Delta-partition-function}) are the $\mathfrak{S}$ weights (Figure \ref{Delta-vertex-weights}), while the weights in (\ref{Gamma-partition-function}) are the $\mathfrak{S}^*$ weights (Figure \ref{Gamma-vertex-weights}).

This result, along with Theorem \ref{partition-function-all-boundaries} can be seen as a replacement of \cite[Theorem~9]{BBF-Schur-polynomials}. That result holds in the case of the 5-vertex free fermionic model, but there is an error in that proof when applied to the full 6-vertex free fermionic model.

We now show that our involution on lattice models defined in Section \ref{six-vertex-section} is equivalent to the involution on generalized symmetric functions defined in Section \ref{background-section}. In other words, the supersymmetric involution can be written \emph{diagrammatically} in terms of lattice model manipulations.

Extend the involution $\omega$ from Section \ref{background-section} so that it sends $A_i\mapsto A_i^{-1}$, $B_i\mapsto B_i^{-1}$. Then, the involutions $\omega$ and $\tilde{\omega}$ are the same.

\begin{corollary}
\[\omega(Z(\mathfrak{S}_{\lambda/\mu})) = \tilde{\omega}(Z(\mathfrak{S}_{\lambda/\mu})),\] and \[\omega(Z(\mathfrak{S}^*_{\lambda/\mu})) = \tilde{\omega}(Z(\mathfrak{S}^*_{\lambda/\mu})).\]
\end{corollary}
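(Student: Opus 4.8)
The plan is to show that both $\omega$ and $\tilde{\omega}$, when applied to $Z(\mathfrak{S}_{\lambda/\mu})$ (and to $Z(\mathfrak{S}^*_{\lambda/\mu})$), produce the same partition function, by evaluating each side explicitly via Corollary \ref{Partition-Function-is-Supersymmetric-Schur} and the supersymmetric Schur function identities. The key input is that the free fermionic partition function is, up to the simple prefactor $\prod_i A_i^{M+1}B_i^{\ell(\lambda)}$, the supersymmetric Schur function $s_{\lambda/\mu}[\boldsymbol{x}|\boldsymbol{y}]$, together with the fact (from the Jacobi-Trudi identities in Section \ref{Hamiltonian-symmetric-functions-section} and Proposition \ref{Supersymmetric-Schur-Prop}) that the algebraic involution $\omega$ sends $s_{\lambda/\mu}[\boldsymbol{x}|\boldsymbol{y}]$ to $s_{\lambda'/\mu'}[\boldsymbol{x}|\boldsymbol{y}]$, which in turn equals $s_{\lambda/\mu}[\boldsymbol{y}|\boldsymbol{x}]$ (the $x\leftrightarrow y$ swap is exactly the supersymmetric analogue of $\omega$ interchanging $h$ and $e$).

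First I would write out $\tilde{\omega}(Z(\mathfrak{S}_{\lambda/\mu}(\boldsymbol{x},\boldsymbol{y},\boldsymbol{A},\boldsymbol{B})))$ using Corollary \ref{involution-action-partition-function}: it equals $Z(\mathfrak{S}^*_{\lambda/\mu}(\boldsymbol{x},\boldsymbol{y},\boldsymbol{A},\boldsymbol{B}))$ by definition of $\tilde{\omega}$, which by the second formula in Corollary \ref{Partition-Function-is-Supersymmetric-Schur} is $\prod_i A_i^{-(M+1)}B_i^{-\ell(\lambda)}\cdot s_{\lambda/\mu}[\boldsymbol{y}|\boldsymbol{x}]$. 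Then I would compute $\omega(Z(\mathfrak{S}_{\lambda/\mu}(\boldsymbol{x},\boldsymbol{y},\boldsymbol{A},\boldsymbol{B})))$: by the first formula in Corollary \ref{Partition-Function-is-Supersymmetric-Schur} this is $\omega$ applied to $\prod_i A_i^{M+1}B_i^{\ell(\lambda)}\cdot s_{\lambda/\mu}[\boldsymbol{x}|\boldsymbol{y}]$. Since $\omega$ sends $A_i\mapsto A_i^{-1}$, $B_i\mapsto B_i^{-1}$, the prefactor becomes $\prod_i A_i^{-(M+1)}B_i^{-\ell(\lambda)}$; and since $\omega$ acts on the parameters $s_k^{(j)}$ by $s_k^{(j)}\mapsto(-1)^{k-1}s_k^{(j)}$, which under the specialization $s_k^{(j)}=\frac1k(x_j^k+(-1)^{k-1}y_j^k)$ is exactly the substitution $x_j\leftrightarrow y_j$, we get $\omega(s_{\lambda/\mu}[\boldsymbol{x}|\boldsymbol{y}])=s_{\lambda/\mu}[\boldsymbol{y}|\boldsymbol{x}]$. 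The two expressions agree, proving the first identity. The $\mathfrak{S}^*$ identity follows by the identical argument with the roles of the two formulas in Corollary \ref{Partition-Function-is-Supersymmetric-Schur} interchanged, or alternatively by applying $\tilde{\omega}$ (an involution) to the first identity.

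The main obstacle is really just bookkeeping: one must be careful that the algebraic involution $\omega$, as extended in the sentence preceding the corollary to act on $A_i,B_i$, interacts correctly with the prefactors, and that the specialization $s_k^{(j)}=\frac1k(x_j^k+(-1)^{k-1}y_j^k)$ implicit in Corollary \ref{Partition-Function-is-Supersymmetric-Schur} is indeed the one for which $\omega$ becomes $x_j\leftrightarrow y_j$. This is precisely the content recorded at the end of Section \ref{background-section} (the two specializations), so the verification $\omega(s_{\lambda/\mu}[\boldsymbol{x}|\boldsymbol{y}])=s_{\lambda'/\mu'}[\boldsymbol{x}|\boldsymbol{y}]=s_{\lambda/\mu}[\boldsymbol{y}|\boldsymbol{x}]$ reduces to the supersymmetric Jacobi-Trudi identity stated in Section \ref{corollaries-section} together with Lemma \ref{partition-lemma} relating the bar and conjugate operations; no genuinely new computation is needed. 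I would keep the written proof to essentially the two-line chain of equalities above, citing Corollary \ref{Partition-Function-is-Supersymmetric-Schur}, Corollary \ref{involution-action-partition-function}, and the supersymmetric Jacobi-Trudi formula.
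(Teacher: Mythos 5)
Your proof is correct and takes essentially the same route as the paper's: the paper writes $Z(\mathfrak{S}_{\lambda/\mu})$ as the prefactor times $\langle\mu|e^{H_+}|\lambda\rangle$, applies the duality $\omega(\langle\mu|e^{H_+}|\lambda\rangle)=\langle\lambda|e^{H_-}|\mu\rangle$ (Lemma \ref{Hamiltonian-duality-lemma}), and invokes Theorems \ref{Delta-lattice-Hamiltonian} and \ref{Gamma-lattice-Hamiltonian} to recognize the result as $Z(\mathfrak{S}^*_{\lambda/\mu})=\tilde{\omega}(Z(\mathfrak{S}_{\lambda/\mu}))$, which is your computation with the supersymmetric Schur specialization replaced by the abstract $\tau$-function. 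Routing instead through Corollary \ref{Partition-Function-is-Supersymmetric-Schur} (and hence the Brubaker--Schultz identification) is harmless but slightly less self-contained than the paper's direct use of the duality lemma.
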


\begin{proof}
By Theorems \ref{Delta-lattice-Hamiltonian} and \ref{Gamma-lattice-Hamiltonian}, \begin{align*}\omega(Z(\mathfrak{S}_{\lambda/\mu})) &= \omega\left(\prod_i A_i^{M+1} B_i^{\ell(\lambda)} \langle\mu|e^{H_+}|\lambda\rangle\right) \\&= \prod_i A_i^{-(M+1)} B_i^{-\ell(\lambda)} \langle\lambda|e^{H_-}|\mu\rangle \\&= Z(\mathfrak{S}^*_{\lambda/\mu}) \\&= \tilde{\omega}(Z(\mathfrak{S}_{\lambda/\mu})).\end{align*} The proof of the second statement is similar
\end{proof}

In particular, using the model $\mathfrak{S}_{\lambda+\rho/\mu+\rho}(\boldsymbol{x}, \boldsymbol{y}, \boldsymbol{1}, \boldsymbol{1})$, we have the expected involution on supersymmetric Schur functions: \[\omega(s_{\lambda/\mu}[\boldsymbol{x}|\boldsymbol{y}]) = \tilde{\omega}(s_{\lambda/\mu}[\boldsymbol{x}|\boldsymbol{y}]) = s_{\lambda/\mu}[\boldsymbol{y}|\boldsymbol{x}].\]

In addition, the Jacobi-Trudi formula (Proposition \ref{Hamiltonian-Jacobi-Trudi}), applied to the lattice model, is a six-vertex free fermionic analogue of the Lindstr\"{o}m-Gessel-Viennot (LGV) Lemma.

For any $r$, $h_k = Z(\mathfrak{S}_{(k+r)/(r)})|_{A_i=B_i=1}$ and $e_k = Z(\mathfrak{S}^*_{(k+r)/(r)})|_{A_i=B_i=1}$, so we have the following.

\begin{proposition}[Six-vertex LGV Lemma] \label{six-vertex-LGV}
\[Z(\mathfrak{S}_{\lambda/\mu}) = \det_{1\le i,j\le \ell(\lambda)} Z(\mathfrak{S}_{(\lambda_i)/(\mu_j)})\] and \[Z(\mathfrak{S}^*_{\lambda/\mu}) = \det_{1\le i,j\le \ell(\lambda)} Z(\mathfrak{S}^*_{(\lambda_i)/(\mu_j)}),\] where if the models on the left side have $M$ columns, and the models on the right side have $M_{ij}$ columns, we have $M = \sum_i M_{i,\sigma(i)}$ for all permutations $\sigma$.
\end{proposition}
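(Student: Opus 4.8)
The plan is to deduce this directly from the Jacobi-Trudi identity for Hamiltonians (Proposition \ref{Hamiltonian-Jacobi-Trudi}) together with the matching theorems (Theorems \ref{Delta-lattice-Hamiltonian} and \ref{Gamma-lattice-Hamiltonian}), so that essentially all the work has already been done; the only genuine content left is bookkeeping about the column-count $M$. First I would reduce to the normalized case $A_i = B_i = 1$: both sides of the claimed determinantal identity carry an overall factor $\prod_i A_i^{M+1} B_i^{\ell(\lambda)}$ (for $\mathfrak{S}$) or its inverse (for $\mathfrak{S}^*$), and since the hypothesis $M = \sum_i M_{i,\sigma(i)}$ guarantees that every term $\prod_i Z(\mathfrak{S}_{(\lambda_i)/(\mu_j)})$ in the determinant expansion contributes the same power of $A_i$ and $B_i$, these prefactors can be pulled out of the determinant on the right and cancelled against the one on the left. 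After this reduction the identity to prove is $\langle \mu + \rho \mid e^{H_+} \mid \lambda + \rho\rangle = \det_{i,j} \langle (\mu_j) + \rho_1 \mid e^{H_+} \mid (\lambda_i) + \rho_1 \rangle$, i.e. exactly $\sigma_{\lambda/\mu} = \det_{i,j} h_{\lambda_i - \mu_j}$, which is a special case of Proposition \ref{Hamiltonian-Jacobi-Trudi} (taking $\ell$ so that $\lambda_i - \mu_j - i + j$ with the $\rho$-shift absorbed gives the stated indices), combined with the one-particle computation of Lemma \ref{one-particle-lemma} identifying $h_k$ with $Z(\mathfrak{S}_{(k+r)/(r)})$ for any $r \geq 1$ (and $h_0 = 1 = Z(\mathfrak{S}_{(r)/(r)})$).

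The key steps, in order: (1) invoke Corollary \ref{Partition-Function-is-Supersymmetric-Schur} or Theorem \ref{Delta-lattice-Hamiltonian} to write $Z(\mathfrak{S}_{\lambda/\mu})$ as a prefactor times $\sigma_{\lambda/\mu}$; (2) apply Proposition \ref{Hamiltonian-Jacobi-Trudi} to expand $\sigma_{\lambda/\mu}$ as $\det_{1 \le i,j \le \ell} h_{\lambda_i - \mu_j}$ (here I am using that $\lambda, \mu$ have equal length $\ell$, padding with trailing zeros as needed, and noting that the $-i+j$ shift in the general Jacobi-Trudi formula is exactly what the $\rho$-shift in the definition of $\sigma_{\lambda/\mu}$ accounts for — so the matrix entries are literally $h_{\lambda_i - \mu_j}$ once we remember $\sigma_{\lambda/\mu} = \langle \mu+\rho \mid e^{H_+} \mid \lambda+\rho\rangle$ and $\det_{ij} h_{(\lambda_i + \ell - i) - (\mu_j + \ell - j)}$); (3) identify each entry $h_{\lambda_i - \mu_j}$ with $Z(\mathfrak{S}_{(\lambda_i)/(\mu_j)})|_{A=B=1}$ via Lemma \ref{one-particle-lemma}, being careful about the degenerate entries where $\lambda_i < \mu_j$ (both the symmetric function $h_{<0} = 0$ and the lattice model partition function with impossible boundary are $0$) and where $\lambda_i = \mu_j$ (giving $1$ on each side); (4) track the column counts: the one-row models $\mathfrak{S}_{(\lambda_i)/(\mu_j)}$ have some number of columns $M_{ij} \ge \max(\lambda_i, \mu_j)$, and restoring the $A_i, B_i$ dependence multiplies $Z(\mathfrak{S}_{(\lambda_i)/(\mu_j)})$ by $A_i^{M_{ij}+1} B_i^{[\lambda_i > 0]}$; in a determinant expansion term $\prod_i Z(\mathfrak{S}_{(\lambda_i)/(\mu_{\sigma(i)})})$ this produces $\prod_i A_i^{M_{i,\sigma(i)}+1}$, which by hypothesis equals $\prod_i A_i^{M+1}$ independent of $\sigma$, so it factors out of the determinant and matches the prefactor on the left; the $B_i$ powers match because in any non-vanishing state $\ell(\lambda) = \ell(\mu)$ and exactly $\ell(\lambda)$ of the one-row models are "nontrivial." Finally, the $\mathfrak{S}^*$ statement follows identically using Theorem \ref{Gamma-lattice-Hamiltonian}, Proposition \ref{second-Gamma-Delta-relationship}, and the dual identity $\sigma'_{\lambda/\mu} = \det e_{\lambda_i - \mu_j}$, or simply by applying $\tilde\omega$.

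The main obstacle I anticipate is entirely notational rather than mathematical: making the index conventions in Proposition \ref{Hamiltonian-Jacobi-Trudi} line up cleanly with the "one-part" models $\mathfrak{S}_{(\lambda_i)/(\mu_j)}$, in particular verifying that the matrix entry is $Z(\mathfrak{S}_{(\lambda_i)/(\mu_j)})$ and not $Z(\mathfrak{S}_{(\lambda_i - i + \text{something})/(\mu_j - j + \text{something})})$ — this requires recalling that $\sigma_{\lambda/\mu}$ is defined with the $\rho$-shift built in, so the combinatorial indices come out right, and that $h_k = Z(\mathfrak{S}_{(k+r)/(r)})$ is independent of $r \ge 1$, which is exactly the stabilization noted in the proof of Lemma \ref{one-row-lemma}. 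The column-count compatibility condition $M = \sum_i M_{i,\sigma(i)}$ is the formal device that lets this independence-of-$r$ be promoted to a statement about genuinely different-sized lattices; one should remark that such a choice of $M_{ij}$ always exists (e.g. take all $M_{ij}$ equal to some common large value and $M$ its $\ell$-fold "diagonal" value appropriately — more precisely the condition just says the $M_{ij}$ lie on a generalized-diagonal-sum-constant matrix, which is satisfiable).
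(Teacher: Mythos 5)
Your proposal is correct and follows essentially the same route as the paper, which derives the proposition in one line from the matching theorem (Theorem \ref{Delta-lattice-Hamiltonian}), the Jacobi--Trudi identity (Proposition \ref{Hamiltonian-Jacobi-Trudi}), and the identification $h_k = Z(\mathfrak{S}_{(k+r)/(r)})|_{A_i=B_i=1}$ (with $e_k$ and $\mathfrak{S}^*$ for the dual statement). Your additional bookkeeping of the $A_i^{M_{ij}+1}$ and $B_i$ prefactors and the role of the condition $M=\sum_i M_{i,\sigma(i)}$ is a correct elaboration of details the paper leaves implicit.
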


This reduces to the usual case of the LGV lemma on a five-vertex model. If we set $y_i=0, A_i=B_i=1$, the resulting model gives the well-known equivalence between the tableaux and Jacobi-Trudi definitions of Schur functions, and if we additionally set $x_i=1$, it gives a determinantal count of sets of non-intersecting lattice paths. We could do the same thing for sets of ``osculating paths'' by instead setting $x_i=0, y_i=A_i=B_i=1$. However, we cannot get an unweighted count of states Boltzmann for the six-vertex model from this formula since specializing all weights to 1 would violate the free fermion condition. 

\subsection{Identities: Pieri rule, Cauchy identity, and branching rule}

Combining Corollary \ref{Partition-Function-is-Supersymmetric-Schur} with the results in Section \ref{Hamiltonian-identities-section}, we get new proofs of the Pieri rule, Cauchy identity, and branching rule for supersymmetric Schur polynomials. Equivalently, we get those same identities for the free fermionic partition function.

The branching rule is straightforward to prove in both the context of the Hamiltonian and the lattice model. The Cauchy identity can also be proved via both methods. Since our Boltzmann weights are free fermionic, the lattice model is solvable by \cite[Theorem~1]{BBF-Schur-polynomials}. Then one can prove the dual Cauchy identity by repeatedly applying the Yang-Baxter equation to a ``combined lattice model''. See \cite[Theorem~7]{Bump-McNamara-Nakasuji} for an example of this process. On the other hand, the Pieri rule appears to be more easily proved using the Hamiltonian.

\begin{proposition}[Cauchy identity] \label{supersymmetric-Cauchy-identity} For any strict partitions $\lambda$ and $\mu$, \begin{align*}\sum_\nu s_{\lambda/\nu}[\boldsymbol{x}|\boldsymbol{y}] s_{\mu/\nu}[\boldsymbol{z}|\boldsymbol{w}] = \prod_{i,j}\frac{(1-x_iz_j)(1-y_iw_j)}{(1+x_iw_j)(1+y_iz_j)} \cdot \sum_\nu s_{\nu/\mu}[\boldsymbol{x}|\boldsymbol{y}] s_{\nu/\lambda}[\boldsymbol{z}|\boldsymbol{w}],\end{align*} where the sums are over all strict partitions $\nu$.
\end{proposition}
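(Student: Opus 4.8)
The plan is to deduce this from the Hamiltonian Cauchy identity (Proposition~\ref{Hamiltonian-Cauchy-identity}) by choosing the two parameter sets so that the generalized skew functions $\sigma_{\lambda/\mu}$ and $\sigma'_{\lambda/\mu}$ specialize to supersymmetric Schur functions, and then evaluating the exponential prefactor in closed form via the logarithm series.

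First I would take the parameter set $\boldsymbol{s}_+$ defining $H_+$ to be $s_k^{(j)} = \frac1k(x_j^k + (-1)^{k-1}y_j^k)$; by Proposition~\ref{Supersymmetric-Schur-Prop}(a) this gives $\sigma_{\lambda/\mu} = \langle\mu+\rho|e^{H_+}|\lambda+\rho\rangle = s_{\lambda/\mu}[\boldsymbol{x}|\boldsymbol{y}]$. Then I would take the parameter set $\boldsymbol{t}_-$ defining $H_-$ to be $t_{-k}^{(j)} = \frac1k(z_j^k + (-1)^{k-1}w_j^k)$, so that Proposition~\ref{Supersymmetric-Schur-Prop}(b), with its first argument set to $\boldsymbol{z}$ and its second to $\boldsymbol{w}$, yields $\sigma'_{\lambda/\mu} = \langle\lambda+\rho|e^{H_-}|\mu+\rho\rangle = s_{\lambda/\mu}[\boldsymbol{z}|\boldsymbol{w}]$. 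With these substitutions the left-hand side of Proposition~\ref{Hamiltonian-Cauchy-identity} becomes $\sum_\nu s_{\lambda/\nu}[\boldsymbol{x}|\boldsymbol{y}]\,s_{\mu/\nu}[\boldsymbol{z}|\boldsymbol{w}]$ and the right-hand side becomes $\prod_{i,j}\exp\!\big(\varepsilon\sum_{k\ge1}k\,s_k^{(i)}t_{-k}^{(j)}\big)\cdot\sum_\nu s_{\nu/\mu}[\boldsymbol{x}|\boldsymbol{y}]\,s_{\nu/\lambda}[\boldsymbol{z}|\boldsymbol{w}]$, where $\varepsilon=\pm1$ is the sign produced by the Heisenberg commutation relation for $[H_-,H_+]$. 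This already matches the desired identity except for the explicit evaluation of the prefactor.

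The remaining step is a routine generating-function calculation. Substituting the specializations, $k\,s_k^{(i)}t_{-k}^{(j)} = \frac1k\big((x_iz_j)^k + (-1)^{k-1}(x_iw_j)^k + (-1)^{k-1}(y_iz_j)^k + (y_iw_j)^k\big)$, and summing over $k\ge1$ term by term using $\sum_{k\ge1}u^k/k = -\log(1-u)$ and $\sum_{k\ge1}(-1)^{k-1}u^k/k = \log(1+u)$ collapses each factor to $\prod_{i,j}\frac{(1-x_iz_j)(1-y_iw_j)}{(1+x_iw_j)(1+y_iz_j)}$ for the correct choice of the sign $\varepsilon$; this is exactly the factor in the statement.

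I expect the only points requiring care to be bookkeeping ones: verifying that $t_{-k}^{(j)} = \frac1k(z_j^k + (-1)^{k-1}w_j^k)$ produces $s_{\lambda/\mu}[\boldsymbol{z}|\boldsymbol{w}]$ and not $s_{\lambda/\mu}[\boldsymbol{w}|\boldsymbol{z}]$ in Proposition~\ref{Supersymmetric-Schur-Prop}(b), and pinning down the sign $\varepsilon$ in the exponent of the prefactor from the commutator $[H_-,H_+]$; neither is an essential difficulty. There should be no real obstacle beyond these. Alternatively, one could argue entirely on the lattice-model side: the free fermionic six-vertex model is solvable by \cite[Theorem~1]{BBF-Schur-polynomials}, so repeated application of the Yang--Baxter equation to a combined model built from $\mathfrak{S}$ and $\mathfrak{S}^*$ rows (as in \cite[Theorem~7]{Bump-McNamara-Nakasuji}) would give the same identity through Corollary~\ref{Partition-Function-is-Supersymmetric-Schur}; the Hamiltonian route above is shorter.
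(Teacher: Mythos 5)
Your proposal is correct and follows essentially the same route as the paper: specialize the Hamiltonian Cauchy identity (Proposition~\ref{Hamiltonian-Cauchy-identity}) via Proposition~\ref{Supersymmetric-Schur-Prop}, then evaluate the exponential prefactor with the logarithm series. The sign bookkeeping you flag is indeed the only delicate point (the paper's own computation of the prefactor lands on the reciprocal of the stated factor, reflecting exactly the commutator sign you call $\varepsilon$), but this does not change the method.
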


\begin{proof}
From Proposition \ref{Hamiltonian-Cauchy-identity}, and using the notation from that proposition, we have the formula \[\sum_\nu \sigma_{\lambda/\nu}\sigma'_{\mu/\nu} = \prod_{i,j}\exp\left(\sum_{k\ge 1} k\cdot s_k^{(i)}s_{-k}^{(j)}\right) \cdot \sum_\nu \sigma_{\nu/\mu}\sigma'_{\nu/\lambda},\] so we just need to prove that when $s_k^{(i)} = \frac{1}{k}\left(x_i^k + (-1)^{k-1}y_i^k\right)$ and $s_{-k}^{(i)} = \frac{1}{k}\left(z_i^k + (-1)^{k-1}w_i^k\right)$, \[\exp\left(\sum_{k\ge 1} k\cdot s_k^{(i)}s_{-k}^{(j)}\right) = \frac{(1-x_iz_j)(1-y_iw_j)}{(1+x_iw_j)(1+y_iz_j)}.\] This follows by exponentiating the next string of equalities. \begin{align*} \sum_{k\ge 1} k\cdot s_k^{(i)}s_{-k}^{(j)} &= \sum_{k\ge 1} k\cdot \frac{1}{k}\left(x_i^k + (-1)^{k-1}y_i^k\right)\cdot \frac{1}{k}\left(z_j^k + (-1)^{k-1}w_j^k\right) \\&= \sum_{k\ge 1} \frac{1}{k}\left(x_i^kz_j^k + (-1)^{k-1}x_i^kw_j^k + (-1)^{k-1}y_i^kz_j^k + y_i^kw_j^k\right) \\&= \log(1+x_iw_j) + \log(1+y_iz_j) -\log(1-x_iz_j) - \log(1-y_iw_j)  \\&= \log\left(\frac{(1+x_iw_j)(1+y_iz_j)}{(1-x_iz_j)(1-y_iw_j)}\right). \end{align*}
\end{proof}

The branching rule and Pieri rule all follow directly from their Hamiltonian analogues in Section \ref{Hamiltonian-symmetric-functions-section}: Propositions \ref{Hamiltonian-branching-rule} and \ref{Hamiltonian-Pieri-rule}, respectively.

\begin{proposition}[Branching rule] \label{supersymmetric-branching-rule}
For all partitions $\lambda,\mu$, \[s_{\lambda/\mu}[x_1,\ldots,x_n|y_1,\ldots,y_n] = \sum_{\nu} s_{\mu/\nu}[x_1|y_1] s_{\lambda/\mu}[x_2,\ldots,x_n|y_2,\ldots,y_n].\]
\end{proposition}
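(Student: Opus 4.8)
The plan is to obtain this as an immediate consequence of the Hamiltonian branching rule (Proposition~\ref{Hamiltonian-branching-rule}) together with the identification of $\tau$-functions with skew supersymmetric Schur functions supplied by Proposition~\ref{Supersymmetric-Schur-Prop}(a).

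First I would fix the supersymmetric specialization $s_k^{(j)} = \tfrac{1}{k}\bigl(x_j^k + (-1)^{k-1}y_j^k\bigr)$ for $1\le j\le n$ and $k\ge 1$, so that Proposition~\ref{Supersymmetric-Schur-Prop}(a) gives $\sigma_{\lambda/\mu}(\boldsymbol{s}) = \langle\mu+\rho|e^{H_+}|\lambda+\rho\rangle = s_{\lambda/\mu}[x_1,\ldots,x_n|y_1,\ldots,y_n]$ for all partitions $\lambda,\mu$ (with the usual convention that both sides vanish unless $\ell(\lambda)=\ell(\mu)$ after padding with trailing zeroes). The one structural observation needed is that the restriction operation $\boldsymbol{s}\mapsto\boldsymbol{s}|_I$ commutes with this specialization: $\boldsymbol{s}|_{\{1\}}$ is precisely the supersymmetric specialization in the single variable pair $(x_1\,|\,y_1)$, and $\boldsymbol{s}|_{[2,n]}$ is the supersymmetric specialization in $(x_2,\ldots,x_n\,|\,y_2,\ldots,y_n)$. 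Applying Proposition~\ref{Supersymmetric-Schur-Prop}(a) once with a single variable pair and once with the remaining $n-1$ pairs therefore yields $\sigma_{\lambda/\nu}(\boldsymbol{s}|_{\{1\}}) = s_{\lambda/\nu}[x_1|y_1]$ and $\sigma_{\nu/\mu}(\boldsymbol{s}|_{[2,n]}) = s_{\nu/\mu}[x_2,\ldots,x_n|y_2,\ldots,y_n]$.

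Substituting these identifications into Proposition~\ref{Hamiltonian-branching-rule} then gives the claimed identity, the sum ranging over strict partitions $\nu+\rho$, equivalently over partitions $\nu$; only those $\nu$ with $\ell(\nu)=\ell(\lambda)=\ell(\mu)$ contribute, all remaining terms on both sides being zero.

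There is essentially no obstacle beyond bookkeeping. The only point that deserves a moment's care is the interaction of the $\rho$-shift with the length conventions — namely that the intermediate partition $\nu$ ranges over the same index set in the Hamiltonian identity and in the supersymmetric one, and that padding by trailing zeroes alters no term — but this is exactly the situation already dealt with in the proof of Proposition~\ref{Hamiltonian-branching-rule}, so it carries over verbatim. One could alternatively give a self-contained lattice-model proof by cutting the grid of $\mathfrak{S}_{\lambda+\rho/\mu+\rho}$ along the horizontal edges between its first and second rows and summing over their spins, invoking Corollary~\ref{Partition-Function-is-Supersymmetric-Schur}; this reduces to the same computation.
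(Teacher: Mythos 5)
Your proposal is correct and follows exactly the route the paper takes: the paper simply asserts that the branching rule "follows directly from" Proposition~\ref{Hamiltonian-branching-rule} via the supersymmetric specialization of Proposition~\ref{Supersymmetric-Schur-Prop}(a), which is precisely the substitution you carry out (and your observation that restriction $\boldsymbol{s}\mapsto\boldsymbol{s}|_I$ commutes with the specialization is the only detail worth recording). Note that what you actually derive is $s_{\lambda/\mu}[\boldsymbol{x}|\boldsymbol{y}] = \sum_\nu s_{\lambda/\nu}[x_1|y_1]\,s_{\nu/\mu}[x_2,\ldots,x_n|y_2,\ldots,y_n]$, the indices in the paper's displayed statement being a typo.
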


\begin{proposition}[Pieri rule] \label{supersymmetric-Pieri-rule}
\[h_k[\boldsymbol{x}|\boldsymbol{y}] \cdot s_\lambda[\boldsymbol{x}|\boldsymbol{y}] = \sum_\nu \left\langle\nu+\rho|U_k|\lambda+\rho\right\rangle s_\nu[\boldsymbol{x}|\boldsymbol{y}].\]
\end{proposition}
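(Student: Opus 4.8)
The plan is to deduce this identity from the Hamiltonian Pieri rule (Proposition \ref{Hamiltonian-Pieri-rule}) by transporting it through the supersymmetric specialization of the parameters, exactly in the spirit of the proof of the branching rule (Proposition \ref{supersymmetric-branching-rule}). In other words, I would take the abstract identity $h_k \cdot \sigma_\lambda = \sum_\nu \langle \nu+\rho|U_k|\lambda+\rho\rangle\, \sigma_\nu$, specialize the $s_k^{(j)}$ to their supersymmetric values, and check that each of the three ingredients ($\sigma_\lambda$, $h_k$, and the coefficients $\langle \nu+\rho|U_k|\lambda+\rho\rangle$) turns into the right thing under that one specialization.

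First, I would fix the specialization $s_k^{(j)} = \frac{1}{k}\big(x_j^k + (-1)^{k-1}y_j^k\big)$ for all $k \ge 1$ and $j \in [1,n]$ (with $N = n$). By Proposition \ref{Supersymmetric-Schur-Prop}(a), applied with empty skew part, the generalized Schur function $\sigma_\nu = \langle \rho|e^{H_+}|\nu+\rho\rangle$ then equals $s_\nu[\boldsymbol{x}|\boldsymbol{y}]$ for every partition $\nu$; in particular $\sigma_\lambda \mapsto s_\lambda[\boldsymbol{x}|\boldsymbol{y}]$ and each $\sigma_\nu$ on the right-hand side becomes $s_\nu[\boldsymbol{x}|\boldsymbol{y}]$.

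Second, I would verify that the generalized complete homogeneous function $h_k$ of Section \ref{Hamiltonian-symmetric-functions-section} specializes to $h_k[\boldsymbol{x}|\boldsymbol{y}] = s_{(k)}[\boldsymbol{x}|\boldsymbol{y}]$. This is immediate from the identification $h_k = \sigma_{(k)/(0)} = \langle (0)|e^{H_+}|(k)\rangle$ recorded after Proposition \ref{Hamiltonian-Jacobi-Trudi}, together with Proposition \ref{Supersymmetric-Schur-Prop}(a) for the one-part partitions $\lambda=(k)$, $\mu=(0)$, since $s_{(k)/(0)}[\boldsymbol{x}|\boldsymbol{y}] = s_{(k)}[\boldsymbol{x}|\boldsymbol{y}]$, which is by definition $h_k[\boldsymbol{x}|\boldsymbol{y}]$. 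Third, I would observe that the operator $U_k = \sum_{\mu \vdash k} z_\mu^{-1} J_{-\mu}$ is built solely from the Fock-space current operators $J_{-m}$ and carries no dependence on the parameters $s_k^{(j)}$, and likewise the vectors $|\lambda+\rho\rangle$ and $\langle \nu+\rho|$ are parameter-free; hence each coefficient $\langle \nu+\rho|U_k|\lambda+\rho\rangle$ is a fixed (in fact nonnegative integer) constant that is unaffected by the specialization. One may further recognize it as the usual Pieri coefficient --- equal to $1$ when $\nu/\lambda$ is a horizontal $k$-strip and $0$ otherwise --- but this is not needed.

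Substituting these three facts into $h_k \cdot \sigma_\lambda = \sum_\nu \langle \nu+\rho|U_k|\lambda+\rho\rangle\, \sigma_\nu$ yields exactly the claimed formula. I do not expect a genuine obstacle here; the only point that needs a little care is that all three dictionary identifications must hold under \emph{one and the same} choice of parameters, which is precisely what Proposition \ref{Supersymmetric-Schur-Prop} provides. As the surrounding discussion notes, a direct lattice-model proof would instead require analyzing the row transfer matrix against the single-row partition functions $Z(\mathfrak{S}_{(k+r)/(r)})$, and appears substantially more involved than this transport-of-structure argument.
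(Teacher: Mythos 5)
Your proposal is correct and follows the same route as the paper, which simply states that the supersymmetric Pieri rule follows directly from its Hamiltonian analogue (Proposition \ref{Hamiltonian-Pieri-rule}) under the specialization $s_k^{(j)} = \frac{1}{k}(x_j^k + (-1)^{k-1}y_j^k)$; you have merely spelled out the three identifications (for $\sigma_\nu$, for $h_k$, and for the parameter-independence of the coefficients $\langle\nu+\rho|U_k|\lambda+\rho\rangle$) that the paper leaves implicit. No gaps.
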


\subsection{Positivity}

Suppose for this subsection that our lattice model parameters $A_i,B_i,x_i,y_i\in\R$. It turns out that the Hamiltonian interpretation provides a positivity condition for the partition function $Z(\mathfrak{S}_{\lambda/\mu})$.

We will ask when $\mathfrak{S}$ satisfies the following condition: \begin{align} \text{The partition function $Z(\mathfrak{S}_{\lambda/\mu}(\boldsymbol{x}, \boldsymbol{y}, \boldsymbol{1}, \boldsymbol{1}))\ge 0$ for all strict partitions $\lambda,\mu$.}\label{positivity-condition}\end{align}

\begin{proposition}
$\mathfrak{S}$ satisfies (\ref{positivity-condition}) if and only if  \[x_i,y_i\ge 0 \hspace{20pt} \text{for all } 1\le i\le n.\]
\end{proposition}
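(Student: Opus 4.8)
The plan is to reduce the statement to a classical total-positivity criterion by way of Corollary~\ref{Partition-Function-is-Supersymmetric-Schur}. Setting $A_i=B_i=1$ there, for strict partitions $\lambda,\mu$ one has
\[
Z\!\left(\mathfrak{S}_{\lambda/\mu}(\boldsymbol{x},\boldsymbol{y},\boldsymbol{1},\boldsymbol{1})\right)=s_{(\lambda-\rho)/(\mu-\rho)}[x_1,\ldots,x_n\,|\,y_1,\ldots,y_n],
\]
and as $\lambda,\mu$ range over all strict partitions the shapes $\lambda-\rho,\mu-\rho$ range over all pairs of ordinary partitions of equal length. So I would first observe that~(\ref{positivity-condition}) is equivalent to the nonnegativity of every skew supersymmetric Schur function $s_{\alpha/\beta}[\boldsymbol{x}|\boldsymbol{y}]$. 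Next, using the supersymmetric Jacobi--Trudi identity $s_{\alpha/\beta}[\boldsymbol{x}|\boldsymbol{y}]=\det\big(h_{\alpha_i-\beta_j-i+j}[\boldsymbol{x}|\boldsymbol{y}]\big)$ and padding shapes with trailing zeros, I would note that these determinants are exactly the minors of the one-sided Toeplitz matrix $T=\big(h_{i-j}[\boldsymbol{x}|\boldsymbol{y}]\big)_{i,j\ge 1}$ (with $h_k[\boldsymbol{x}|\boldsymbol{y}]=0$ for $k<0$), so the positivity condition becomes: $T$ is totally nonnegative. The relevant symbol is $\sum_{r\ge 0}h_r[\boldsymbol{x}|\boldsymbol{y}]\,t^r=\prod_{i=1}^n\frac{1+y_it}{1-x_it}$, which I would read off from $S(t)=\log H(t)$ (equation~(\ref{S-equals-log-H})) together with the specialization $s_k^{(j)}=\frac1k\big(x_j^k+(-1)^{k-1}y_j^k\big)$ underlying Corollary~\ref{Partition-Function-is-Supersymmetric-Schur}.

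For the ``if'' direction, assuming all $x_i,y_i\ge 0$, I would argue that $\prod_i\frac{1+y_it}{1-x_it}$ is a finite product of symbols of the forms $1+\beta t$ and $(1-\alpha t)^{-1}$ with $\alpha,\beta\ge 0$, each of which gives a totally nonnegative Toeplitz matrix (respectively a bidiagonal one and a lower-triangular one); since a product of totally nonnegative matrices is totally nonnegative, $T$ is too, so every $s_{\alpha/\beta}[\boldsymbol{x}|\boldsymbol{y}]\ge 0$. Alternatively one can simply cite the monomial-positive tableau expansion of $s_{\alpha/\beta}[\boldsymbol{x}|\boldsymbol{y}]$.

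For the ``only if'' direction I would invoke the Edrei--Thoma theorem (equivalently the Aissen--Schoenberg--Whitney classification in the rational case): a formal power series $\sum_{r\ge 0}h_rt^r$ with $h_0=1$ yields a totally nonnegative Toeplitz matrix precisely when it equals $e^{\gamma t}\prod_i\frac{1+\beta_it}{1-\alpha_it}$ with $\gamma,\alpha_i,\beta_i\ge 0$. Since $\prod_{i=1}^n\frac{1+y_it}{1-x_it}$ is rational with numerator and denominator of degree at most $n$, this forces $\gamma=0$ and finitely many $\alpha_i,\beta_i$, and comparing the two factorizations of this rational function identifies the multisets $\{x_i\}=\{\alpha_i\}$ and $\{y_i\}=\{\beta_i\}$, hence $x_i,y_i\ge 0$. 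I expect the hard half of Edrei--Thoma to be the one genuine input here; everything else --- the passage from $Z$ to $s_{\alpha/\beta}[\boldsymbol{x}|\boldsymbol{y}]$ to Toeplitz minors, and the computation of the generating function --- is routine bookkeeping resting on Corollary~\ref{Partition-Function-is-Supersymmetric-Schur} and the supersymmetric Jacobi--Trudi formula.
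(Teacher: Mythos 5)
Your proof is correct and follows essentially the same route as the paper: both identify $Z(\mathfrak{S}_{\lambda/\mu}(\boldsymbol{x},\boldsymbol{y},\boldsymbol{1},\boldsymbol{1}))$ with the supersymmetric Schur specialization of $\sigma_{\lambda/\mu}$ and then invoke the Edrei--Thoma theorem, which is the one genuine input. The paper's argument is simply a terser version of yours, citing Edrei--Thoma for both directions at once instead of spelling out the Toeplitz/total-nonnegativity reformulation, the tableau-positive ``if'' direction, and the matching of rational factorizations, so your extra detail is elaboration rather than a different method.
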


\begin{proof}
By Theorem \ref{Delta-lattice-Hamiltonian}, the map $\phi:\Lambda\to\R$ defined by \[s_k \mapsto \frac{1}{k}\sum_j \left(x_j^k + (-1)^{k-1}y_j^k\right)\] has the property that $\phi(\sigma_{\lambda/\mu}) = Z(\mathfrak{S}_{\lambda/\mu}(\boldsymbol{x}, \boldsymbol{y}, \boldsymbol{1}, \boldsymbol{1}))$ for all strict partitions $\lambda,\mu$.

By the Edrei-Thoma Theorem (see \cite[Proposition~1.3]{Matveev-Macdonald}), $\phi(\sigma_{\lambda/\mu})$ is positive for all $\lambda,\mu$ precisely when $x_j,y_j\ge 0$ for all $j$.
\end{proof}

In other words, if a free fermionic six-vertex model satisfies \[\frac{\vx{b_2^{(j)}}}{\vx{a_1^{(j)}}}, \frac{\vx{a_2^{(j)}}}{\vx{b_1^{(j)}}}\ge 0 \hspace{20pt} \text{for all } 1\le j\le n,\] then the partition function has a predictable sign that only depends on $M$ and $\ell(\lambda)$. It is unclear whether any similar result holds for non-free-fermionic weights, or whether there is a probabilistic interpretation of this result.

\section{Boundary Conditions} \label{boundary-conditions-section}

We will use the results of the previous sections to compute the free fermionic partition function for a model with modified left and right boundary conditions. For any number of rows and any left and right boundary conditions, we give an operator on Fock space that matches the partition function; however, this operator is ugly in general. In the case where the left and right boundaries are both uniform (e.g. domain-wall), we can use a different method to give a precise formula for the partition function.

\subsection{A Fock space operator for any boundary conditions}

We'll work with a version of $\mathfrak{S}$ with generalized boundary conditions. Similar results hold for $\mathfrak{S}^*$. If $\alpha,\beta,\lambda,\mu$ are strict partitions where all parts of $\alpha$ and $\beta$ are positive, let $\mathfrak{S}_{\lambda/\mu}^{\alpha/\beta} := (\mathfrak{S})_{\lambda/\mu}^{\alpha/\beta}$ be defined as follows.

\begin{itemize}
    \item $N$ rows, where $N\ge \max(\sigma_1,\tau_1)$, labelled $1,\ldots, N$ from bottom to top;
    \item $M+1$ columns, where $M\ge \max(\lambda_1,\mu_1)$, labelled $0,\ldots,M$ from left to right;
    \item Right boundary edges $-$ on parts of $\alpha$; $+$ otherwise;
    \item Left boundary edges $-$ on parts of $\beta$; $+$ otherwise;
    \item Bottom boundary edges $-$ on parts of $\lambda$; $+$ otherwise;
    \item Top boundary edges $-$ on parts of $\mu$; $+$ otherwise;
    \item Boltzmann weights from Figure \ref{Delta-vertex-weights}.
\end{itemize}

Given $1\le i\le n$, consider 4 cases:
\begin{enumerate}
    \item[A)] Neither $\alpha$ nor $\beta$ has a part of size $i$.
    \item[B)] $\alpha$ has a part of size $i$, but $\beta$ does not.
    \item[C)] $\beta$ has a part of size $i$, but $\alpha$ does not.
    \item[D)] Both $\alpha$ and $\beta$ have parts of size $i$.
\end{enumerate}

Define \[e^{\Phi_i} := \begin{cases} e^{\phi_i}, & \text{if case A,} \\ (x_i+y_i)^{-1}\cdot e^{\phi_i} \psi^*_{M+1/2}, & \text{if case B,} \\ \psi^*_{-3/2}\psi_{-3/2} e^{\phi_i} \psi_{-3/2}, & \text{if case C,} \\ (x_i+y_i)^{-1} \psi^*_{-3/2} \psi_{-3/2} e^{\phi_i} \psi^*_{M+1/2}\psi_{-3/2}, & \text{if case D.}\end{cases}\] Here is the general idea behind the operators $e^{\Phi_i}$. In cases B and D, we introduce a particle in column $M+1$, while in cases C and D, we remove a particle from column $-1$. This adjustment corresponds on the lattice model side to creating a ``ghost vertex'' of type $\vx{c_1^{(i)}}$ on the right of the row (resp. type $\vx{c_2^{(i)}}$ on the left of the row. We then divide by the weight of the ghost vertex since it doesn't appear in the lattice model.

The particle removal has another wrinkle. We first remove a particle from column -1 by applying $\psi_{-3/2}$, which introduces a factor of $(-1)^{\ell(\lambda)}$. Then the operator $e^{\phi_i}$ fills that empty spot up. Then the operator $\psi^*_{-3/2}\psi_{-3/2}$ serves as a ``check'', killing the state unless there is a particle in column -1.

Let $S:\mathcal{F}_\ell\to \mathcal{F}_{\ell+1}$ be the shift operator defined by $S|\rho_\ell\rangle = |\rho_{\ell+1}\rangle$. Note that $S$ commutes with current operators, and therefore with $e^{\phi_i}$. Let $\emptyset$ denote the empty partition (no parts).

\begin{proposition}
\begin{align} Z((\mathfrak{S})^{\alpha/\beta}_{\lambda/\mu}) = \prod_{i=0}^N (-1)^{\ell(\lambda) + \delta_i} A_i^{M+1} B_i^{\ell(\lambda) + \delta_i} \langle \mu|e^{\Phi_N} e^{\Phi_{N-1}}\ldots e^{\Phi_1} e^{\Phi_0}|\lambda\rangle, \label{boundary-operator-equation}\end{align} where \[\delta_i = |\{j|\alpha_j < i\}| - |\{j|\beta_j < i\}|\]
\end{proposition}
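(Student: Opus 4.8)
The plan is to reduce the claim to the already-established $\mathfrak{S}$-to-Hamiltonian correspondence (Theorem \ref{Delta-lattice-Hamiltonian}, or more precisely the row-by-row branching argument of Proposition \ref{Reduced-Delta-lattice-Hamiltonian}) by analyzing, row by row, how a nontrivial side boundary affects a single row transfer operator. The key observation is that once we fix the $i$th row, the side boundary edges at height $i$ behave exactly like extra ``virtual columns'' $-1$ and $M+1$ appended to that row. So first I would isolate a single row and ask: for each of the four cases A--D, what operator on Fock space sends the vector recording the vertical $-$-spins at the bottom of row $i$ to the vector recording them at the top, given the prescribed behavior of the left edge (column $-1$) and right edge (column $M+1$)?

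In case A the row is just an ordinary row of $\mathfrak{S}$ with all-$+$ side edges, so the operator is $e^{\phi_i}$ by Theorem \ref{Delta-lattice-Hamiltonian} (after setting $A_i=B_i=1$; the $A_i,B_i$ powers are tracked separately as in the discussion following that theorem). For case B, a $-$ spin enters on the right; I would argue that the only admissible vertex in the virtual column $M+1$ with a $-$ on its left-incoming edge and the boundary forcing a $+$ on its right edge and $+$ on its top (since $M\ge\max(\lambda_1,\mu_1)$, no real particle sits there) is the $\vx{c_1^{(i)}}$ vertex, which deposits a particle into the row at position $M+\tfrac12$; this is exactly the effect of post-composing with $\psi^*_{M+1/2}$, and we divide by its weight $(x_i+y_i)A_iB_i$ — hence the $(x_i+y_i)^{-1}$ and the adjustment to the $B_i$-exponent recorded in $\delta_i$. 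For case C, a $-$ spin exits on the left at height $i$: reading the row left-to-right, one must first have a particle available in virtual column $-1$; applying $\psi_{-3/2}$ removes it (producing the sign $(-1)^{\ell(\lambda)}$ flagged in the ``particle removal'' discussion), $e^{\phi_i}$ then runs the row, and $\psi^*_{-3/2}\psi_{-3/2}$ enforces that the slot was indeed occupied, i.e. acts as the indicator that the $\vx{c_2^{(i)}}$ ghost vertex was admissible; again we divide by its weight. Case D is the composite of B and C, and since the operators on the two ends commute with each other, the operator is the product, with the $(x_i+y_i)^{-1}$ appearing once (only the right ghost vertex is a $\vx{c_1}$; the left $\vx{c_2^{(i)}}$ vertex has weight $A_i$, already absorbed into the $A_i^{M+1}$ factor).

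With the single-row statement in hand, the global formula follows by the same branching/Miwa-transform argument used to prove Proposition \ref{Reduced-Delta-lattice-Hamiltonian}: insert a complete set of intermediate basis vectors $|\nu_i\rangle$ between consecutive rows, identify each one-row factor $Z$ with the corresponding $\langle\nu_i|e^{\Phi_i}|\nu_{i-1}\rangle$, and multiply. I would then collect the scalar prefactors: each row contributes $A_i^{M+1}$ (from the $M+1$ admissible vertices carrying an $A_i$), contributes $B_i$ once for each vertical $-$-spin at its bottom, and the number of such spins in row $i$ is $\ell(\lambda)+\delta_i$ — because the count of vertical particles changes from $\ell(\lambda)$ at the very bottom by $+1$ each time we pass a height where $\alpha$ (right boundary) has a part below and $-1$ each time $\beta$ (left boundary) does, which is precisely $\delta_i=|\{j:\alpha_j<i\}|-|\{j:\beta_j<i\}|$. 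Finally, the sign $(-1)^{\ell(\lambda)+\delta_i}$ accumulates exactly once per row in which we applied a $\psi_{-3/2}$ (cases C and D), and the exponent $\ell(\lambda)+\delta_i$ correctly tracks the running particle count at that stage; here one must be a little careful that the sign bookkeeping matches across rows, since each $\psi_{-3/2}$ sees a different number of particles.

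The main obstacle I anticipate is the case-C/D analysis: verifying that the somewhat ad hoc-looking operator $\psi^*_{-3/2}\psi_{-3/2}\,e^{\phi_i}\,\psi_{-3/2}$ genuinely reproduces the partition function of a row with a forced $-$ on the left boundary, including getting the sign $(-1)^{\ell(\lambda)}$ and the ``check'' behavior exactly right, and confirming that the position $-3/2$ (rather than $-1/2$) is forced so that the inserted/checked particle does not collide with a genuine column-$0$ particle. Equivalently, one is checking that appending a ghost $\vx{c_2^{(i)}}$ vertex on the left and then deleting it is implemented faithfully by these creation/deletion operators on the half-integer lattice. Everything else — the admissibility case-check in the virtual columns, the commutation of the two end operators, and the branching sum — is routine given the machinery already set up.
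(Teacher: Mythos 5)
Your outline coincides with the paper's own proof in structure: both reduce to a single row, interpret the side boundary spins as forced ``ghost'' vertices in virtual columns (a $\vx{c_2^{(i)}}$ on the left, a $\vx{c_1^{(i)}}$ on the right), and then assemble the multi-row statement by the same branching/Miwa argument used for Proposition \ref{Reduced-Delta-lattice-Hamiltonian}. Your prefactor bookkeeping (each row contributes $A_i^{M+1}$, and $B_i$ once per vertical $-$ spin at its bottom, of which there are $\ell(\lambda)+\delta_i$) is also how the paper accounts for the powers of $A_i$ and $B_i$.

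The gap is exactly the step you defer to the end: you never actually prove that, say, $(x_i+y_i)^{-1}\psi^*_{-3/2}\psi_{-3/2}\,e^{\phi_i}\,\psi^*_{M+1/2}\psi_{-3/2}$ computes the one-row partition function with forced side spins, and an admissibility analysis of the virtual columns does not by itself connect this Clifford-algebra expression to a quantity you already control. The paper closes this with a device your proposal omits: introduce the shift operator $S:\mathcal{F}_\ell\to\mathcal{F}_{\ell+1}$ (which commutes with the current operators, hence with $e^{\phi_i}$) together with the enlarged strict partitions $\tilde{\lambda}=(M+2,\lambda_1+1,\ldots,\lambda_{\ell(\lambda)}+1)$ and $\tilde{\mu}=(\mu_1+1,\ldots,\mu_{\ell(\mu)}+1,0)$, and verify the identities $|\tilde{\lambda}\rangle=(-1)^{\ell(\lambda)}S\psi^*_{M+1/2}\psi_{-3/2}|\lambda\rangle$ and $\langle\tilde{\mu}|=\langle\mu|\psi^*_{-3/2}\psi_{-3/2}S^{-1}$. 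These convert $\langle\mu|e^{\Phi_i}|\lambda\rangle$ into $(-1)^{\ell(\lambda)}(x_i+y_i)^{-1}\langle\tilde{\mu}|e^{\phi_i}|\tilde{\lambda}\rangle$, which Theorem \ref{Delta-lattice-Hamiltonian} evaluates as the partition function of the all-$+$-side-boundary model $\mathfrak{S}_{\tilde{\lambda}/\tilde{\mu}}$ on the enlarged grid; one then observes that its leftmost and rightmost vertices are forced to be of types $\vx{c_2^{(i)}}$ and $\vx{c_1^{(i)}}$, so dividing by their weights recovers $Z(\mathfrak{S}^{(1)/(1)}_{\lambda/\mu})$. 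Without some such reduction to the already-proved correspondence, the sign $(-1)^{\ell(\lambda)}$, the choice of index $-3/2$ rather than $-1/2$, and the role of the projector $\psi^*_{-3/2}\psi_{-3/2}$ remain unverified assertions, so you should supply these identities (or an equivalent computation) to complete the single-row step; the rest of your argument then goes through as written.
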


\begin{proof}
We prove this first for a single row, and drop the subscripts $i=0$. If $N=1$, we want to prove \[Z((\mathfrak{S})^{\alpha/\beta}_{\lambda/\mu}) = A_1^M B_1^{\ell(\lambda)} \langle \mu|e^{\Phi_1}|\lambda\rangle\] for all possible $\alpha,\beta\in \{(1),\emptyset\}$. We have four cases, corresponding to the cases above: A) $\alpha=\beta=\emptyset$; B) $\alpha=(1), \beta=\emptyset$; C) $\alpha=\emptyset, \beta=(1)$; D) $\alpha=\beta=(1)$. We will prove case D, and the others are similar.

Let $\tilde{\lambda} = (M+2,\lambda_1+1,\ldots,\lambda_{\ell(\lambda)}+1), \widetilde{\mu} = (\mu_1+1,\ldots,\mu_{\ell(\mu)}+1,0)$. Note that \[|\tilde{\lambda}\rangle = (-1)^{\ell(\lambda)}S\psi^*_{M+1/2}\psi_{-3/2}|\lambda\rangle \hspace{20pt} \text{and} \hspace{20pt} \langle\tilde{\mu}| = \langle\mu|\psi^*_{-3/2}\psi_{-3/2}S^{-1},\] so \begin{align*} \langle \mu|e^{\Phi}|\lambda\rangle &= (x+y)^{-1} \langle\mu|\psi^*_{-3/2} \psi_{-3/2} e^{\phi} \psi^*_{M+1/2}\psi_{-3/2}|\lambda\rangle \\&= (-1)^{\ell(\lambda)} (x+y)^{-1}\langle\tilde{\mu}|Se^{\phi}S^{-1}|\tilde{\lambda}\rangle \\&= (-1)^{\ell(\lambda)} (x+y)^{-1}\langle\tilde{\mu}|e^{\phi}|\tilde{\lambda}\rangle \\&= (-1)^{\ell(\lambda)} (x+y)^{-1} A^{-(M+3)} B^{-\ell(\lambda)-1} Z(\mathfrak{S}_{\tilde{\lambda}/\tilde{\mu}}) \\&= (-1)^{\ell(\lambda)} A^{-(M+1)} B^{-\ell(\lambda)} Z(\mathfrak{S}^{(1)/(1)}_{\tilde{\lambda}/\tilde{\mu}}).\end{align*}

Here, the last two equalities are because $\mathfrak{S}_{\tilde{\lambda}/\tilde{\mu}}$ is the following lattice model,

\[\scalebox{0.8}{
\begin{tikzpicture}
  \coordinate (aa) at (1,1);
  \coordinate (ab) at (1,2);
  \coordinate (ac) at (1,3);
  \coordinate (ba) at (2,1);
  \coordinate (bb) at (2,2);
  \coordinate (bc) at (2,3);
  \coordinate (ca) at (3,1);
  \coordinate (cb) at (3,2);
  \coordinate (cc) at (3,3);
  \coordinate (da) at (4,1);
  \coordinate (db) at (4,2);
  \coordinate (dc) at (4,3);
  \coordinate (ea) at (5,1);
  \coordinate (eb) at (5,2);
  \coordinate (ec) at (5,3);
  \coordinate (fa) at (6,1);
  \coordinate (fb) at (6,2);
  \coordinate (fc) at (6,3);
  \coordinate (ga) at (7,1);
  \coordinate (gb) at (7,2);
  \coordinate (gc) at (7,3);
  \coordinate (ha) at (8,1);
  \coordinate (hb) at (8,2);
  \coordinate (hc) at (8,3);
  \coordinate (ia) at (9,1);
  \coordinate (ib) at (9,2);
  \coordinate (ic) at (9,3);
  \coordinate (ja) at (10,1);
  \coordinate (jb) at (10,2);
  \coordinate (jc) at (10,3);
  \coordinate (ka) at (11,1);
  \coordinate (kb) at (11,2);
  \coordinate (kc) at (11,3);
  \draw (ab)--(kb);
  \draw (ba)--(bc);
  \draw (da)--(dc);
  \draw (ha)--(hc);
  \draw (ja)--(jc);
  \draw[fill=white] (ba) circle (.25);
  \draw[fill=white] (da) circle (.25);
  \draw[fill=white] (ha) circle (.25);
  \draw[fill=white] (ja) circle (.25);
  \draw[fill=white] (bc) circle (.25);
  \draw[fill=white] (dc) circle (.25);
  \draw[fill=white] (hc) circle (.25);
  \draw[fill=white] (jc) circle (.25);
  \draw[fill=white] (ab) circle (.25);
  \draw[fill=white] (cb) circle (.25);
  \draw[fill=white] (eb) circle (.25);
  \draw[fill=white] (gb) circle (.25);
  \draw[fill=white] (ib) circle (.25);
  \draw[fill=white] (kb) circle (.25);
  \path[fill=white] (bb) circle (.25);
  \path[fill=white] (fb) circle (.25);
  \path[fill=white] (jb) circle (.25);
  \node at (fc) {$\ldots$};
  \node at (fb) {$\ldots$};
  \node at (fa) {$\ldots$};
  \node at (kb) {$+$};
  \node at (jb) {$m$};
  \node at (bb) {$0$};
  \node at (jc) {$+$};
  \node at (bc) {$-$};
  \node at (ja) {$-$};
  \node at (ba) {$+$};
  \node at (ab) {$+$};
  \draw [->,>=stealth] (8,3.7)--(4,3.7);
  \node at (6,4) {$\mu$};
  \draw [->,>=stealth] (8,0.3)--(4,0.3);
  \node at (6,0) {$\lambda$};
\end{tikzpicture}}.\]

and the rightmost vertex of $\mathfrak{S}_{\tilde{\lambda}/\tilde{\mu}}$ must be type $\vx{c_1}$, and the leftmost vertex  must be type $\vx{c_2}$, so \begin{align*}Z(\mathfrak{S}^{(1)/(1)}_{\lambda/\mu}) = \frac{Z(\mathfrak{S}_{\tilde{\lambda}/\tilde{\mu}})}{(x+y)A^2B}.\end{align*} Thus, (\ref{boundary-operator-equation}) holds for case D, and the other cases are similar.

Now, for multiple rows, the result follows by induction from the single row case, using the branching rules for both lattice models and Hamiltonians. $\delta_i$ is the number of parts of $\lambda_i$ where $|\lambda_i\rangle$ appears in $|e^{\Phi_{i-1}}\cdots e^{\Phi_0|}\lambda\rangle$.
\end{proof}

\subsection{Computation of the partition function with uniform side boundary conditions} \label{side-boundary-part-fun-section}

Recall that $\rho_\ell = (\ell-1,\ldots,1,0)$. Let $\rho^+_\ell = (\ell,\ldots,2,1)$.

We make the definitions \[L(\beta) := Z(\mathfrak{S}^{\beta/\emptyset}_{\emptyset/\rho_{\ell(\beta)}}), \hspace{20pt} R(\alpha) := Z(\mathfrak{S}^{\emptyset/\alpha}_{\rho_{\ell(\alpha)}/\emptyset}).\]

Given integers $s,t$, let $\lambda,\mu$ be strict partitions with $\ell(\lambda)=\ell,\ell(\mu)=k$, $\ell+s=k+t$. Let $\ell(\alpha)=s,\ell(\beta)=t$, and as usual let $M\ge\max(\lambda_1,\mu_1),N\ge\max(\alpha_1,\beta_1)$. We want to evaluate the partition function $Z(\mathfrak{S}^{\alpha/\beta}_{\lambda/\mu})$.

Let \[\tilde{\lambda} = (M+s+t,M+s+t-1,\ldots,M+t+1,\lambda_1+t,\lambda_2+t,\ldots,\lambda_\ell+t),\] \[\tilde{\mu} = (\mu_1+t,\mu_2+t,\ldots,\mu_k+t,t-1,t-2,\ldots,0).\] Note that if $t=0$, $\tilde{\mu}=\mu$ and if $s=t=0$, then $\tilde{\lambda}=\lambda$.

\begin{proposition} \label{extendedlatticemodelproposition}
\[\prod_{i=1}^N A_i^{M+1+s+t} B_i^{\ell+s}\langle \tilde{\mu}|e^{H_+}|\tilde{\lambda}\rangle = \sum_{\alpha,\beta}  L(\beta) R(\alpha) Z(\mathfrak{S}^{\alpha/\beta}_{\lambda/\mu}),\] where the sum is over all $\alpha,\beta$ where $\alpha_1,\beta_1\le N$ and $\ell(\alpha)=s,\ell(\beta)=t$.
\end{proposition}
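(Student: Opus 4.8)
The plan is to read the left-hand side as the partition function of one large free-fermionic model with uniform $+$ side boundaries, and then to slice that model into three vertical strips whose partition functions are $L(\beta)$, $Z(\mathfrak{S}^{\alpha/\beta}_{\lambda/\mu})$, and $R(\alpha)$, with $\alpha$ and $\beta$ recording the horizontal spins along the two cuts. First I would check that $\tilde{\lambda}$ and $\tilde{\mu}$ are genuine strict partitions with $\ell(\tilde{\lambda}) = \ell + s = k + t = \ell(\tilde{\mu})$ and $\tilde{\lambda}_1 = M+s+t \geq \tilde{\mu}_1$, so that $\mathfrak{S}_{\tilde{\lambda}/\tilde{\mu}}$ is a legitimate model with $M+s+t+1$ columns (labelled $0,\dots,M+s+t$) and $N$ rows. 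Since we are in the free fermionic case and $H_+$ is the matching Hamiltonian, Theorem~\ref{Delta-lattice-Hamiltonian} (equivalently Corollary~\ref{Partition-Function-is-Supersymmetric-Schur}) gives
\[
\prod_{i=1}^{N} A_i^{M+1+s+t} B_i^{\ell+s}\,\langle\tilde{\mu}|e^{H_+}|\tilde{\lambda}\rangle = Z(\mathfrak{S}_{\tilde{\lambda}/\tilde{\mu}}),
\]
so it suffices to establish $Z(\mathfrak{S}_{\tilde{\lambda}/\tilde{\mu}}) = \sum_{\alpha,\beta} L(\beta) R(\alpha) Z(\mathfrak{S}^{\alpha/\beta}_{\lambda/\mu})$.

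Next I would cut every admissible state of $\mathfrak{S}_{\tilde{\lambda}/\tilde{\mu}}$ along the vertical lines separating columns $t-1$ and $t$, and columns $M+t$ and $M+t+1$. Because the Boltzmann weight of a state is the product over its vertices and the three strips partition the vertex set, the weight factors as the product of the weights of the three restricted states. Reading off the boundary data from the definitions of $\tilde{\lambda}$ and $\tilde{\mu}$: the leftmost strip ($t$ columns) has empty left boundary, empty bottom boundary (all parts of $\tilde{\lambda}$ are $\geq t$), and full top boundary $\rho_t$ (the last $t$ parts of $\tilde{\mu}$), so after relabelling columns it is precisely $\mathfrak{S}^{\beta/\emptyset}_{\emptyset/\rho_{\ell(\beta)}}$ when $\ell(\beta)=t$; symmetrically the rightmost strip ($s$ columns) has empty top and right boundaries and full bottom boundary $\rho_s$, giving $\mathfrak{S}^{\emptyset/\alpha}_{\rho_{\ell(\alpha)}/\emptyset}$ when $\ell(\alpha)=s$; and the middle strip ($M+1$ columns) has bottom boundary $\lambda$, top boundary $\mu$, left boundary $\beta$ and right boundary $\alpha$, i.e.\ $\mathfrak{S}^{\alpha/\beta}_{\lambda/\mu}$. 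Grouping the sum over states of $\mathfrak{S}_{\tilde{\lambda}/\tilde{\mu}}$ first by the interface data $(\alpha,\beta)$ and then summing independently over states of each strip compatible with that data yields $Z(\mathfrak{S}_{\tilde{\lambda}/\tilde{\mu}}) = \sum_{\alpha,\beta} Z(\mathfrak{S}^{\beta/\emptyset}_{\emptyset/\rho_t})\, Z(\mathfrak{S}^{\alpha/\beta}_{\lambda/\mu})\, Z(\mathfrak{S}^{\emptyset/\alpha}_{\rho_s/\emptyset})$.

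Finally I would pin down the index set of the sum. A standard path-conservation count --- in $\mathfrak{S}$ every path travels up and left, hence starts on the bottom or right boundary and ends on the top or left boundary --- shows that the leftmost strip has no admissible state unless $\ell(\beta)=t$, the rightmost strip none unless $\ell(\alpha)=s$, and that these constraints are compatible with the hypothesis $\ell+s = k+t$; moreover $\alpha_1,\beta_1 \leq N$ automatically since there are only $N$ rows. Terms with $\ell(\beta)\neq t$ or $\ell(\alpha)\neq s$ therefore vanish, and what remains is exactly $\sum_{\alpha,\beta} L(\beta) R(\alpha) Z(\mathfrak{S}^{\alpha/\beta}_{\lambda/\mu})$ over $\alpha_1,\beta_1 \leq N$, $\ell(\alpha)=s$, $\ell(\beta)=t$. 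The step to watch is the strip identification: one must keep the column counts of the three strips ($t$, $M+1$, $s$) in line with the (minimal) column counts built into the definitions of $L(\beta)$, $\mathfrak{S}^{\alpha/\beta}_{\lambda/\mu}$, and $R(\alpha)$, so that the powers of $A_i$ distribute correctly; but since the strips literally partition the $M+s+t+1$ columns of the large model, $t+(M+1)+s = M+s+t+1$, this is automatic, and no separate bookkeeping of the $A_i$ or $B_i$ factors is needed beyond the first displayed identity.
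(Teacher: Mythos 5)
Your proposal is correct and follows exactly the paper's route: apply Theorem \ref{Delta-lattice-Hamiltonian} to identify the left-hand side with $Z(\mathfrak{S}_{\tilde{\lambda}/\tilde{\mu}})$, then slice that model into the left $t$, middle $M+1$, and right $s$ columns and sum over the interface spins. The paper states this in one line; your write-up just supplies the boundary-condition bookkeeping and the path-conservation argument forcing $\ell(\alpha)=s$, $\ell(\beta)=t$, all of which checks out.
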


\begin{proof}
By Theorem \ref{Delta-lattice-Hamiltonian}, \[\prod_{i=1}^N A_i^{M+1+s+t} B_i^{\ell+s}\langle \tilde{\mu}|e^{H_+}|\tilde{\lambda}\rangle = Z(\mathfrak{S}_{\tilde{\lambda}/\tilde{\mu}}).\] The result follows from breaking the lattice model into three parts: the left $t$ columns, the middle $M$ columns, and the right $s$ columns.
\end{proof}

We will look more closely at the four special cases where $s$ and $t$ are each either 0 or $N$. In these cases, there is only one choice for $\alpha$ and $\beta$, so the sum from Proposition \ref{extendedlatticemodelproposition} disappears. Let $L(N) := L(\rho^+_{N+1}), R(N) := R(\rho^+_{N+1})$. Then, the previous proposition has the following immediate corollary.

\begin{corollary} Let $\ell = \ell(\lambda)$.
\begin{enumerate}
    \item[A)] \[Z(\mathfrak{S}^{\emptyset/\emptyset}_{\lambda/\mu}) = \prod_{i=1}^N A_i^{M+1} B_i^\ell \cdot \langle \mu|e^{H_+}|\lambda\rangle,\]
    \item[B)] \[Z(\mathfrak{S}^{\rho^+_N/\emptyset}_{\lambda/\mu}) = \frac{1}{R(N)}\cdot \prod_{i=1}^N A_i^{M+1+N} B_i^{\ell + N} \cdot \langle\mu|e^{H_+}|\tilde{\lambda}\rangle,\]
    \item[C)] \[Z(\mathfrak{S}^{\emptyset/\rho^+_N}_{\lambda/\mu}) = \frac{1}{L(N)}\cdot \prod_{i=1}^N A_i^{M+1+N} B_i^\ell \cdot \langle \tilde{\mu}|e^{H_+}|\tilde{\lambda}\rangle,\]
    \item[D)] \[Z(\mathfrak{S}^{\rho^+_N/\rho^+_N}_{\lambda/\mu}) = \frac{1}{L(N)R(N)}\cdot \prod_{i=1}^N A_i^{M+1+2N} B_i^{\ell+N} \cdot \langle \tilde{\mu}|e^{H_+}|\tilde{\lambda}\rangle.\]
\end{enumerate}
\end{corollary}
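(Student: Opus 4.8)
The plan is to obtain all four formulas as immediate specializations of Proposition~\ref{extendedlatticemodelproposition}, letting the pair $(s,t)=(\ell(\alpha),\ell(\beta))$ run over the four possibilities in $\{0,N\}^2$.

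First I would record how $\tilde{\lambda}$ and $\tilde{\mu}$ simplify in each case: since $\tilde{\mu}=\mu$ whenever $t=0$, and $\tilde{\lambda}=\lambda$ when in addition $s=0$, cases A) and B) produce $\langle\mu|e^{H_+}|\tilde{\lambda}\rangle$ and cases C) and D) produce $\langle\tilde{\mu}|e^{H_+}|\tilde{\lambda}\rangle$ — exactly matching the shapes of the four claimed identities, with A) involving the unmodified $\lambda,\mu$. The essential point is that the sum $\sum_{\alpha,\beta}$ on the right-hand side of Proposition~\ref{extendedlatticemodelproposition} collapses to a single term in each of these four cases: the index $\alpha$ ranges over strict partitions with positive parts, $\ell(\alpha)=s$, and $\alpha_1\le N$, so when $s=0$ the only choice is $\alpha=\emptyset$, and when $s=N$ strictness together with the bound $\alpha_1\le N$ forces $\alpha$ to be the full staircase; the same holds for $\beta$. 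Thus Proposition~\ref{extendedlatticemodelproposition} reduces to the single equation
\[\prod_{i=1}^N A_i^{M+1+s+t}B_i^{\ell+s}\,\langle\tilde{\mu}|e^{H_+}|\tilde{\lambda}\rangle = L(\beta)\,R(\alpha)\,Z(\mathfrak{S}^{\alpha/\beta}_{\lambda/\mu})\]
with $\alpha,\beta$ the forced choices.

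It then remains only to evaluate the boundary factors. Here $L(\emptyset)=R(\emptyset)=1$, since these are partition functions over an empty strip of columns (equivalently, all-$+$ boundaries on no columns), hence empty products; and for the staircase choices $L$ and $R$ coincide with the quantities $L(N)$ and $R(N)$ defined just before the statement. Substituting $(s,t)\in\{0,N\}^2$ into the displayed equation and solving for $Z(\mathfrak{S}^{\alpha/\beta}_{\lambda/\mu})$ gives formulas A)--D) verbatim; in particular A) is just a restatement of Theorem~\ref{Delta-lattice-Hamiltonian}. I do not expect any genuine obstacle: the only content is the collapse of the index set (a one-line strictness argument) and the bookkeeping of the $A_i$ and $B_i$ exponents, both routine. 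The one place warranting a moment's care is confirming that $L(\emptyset)=R(\emptyset)=1$ under the conventions in force for the definitions of $L$ and $R$.
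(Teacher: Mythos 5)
Your proposal is correct and is essentially the paper's own argument: the paper likewise derives all four identities by specializing Proposition~\ref{extendedlatticemodelproposition} to $(s,t)\in\{0,N\}^2$, observing that strictness plus the bound $\alpha_1,\beta_1\le N$ forces $\alpha,\beta$ to be $\emptyset$ or the full staircase so the sum collapses, and then dividing by the boundary factors. Your explicit check that $L(\emptyset)=R(\emptyset)=1$ (as empty products) is a detail the paper leaves implicit but is the right thing to verify.
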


Case A is just Theorem \ref{Delta-lattice-Hamiltonian} restated. To evaluate the other three partition functions, we need to evaluate $L(N)$ and $R(N)$. For the rest of the section, we will use the vertices and the weights from Figure \ref{Delta-vertex-weights} interchangeably.

\begin{lemma}\cite[Lemma~10]{BBF-Schur-polynomials}
\[L(N)\cdot \left[\prod_{i<j}(\vx{a_1^{(j)}}\vx{a_2^{(i)}} + \vx{b_1^{(i)}}\vx{b_2^{(j)}})\right]^{-1}\] is symmetric with respect to the row parameters, and expressible as a polynomial in the variables $\vx{a_1^{(i)}}, \vx{a_2^{(i)}}, \vx{b_1^{(i)}}, \vx{b_2^{(i)}}$ with integer coefficients.
\end{lemma}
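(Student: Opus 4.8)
The statement has two parts --- that $L(N)\cdot\left[\prod_{i<j}(\vx{a_1^{(j)}}\vx{a_2^{(i)}}+\vx{b_1^{(i)}}\vx{b_2^{(j)}})\right]^{-1}$ is symmetric in the row parameters, and that it is a polynomial with integer coefficients in $\vx{a_1^{(i)}},\vx{a_2^{(i)}},\vx{b_1^{(i)}},\vx{b_2^{(i)}}$ --- and the plan is to extract both from one Yang--Baxter functional equation. To make the second claim meaningful, first note that $L(N)$ is by definition a sum over admissible states of products of Boltzmann weights, hence a polynomial with nonnegative integer coefficients in the six weights of each row; using the free fermion relation to write $\vx{c_1^{(i)}}\vx{c_2^{(i)}}=\vx{a_1^{(i)}}\vx{a_2^{(i)}}+\vx{b_1^{(i)}}\vx{b_2^{(i)}}$ and $\vx{c_2^{(i)}}=\vx{a_1^{(i)}}$, together with the fact (Remark \ref{c-weights-remark}) that for fixed side boundary the numbers of $\vx{c_1^{(i)}}$ and $\vx{c_2^{(i)}}$ vertices in row $i$ differ by a boundary-dependent constant, one checks that $L(N)$ is a polynomial with integer coefficients in the four families $\vx{a_1^{(i)}},\vx{a_2^{(i)}},\vx{b_1^{(i)}},\vx{b_2^{(i)}}$.

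For the symmetry I would run the usual ``train argument''. Since the weights are free fermionic the model is solvable (\cite[Theorem~1]{BBF-Schur-polynomials}), so there is an $R$-matrix intertwining any two adjacent rows $k,k+1$. Attach the corresponding $R$-vertex at one end of the lattice between rows $k$ and $k+1$; the staircase side-boundary condition, the empty boundary on the other side, and the prescribed top/bottom force a unique admissible configuration of that corner, with an explicit weight $w_-$. Repeatedly applying the Yang--Baxter equation slides the $R$-vertex across all columns; when it emerges at the far end the corner again admits a unique configuration, with weight $w_+$, but the parameter sets of rows $k$ and $k+1$ have been exchanged. Equating the two evaluations gives
\[L(N)^{s_k}\cdot w_- \;=\; L(N)\cdot w_+ ,\]
where $s_k$ denotes the transposition of the parameters of rows $k,k+1$. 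A short check shows that $P:=\prod_{i<j}(\vx{a_1^{(j)}}\vx{a_2^{(i)}}+\vx{b_1^{(i)}}\vx{b_2^{(j)}})$ changes under $s_k$ only through its $(k,k+1)$ factor, and the crux computation is that $w_+/w_-$ equals exactly $P^{s_k}/P$; granting this, $L(N)\cdot P^{-1}$ is invariant under every adjacent transposition, hence symmetric.

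For the divisibility I would reuse the $k=1$ case, written as $L(N)^{s_1}\cdot F = L(N)\cdot G$ with $F:=\vx{a_1^{(2)}}\vx{a_2^{(1)}}+\vx{b_1^{(1)}}\vx{b_2^{(2)}}$ and $G:=\vx{a_1^{(1)}}\vx{a_2^{(2)}}+\vx{b_1^{(2)}}\vx{b_2^{(1)}}$. Passing to the quotient ring $\Z[\vx{a_1^{(i)}},\vx{a_2^{(i)}},\vx{b_1^{(i)}},\vx{b_2^{(i)}}]/(F)$, which is an integral domain because $F$ is an irreducible bilinear form hence prime, the left-hand side vanishes, so $\overline{L(N)}\,\overline{G}=0$; and $\overline{G}\neq 0$ since $G\notin(F)$ (setting $\vx{a_2^{(1)}}=\vx{b_2^{(2)}}=0$ kills $F$ but not $G$). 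Hence $\overline{L(N)}=0$, i.e.\ $F\mid L(N)$. By the symmetry already established, every factor $\vx{a_1^{(j)}}\vx{a_2^{(i)}}+\vx{b_1^{(i)}}\vx{b_2^{(j)}}$ divides $L(N)$; these are pairwise non-associate irreducibles, so in the UFD $\Z[\vx{a_1^{(i)}},\vx{a_2^{(i)}},\vx{b_1^{(i)}},\vx{b_2^{(i)}}]$ their product divides $L(N)$, the quotient being a polynomial with integer coefficients that is symmetric by the previous paragraph.

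The main obstacle is the Yang--Baxter step: one must exhibit the free-fermionic $R$-matrix explicitly, verify that the staircase and empty side-boundary conditions pin down a unique way to attach the $R$-vertex at each end and to absorb it after the train argument, and compute $w_-$ and $w_+$ well enough to recognize $w_+/w_-$ as $P^{s_k}/P$ --- essentially the computation behind \cite[Theorem~1]{BBF-Schur-polynomials} specialized to this boundary. An alternative that sidesteps the $R$-matrix is to evaluate $L(N)$ directly from the operator description in Proposition \ref{extendedlatticemodelproposition} (equivalently the single-row operators $e^{\Phi_i}$ in case B), recognize the answer as a determinantal (supersymmetric Jacobi--Trudi/bialternant) expression, and read off both the symmetry and the divisibility by $\prod_{i<j}(\cdots)$ from that closed form.
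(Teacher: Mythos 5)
The paper does not prove this lemma at all --- it is quoted verbatim from \cite[Lemma~10]{BBF-Schur-polynomials} --- so there is no in-paper argument to compare against; what you have written is essentially a reconstruction of the original Brubaker--Bump--Friedberg proof (train argument for the functional equation, then divisibility in the UFD $\Z[\vx{a_1^{(i)}},\vx{a_2^{(i)}},\vx{b_1^{(i)}},\vx{b_2^{(i)}}]$), and the outline is sound. Two points need tightening. First, the central computation --- that the two $R$-vertices pinned down by the all-$+$ left boundary and the all-$-$ right boundary have weights whose ratio is exactly $(\vx{a_1^{(k)}}\vx{a_2^{(k+1)}}+\vx{b_1^{(k+1)}}\vx{b_2^{(k)}})/(\vx{a_1^{(k+1)}}\vx{a_2^{(k)}}+\vx{b_1^{(k)}}\vx{b_2^{(k+1)}})$ --- is asserted rather than carried out; you acknowledge this, and it is precisely the content of \cite[Theorem~1]{BBF-Schur-polynomials}, so it is a deferral rather than an error. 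Second, the sentence ``by the symmetry already established, every factor divides $L(N)$'' is not a complete argument: the adjacent-row functional equation only gives $F_{k,k+1}\mid L(N)$ directly, and symmetry of the ratio does not by itself yield $F_{ij}\mid L(N)$ for non-adjacent $i<j$. The standard fix is to compose the adjacent functional equations along a reduced word (for the transposition $(i\,j)$, or for the long element $w_0$), obtaining $L^{w}\cdot\prod_{(i,j)\in\mathrm{Inv}(w)}F_{ij}=L\cdot\prod_{(i,j)\in\mathrm{Inv}(w)}G_{ij}$; since each $F_{ij}$ is prime and coprime to every $G_{i'j'}$, this forces $F_{ij}\mid L(N)$ for all $i<j$, and your pairwise-non-associate argument then finishes the divisibility. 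Your handling of the integrality claim (using that each row of this model has exactly one more $\vx{c_2^{(i)}}$ than $\vx{c_1^{(i)}}$ vertex, the free fermion relation $\vx{c_1^{(i)}}\vx{c_2^{(i)}}=\vx{a_1^{(i)}}\vx{a_2^{(i)}}+\vx{b_1^{(i)}}\vx{b_2^{(i)}}$, and the normalization $\vx{c_2^{(i)}}=\vx{a_1^{(i)}}$ from Figure \ref{Delta-vertex-weights}) is consistent with how the paper uses the lemma in the proposition that follows it.
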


\begin{proposition}
\[L(N) = \prod_{k=1}^N \vx{c_2^{(k)}}\cdot \prod_{i<j} \left(\vx{a_1^{(j)}}\vx{a_2^{(i)}} + \vx{b_1^{(i)}}\vx{b_2^{(j)}}\right) = \prod_{k=1}^N A_k^N B_k^{N-k} \cdot \prod_{i<j} (y_i + x_j),\] and \[R(N) = \prod_{k=1}^N \vx{c_1^{(k)}}\cdot \prod_{i<j} \left(\vx{a_1^{(i)}}\vx{a_2^{(j)}} + \vx{b_1^{(j)}}\vx{b_2^{(i)}}\right) = \prod_{k=1}^N A_k^N B_k^{k-1} (x_k+y_k)\cdot \prod_{i<j} (x_i+y_j).\]
\end{proposition}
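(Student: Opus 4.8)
The plan is to recognize $L(N)$ and $R(N)$ as partition functions of the $\mathfrak{S}$ model with domain‑wall boundary conditions — for $L(N)$ the $-$ spins occupy the whole top and right sides, for $R(N)$ the whole bottom and left sides — and to pin them down using the structural input \cite[Lemma~10]{BBF-Schur-polynomials} recorded just above. I would prove the ``vertex weight'' forms first, namely $L(N)=\prod_k\vx{c_2^{(k)}}\cdot\prod_{i<j}(\vx{a_1^{(j)}}\vx{a_2^{(i)}}+\vx{b_1^{(i)}}\vx{b_2^{(j)}})$ and $R(N)=\prod_k\vx{c_1^{(k)}}\cdot\prod_{i<j}(\vx{a_1^{(i)}}\vx{a_2^{(j)}}+\vx{b_1^{(j)}}\vx{b_2^{(i)}})$, and then obtain the explicit $x,y,A,B$ forms by substituting $\vx{c_2^{(k)}}=A_k$, $\vx{c_1^{(k)}}=(x_k+y_k)A_kB_k$, $\vx{a_1^{(j)}}\vx{a_2^{(i)}}+\vx{b_1^{(i)}}\vx{b_2^{(j)}}=A_iA_jB_i(y_i+x_j)$ and $\vx{a_1^{(i)}}\vx{a_2^{(j)}}+\vx{b_1^{(j)}}\vx{b_2^{(i)}}=A_iA_jB_j(x_i+y_j)$, and collecting the per‑row powers of $A_k$ and $B_k$.

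By \cite[Lemma~10]{BBF-Schur-polynomials}, $Q_N:=L(N)\cdot\bigl[\prod_{i<j}(\vx{a_1^{(j)}}\vx{a_2^{(i)}}+\vx{b_1^{(i)}}\vx{b_2^{(j)}})\bigr]^{-1}$ is a polynomial in the row weights $\vx{a_1^{(i)}},\vx{a_2^{(i)}},\vx{b_1^{(i)}},\vx{b_2^{(i)}}$ that is symmetric under permuting the rows, so it is enough to identify $Q_N=\prod_k\vx{c_2^{(k)}}=\prod_k A_k$. I would do this in two steps. First, a degree/support bound: $A_i$ divides every one of the six weights, so each admissible state of the $N\times N$ grid contributes a monomial of $A_i$‑degree exactly $N$, and writing each row‑$i$ weight as $A_i$ times a monomial in $x_i,y_i,B_i$ lets one bound the $B_i$‑ and $(x_i,y_i)$‑degrees of $L(N)$ using particle conservation together with the turn count for the corner vertices — each of the $N$ up‑left paths enters from the right going leftward and leaves through the top going upward, hence makes one more right‑to‑up turn than up‑to‑left turn, so $\#\vx{c_2}-\#\vx{c_1}=N$ in every state — and comparing with the explicit degrees of the denominator forces $Q_N$ into the line spanned by $\prod_k A_k$. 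Second, fixing the scalar: specialize $\vx{a_2^{(i)}}=0$ (i.e. $y_i=0$) for all $i$, the vicious‑walker limit; then the $L(N)$ grid has a single admissible state with the $N$ non‑intersecting paths completely forced, its weight is an explicit monomial, and dividing by $\prod_{i<j}\vx{b_1^{(i)}}\vx{b_2^{(j)}}$ yields $\prod_k A_k$, so the scalar is $1$.

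For $R(N)$ one can run the identical argument — \cite[Lemma~10]{BBF-Schur-polynomials} applies verbatim with the mirrored pairing $\vx{a_1^{(i)}}\vx{a_2^{(j)}}+\vx{b_1^{(j)}}\vx{b_2^{(i)}}$ as denominator — or, more quickly, deduce $R(N)$ from $L(N)$ by a weight‑preserving transformation of the sort used in Section \ref{six-vertex-section}: reflecting the $L(N)$ grid across a vertical axis, relabelling the columns, and dividing the row weights appropriately turns it into an $\mathfrak{S}$ domain‑wall grid of the $R(N)$ type, at the cost of an explicit monomial in the $A_i,B_i$ and the interchange of the two pairings, so the $L(N)$ formula transports to the $R(N)$ formula. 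In either case one finishes by the substitutions listed in the first paragraph.

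I expect the main obstacle to be precisely the degree/support bookkeeping of the second paragraph — keeping the per‑row exponents of $A_i$, and especially $B_i$, straight, since the $B_k$‑powers appearing in $L(N)$ and $R(N)$ are $k$‑dependent; it is worth double‑checking the $B_k$‑exponent in the stated $R(N)$ formula against $\prod_k\vx{c_1^{(k)}}=\prod_k(x_k+y_k)A_kB_k$. An alternative route that sidesteps the combinatorics is operator‑theoretic: express $L(N)$ and $R(N)$ through the boundary‑condition operators $e^{\Phi_i}$ of the previous subsection — for $L(N)$ every row is of ``case B'', for $R(N)$ of ``case C'' — commute the creation operators $\psi^*_{M+1/2}$ past the $e^{\phi_i}$ via $e^{\phi_i}\psi^*(t)=\tfrac{1+y_it}{1-x_it}\,\psi^*(t)e^{\phi_i}$, and then apply Wick's Theorem to the resulting Fock‑space bracket, which collapses to a Cauchy/Vandermonde‑type determinant that evaluates in closed form.
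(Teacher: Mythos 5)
Your proposal follows essentially the same route as the paper: both rest on \cite[Lemma~10]{BBF-Schur-polynomials} to supply the $\prod_{i<j}$ factor, pin down the remaining factor $\prod_k \vx{c_2^{(k)}}$ by degree bookkeeping (the paper via the per-row count $\#\vx{c_2}-\#\vx{c_1}=1$ together with the total degree $N^2$, you via per-row $A_i$-homogeneity after dividing out the denominator), and then obtain $R(N)$ from $L(N)$ by a spin-reversal/reflection of the grid rather than rerunning the argument. Your added touches---fixing the overall scalar by the $y_i=0$ vicious-walker specialization, and flagging the $B_k$-exponent in the stated $R(N)$ formula against $\prod_k \vx{c_1^{(k)}}=\prod_k (x_k+y_k)A_kB_k$---are sound refinements of details the paper leaves implicit.
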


\begin{proof}
For the first equation, by the previous lemma, $L(N)$ is a multiple of \[\left[\prod_{i<j}(\vx{a_1^{(j)}}\vx{a_2^{(i)}} + \vx{b_1^{(i)}}\vx{b_2^{(j)}})\right]\] as a polynomial in the Boltzmann weights. In addition, each state of $\mathfrak{S}^{\rho^+_N/\emptyset}_{\emptyset/\rho_N}$ must have precisely 1 more $\vx{c_2}$ vertex than $\vx{c_1}$ vertex in each row, so $L(N)$ is also divisible by $\prod_{k=1}^N \vx{c_2^{(k)}}$. Each state of $\mathfrak{S}^{\rho^+_N/\emptyset}_{\emptyset/\rho_N}$ has $N^2$ vertices, so $L(N)$ must have degree $N^2$. The product of the factors we have already determined also has degree $N$, so \[L(N) = \prod_{k=1}^N \vx{c_2^{(k)}}\cdot \prod_{i<j} \left(\vx{a_1^{(j)}}\vx{a_2^{(i)}} + \vx{b_1^{(i)}}\vx{b_2^{(j)}}\right)\] as desired. The second equality in the first equation follows from plugging in the weights from Figure \ref{Delta-vertex-weights}. The second equation follows by reversing the edges in $\mathfrak{S}^{\rho^+_N/\emptyset}_{\emptyset/\rho_N}$ to obtain $\mathfrak{S}^{\emptyset/\rho^+_N}_{\rho_N/\emptyset}$.
\end{proof}

All together, we have the following result, where we have also applied Theorem \ref{Delta-lattice-Hamiltonian}. Case B is the important case of domain-wall boundary conditions.

\begin{theorem} \label{partition-function-all-boundaries} Let $\ell = \ell(\lambda)$.
\begin{enumerate}
    \item[A)] \[Z(\mathfrak{S}^{\emptyset/\emptyset}_{\lambda+\rho/\mu+\rho}) = \prod_{i=1}^{N} (\vx{a_1^{(i)}})^{M+1-\ell}(\vx{b_1^{(i)}})^{\ell} \cdot s_{\lambda/\mu}\left[\left.\frac{\vx{b_2^{(1)}}}{\vx{a_1^{(1)}}},\ldots,\frac{\vx{b_2^{(N)}}}{\vx{a_1^{(N)}}}\right|\frac{\vx{a_2^{(1)}}}{\vx{b_1^{(1)}}},\ldots,\frac{\vx{a_2^{(N)}}}{\vx{b_1^{(N)}}}\right],\]
    \item[B)] \[Z(\mathfrak{S}^{\rho^+_N/\emptyset}_{\lambda+\rho/\mu+\rho}) = \frac{\prod_{i=1}^{N} (\vx{a_1^{(i)}})^{M+1-\ell}(\vx{b_1^{(i)}})^{\ell+N} \cdot s_{\tilde{\lambda}/\mu}\left[\left.\frac{\vx{b_2^{(1)}}}{\vx{a_1^{(1)}}},\ldots,\frac{\vx{b_2^{(N)}}}{\vx{a_1^{(N)}}}\right|\frac{\vx{a_2^{(1)}}}{\vx{b_1^{(1)}}},\ldots,\frac{\vx{a_2^{(N)}}}{\vx{b_1^{(N)}}}\right]}{\prod_{k=1}^N \vx{c_1^{(k)}}\cdot \prod_{i<j} \left(\vx{a_1^{(i)}}\vx{a_2^{(j)}} + \vx{b_1^{(j)}}\vx{b_2^{(i)}}\right)},\]
    \item[C)] \[Z(\mathfrak{S}^{\emptyset/\rho^+_N}_{\lambda+\rho/\mu+\rho}) = \frac{\prod_{i=1}^{N} (\vx{a_1^{(i)}})^{M+1+N-\ell}(\vx{b_1^{(i)}})^{\ell} \cdot s_{\tilde{\lambda}/\tilde{\mu}}\left[\left.\frac{\vx{b_2^{(1)}}}{\vx{a_1^{(1)}}},\ldots,\frac{\vx{b_2^{(N)}}}{\vx{a_1^{(N)}}}\right|\frac{\vx{a_2^{(1)}}}{\vx{b_1^{(1)}}},\ldots,\frac{\vx{a_2^{(N)}}}{\vx{b_1^{(N)}}}\right]}{\prod_{k=1}^N \vx{c_2^{(k)}}\cdot \prod_{i<j} \left(\vx{a_1^{(j)}}\vx{a_2^{(i)}} + \vx{b_1^{(i)}}\vx{b_2^{(j)}}\right)},\]
    \item[D)] \[Z(\mathfrak{S}^{\rho^+_N/\rho^+_N}_{\lambda+\rho/\mu+\rho}) = \frac{\prod_{i=1}^{N} (\vx{a_1^{(i)}})^{M+1+N-\ell}(\vx{b_1^{(i)}})^{\ell+N} \cdot s_{\tilde{\lambda}/\tilde{\mu}}\left[\left.\frac{\vx{b_2^{(1)}}}{\vx{a_1^{(1)}}},\ldots,\frac{\vx{b_2^{(N)}}}{\vx{a_1^{(N)}}}\right|\frac{\vx{a_2^{(1)}}}{\vx{b_1^{(1)}}},\ldots,\frac{\vx{a_2^{(N)}}}{\vx{b_1^{(N)}}}\right]}{\prod_{k=1}^N \vx{c_1^{(k)}}\vx{c_2^{(k)}}\cdot \prod_{i\ne j} \left(\vx{a_1^{(j)}}\vx{a_2^{(i)}} + \vx{b_1^{(i)}}\vx{b_2^{(j)}}\right)}.\]
\end{enumerate}
\end{theorem}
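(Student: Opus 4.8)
The plan is to assemble the four formulas directly from the three results that immediately precede the statement, with no new idea required beyond simplifying products of Boltzmann weights. The ingredients are: the Corollary to Proposition~\ref{extendedlatticemodelproposition}, which already expresses each of the four partition functions in terms of a Hamiltonian pairing $\langle\tilde\mu|e^{H_+}|\tilde\lambda\rangle$ together with the factors $L(N)$ and $R(N)$ and a monomial in the $A_i,B_i$; the Proposition evaluating $L(N)=\prod_k\vx{c_2^{(k)}}\prod_{i<j}(\vx{a_1^{(j)}}\vx{a_2^{(i)}}+\vx{b_1^{(i)}}\vx{b_2^{(j)}})$ and $R(N)=\prod_k\vx{c_1^{(k)}}\prod_{i<j}(\vx{a_1^{(i)}}\vx{a_2^{(j)}}+\vx{b_1^{(j)}}\vx{b_2^{(i)}})$; and Proposition~\ref{Supersymmetric-Schur-Prop}(a) (equivalently Corollary~\ref{Partition-Function-is-Supersymmetric-Schur}), which identifies $\langle\tilde\mu|e^{H_+}|\tilde\lambda\rangle$ with a skew supersymmetric Schur function in the variables $\vx{b_2^{(i)}}/\vx{a_1^{(i)}}$ and $\vx{a_2^{(i)}}/\vx{b_1^{(i)}}$. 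I would present the argument as a short computation in each of the four cases.

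First I would fix the dictionary $A_i=\vx{a_1^{(i)}}$, $B_i=\vx{b_1^{(i)}}/\vx{a_1^{(i)}}$, $x_i=\vx{b_2^{(i)}}/\vx{a_1^{(i)}}$, $y_i=\vx{a_2^{(i)}}/\vx{b_1^{(i)}}$ between the two parametrizations of the free fermionic weights, so that any monomial $\prod_iA_i^{p}B_i^{q}$ becomes $\prod_i(\vx{a_1^{(i)}})^{p-q}(\vx{b_1^{(i)}})^{q}$; applied to the four cases of the preceding Corollary this turns the prefactors into precisely those appearing in the theorem (for instance $A_i^{M+1+2N}B_i^{\ell+N}\mapsto(\vx{a_1^{(i)}})^{M+1+N-\ell}(\vx{b_1^{(i)}})^{\ell+N}$ in case~D). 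Next I would substitute the explicit formulas for $L(N)$ and $R(N)$: these reproduce verbatim the denominators in cases~B and~C, while for case~D one uses that the transposition $(i,j)\leftrightarrow(j,i)$ carries the $L(N)$ factor $\vx{a_1^{(j)}}\vx{a_2^{(i)}}+\vx{b_1^{(i)}}\vx{b_2^{(j)}}$ to the $R(N)$ factor $\vx{a_1^{(i)}}\vx{a_2^{(j)}}+\vx{b_1^{(j)}}\vx{b_2^{(i)}}$, so that $L(N)R(N)=\prod_k\vx{c_1^{(k)}}\vx{c_2^{(k)}}\prod_{i\ne j}(\vx{a_1^{(j)}}\vx{a_2^{(i)}}+\vx{b_1^{(i)}}\vx{b_2^{(j)}})$, which is exactly the case~D denominator. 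Finally I would invoke Proposition~\ref{Supersymmetric-Schur-Prop}(a); since, by the discussion following Theorem~\ref{Delta-lattice-Hamiltonian}, the pairing $\langle\tilde\mu|e^{H_+}|\tilde\lambda\rangle$ does not depend on the $A_i$ or $B_i$, it may be computed after setting $A_i=B_i=1$, and it then equals the skew supersymmetric Schur function $s_{\tilde\lambda/\tilde\mu}[\,\ldots\,]$, where $\tilde\lambda,\tilde\mu$ denote the ordinary partitions obtained from the extended strict partitions (defined just before Proposition~\ref{extendedlatticemodelproposition}) by un-shifting by $\rho$; these coincide with the partitions named in the theorem, and when $t=0$ one has $\tilde\mu=\mu$. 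Case~A is then just a restatement of Corollary~\ref{Partition-Function-is-Supersymmetric-Schur}.

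Throughout, one should keep in mind that every identity is vacuous unless $\ell(\lambda)+\ell(\alpha)=\ell(\mu)+\ell(\beta)$, since otherwise both sides vanish by particle conservation, and that, as in Remark~\ref{c-weights-remark}, $\vx{c_1}$ and $\vx{c_2}$ enter the answer only through the stated products, so the formulas in fact hold for arbitrary free fermionic weights and not just the four-parameter family $\mathfrak{S}$. The only step demanding genuine care — and hence the main obstacle — is this last piece of bookkeeping: aligning the $\rho$-shift built into the notation $Z(\mathfrak{S}^{\cdots}_{\lambda+\rho/\mu+\rho})$, the precise definitions of the extended partitions $\tilde\lambda,\tilde\mu$, and the indexing conventions of the supersymmetric Jacobi--Trudi formula so that the partitions appearing on the two sides match on the nose.
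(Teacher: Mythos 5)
Your proposal is correct and is exactly the paper's argument: the theorem is stated there as an immediate assembly of the corollary to Proposition \ref{extendedlatticemodelproposition} (the four cases with $L(N)$, $R(N)$, and the $A_i,B_i$ monomials), the explicit evaluation of $L(N)$ and $R(N)$, and the identification of $\langle\tilde\mu|e^{H_+}|\tilde\lambda\rangle$ as a supersymmetric Schur function via Theorem \ref{Delta-lattice-Hamiltonian} and Proposition \ref{Supersymmetric-Schur-Prop}. Your bookkeeping of the weight dictionary, the $L(N)R(N)$ product for case D, and the $\rho$-shifts all check out.
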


\subsection{Berele-Regev formula and Tokuyama's formula} \label{berele-regev-section}

For this subsection, let $Z_\lambda = Z(\mathfrak{S}_{\emptyset/\lambda+\rho}^{\rho^+_N/\emptyset})$. We will use the Berele-Regev formula to show that $Z_\lambda$ has another expression as a Schur function times a deformed denominator. This corrects the result \cite[Theorem~9]{BBF-Schur-polynomials}, whose proof is circular.

We will use the \emph{supertableau} (or \emph{bitableau}) formula for supersymmetric Schur functions \cite[\S~I.5,~Exercise~23]{Macdonald-symmetric-functions}.

A supertableau of shape $\lambda/\mu$ is a filling of $\lambda/\mu$ with the entries $1,\ldots,N, 1',\ldots, N'$ such that

\begin{itemize}
    \item[(i)] The entries weakly increase across rows and columns under the ordering $1<\ldots<N<1'<\ldots,N'$.
    \item[(ii)] There is at most one $j'$ in every row, and at most one $i$ in every column.
\end{itemize}

Then, \begin{align} s_{\lambda/\mu}[\boldsymbol{x}|\boldsymbol{y}] = \sum_T [\boldsymbol{x}|\boldsymbol{y}]^T, \hspace{30pt} [\boldsymbol{x}|\boldsymbol{y}]^T = \prod_{i=1}^N x_i^{m_i} \prod_{j=1}^N y_j^{n_j}, \label{supertableau-formula}\end{align} where the sum is over all supertableaux $T$ of shape $\lambda/\mu$, and $m_i$ (resp. $n_j$) is the number of entries $i$ (resp. $j'$) in $T$.

\begin{lemma} \label{tableau-rotation-lemma}
Let $\lambda/\mu$ be a skew shape, and let $\gamma/\sigma$ be the skew shape obtained from rotating $\lambda/\mu$ by $180^\circ$. Then $s_{\lambda/\mu}[\boldsymbol{x}|\boldsymbol{y}] = s_{\gamma/\sigma}[\boldsymbol{x}|\boldsymbol{y}]$.
\end{lemma}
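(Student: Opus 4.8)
The plan is to exhibit a weight-preserving bijection between supertableaux of shape $\lambda/\mu$ and supertableaux of the $180^\circ$-rotated shape $\gamma/\sigma$, and then appeal to the supertableau formula (\ref{supertableau-formula}). First I would set up the rotation carefully on the level of cells: if $\lambda/\mu$ sits inside an $\ell \times m$ rectangle (where $\ell = \ell(\lambda)$ and $m \ge \lambda_1$), the rotation by $180^\circ$ sends the cell in row $i$, column $j$ to the cell in row $\ell+1-i$, column $m+1-j$, and one checks directly that $\gamma/\sigma$ is again a skew shape with $\gamma_i = m - \mu_{\ell+1-i}$ and $\sigma_i = m - \lambda_{\ell+1-i}$. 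The bijection on fillings is then: given a supertableau $T$ of shape $\lambda/\mu$, rotate the diagram and simultaneously apply the order-reversing involution on the alphabet that sends $i \mapsto (N+1-i)'$ and $j' \mapsto N+1-j$. Call the resulting filling $T^*$ on $\gamma/\sigma$.

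The key steps, in order, are: (1) verify that the alphabet involution $i \leftrightarrow (N+1-i)'$, $j' \leftrightarrow N+1-j$ is an order-reversing bijection of the totally ordered alphabet $1 < \cdots < N < 1' < \cdots < N'$ — this is immediate from the definition; (2) check that $T^*$ satisfies monotonicity condition (i): since the $180^\circ$ rotation reverses the direction of both the ``increase across rows'' and ``increase down columns'' requirements, composing with an order-reversing relabeling restores weak increase in both directions; (3) check that $T^*$ satisfies condition (ii): a repeated entry $j'$ in a row of $T$ (forbidden: at most one $j'$ per row) corresponds under the involution to a repeated unprimed entry $N+1-j$ in a row of $T^*$, and dually at most one $i$ per column in $T$ becomes at most one primed entry per column in $T^*$ — so the ``at most one primed per row, at most one unprimed per column'' conditions are exactly swapped by rotation and restored by the relabeling; (4) observe that the map $T \mapsto T^*$ is an involution up to the analogous construction, hence a bijection; (5) track weights: the number of entries equal to $i$ in $T$ equals the number of entries equal to $(N+1-i)'$ in $T^*$, so $[\boldsymbol{x}|\boldsymbol{y}]^T = \prod_i x_i^{m_i}\prod_j y_j^{n_j}$ becomes $\prod_i x_i^{m_i} \prod_j y_j^{n_j}$ with the roles of $x$-exponents and $y$-exponents permuted by $i \mapsto N+1-i$ — but since we sum over \emph{all} supertableaux and $s_{\lambda/\mu}[\boldsymbol{x}|\boldsymbol{y}]$ is symmetric in the $x_i$ and (separately) in the $y_j$, this reindexing does not change the total sum. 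Summing (\ref{supertableau-formula}) over the bijection then yields $s_{\lambda/\mu}[\boldsymbol{x}|\boldsymbol{y}] = s_{\gamma/\sigma}[\boldsymbol{x}|\boldsymbol{y}]$.

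The main obstacle I anticipate is bookkeeping in step (3): the two ``column-strict/row-strict'' refinements in condition (ii) are not symmetric between primed and unprimed letters, so one must be careful that rotation genuinely swaps the row-condition with the column-condition before the relabeling turns primed into unprimed. A clean way to handle this is to phrase everything in terms of the two orderings simultaneously — weak increase in the total order, strict increase down columns for unprimed letters, strict increase across rows for primed letters — and note that $180^\circ$ rotation transposes neither rows-to-columns (it preserves the row/column distinction) but reverses orientation within each, while the alphabet involution converts ``strict for unprimed'' to ``strict for primed.'' Once this dictionary is written out explicitly the verification is routine. An alternative, if the bijective argument feels delicate, is to deduce the lemma from the Jacobi–Trudi definition $s_{\lambda/\mu}[\boldsymbol{x}|\boldsymbol{y}] = \det_{1\le i,j\le \ell} h_{\lambda_i - \mu_j - i + j}[\boldsymbol{x}|\boldsymbol{y}]$: one shows that the matrix for $\gamma/\sigma$ is obtained from that for $\lambda/\mu$ by reversing the order of both the rows and the columns (via $i \mapsto \ell+1-i$, $j \mapsto \ell+1-j$), which leaves the determinant unchanged since it multiplies by $(-1)^{\ell}(-1)^{\ell} = 1$; this is arguably the shorter route and sidesteps the tableau combinatorics entirely.
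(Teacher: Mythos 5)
Your main bijective argument has a genuine gap. The composite of the $180^\circ$ rotation with the alphabet involution $i\mapsto (N+1-i)'$, $j'\mapsto N+1-j$ does \emph{not} send supertableaux to supertableaux. Rotation preserves the row/column distinction (rows go to rows, columns to columns), so the constraint ``at most one $j'$ per row'' in $T$ becomes, after your relabeling swaps primed with unprimed, ``at most one unprimed letter per row'' in $T^*$ --- whereas condition (ii) for $T^*$ demands unprimed letters be column-strict and primed letters row-strict. Your image fillings satisfy the conjugate convention, not the supertableau convention. The weight computation in step (5) has the same problem and cannot be repaired by symmetry: since the involution exchanges primed and unprimed letters, it sends $m_i\mapsto n_{N+1-i}$ and $n_j\mapsto m_{N+1-j}$, i.e.\ it exchanges $x$-exponents with $y$-exponents rather than merely permuting within each family. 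Symmetry of $s_{\lambda/\mu}[\boldsymbol{x}|\boldsymbol{y}]$ in the $x_i$ separately and in the $y_j$ separately does not license swapping the two alphabets; what your map actually proves is $s_{\gamma/\sigma}[\boldsymbol{x}|\boldsymbol{y}]=s_{\lambda/\mu}[\boldsymbol{y}|\boldsymbol{x}]=s_{\lambda'/\mu'}[\boldsymbol{x}|\boldsymbol{y}]$, which is a different (and in general inequivalent) identity. The fix, and what the paper actually does, is to rotate \emph{without} relabeling: rotation alone is a weight-preserving bijection from supertableaux of $\lambda/\mu$ for the order $1<\cdots<N<1'<\cdots<N'$ to supertableaux of $\gamma/\sigma$ for the reversed order $N'<\cdots<1'<N<\cdots<1$ (condition (ii) is order-independent and row/column-preserving, and reversing the order compensates for the reversal of orientation in condition (i)); one then invokes Macdonald's result \cite[\S~I.5,~Exercise~23(c)]{Macdonald-symmetric-functions} that the generating function (\ref{supertableau-formula}) is independent of the chosen total order. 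That external input is the missing ingredient in your argument.

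Your fallback via Jacobi--Trudi, by contrast, is essentially sound and is a genuinely different (and arguably cleaner) route, since $s_{\lambda/\mu}[\boldsymbol{x}|\boldsymbol{y}]$ is \emph{defined} here by that determinant. Two small corrections: with $\gamma_i=m-\mu_{\ell+1-i}$ and $\sigma_j=m-\lambda_{\ell+1-j}$ one finds that the $(i,j)$ entry of the matrix for $\gamma/\sigma$ is the $(\ell+1-j,\ell+1-i)$ entry of the matrix for $\lambda/\mu$, i.e.\ the transformation is the anti-transpose (reverse rows, reverse columns, \emph{and} transpose), not just the double reversal; and the sign of the order-reversing permutation on $\ell$ letters is $(-1)^{\binom{\ell}{2}}$, not $(-1)^{\ell}$. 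Neither affects the conclusion, since the two reversal signs cancel and transposition preserves the determinant, so the identity follows. I would recommend promoting this determinantal argument to your main proof.
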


\begin{proof}
By \cite[\S~I.5,~Exercise~23(c)]{Macdonald-symmetric-functions}, (\ref{supertableau-formula}) holds if the ordering on $\{1,\ldots,N,1',\ldots,N'\}$ is replaced by any total ordering. It is easy to see that a $180^\circ$ rotation gives a weight-preserving bijection between supertableaux for $\lambda/\mu$ and supertableaux for $\gamma/\sigma$ with the ordering $N'<\ldots<1'<N<\ldots<1$.
\end{proof}

\begin{corollary} \label{domain-wall-corollary}
Let $\tau$ be the partition obtained by taking the complement of $\lambda$ in a $N\times (M+1-N)$ box and rotating $180^\circ$. Then, \begin{align} Z_\lambda = \prod_{i=1}^{N} (\vx{a_1^{(i)}})^{M+1-N} \vx{c_2^{(i)}} \cdot \prod_{i<j} \left(\vx{a_1^{(j)}}\vx{a_2^{(i)}} + \vx{b_1^{(i)}}\vx{b_2^{(j)}}\right)\cdot s_\tau\left[\frac{\vx{b_2^{(1)}}}{\vx{a_1^{(1)}}},\ldots, \frac{\vx{b_2^{(1)}}}{\vx{a_1^{(1)}}}\right] \label{domain-wall-partition-function}\end{align}
\end{corollary}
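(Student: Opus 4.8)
The plan is to feed $Z_\lambda$ through Case~B of Theorem~\ref{partition-function-all-boundaries}, then convert the resulting skew \emph{super}symmetric Schur function into an ordinary Schur polynomial by a $180^\circ$ rotation followed by a factorization that strips off an $N\times N$ square.

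First I would apply Theorem~\ref{partition-function-all-boundaries}(B) (equivalently Case~B of the corollary preceding it) with bottom boundary $\emptyset$ and top boundary $\lambda+\rho$: this is the case ``$\lambda_{\mathrm{thm}}=\emptyset$, $\mu_{\mathrm{thm}}=\lambda$''. Since $\emptyset$ has length $0$, the right boundary $\rho^+_N$ has length $s=N$, and $\beta=\emptyset$ has length $t=0$, the extended partition of Proposition~\ref{extendedlatticemodelproposition} is $\tilde\lambda=(M+N,M+N-1,\ldots,M+1)$, whose down-shift $\tilde\lambda-\rho$ is the rectangle $((M+1)^N)$. Writing $x_i=\vx{b_2^{(i)}}/\vx{a_1^{(i)}}$ and $y_i=\vx{a_2^{(i)}}/\vx{b_1^{(i)}}$ as in Corollary~\ref{Partition-Function-is-Supersymmetric-Schur}, this gives
\[ Z_\lambda \;=\; \frac{\prod_{i=1}^N (\vx{a_1^{(i)}})^{M+1}(\vx{b_1^{(i)}})^{N}}{\prod_{k=1}^N \vx{c_1^{(k)}}\cdot\prod_{i<j}\big(\vx{a_1^{(i)}}\vx{a_2^{(j)}}+\vx{b_1^{(j)}}\vx{b_2^{(i)}}\big)}\;\cdot\; s_{((M+1)^N)/\lambda}[\boldsymbol{x}\,|\,\boldsymbol{y}], \]
the denominator being exactly $R(N)$. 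Note $\lambda$ fits in the $N\times(M+1-N)$ box, since the parts of $\lambda+\rho$ are $\le M$.

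Next I would remove the skewing. By Lemma~\ref{tableau-rotation-lemma}, $s_{((M+1)^N)/\lambda}[\boldsymbol{x}|\boldsymbol{y}]=s_{(N^N)+\tau}[\boldsymbol{x}|\boldsymbol{y}]$, where $(N^N)+\tau$ is the $180^\circ$ rotation of $((M+1)^N)/\lambda$ inside the $N\times(M+1)$ rectangle: its $i$th part is $(M+1)-\lambda_{N+1-i}\ge N$, so the rotated shape contains the $N\times N$ square and equals $(N^N)$ plus $\tau$, with $\tau_i=(M+1-N)-\lambda_{N+1-i}$ — precisely the rotated complement of $\lambda$ in the $N\times(M+1-N)$ box. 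Now I factor the supersymmetric Schur function of the straight shape $(N^N)+\tau$: in the bialternant $s_\nu[\boldsymbol{x}|\boldsymbol{y}]=A_{\nu+\rho}/A_\rho$ with $N$ variables $x_i$ and $N$ variables $y_j$ (so $[x|\boldsymbol{y}]^r=\prod_{k=1}^N(x+y_k)\,x^{r-N}$ for $r\ge N$), every exponent $((N^N)+\tau+\rho)_j=\tau_j+2N-j$ is $\ge N$, hence each entry $[x_i|\boldsymbol{y}]^{\tau_j+2N-j}=\big(\prod_{k=1}^N(x_i+y_k)\big)x_i^{\tau_j+N-j}$; pulling $\prod_k(x_i+y_k)$ out of the $i$th row of the numerator and recognizing $A_\rho$ (whose exponents are all $<N$) as the Vandermonde $\prod_{i<j}(x_i-x_j)$ yields
\[ s_{(N^N)+\tau}[\boldsymbol{x}|\boldsymbol{y}] \;=\; \Big(\prod_{i=1}^N\prod_{j=1}^N(x_i+y_j)\Big)\, s_\tau[\boldsymbol{x}]. \]
(Alternatively this can be extracted from the bitableau formula~(\ref{supertableau-formula}) by peeling off the $N\times N$ square, whose entries are forced to contribute exactly $\prod_{i,j}(x_i+y_j)$, leaving a semistandard filling of $\tau$.) Finally I would do the bookkeeping with Figure~\ref{Delta-vertex-weights}: using $\prod_{i,j}(x_i+y_j)=\prod_k(x_k+y_k)\prod_{i<j}(x_i+y_j)(x_j+y_i)$ and the value of $R(N)$, the factors $\prod_k(x_k+y_k)\prod_{i<j}(x_i+y_j)$ cancel against $R(N)$, leaving $\prod_{i<j}(y_i+x_j)$ and a monomial in the $A_i,B_i$; one checks that monomial together with those linear factors reassembles into $\prod_i(\vx{a_1^{(i)}})^{M+1-N}\vx{c_2^{(i)}}\cdot\prod_{i<j}(\vx{a_1^{(j)}}\vx{a_2^{(i)}}+\vx{b_1^{(i)}}\vx{b_2^{(j)}})$, which is~(\ref{domain-wall-partition-function}).

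I expect the main obstacle to be the middle step: fixing which $N\times c$ rectangle the $180^\circ$ rotation is taken in so that the rotated shape genuinely contains the $N\times N$ square, and then handling the bialternant with the truncated $y$-alphabet (the convention $[x|\boldsymbol{y}]^r=\prod_{k\le N}(x+y_k)\,x^{r-N}$). The prefactor arithmetic of the last step is routine but error-prone; the safest way to keep it honest is to compare both sides as polynomials in the Boltzmann weights, noting each has degree $N(M+1)$ (one vertex weight per vertex), with $s_\tau[\boldsymbol{x}]$ contributing degree $0$.
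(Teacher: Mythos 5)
Your proposal is correct and follows essentially the same route as the paper: apply Theorem \ref{partition-function-all-boundaries}(B) with $\lambda_{\mathrm{thm}}=\emptyset$, $\mu_{\mathrm{thm}}=\lambda$, rotate the skew shape $((M+1)^N)/\lambda$ by $180^\circ$ via Lemma \ref{tableau-rotation-lemma}, strip the $N\times N$ square to get $\prod_{i,j}(x_i+y_j)\cdot s_\tau[\boldsymbol{x}]$, and cancel against $R(N)$ using the free fermion condition. The only difference is that where the paper cites the Berele--Regev formula \cite[Theorem~6.20]{BereleRegev} for the factorization step, you derive the needed special case directly from the bialternant (a correct and self-contained substitute, since every exponent of the rotated shape is at least $N$ and $A_\rho$ is the Vandermonde).
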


\begin{proof}

By Theorem \ref{partition-function-all-boundaries}B, plugging in $\emptyset$ for $\lambda$, and $\lambda+\rho$ for $\mu$, \[Z_\lambda = \frac{\prod_{i=1}^N (\vx{a_1^{(i)}})^{M+1}(\vx{b_1^{(i)}})^{N} \cdot s_{\nu/\lambda}\left[\left.\frac{\vx{b_2^{(1)}}}{\vx{a_1^{(1)}}},\ldots,\frac{\vx{b_2^{(N)}}}{\vx{a_1^{(N)}}}\right|\frac{\vx{a_2^{(1)}}}{\vx{b_1^{(1)}}},\ldots,\frac{\vx{a_2^{(N)}}}{\vx{b_1^{(N)}}}\right]}{\prod_{k=1}^N \vx{c_1^{(k)}}\cdot \prod_{i<j} \left(\vx{a_1^{(i)}}\vx{a_2^{(j)}} + \vx{b_1^{(j)}}\vx{b_2^{(i)}}\right)},\] where $\nu$ is the $N\times (M+1)$ block partition: $(M+1,\ldots,M+1)$.

The $180^\circ$ rotation of $\nu/\lambda$ is the (nonskew) partition $\mu := (M+1-\lambda_N, M+1-\lambda_{N-1},\ldots M+1-\lambda_1)$. Notice that all parts of $\mu$ have size $\ge N$ since we must have $M\ge \lambda_1+N-1$ for $\lambda+\rho$ to fit on the top boundary. By Lemma \ref{tableau-rotation-lemma},

\begin{align} Z_\lambda = \frac{\prod_{i=1}^{N} (\vx{a_1^{(i)}})^{M+1}(\vx{b_1^{(i)}})^{N} \cdot s_{\mu}\left[\left.\frac{\vx{b_2^{(1)}}}{\vx{a_1^{(1)}}},\ldots,\frac{\vx{b_2^{(N)}}}{\vx{a_1^{(N)}}}\right|\frac{\vx{a_2^{(1)}}}{\vx{b_1^{(1)}}},\ldots,\frac{\vx{a_2^{(N)}}}{\vx{b_1^{(N)}}}\right]}{\prod_{k=1}^N \vx{c_1^{(k)}}\cdot \prod_{i<j} \left(\vx{a_1^{(i)}}\vx{a_2^{(j)}} + \vx{b_1^{(j)}}\vx{b_2^{(i)}}\right)}. \label{domain-wall-partition-function-intermediate}\end{align}

Next we apply the Berele-Regev formula \cite[Theorem~6.20]{BereleRegev}. $\tau$ is also the partition obtained by subtracting $N$ from every part of $\mu$. Then the Berele-Regev formula says that \[s_\mu\left[x_1,\ldots, x_N\left|y_1,\ldots, y_N\right.\right]  = \prod_{i,j} (x_j + y_i)\cdot s_\tau[x_1,\ldots, x_n].\]

Applying this to (\ref{domain-wall-partition-function-intermediate}) gives \begin{align*} Z_\lambda &= \frac{\prod_{i=1}^{N} (\vx{a_1^{(i)}})^{M+1}(\vx{b_1^{(i)}})^{N} \cdot \prod_{i,j} \left(\frac{\vx{b_2^{(j)}}}{\vx{a_1^{(j)}}} + \frac{\vx{a_2^{(i)}}}{\vx{b_1^{(i)}}}\right)\cdot s_\tau\left[\frac{\vx{b_2^{(1)}}}{\vx{a_1^{(1)}}},\ldots, \frac{\vx{b_2^{(1)}}}{\vx{a_1^{(1)}}}\right]}{\prod_{k=1}^N \vx{c_1^{(k)}}\cdot \prod_{i<j} \left(\vx{a_1^{(i)}}\vx{a_2^{(j)}} + \vx{b_1^{(j)}}\vx{b_2^{(i)}}\right)} \\&= \frac{\prod_{i=1}^{N} (\vx{a_1^{(i)}})^{M+1-N} \cdot \prod_{i,j} \left(\vx{a_1^{(j)}}\vx{a_2^{(i)}} + \vx{b_1^{(i)}}\vx{b_2^{(j)}}\right)\cdot s_\tau\left[\frac{\vx{b_2^{(1)}}}{\vx{a_1^{(1)}}},\ldots, \frac{\vx{b_2^{(1)}}}{\vx{a_1^{(1)}}}\right]}{\prod_{k=1}^N \vx{c_1^{(k)}}\cdot \prod_{i<j} \left(\vx{a_1^{(i)}}\vx{a_2^{(j)}} + \vx{b_1^{(j)}}\vx{b_2^{(i)}}\right)} \\&=  \prod_{i=1}^{N} (\vx{a_1^{(i)}})^{M+1-N} \vx{c_2^{(i)}} \cdot \prod_{i<j} \left(\vx{a_1^{(j)}}\vx{a_2^{(i)}} + \vx{b_1^{(i)}}\vx{b_2^{(j)}}\right)\cdot s_\tau\left[\frac{\vx{b_2^{(1)}}}{\vx{a_1^{(1)}}},\ldots, \frac{\vx{b_2^{(1)}}}{\vx{a_1^{(1)}}}\right], \end{align*} where the last equality uses the free fermion condition. Note that this is a polynomial in the Boltzmann weights of degree $(M+1)\cdot N$.
\end{proof}

A very similar result was stated by Brubaker, Bump, and Friedberg \cite[Theorem 9]{BBF-Schur-polynomials} as a generalization of Tokuyama's formula \cite{Tokuyama}; they give exactly the formula (\ref{domain-wall-partition-function}) except that their power of $\vx{a_1^{(i)}}$ is different. The proof in \cite{BBF-Schur-polynomials} is circular: they observe that after normalizing $\vx{c_2^{(i)}}=1$, using the free fermion condition, $Z_\lambda$ can be expressed as a polynomial in $\vx{a_1^{(i)}}, \vx{a_2^{(i)}}, \vx{b_1^{(i)}}, \vx{b_2^{(i)}}$. However, they then consider only the states without vertices of type $\vx{c_1^{(i)}}$, rather than all admissible states. Despite the fact that $\vx{c_1^{(i)}}$ can be removed algebraically from an expression of the partition function, it is still necessary to consider states involving type $\vx{c_1^{(i)}}$ vertices. Corollary \ref{domain-wall-corollary} is therefore a correction of that proof. The fact that the formula given in \cite{BBF-Schur-polynomials} is so close to correct suggests that maybe their proof technique can be salvaged.

In \cite{BBF-Schur-polynomials-arxiv}, the ArXiv version of \cite{BBF-Schur-polynomials}, Brubaker, Bump, and Friedberg give a correct proof of Tokuyama's Theorem by two different evaluations of two free fermionic six-vertex models they call $\Gamma$ and $\Delta$ ice. One side of Tokuyama's formula is given as a sum over Gelfand-Tsetlin patterns, while the other is a Schur function times a deformed denominator.

It is easy to see that is the case of these weights, Corollary \ref{domain-wall-corollary} specializes to Tokuyama's formula. In the case of the $\Gamma$ weights in \cite[Table~1]{BBF-Schur-polynomials-arxiv}, Corollary \ref{domain-wall-corollary} yields \[Z_\lambda = \prod_{i<j} (t_iz_j + z_i) s_\tau[z_1,\ldots,z_n],\] which is \cite[Theorem~5]{BBF-Schur-polynomials-arxiv}.

Bump, McNamara, and Nakasuji \cite{Bump-McNamara-Nakasuji} give a Tokuyama formula for factorial Schur functions. Motegi \cite{Motegi-free-fermion} then generalizes this, and considers a version of this problem using Izergin-Korepin analysis. Motegi's result benefits from consideration of colulmn parameters, while ours is slightly more general when considering only row parameters (thereby allowing our corrective to the formula in \cite{BBF-Schur-polynomials}). In addition, our proof of Tokuyama's formula using Hamiltonian operators is very different from other known proofs.

Aggarwal, Borodin, Petrov, and Wheeler consider the Berele-Regev formula from the opposite direction. Using their lattice models and \cite[Theorem~9]{BBF-Schur-polynomials}, they prove a generalized Berele-Regev formula \cite[Corollary~4.13]{AggarwalBorodinPetrovWheeler}.

This completes our analysis of the classical six-vertex model and classical Fock space.

\section{Metaplectic Fock spaces} \label{q-Fock-space-section}

We now turn our attention to the six-vertex model with charge, which will turn out to match Hamiltonian operators arising from Drinfeld twists of $q$-Fock space.

$q$-Fock space \cite{Kashiwara-Miwa-Stern} is a quantum analogue of the classical Fock space defined in Section \ref{background-section}. It is formed as a quotient by a Hecke algebra action of the infinite tensor power of the standard evaluation module of the quantum group $U_q(\widehat{\mathfrak{sl}_n})$.

We will work with a family of related spaces defined by Brubaker, Buciumas, Bump, and Gustafsson \cite{BBBG-Hamiltonian}. Instead of being a module for $U_q(\widehat{\mathfrak{sl}_n})$, these spaces are modules for Drinfeld twists of $U_q(\widehat{\mathfrak{sl}_n})$. Reshetikhin \cite{Reshetikhin-twist} defined a large class of Drinfeld twists of quantum groups, and Brubaker, Buciumas, Bump, and Gustafsson applied these twists to $q$-Fock space. Drinfeld twisting doesn't affect the algebra structure of a quantum group, but it does affect the coalgebra structure, so the result is a set of genuinely distinct modules, which we will now describe.

Choose an integer $n\ge 1$. Let \[F = \exp\left(\sum_{1\le i<j\le n} a_{ij}(H_i\otimes H_j - H_j\otimes H_i)\right),\] where the $H_i$ are certain generators of the topological Hopf algebra associated to $U_q(\widehat{\mathfrak{sl}_n})$ (see \cite{Reshetikhin-twist}). If $1\le i,j\le n$, set \[\alpha_{ij} = \begin{cases} \exp(2a_{i,j} - 2a_{i-1,j} - 2a_{i,j-1} + 2a_{i-1,j-1}), & i\ne j \\ 1, & i=j,\end{cases}\] and let $\alpha_{ij}$ be defined modulo $n$ so that $\alpha_{i+kn,j+mn} = \alpha_{ij}$.

These $\alpha_{ij}$ determine the relations in our twists of $q$-Fock space. There is no Clifford algebra structure associated to $q$-Fock space, so we will define it using wedges. The twisted $q$-Fock space $\mathcal{F} := \mathcal{F}(F)$ is the space with basis \[u_{m_1}\wedge u_{m_2}\wedge u_{m_3}\wedge \ldots, \hspace{30pt} (m_1>m_2>\ldots).\] Wedges with decreasing index like these are called \emph{normally ordered}.

Additionally, $\mathcal{F}$ has the following \emph{normal ordering relations}. \[u_l\wedge u_m = \begin{cases} -u_m\wedge u_l, & l\equiv m\mod n\\ -q\alpha_{lm} u_m\wedge u_l + (q^2-1)(u_{m-i}\wedge u_{l+i} + -q\alpha_{lm} u_{m-n}\wedge u_{l+n} + \\\hspace{40pt}q^2 u_{m-n-i}\wedge u_{l+n+i} - q^3 \alpha_{lm} u_{m-2n}\wedge u_{l+2n} + \ldots), & \text{otherwise,}\end{cases}\] where $l\le m$. In particular, $u_m\wedge u_m = 0$. Here, $i$ satisfies $0<i<n, m-i\equiv l \mod n$. The sum in the second case continues while the wedges are normally ordered. Notice that these relations depend only on the numbers $\alpha_{ij}$.

We will say a Drinfeld twist of $q$-Fock space is \emph{shift invariant} if $\alpha_{ij}$ depends only on $i-j$. In this case, we define a function $g$ on integers modulo $n$ let \[g(i-j) := \begin{cases} -q\alpha_{ij}, & i\ne j \mod n \\ -q^2, & i=j \mod n.\end{cases}\] We call the integers modulo $n$ \emph{charge}, and will often take as representatives the integers $0,1,\ldots,n-1$. For convenience, let $v=q^2$. The relation $\alpha_{ij}\alpha_{ji}=1$ becomes \[g(a)g(-a) = -g(0), \hspace{40pt} \text{for all } a\not\equiv 0\mod n.\]

Under the assumption of shift invariance, the wedge relations become: \[u_l\wedge u_m = \begin{cases} -u_m\wedge u_l, & l\equiv m\mod n\\ g(l-m) u_m\wedge u_l + (q^2-1)(u_{m-i}\wedge u_{l+i} + g(l-m) u_{m-n}\wedge u_{l+n} + \\\hspace{40pt}q^2 u_{m-i}\wedge u_{l+i} + q^2 g(l-m) u_{m-n}\wedge u_{l+n} + \ldots), & \text{otherwise}.\end{cases}\] Notice that these relations depend only on the function $g$. As a result, we will often view $g$ as being synonymous with $\mathcal{F}$.

The shift invariant Drinfeld-Reshetikhin twists of $q$-Fock space were shown in \cite{BBBG-Hamiltonian} to be related to lattice models for metaplectic Whittaker functions, so we will call these spaces \emph{metaplectic Fock spaces}. They will also turn out to be closely related to the solvability of charged models. The shift invariance property is a natural one to require; as we will see in the next section, charged models have a sort of shift invariance themselves, so it doesn't make sense to compare non-shift-invariant spaces with charged lattice models. 

For a strict partition $\lambda$, we define \[|\lambda\rangle := v_{\lambda_1}\wedge v_{\lambda_2}\wedge\ldots\wedge v_{\lambda_{\ell(\lambda)}}\wedge v_{-1}\wedge v_{-2}\wedge\ldots.\] Define the dual basis $\{\langle\lambda|\}$ of $\mathcal{F}^*$ by the pairing \[\langle\mu|\lambda\rangle = \begin{cases} 1, & \text{if $\lambda=\mu$} \\ 0, &\text{otherwise}.\end{cases}\]

There is a Heisenberg action \[J_k\cdot (u_{m_1}\wedge u_{m_2}\wedge\ldots) = \sum_{i\ge 0} (u_{m_1}\wedge\ldots \wedge u_{m_{i-1}} \wedge u_{m_i-kn} \wedge u_{m_{i+1}}\wedge\ldots).\]

It has been shown \cite{Kashiwara-Miwa-Stern, BBBG-Hamiltonian} that \[[J_k,J_l] = k\cdot \frac{1-v^{n|k|}}{1-v^{|k|}} \delta_{k,-l}.\]

Once again, we fix parameters $s_k^{(j)}$, $k\ne 0, 1\le j\le N$ and set \[s_k := \sum_{j=1}^N s_k^{(j)},\hspace{20pt} H_\pm = \sum_{k\ge 1} s_{\pm k} J_{\pm k}, \hspace{40pt} e^{H_\pm} = \sum_{m\ge 0} \frac{1}{m!} H^m.\]

\section{Six-vertex models with charge} \label{charged-models-section}

The lattice models in this section are similar to the six-vertex model, but use an extra statistic called \emph{charge}, an integer modulo $n$ associated to each horizontal $-$ spin. We call the resulting combination of spin and charge (or simply a $+$ spin) a \emph{decorated spin}. As first seen in \cite{BBBG-Hamiltonian}, the charge statistic turns out to be the right way to represent the $q$-Fock space for $U_q(\widehat{\mathfrak{sl}}_n)$, along with the metaplectic data from its Drinfeld twists.

Informally, the use of charge forces particles to travel a multiple of $n$ columns in each row, mirroring the action of the current operators in the previous section. Therefore, the $\vx{a_2}$ and $\vx{b_2}$ vertices in charged models depend on the charges on their horizontal edges, while the other four vertices either have $+$ spins on their horizontal edges or are restricted to specified charges on their horizontal $-$ edges. This results in a total of $2n+4$ vertices, which makes the analysis both by the Yang-Baxter equation and by Hamiltonians more challenging than for the classical six-vertex model. Note that if $n=1$, this model reduces to that one.

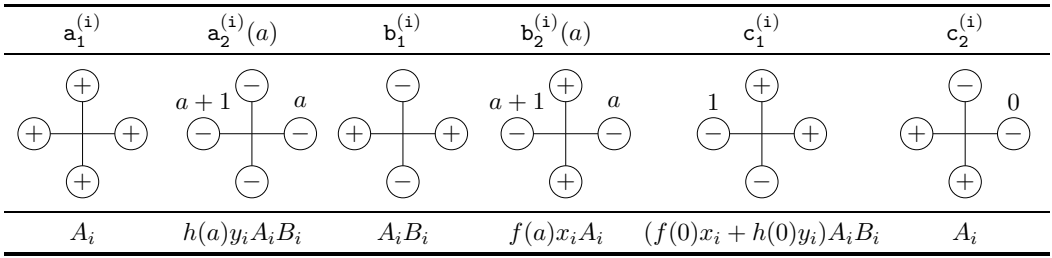
\begin{figure}[h]
\begin{center}
\scalebox{.85}{$
\begin{array}{c@{\hspace{8pt}}c@{\hspace{8pt}}c@{\hspace{5pt}}c@{\hspace{5pt}}c@{\hspace{8pt}}c@{\hspace{8pt}}c@{\hspace{5pt}}c}
\toprule
\vx{a_1^{(i)}}&\vx{a_2^{(i)}}(a)&\vx{b_1^{(i)}} & \vx{b_2^{(i)}}(a) & \vx{c_1^{(i)}}&\vx{c_2^{(i)}}\\
\midrule
\begin{tikzpicture}
\coordinate (a) at (-.75, 0);
\coordinate (b) at (0, .75);
\coordinate (c) at (.75, 0);
\coordinate (d) at (0, -.75);
\coordinate (aa) at (-.75,.5);
\coordinate (cc) at (.75,.5);
\draw (a)--(0,0);
\draw (b)--(0,0);
\draw (c)--(0,0);
\draw (d)--(0,0);
\draw[fill=white] (a) circle (.25);
\draw[fill=white] (b) circle (.25);
\draw[fill=white] (c) circle (.25);
\draw[fill=white] (d) circle (.25);
\node at (0,1) { };
\node at (a) {$+$};
\node at (b) {$+$};
\node at (c) {$+$};
\node at (d) {$+$};
\end{tikzpicture}
&
\begin{tikzpicture}
\coordinate (a) at (-.75, 0);
\coordinate (b) at (0, .75);
\coordinate (c) at (.75, 0);
\coordinate (d) at (0, -.75);
\coordinate (aa) at (-.75,.5);
\coordinate (cc) at (.75,.5);
\draw (a)--(0,0);
\draw (b)--(0,0);
\draw (c)--(0,0);
\draw (d)--(0,0);
\draw[fill=white] (a) circle (.25);
\draw[fill=white] (b) circle (.25);
\draw[fill=white] (c) circle (.25);
\draw[fill=white] (d) circle (.25);
\node at (0,1) { };
\node at (a) {$-$};
\node at (b) {$-$};
\node at (c) {$-$};
\node at (d) {$-$};
\node at (aa) {$a+1$};
\node at (cc) {$a$};
\end{tikzpicture}
&
\begin{tikzpicture}
\coordinate (a) at (-.75, 0);
\coordinate (b) at (0, .75);
\coordinate (c) at (.75, 0);
\coordinate (d) at (0, -.75);
\coordinate (aa) at (-.75,.5);
\coordinate (cc) at (.75,.5);
\draw (a)--(0,0);
\draw (b)--(0,0);
\draw (c)--(0,0);
\draw (d)--(0,0);
\draw[fill=white] (a) circle (.25);
\draw[fill=white] (b) circle (.25);
\draw[fill=white] (c) circle (.25);
\draw[fill=white] (d) circle (.25);
\node at (0,1) { };
\node at (a) {$+$};
\node at (b) {$-$};
\node at (c) {$+$};
\node at (d) {$-$};
\end{tikzpicture}
&
\begin{tikzpicture}
\coordinate (a) at (-.75, 0);
\coordinate (b) at (0, .75);
\coordinate (c) at (.75, 0);
\coordinate (d) at (0, -.75);
\coordinate (aa) at (-.75,.5);
\coordinate (cc) at (.75,.5);
\draw (a)--(0,0);
\draw (b)--(0,0);
\draw (c)--(0,0);
\draw (d)--(0,0);
\draw[fill=white] (a) circle (.25);
\draw[fill=white] (b) circle (.25);
\draw[fill=white] (c) circle (.25);
\draw[fill=white] (d) circle (.25);
\node at (0,1) { };
\node at (a) {$-$};
\node at (b) {$+$};
\node at (c) {$-$};
\node at (d) {$+$};
\node at (aa) {$a+1$};
\node at (cc) {$a$};
\end{tikzpicture}
&
\begin{tikzpicture}
\coordinate (a) at (-.75, 0);
\coordinate (b) at (0, .75);
\coordinate (c) at (.75, 0);
\coordinate (d) at (0, -.75);
\coordinate (aa) at (-.75,.5);
\coordinate (cc) at (.75,.5);
\draw (a)--(0,0);
\draw (b)--(0,0);
\draw (c)--(0,0);
\draw (d)--(0,0);
\draw[fill=white] (a) circle (.25);
\draw[fill=white] (b) circle (.25);
\draw[fill=white] (c) circle (.25);
\draw[fill=white] (d) circle (.25);
\node at (0,1) { };
\node at (a) {$-$};
\node at (b) {$+$};
\node at (c) {$+$};
\node at (d) {$-$};
\node at (aa) {$1$};
\end{tikzpicture}
&
\begin{tikzpicture}
\coordinate (a) at (-.75, 0);
\coordinate (b) at (0, .75);
\coordinate (c) at (.75, 0);
\coordinate (d) at (0, -.75);
\coordinate (aa) at (-.75,.5);
\coordinate (cc) at (.75,.5);
\draw (a)--(0,0);
\draw (b)--(0,0);
\draw (c)--(0,0);
\draw (d)--(0,0);
\draw[fill=white] (a) circle (.25);
\draw[fill=white] (b) circle (.25);
\draw[fill=white] (c) circle (.25);
\draw[fill=white] (d) circle (.25);
\node at (0,1) { };
\node at (a) {$+$};
\node at (b) {$-$};
\node at (c) {$-$};
\node at (d) {$+$};
\node at (cc) {$0$};
\end{tikzpicture}
\\
   \midrule
   A_i & h(a)y_iA_iB_i & A_iB_i & f(a)x_iA_i & (f(0)x_i+h(0)y_i)A_iB_i & A_i \\
   \bottomrule
\end{array}$}
\end{center}
\caption{The Boltzmann weights for $\mathfrak{S}$ with charge. Here, $x_i, y_i, A_i$, and $B_i$ are parameters associated to each row, while $f(a)$ and $h(a)$ depend only on the change $a$.}
    \label{Delta-charged-vertex-weights}
\end{figure}

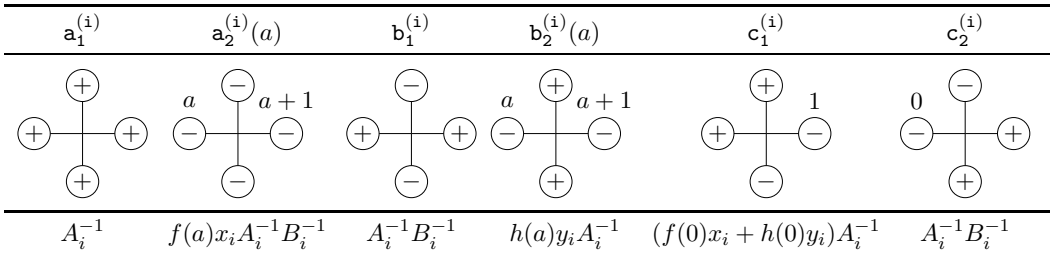
\begin{figure}[h]
\centering
\scalebox{.85}{$
\begin{array}{c@{\hspace{8pt}}c@{\hspace{8pt}}c@{\hspace{5pt}}c@{\hspace{5pt}}c@{\hspace{8pt}}c@{\hspace{8pt}}c@{\hspace{5pt}}c}
\toprule
\vx{a_1^{(i)}}&\vx{a_2^{(i)}}(a)&\vx{b_1^{(i)}} & \vx{b_2^{(i)}}(a) & \vx{c_1^{(i)}}&\vx{c_2^{(i)}}\\
\midrule
\begin{tikzpicture}
\coordinate (a) at (-.75, 0);
\coordinate (b) at (0, .75);
\coordinate (c) at (.75, 0);
\coordinate (d) at (0, -.75);
\coordinate (aa) at (-.75,.5);
\coordinate (cc) at (.75,.5);
\draw (a)--(0,0);
\draw (b)--(0,0);
\draw (c)--(0,0);
\draw (d)--(0,0);
\draw[fill=white] (a) circle (.25);
\draw[fill=white] (b) circle (.25);
\draw[fill=white] (c) circle (.25);
\draw[fill=white] (d) circle (.25);
\node at (0,1) { };
\node at (a) {$+$};
\node at (b) {$+$};
\node at (c) {$+$};
\node at (d) {$+$};
\end{tikzpicture}
&
\begin{tikzpicture}
\coordinate (a) at (-.75, 0);
\coordinate (b) at (0, .75);
\coordinate (c) at (.75, 0);
\coordinate (d) at (0, -.75);
\coordinate (aa) at (-.75,.5);
\coordinate (cc) at (.75,.5);
\draw (a)--(0,0);
\draw (b)--(0,0);
\draw (c)--(0,0);
\draw (d)--(0,0);
\draw[fill=white] (a) circle (.25);
\draw[fill=white] (b) circle (.25);
\draw[fill=white] (c) circle (.25);
\draw[fill=white] (d) circle (.25);
\node at (0,1) { };
\node at (a) {$-$};
\node at (b) {$-$};
\node at (c) {$-$};
\node at (d) {$-$};
\node at (aa) {$a$};
\node at (cc) {$a+1$};
\end{tikzpicture}
&
\begin{tikzpicture}
\coordinate (a) at (-.75, 0);
\coordinate (b) at (0, .75);
\coordinate (c) at (.75, 0);
\coordinate (d) at (0, -.75);
\coordinate (aa) at (-.75,.5);
\coordinate (cc) at (.75,.5);
\draw (a)--(0,0);
\draw (b)--(0,0);
\draw (c)--(0,0);
\draw (d)--(0,0);
\draw[fill=white] (a) circle (.25);
\draw[fill=white] (b) circle (.25);
\draw[fill=white] (c) circle (.25);
\draw[fill=white] (d) circle (.25);
\node at (0,1) { };
\node at (a) {$+$};
\node at (b) {$-$};
\node at (c) {$+$};
\node at (d) {$-$};
\end{tikzpicture}
&
\begin{tikzpicture}
\coordinate (a) at (-.75, 0);
\coordinate (b) at (0, .75);
\coordinate (c) at (.75, 0);
\coordinate (d) at (0, -.75);
\coordinate (aa) at (-.75,.5);
\coordinate (cc) at (.75,.5);
\draw (a)--(0,0);
\draw (b)--(0,0);
\draw (c)--(0,0);
\draw (d)--(0,0);
\draw[fill=white] (a) circle (.25);
\draw[fill=white] (b) circle (.25);
\draw[fill=white] (c) circle (.25);
\draw[fill=white] (d) circle (.25);
\node at (0,1) { };
\node at (a) {$-$};
\node at (b) {$+$};
\node at (c) {$-$};
\node at (d) {$+$};
\node at (aa) {$a$};
\node at (cc) {$a+1$};
\end{tikzpicture}
&
\begin{tikzpicture}
\coordinate (a) at (-.75, 0);
\coordinate (b) at (0, .75);
\coordinate (c) at (.75, 0);
\coordinate (d) at (0, -.75);
\coordinate (aa) at (-.75,.5);
\coordinate (cc) at (.75,.5);
\draw (a)--(0,0);
\draw (b)--(0,0);
\draw (c)--(0,0);
\draw (d)--(0,0);
\draw[fill=white] (a) circle (.25);
\draw[fill=white] (b) circle (.25);
\draw[fill=white] (c) circle (.25);
\draw[fill=white] (d) circle (.25);
\node at (0,1) { };
\node at (a) {$+$};
\node at (b) {$+$};
\node at (c) {$-$};
\node at (d) {$-$};
\node at (cc) {$1$};
\end{tikzpicture}
&
\begin{tikzpicture}
\coordinate (a) at (-.75, 0);
\coordinate (b) at (0, .75);
\coordinate (c) at (.75, 0);
\coordinate (d) at (0, -.75);
\coordinate (aa) at (-.75,.5);
\coordinate (cc) at (.75,.5);
\draw (a)--(0,0);
\draw (b)--(0,0);
\draw (c)--(0,0);
\draw (d)--(0,0);
\draw[fill=white] (a) circle (.25);
\draw[fill=white] (b) circle (.25);
\draw[fill=white] (c) circle (.25);
\draw[fill=white] (d) circle (.25);
\node at (0,1) { };
\node at (a) {$-$};
\node at (b) {$-$};
\node at (c) {$+$};
\node at (d) {$+$};
\node at (aa) {$0$};
\end{tikzpicture}
\\
   \midrule
   A_i^{-1} & f(a)x_iA_i^{-1}B_i^{-1} & A_i^{-1}B_i^{-1} & h(a)y_iA_i^{-1} & (f(0)x_i+h(0)y_i)A_i^{-1} & A_i^{-1}B_i^{-1} \\
   \bottomrule
\end{array}$}
\caption{The Boltzmann weights for $\mathfrak{S}^*$ with charge. Here, $z_i, w_i, A_i$, and $B_i$ are parameters associated to each row, while $f(a)$ and $h(a)$ depend only on the change $a$.}
    \label{Gamma-charged-vertex-weights}
\end{figure}

Define the charged model $\mathfrak{S}^q_{\lambda/\mu} := \mathfrak{S}^q_{\lambda/\mu}(\boldsymbol{x}, \boldsymbol{y}, \boldsymbol{A}, \boldsymbol{B}, f, h)$ to be as follows:
\begin{itemize}
    \item $N$ rows, labelled $1,\ldots, N$ from bottom to top;
    \item $M+1$ columns, where $M\ge \max(\lambda_1,\mu_1)$, labelled $0,\ldots,M$ from left to right;
    \item Left and right boundary edges all $+$;
    \item Bottom boundary edges $-$ on parts of $\lambda$; $+$ otherwise;
    \item Top boundary edges $-$ on parts of $\mu$; $+$ otherwise.
    \item Boltzmann weights from Figure \ref{Delta-charged-vertex-weights},
\end{itemize}

and the charged model $\mathfrak{S}^{*,q}_{\lambda/\mu} := \mathfrak{S}^q_{\lambda/\mu}(\boldsymbol{x}, \boldsymbol{y}, \boldsymbol{A}, \boldsymbol{B}, f, h)$ is defined similarly:
\begin{itemize}
    \item $N$ rows, labelled $1,\ldots, N$ from top to bottom;
    \item $M+1$ columns, where $M\ge \max(\lambda_1,\mu_1)$, labelled $0,\ldots,M$ from left to right;
    \item Left and right boundary edges all $+$;
    \item Bottom boundary edges $-$ on parts of $\mu$; $+$ otherwise;
    \item Top boundary edges $-$ on parts of $\lambda$; $+$ otherwise.
    \item Boltzmann weights from Figure \ref{Gamma-charged-vertex-weights}.
\end{itemize}

Notice that in the case $n=1$, these models become $\mathfrak{S}$ and $\mathfrak{S}^*$ from Section \ref{six-vertex-section}, up to a scaling of the parameters $x_i,y_i$. As in that case, there are two ways to transform $\mathfrak{S}^q_{\lambda/\mu}$ into $\mathfrak{S}^{*,q}_{\lambda/\mu}$ by manipulating the model. First,
\begin{itemize}
    \item Rotate the model $\mathfrak{S}^q_{\lambda/\mu}(\boldsymbol{x}, \boldsymbol{y}, \boldsymbol{A}, \boldsymbol{B}, f, h)$ $180^\circ$.
    \item Flip the vertical spins.
    \item Reverse the ordering on the columns.
    \item Divide the Boltzmann weights by $A_i^2B_i$.
\end{itemize}

This results in the model $\mathfrak{S}^{*,q}_{\ba{\lambda}/\ba{\mu}}(\boldsymbol{x}, \boldsymbol{y}, \boldsymbol{A}, \boldsymbol{B}, f, h)$,so we have the following relationship between partition functions.

\begin{proposition} \label{charged-first-Gamma-Delta-relationship}
\begin{align*}Z(\mathfrak{S}^{*,q}_{\ba{\lambda}/\ba{\mu}} (\boldsymbol{x}, \boldsymbol{y}, \boldsymbol{A}, \boldsymbol{B}, f, h)) = \prod_{i=1}^N (A_i^{-2M-2} B_i^{-M-1}) \cdot Z( \mathfrak{S}^q_{\lambda/\mu}(\boldsymbol{x}, \boldsymbol{y}, \boldsymbol{A}, \boldsymbol{B}, f, h)).\end{align*}
\end{proposition}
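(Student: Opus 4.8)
The plan is to follow the proof of Proposition~\ref{first-Gamma-Delta-relationship} verbatim, carrying the charges along. First I would observe that the $180^\circ$ rotation, the vertical spin flip, and the column reversal are each weight-preserving bijections on the set of admissible states: they amount to geometric symmetries of a fixed grid together with a relabeling of the columns, and none of them alters the Boltzmann weight attached to any physical vertex configuration. Composing them produces a bijection from admissible states of $\mathfrak{S}^q_{\lambda/\mu}(\boldsymbol{x},\boldsymbol{y},\boldsymbol{A},\boldsymbol{B},f,h)$ onto admissible states of a lattice model that becomes $\mathfrak{S}^{*,q}_{\ba{\lambda}/\ba{\mu}}$ after the fourth step, the division of the row-$i$ weights by $A_i^2B_i$. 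For the boundaries: the $180^\circ$ rotation exchanges the top and bottom boundaries and reflects each across the center column, and the subsequent spin flip turns a $-$ on column $k$ into a $+$ on column $M-k$; by the definition of $\ba{\lambda}$ (reverse within $[0,M]$, then swap parts and non-parts) this is precisely the passage $\lambda\mapsto\ba{\lambda}$, $\mu\mapsto\ba{\mu}$, so the bottom boundary $\lambda$ of $\mathfrak{S}^q$ becomes the top boundary $\ba{\lambda}$ of $\mathfrak{S}^{*,q}$, and likewise $\mu\mapsto\ba{\mu}$.

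The substance of the proof is the vertex-by-vertex check that, after the rotation and the vertical spin flip, the $2n+4$ admissible vertices of $\mathfrak{S}^q$ go bijectively to those of $\mathfrak{S}^{*,q}$ --- with types $\vx{a}$ and $\vx{b}$ interchanged and types $\vx{c}$ preserved --- and that dividing the row-$i$ weights by $A_i^2B_i$ then reproduces Figure~\ref{Gamma-charged-vertex-weights} on the nose. The one feature absent from the uncharged argument is the bookkeeping of charges: a $180^\circ$ rotation swaps the left and right horizontal edges of a vertex, hence interchanges the decorated spins on them, and this must be matched against the conventions of Figures~\ref{Delta-charged-vertex-weights} and~\ref{Gamma-charged-vertex-weights}. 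For example, the vertex $\vx{a_2^{(i)}}(a)$ of $\mathfrak{S}^q$ has all four spins $-$ with left charge $a+1$ and right charge $a$; the rotation sends it to a vertex with left charge $a$ and right charge $a+1$, the spin flip then converts it to $\vx{b_2^{(i)}}(a)$ of $\mathfrak{S}^{*,q}$, and its tabulated weight $h(a)y_iA_i^{-1}$ is exactly $h(a)y_iA_iB_i$ divided by $A_i^2B_i$. The pair $\vx{a_1^{(i)}}\leftrightarrow\vx{b_1^{(i)}}$ and the maps $\vx{c_1^{(i)}}\to\vx{c_1^{(i)}}$, $\vx{c_2^{(i)}}\to\vx{c_2^{(i)}}$ are handled in the same way; here one uses that the single fixed charge on each $\vx{c}$ vertex ($1$ on $\vx{c_1}$, $0$ on $\vx{c_2}$) lands on the allowed edge after the rotation, so that no illegal decorated spin arises.

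Finally, since only the division by $A_i^2B_i$ affects the weight of a state, and each of the $N$ rows contains $M+1$ vertices, every admissible state $s$ of $\mathfrak{S}^q_{\lambda/\mu}$ has weight equal to $\prod_{i=1}^N (A_i^2B_i)^{M+1}$ times the weight of its image in $\mathfrak{S}^{*,q}_{\ba{\lambda}/\ba{\mu}}$. Summing over states gives $Z(\mathfrak{S}^q_{\lambda/\mu}) = \prod_{i=1}^N A_i^{2M+2}B_i^{M+1}\cdot Z(\mathfrak{S}^{*,q}_{\ba{\lambda}/\ba{\mu}})$, which is the asserted identity after dividing through by that factor.

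I expect the only real obstacle to be organizational: lining up the charge conventions ($a$ versus $a+1$, left edge versus right edge) across the two weight tables after a $180^\circ$ rotation, and confirming that none of the six vertex types acquires a charge outside the range allowed in Figure~\ref{Gamma-charged-vertex-weights}. Beyond this bookkeeping there is no new difficulty relative to Proposition~\ref{first-Gamma-Delta-relationship}.
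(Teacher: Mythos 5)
Your proposal is correct and takes the same route as the paper: the paper proves this proposition precisely by listing the four-step transformation (rotate $180^\circ$, flip vertical spins, reverse column order, divide row-$i$ weights by $A_i^2B_i$) and observing it carries $\mathfrak{S}^q_{\lambda/\mu}$ to $\mathfrak{S}^{*,q}_{\ba{\lambda}/\ba{\mu}}$, leaving the vertex-by-vertex and charge bookkeeping implicit. Your explicit checks (e.g.\ $\vx{a_2^{(i)}}(a)\mapsto\vx{b_2^{(i)}}(a)$ with the left/right charges $a+1,a$ landing as $a,a+1$, and the factor $(A_i^2B_i)^{M+1}$ per row) are exactly the verifications the paper relies on, so nothing is missing.
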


Second,
\begin{itemize}
    \item Flip $\mathfrak{S}^q_{\lambda/\mu}(\boldsymbol{x}, \boldsymbol{y}, \boldsymbol{A}, \boldsymbol{B}, f, h)$ vertically (over a horizontal axis).
    \item Replace each charge $a$ with the representative modulo $n$ of $n-a+1$ in the range $[0,n-1]$.
    \item Swap the $\vx{c_1}$ and $\vx{c_2}$ vertices.
    \item Replace $A_i$ with $A_i^{-1}$ and $B_i$ with $B_i^{-1}$.
    \item Swap $x_i$ and $y_i$.
    \item Replace $f(a)$ with $h(-a)$ and $h(a)$ with $f(-a)$;
    \item Rebalance the $\vx{c_1}$ and $\vx{c_2}$ vertices by multiplying the former and dividing the latter by $f(0)z_i+h(0)w_i$.
\end{itemize}

The resulting model is $\mathfrak{S}^{*,q}_{\lambda/\mu}(\boldsymbol{x}, \boldsymbol{y}, \boldsymbol{A}, \boldsymbol{B}, f, h)$. Let $\ba{f}(a) = f(a)$, and similarly for $\ba{h}$. Then we have

\begin{proposition} \label{charged-second-Gamma-Delta-relationship}
\begin{align*}Z(\mathfrak{S}^{*,q}_{\lambda/\mu}(\boldsymbol{x}, \boldsymbol{y}, \boldsymbol{A}, \boldsymbol{B}, f, h)) = Z(\mathfrak{S}^q_{\lambda/\mu}(\boldsymbol{y}, \boldsymbol{x}, \boldsymbol{A}^{-1}, \boldsymbol{B}^{-1}, \ba{h}, \ba{f})).\end{align*}
\end{proposition}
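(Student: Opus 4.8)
The plan is to show that the seven listed manipulations realize a weight--preserving bijection carrying the admissible states of $\mathfrak{S}^q_{\lambda/\mu}(\boldsymbol{x},\boldsymbol{y},\boldsymbol{A},\boldsymbol{B},f,h)$ onto those of $\mathfrak{S}^{*,q}_{\lambda/\mu}(\boldsymbol{y},\boldsymbol{x},\boldsymbol{A}^{-1},\boldsymbol{B}^{-1},\ba{h},\ba{f})$, so that summing over states gives $Z(\mathfrak{S}^q_{\lambda/\mu}(\boldsymbol{x},\boldsymbol{y},\boldsymbol{A},\boldsymbol{B},f,h))=Z(\mathfrak{S}^{*,q}_{\lambda/\mu}(\boldsymbol{y},\boldsymbol{x},\boldsymbol{A}^{-1},\boldsymbol{B}^{-1},\ba{h},\ba{f}))$; the stated identity then follows by the substitution $\boldsymbol{x}\leftrightarrow\boldsymbol{y}$, $\boldsymbol{A}\mapsto\boldsymbol{A}^{-1}$, $\boldsymbol{B}\mapsto\boldsymbol{B}^{-1}$, $f\mapsto\ba{h}$, $h\mapsto\ba{f}$, using $\ba{\ba{f}}=f$ and $\ba{\ba{h}}=h$. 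This is the charged analogue of the argument preceding Proposition~\ref{second-Gamma-Delta-relationship}, and it reduces to that proposition when $n=1$. First I would separate the seven steps into the purely combinatorial ones — the vertical flip, the charge relabeling $a\mapsto(n-a+1)\bmod n$, the interchange of the names $\vx{c_1}$ and $\vx{c_2}$, and the rebalancing of the $\vx{c_1}$ and $\vx{c_2}$ weights — and the formal parameter substitutions $\boldsymbol{x}\leftrightarrow\boldsymbol{y}$, $A_i\mapsto A_i^{-1}$, $B_i\mapsto B_i^{-1}$, $f\mapsto\ba{h}$, $h\mapsto\ba{f}$. The flip is a bijection on edge configurations; crucially it interchanges the top and bottom boundaries of the grid, which is exactly what is needed to pass between the $\mathfrak{S}^q$ convention (rows numbered upward, bottom boundary on the parts of the lower partition) and the $\mathfrak{S}^{*,q}$ convention (rows numbered downward, top boundary on the parts of the lower partition).

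The technical heart is the per-vertex bookkeeping. I would run through the $2n+4$ admissible configurations of Figure~\ref{Delta-charged-vertex-weights} and check that the vertical flip followed by the charge relabeling sends each to exactly one admissible configuration of Figure~\ref{Gamma-charged-vertex-weights}, with $\vx{a_1}\mapsto\vx{a_1}$, $\vx{b_1}\mapsto\vx{b_1}$, $\vx{a_2}(a)\mapsto\vx{a_2}(-a)$, $\vx{b_2}(a)\mapsto\vx{b_2}(-a)$, and $\vx{c_1}\leftrightarrow\vx{c_2}$ (all charges read modulo $n$). The subtle point, and the one I expect to be the main obstacle, is that the two models orient the charge increment along a horizontal $-$-strand oppositely — left charge $a+1$ and right charge $a$ in $\mathfrak{S}^q$ versus left charge $a$ and right charge $a+1$ in $\mathfrak{S}^{*,q}$ — whereas the vertical flip does not reverse the column order; the relabeling $a\mapsto n-a+1$ is engineered precisely to absorb this mismatch, and one must also confirm that it carries the distinguished charge $1$ on an $\mathfrak{S}^q$ $\vx{c_1}$ vertex to the charge $0$ demanded of the $\mathfrak{S}^{*,q}$ $\vx{c_2}$ vertex, and the distinguished charge $0$ on an $\mathfrak{S}^q$ $\vx{c_2}$ vertex to the charge $1$ demanded of the $\mathfrak{S}^{*,q}$ $\vx{c_1}$ vertex. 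This check is entirely mechanical but requires care with the modular arithmetic.

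Finally I would track the Boltzmann weights. Applying the parameter substitutions to the weight of an $\mathfrak{S}^q(\boldsymbol{x},\boldsymbol{y},\boldsymbol{A},\boldsymbol{B},f,h)$ vertex and comparing with the weight of its image in $\mathfrak{S}^{*,q}(\boldsymbol{y},\boldsymbol{x},\boldsymbol{A}^{-1},\boldsymbol{B}^{-1},\ba{h},\ba{f})$ — for example the $\vx{a_2}(a)$ vertex of weight $h(a)y_iA_iB_i$ goes to the $\vx{a_2}(-a)$ vertex of weight $\ba{h}(-a)\,y_iA_iB_i=h(a)y_iA_iB_i$, and similarly for $\vx{a_1},\vx{b_1},\vx{b_2}$ — one finds agreement for every vertex except the two $\vx{c}$ types, whose weights after the flip and substitution differ from the required ones only by a rescaling of the pair $(\vx{c_1},\vx{c_2})$ that preserves their product. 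That remaining discrepancy is exactly what the seventh step removes (rebalancing $\vx{c_1}$ and $\vx{c_2}$ by multiplying the former and dividing the latter by $f(0)x_i+h(0)y_i$), and it leaves the partition function unchanged because the all-$+$ side boundary conditions force every row of every admissible state to contain equally many $\vx{c_1}$ and $\vx{c_2}$ vertices — the charged analogue of Remark~\ref{c-weights-remark}, which follows from particle conservation along each row since a $\vx{c_2}$ (resp.\ $\vx{c_1}$) vertex is the unique vertex type that turns the horizontal spin on (resp.\ off) while reading across a row bounded by $+$ on both sides. Assembling these observations produces the desired weight-preserving bijection, and hence the claimed equality of partition functions.
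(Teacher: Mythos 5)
Your proposal is correct and follows essentially the same route as the paper: the paper simply lists the seven-step transformation immediately before the proposition and asserts the result, and your per-vertex verification (the flip exchanging the $\lambda$ and $\mu$ boundaries, the relabeling $a\mapsto n-a+1$ sending $\vx{a_2}(a)\mapsto\vx{a_2}(-a)$, $\vx{b_2}(a)\mapsto\vx{b_2}(-a)$ and carrying the distinguished charges $1\mapsto 0$ and $0\mapsto 1$ on the $\vx{c}$-vertices, plus the equal-count argument justifying the $\vx{c_1}/\vx{c_2}$ rebalancing) is exactly the bookkeeping the paper leaves implicit. The only discrepancy is that the paper's line ``Let $\ba{f}(a)=f(a)$'' is evidently a typo for $\ba{f}(a)=f(-a)$, which is the convention your weight computation correctly uses.
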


We can now combine these identities to relate the partition functions of $\mathfrak{S}^{*,q}$ and $\mathfrak{S}^q$ to themselves.

\begin{proposition} \label{charged-Functional-equations-proposition}
\leavevmode
\begin{enumerate}
    \item[(a)] \begin{align*} Z(\mathfrak{S}^{*,q}_{\ba{\lambda}/\ba{\mu}}(\boldsymbol{x}, \boldsymbol{y}, \boldsymbol{A}, \boldsymbol{B}, f, h)) = \prod_{i=1}^N (A_i^{-2M-2} B_i^{-M-1}) \cdot Z(\mathfrak{S}^q_{\lambda/\mu}(\boldsymbol{y}, \boldsymbol{x}, \boldsymbol{A}^{-1}, \boldsymbol{B}^{-1}, \ba{h}, \ba{f})).\end{align*}
    \item[(b)] \begin{align*} Z(\mathfrak{S}^q_{\ba{\lambda}/\ba{\mu}}(\boldsymbol{x}, \boldsymbol{y}, \boldsymbol{A}, \boldsymbol{B}, f, h)) = \prod_{i=1}^N (A_i^{2M+2} B_i^{M+1}) \cdot Z(\mathfrak{S}^{*,q}_{\lambda/\mu}(\boldsymbol{y}, \boldsymbol{x}, \boldsymbol{A}^{-1}, \boldsymbol{B}^{-1}, \ba{h}, \ba{f})).\end{align*}
\end{enumerate}
\end{proposition}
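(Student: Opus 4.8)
The plan is to deduce both parts of Proposition~\ref{charged-Functional-equations-proposition} by composing the two geometric relationships of Propositions~\ref{charged-first-Gamma-Delta-relationship} and~\ref{charged-second-Gamma-Delta-relationship}, in exactly the way Proposition~\ref{Functional-equations-proposition} was obtained from Propositions~\ref{first-Gamma-Delta-relationship} and~\ref{second-Gamma-Delta-relationship} in the uncharged case. Each of the two underlying transformations --- the $180^\circ$ rotation together with the vertical-spin flip, the column reversal, and the rescaling by $A_i^2B_i$; respectively the vertical flip together with the charge relabelling $a\mapsto n-a+1$, the $\vx{c_1}\!\leftrightarrow\!\vx{c_2}$ swap, the inversions $A_i\mapsto A_i^{-1},\ B_i\mapsto B_i^{-1}$, the swap $x_i\leftrightarrow y_i$, the substitution $f\mapsto\ba{h},\ h\mapsto\ba{f}$, and the $\vx{c_1},\vx{c_2}$ rebalancing --- is an involution on the combined family $\{\mathfrak{S}^q_{\lambda/\mu}\}\cup\{\mathfrak{S}^{*,q}_{\lambda/\mu}\}$ that interchanges the two model types. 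Hence, besides the two stated identities, I would also record their reverse forms: running the first transformation on an $\mathfrak{S}^{*,q}$ model gives $Z(\mathfrak{S}^q_{\ba{\lambda}/\ba{\mu}}(\boldsymbol{x},\boldsymbol{y},\boldsymbol{A},\boldsymbol{B},f,h))=\prod_{i=1}^N A_i^{2M+2}B_i^{M+1}\cdot Z(\mathfrak{S}^{*,q}_{\lambda/\mu}(\boldsymbol{x},\boldsymbol{y},\boldsymbol{A},\boldsymbol{B},f,h))$, and running the second on an $\mathfrak{S}^q$ model gives $Z(\mathfrak{S}^q_{\lambda/\mu}(\boldsymbol{x},\boldsymbol{y},\boldsymbol{A},\boldsymbol{B},f,h))=Z(\mathfrak{S}^{*,q}_{\lambda/\mu}(\boldsymbol{y},\boldsymbol{x},\boldsymbol{A}^{-1},\boldsymbol{B}^{-1},\ba{h},\ba{f}))$.

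For part (a), I would begin with $Z(\mathfrak{S}^{*,q}_{\ba{\lambda}/\ba{\mu}}(\boldsymbol{x},\boldsymbol{y},\boldsymbol{A},\boldsymbol{B},f,h))$, apply Proposition~\ref{charged-first-Gamma-Delta-relationship} to rewrite it as $\prod_{i=1}^N A_i^{-2M-2}B_i^{-M-1}\cdot Z(\mathfrak{S}^q_{\lambda/\mu}(\boldsymbol{x},\boldsymbol{y},\boldsymbol{A},\boldsymbol{B},f,h))$, and then feed that $\mathfrak{S}^q$ partition function into the reverse form of Proposition~\ref{charged-second-Gamma-Delta-relationship} above, landing on the same partition function in the $(\boldsymbol{y},\boldsymbol{x},\boldsymbol{A}^{-1},\boldsymbol{B}^{-1},\ba{h},\ba{f})$ parametrization; the identities $\ba{\ba{f}}=f$, $\ba{\ba{h}}=h$ put the parameters exactly where claimed. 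Part (b) is the mirror image: start from $Z(\mathfrak{S}^q_{\ba{\lambda}/\ba{\mu}}(\boldsymbol{x},\boldsymbol{y},\boldsymbol{A},\boldsymbol{B},f,h))$, apply the reverse form of Proposition~\ref{charged-first-Gamma-Delta-relationship} --- which multiplies each of the $M+1$ weights in a row by $A_i^2B_i$ and so contributes the prefactor $\prod_{i=1}^N A_i^{2M+2}B_i^{M+1}$ --- and then finish with Proposition~\ref{charged-second-Gamma-Delta-relationship}. In both chains every intermediate move is a weight-preserving bijection of decorated-spin states, so the only numerical contributions come from the two rescaling steps, which produce precisely the stated monomial factors; the proof is then completely formal.

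The only real obstacle is the bookkeeping, and it is the same bookkeeping as in the $n=1$ case of Proposition~\ref{Functional-equations-proposition}, now carried along with the extra charge data and the functions $f,h$. Concretely, one must check: that $\ba{\lambda}$ and $\ba{\mu}$ are formed with respect to the same $M$, so that $\ba{\ba{\lambda}}=\lambda$ and $\ba{\ba{\mu}}=\mu$ and the composite really returns the original boundary; that the charge relabelling $a\mapsto n-a+1\pmod n$ and the assignment $f(a)\mapsto h(-a),\ h(a)\mapsto f(-a)$ each square to the identity, so that the second transformation is genuinely involutive at the level of Boltzmann weights; and that the vertex-type bijections induced by the two geometric moves are compatible with the $f\!\leftrightarrow\!h$ interchange visible in Figures~\ref{Delta-charged-vertex-weights} and~\ref{Gamma-charged-vertex-weights} (for instance, that a charge-$a$ vertex $\vx{a_2^{(i)}}$ of $\mathfrak{S}^q$ is sent to the correctly charged vertex of $\mathfrak{S}^{*,q}$, matching the $f,h$ entries of the two tables). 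Granting Propositions~\ref{charged-first-Gamma-Delta-relationship} and~\ref{charged-second-Gamma-Delta-relationship}, none of this is deep, and once their reverse forms are recorded the remainder of the argument is purely formal.
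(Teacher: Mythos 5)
Your approach is exactly the paper's: the proposition is stated immediately after Propositions \ref{charged-first-Gamma-Delta-relationship} and \ref{charged-second-Gamma-Delta-relationship} with no argument beyond ``combine these identities,'' and your two chains --- together with the observations that $\ba{\ba{\lambda}}=\lambda$, $\ba{\ba{f}}=f$, $\ba{\ba{h}}=h$, and that the monomial prefactor inverts under $A_i\mapsto A_i^{-1}$, $B_i\mapsto B_i^{-1}$ --- are the intended composition.

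One caveat: your composition does not land ``exactly where claimed.'' Chasing part (a) through, Proposition \ref{charged-first-Gamma-Delta-relationship} yields $\prod_{i=1}^N(A_i^{-2M-2}B_i^{-M-1})\cdot Z(\mathfrak{S}^q_{\lambda/\mu}(\boldsymbol{x},\boldsymbol{y},\boldsymbol{A},\boldsymbol{B},f,h))$, and your reverse form of Proposition \ref{charged-second-Gamma-Delta-relationship} converts that $\mathfrak{S}^q$ partition function into $Z(\mathfrak{S}^{*,q}_{\lambda/\mu}(\boldsymbol{y},\boldsymbol{x},\boldsymbol{A}^{-1},\boldsymbol{B}^{-1},\ba{h},\ba{f}))$ --- a \emph{starred} model --- whereas the displayed statement of (a) has the unstarred $\mathfrak{S}^q_{\lambda/\mu}$ on the right; likewise your chain for (b) terminates at $\mathfrak{S}^q_{\lambda/\mu}$ rather than the printed $\mathfrak{S}^{*,q}_{\lambda/\mu}$. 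The identities your chains actually produce (each model related to itself, with the bar involution and the parameter swap) are the ones consistent with the uncharged analogue, Proposition \ref{Functional-equations-proposition}, and with the sentence introducing the charged proposition (``relate the partition functions of $\mathfrak{S}^{*,q}$ and $\mathfrak{S}^q$ to themselves''); the printed right-hand sides appear to have the stars transposed. So your derivation is the correct one, but you should state explicitly which version you are proving and flag the discrepancy rather than asserting that the endpoint matches the claim verbatim.
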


Similar identities hold when the weights are left arbitrary.

We'll also need the row transfer matrices for both models. Define: \[\langle\mu|T^q|\lambda\rangle := Z(\mathfrak{S}^q_{\lambda/\mu}), \hspace{20pt} \langle\lambda|T^{*,q}|\mu\rangle := Z(\mathfrak{S}^{*,q}_{\lambda/\mu}),\] where for both lattice models, $N=1$ and $M\ge \max(\lambda_1,\mu_1)$.

Let $\mathcal{F}$ be a metaplectic Fock space as in the previous subsection, and let $g$ be the associated function modulo $n$. We say the Boltzmann weights for a lattice model with charge (either $\mathfrak{S}^q$ or $\mathfrak{S}^{*,q}$ above) satisfy the \emph{$\mathcal{F}$-free fermion condition} if the following two conditions hold:

\begin{itemize}
    \item Zero-charge free fermion condition: \begin{align}\vx{a_1}\vx{a_2}(0) + \vx{b_1}\vx{b_2}(0) - \vx{c_1}\vx{c_2} = 0, \label{zero-charge-free-fermion-condition}\end{align}
    \item $\mathcal{F}$-charge condition: for any $0\le a\le n-1$, \begin{align}\frac{\vx{a_1^{(i)}}\vx{a_2^{(i)}}(a)}{\vx{b_1^{(i)}}\vx{b_2^{(i)}}(a)} = g(a).\label{F-charge-condition}\end{align}
\end{itemize}

We will say that a set of Boltzmann weights satisfies the \emph{generalized free fermion condition} if it satisfies the \emph{$\mathcal{F}$-free fermion condition} for any metaplectic Fock space $\mathcal{F}$. The $\mathcal{F}$-charge condition involves values of $g$, which depend on $\mathcal{F}$, so we need an equivalent expression that is independent of $g$. To get this, we use the relation $g(a)g(-a) = -g(0)$, and use the $\mathcal{F}$-charge condition to replace each factor with a ratio of weights. Doing this, we arrive with the following conditions.

\begin{itemize}
    \item Zero charge free fermion condition: \[\vx{a_1}\vx{a_2}(0) + \vx{b_1}\vx{b_2}(0) - \vx{c_1}\vx{c_2} = 0,\]
    \item Charge condition: for any $1\le a\le n-1$, \begin{align}\frac{\vx{a_1^{(i)}}\vx{a_2^{(i)}}(a)\vx{a_2^{(i)}}(-a)}{\vx{b_1^{(i)}}\vx{b_2^{(i)}}(a)\vx{b_2^{(i)}}(-a)} = -\frac{\vx{a_2^{(i)}}(0)}{\vx{b_2^{(i)}}(0)}.\label{charge-condition}\end{align}
\end{itemize}

If our weights satisfy the generalized free fermion condition, we can determine $g$ (and therefore $\mathcal{F}$) from (\ref{F-charge-condition}). If $\mathfrak{S}$ is a lattice model with charge, we will denote the corresponding metaplectic Fock space $\mathcal{F}(\mathfrak{S})$. Note that if $n=1$, the charge condition goes away, and the generalized free fermion condition reduces to the usual free fermion condition. In this case, $\mathcal{F}(\mathfrak{S})$ is the classical Fock space of Section \ref{background-section}.

By construction, both the $\mathfrak{S}$ and $\mathfrak{S}^*$ weights above satisfy the free fermion condition. The left side of the charge condition must be independent of $a$. Using the parameters in the $\mathfrak{S}$ weights above, it can be expressed more simply: \[\frac{h(a)h(-a)}{f(a)f(-a)} = -\frac{h(0)}{f(0)}.\]

Notice that the $\mathfrak{S}$ and $\mathfrak{S}^*$ weights above are almost completely general, aside from satisfying the zero-charge free fermion condition. Similarly to the $n=1$ case, we could change the relative weights of $\vx{c_1}$ and $\vx{c_2}$, which would create a model that is only trivially different from the original. The last piece is that we could have let the functions $f(a)$ and $h(a)$ depend on their row. However, the ratio $\frac{h(a)}{f(a)}$ for any $a$ needs to be independent of the row in order for the model to match a Hamiltonian of a single metaplectic Fock space, so we simplify notation by having both $f$ and $h$ be independent of $i$.

\begin{remark}
Six-vertex models with charge can be considered as a subset of the colored models studied in \cite{BBBG-Iwahori}. In that context $\vx{a_2}$ vertices depend on two parameters, one for each color. The charge in our models corresponds to the difference between the colors. Our models have a shift invariance property: adding an integer uniformly to every column index does not change any of the Boltzmann weights in Figures \ref{Delta-charged-vertex-weights} and \ref{Gamma-charged-vertex-weights}, whereas the same operation would change the Boltzmann weights in \cite[Figure~7]{BBBG-Iwahori}. In this light, shift invariance is a natural and necessary condition for our Fock spaces. The more general colored models may not need this condition, and will be the subject of future work.
\end{remark}

The following is our main result relating Hamiltonian operators to charged lattice models. We will prove it in Section \ref{proof-of-quantum-main-result-section}.

\begin{align} Z(\mathfrak{S}^q_{\lambda/\mu}) = \prod_{i=1}^{N} A_i^{M+1} B_i^{\ell(\lambda)} \cdot\langle\mu|e^{H_+}|\lambda\rangle \hspace{20pt} \text{for all strict partitions $\lambda,\mu$ and all $M,N$.} \label{charged-Delta-Lattice-Hamiltonian-correspondence-equation}\end{align}

\begin{theorem} \label{charged-Delta-lattice-Hamiltonian}
\leavevmode
\begin{enumerate}
    \item[(a)] (\ref{charged-Delta-Lattice-Hamiltonian-correspondence-equation}) holds precisely when the weights of $\mathfrak{S}^q$ satisfy the generalized free fermion condition and for all $k\ge 1, j\in [1,N]$, \begin{align} s_k^{(j)} = \frac{1}{k} \left(x_i^{nk} \left(\prod_{a=0}^{n-1} f(a)\right)^k  + (-1)^{k-1}(g(0))^ky_i^k x_i^{(n-1)k}\left(\prod_{a=0}^{n-1} f(a)\right)^k\right). \label{charged-Delta-Hamiltonian-parameter}\end{align}
    \item[(b)] If the Boltzmann weights are not generalized free fermionic, (\ref{charged-Delta-Lattice-Hamiltonian-correspondence-equation}) does not hold for any choice of the $s_k^{(j)}$.
\end{enumerate}
\end{theorem}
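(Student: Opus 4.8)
The plan is to run the architecture of the proof of Theorem \ref{Delta-lattice-Hamiltonian}, carrying along the extra bookkeeping forced by the charge. As there, one first observes that $\vx{a_1^{(i)}} = A_i$ occurs at every vertex of row $i$, and $\vx{b_1^{(i)}} = A_iB_i$ occurs precisely at the vertices carrying a $-$ on their bottom edge, of which --- by conservation of particles --- there are $\ell(\lambda)$ in each row, forcing $\ell(\lambda)=\ell(\mu)$. Hence the factor $\prod_i A_i^{M+1}B_i^{\ell(\lambda)}$ is common to both sides of (\ref{charged-Delta-Lattice-Hamiltonian-correspondence-equation}), and we may normalize $A_i=B_i=1$, reducing to a charged analogue of Proposition \ref{Reduced-Delta-lattice-Hamiltonian}. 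A purely formal branching/Miwa-transform argument, word for word as in the proof of Proposition \ref{Reduced-Delta-lattice-Hamiltonian}, reduces the case of general $N$ to $N=1$, so all of part (a) comes down to the one-row statement.

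For a single row I would first treat the one-particle case $\lambda=(r+p)$, $\mu=(r)$, the charged analogue of Lemma \ref{one-particle-lemma}. Tracking the charge labels on horizontal edges shows that a path turning left at a $\vx{c_1}$ vertex and up at a $\vx{c_2}$ vertex must traverse a number of columns divisible by $n$ --- this is exactly the statement that $J_k$ displaces a particle by $kn$ --- so $Z(\mathfrak{S}^q_{(r+p)/(r)})=0$ unless $p\in n\Z_{\ge 0}$, while for $p=nk\ge 1$ there is a unique admissible state, whose weight is the product of one $\vx{c_1}$, then $nk-1$ consecutive $\vx{b_2}$ vertices with charges $1,2,\ldots,nk-1$, then one $\vx{c_2}$; periodicity of $f$ collapses this to a closed form. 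On the Hamiltonian side, the explicit wedge action of $e^{H_+}$ on $|(r+nk)\rangle$ (including the straightening coefficients created when the moved index passes the vacuum indices) computes $\langle (r)|e^{H_+}|(r+nk)\rangle$; packaging these into the generating function in $t^n$ and recognizing it, exactly as in Lemma \ref{one-particle-lemma}, as a geometric series of the shape $\bigl(1+\beta t^{n}\bigr)/\bigl(1-\gamma t^{n}\bigr)$, one takes logarithms and reads off that (\ref{charged-Delta-Lattice-Hamiltonian-correspondence-equation}) forces precisely (\ref{charged-Delta-Hamiltonian-parameter}), with the $f,h,g(0)$ factors appearing as the prefactors $\beta,\gamma$.

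The one-row case for arbitrary $\lambda,\mu$ is the charged analogue of Lemma \ref{one-row-lemma}, and here the classical proof does not transfer verbatim, since $q$-Fock space carries no Clifford structure and hence no literal Wick theorem. The cleanest route is to invoke the Jacobi--Trudi-type determinant for $\tau$-functions on metaplectic Fock space from \cite{BBBG-Hamiltonian}, writing $\langle\mu|e^{H_+}|\lambda\rangle = \det\bigl(h^q_{\lambda_i-\mu_j}\bigr)$ for the appropriate one-particle family $h^q$ computed in the previous step, and then repeating the block-factorization of that determinant exactly as in Lemma \ref{one-row-lemma}; the only change is that the entries are now the charge-weighted closed forms. (An alternative, fully combinatorial route: when $\lambda$ and $\mu$ interleave the charged model again has a single admissible state with all horizontal charges determined, and one matches its weight against $\langle\mu|e^{H_+}|\lambda\rangle$ evaluated directly from the wedge action; when they do not interleave, both sides vanish by the same column-operation argument as in Lemma \ref{one-row-lemma}.) Part (a) then follows by branching.

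For part (b), one reruns the computation with the general model $\overline{\mathfrak{S}}^q_{\lambda/\mu}$ and arbitrary $s_k^{(j)}$: by Remark \ref{c-weights-remark} and its charged refinement one may normalize $\vx{a_1^{(i)}}=\vx{b_1^{(i)}}=\vx{c_2^{(i)}}=1$. The one-particle computation pins down $s_k$ uniquely; substituting that choice into a one-row, two-particle computation produces a single polynomial identity among the Boltzmann weights that must hold for the match to persist, and unwinding it shows that it is equivalent to the zero-charge free fermion condition (\ref{zero-charge-free-fermion-condition}) together with the charge condition (\ref{charge-condition}), i.e.\ to the generalized free fermion condition --- while part (a) supplies the converse. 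I expect the main obstacle to be the one-row step: making the block-factorization of the $\tau$-function survive the wedge-straightening coefficients (equivalently, nailing down the correct charged Jacobi--Trudi/Wick substitute) and keeping the $f,h,g(0)$ bookkeeping matched on the two sides. By contrast the reduction to $N=1$ and the generating-function manipulation in the one-particle case should be routine adaptations of the classical arguments.
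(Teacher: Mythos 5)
Your reduction to the one-row case (normalizing $A_i=B_i=1$, then branching/Miwa to pass from $N=1$ to general $N$) matches the paper, and your outline of part (b) --- pinning down $s_k$ from the one-particle case and then extracting the free fermion and charge conditions from two-particle comparisons --- is close in spirit to what the paper does (it derives the zero-charge condition by restricting to partitions with all parts divisible by $n$, reducing to the $n=1$ theorem, and the charge condition by comparing $Z(\ba{\mathfrak{S}}^q_{(n,p)/(p,0)})$ against $Z(\ba{\mathfrak{S}}^q_{(n)/(0)})$). The genuine gap is in the one-row, multi-particle step of part (a), which is the heart of the theorem. Your primary route assumes a Jacobi--Trudi-type determinant $\langle\mu|e^{H_+}|\lambda\rangle=\det\bigl(h^q_{\lambda_i-\mu_j}\bigr)$ on metaplectic Fock space; no such formula is in \cite{BBBG-Hamiltonian} or available elsewhere. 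These $\tau$-functions are (supersymmetric) LLT polynomials, which do not admit a Jacobi--Trudi determinant, and the absence of a Clifford structure on $q$-Fock space is exactly why the paper states that Wick's theorem ``is not available to us in this context.'' Your fallback --- evaluating $\langle\mu|e^{H_+}|\lambda\rangle$ directly from the wedge action when $\lambda$ and $\mu$ interleave --- is not an argument but a restatement of the difficulty: each application of $J_k$ produces non-normally-ordered wedges whose straightening generates sums of terms with $g$- and $q$-dependent coefficients, and you give no mechanism for controlling them; likewise the ``column-operation'' vanishing argument you import from Lemma \ref{one-row-lemma} has no meaning without a determinant to operate on.

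The missing idea is the paper's inductive scheme adapted from \cite{BBBG-Hamiltonian}: introduce the deformed creation operators $\rho_k^*(t)=\psi_k^*-t\psi_{k-n}^*$ and prove the two compatible commutation identities $e^{H_+}\rho_k^*(\zeta)e^{-H_+}=\rho_k^*(\tau)$ on the Fock side and $\langle\mu|\hat{T}_k\rho_k^*(\zeta)|\lambda\rangle=\langle\mu|\rho_k^*(\tau)\hat{T}_k|\lambda\rangle$ for a column-restricted transfer matrix $\hat{T}_k$ (verified by an explicit case analysis over all local decorated-spin configurations), with base case $M<n$ where both sides are trivially $\delta_{\lambda\mu}$. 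These identities let one add particles one at a time without ever computing a multi-particle matrix element in closed form. Note also that the sufficiency of the generalized free fermion condition enters precisely through those case-analysis tables; your interleaving/single-state picture only inspects the weight of one admissible state at a time and therefore cannot see where that condition is actually used.
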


\section{Solvability} \label{solvability-section}

In this section, we demonstrate that the generalized free fermion condition is important for solvability of the six-vertex model with charge. Consider the vertices in Figure \ref{general-charged-rectangular-vertices}. We will describe conditions on their Boltzmann weights that are necessary and sufficient for solvability, and in the case where the model is solvable, compute the weights of the $R$-vertices.

A (rectangular) lattice model is called solvable if there exist a set of \emph{$R$-vertices} that satisfy the \emph{Yang-Baxter equation}. Let $T_i$ denote a vertex with row index $i$. The Yang-Baxter equation is satisfied if there exists a set of vertex weights $R_{ij}$ such that for all possible choices of decorated spins $\alpha,\beta,\gamma,\delta,\epsilon,\eta$, we have equality of partition functions:

\begin{align}\label{YBE-diagram}
\begin{array}{c}
\scalebox{.85}{\begin{tikzpicture}
  \draw (0,0)--(2,0);
  \draw (0,2)--(2,2);
  \draw (1,-1)--(1,3);
  \coordinate (a1) at (-2,0);
  \coordinate (c1) at (0,2);
  \coordinate (a2) at (-2,2);
  \coordinate (c2) at (0,0);
  \draw (a1) to [out=0,in=180] (c1);
  \draw (a2) to [out=0,in=180] (c2);
  \draw[fill=white] (-2,0) circle (.3);
  \draw[fill=white] (-2,2) circle (.3);
  \draw[fill=white](1,3) circle(.3);
  \draw[fill=white] (2,2) circle (.3);
  \draw[fill=white] (2,0) circle (.3);
  \draw[fill=white](1,-1) circle (.3);
  \draw[fill=white] (0,2) circle (.3);
  \draw[fill=white] (0,0) circle (.3);
  \draw[fill=white](1,1) circle (.3);
  \path[fill=white] (-1,1) circle (.3);
  \path[fill=white] (1,2) circle (.3);
  \path[fill=white] (1,0) circle (.3);
  \node at (-2,0){$\alpha$};
  \node at (-2,2){$\beta$};
  \node at (1,3){$\gamma$};
  \node at (2,2) {$\delta$};
  \node at (2,0) {$\epsilon$};
  \node at (1,-1){$\eta$};
  \node at (1,2) {$T_i$};
  \node at (1,0) {$T_j$};
  \node at (-1,1){$R_{i,j}$};
  \node at (3,1) {{\Large $=$}};
  \end{tikzpicture}}
\hspace{1em}
\scalebox{.85}{\begin{tikzpicture}
  \draw (0,0)--(-2,0); 
  \draw (0,2)--(-2,2);
  \draw (-1,-1)--(-1,3);
  \coordinate (a1) at (2,0);
  \coordinate (c1) at (0,2);
  \coordinate (a2) at (2,2);
  \coordinate (c2) at (0,0);
  \draw (a1) to [out=180,in=0] (c1);
  \draw (a2) to [out=180,in=0] (c2);
  \draw[fill=white] (-2,0) circle (.3);
  \draw[fill=white] (-2,2) circle (.3);
  \draw[fill=white](-1,3) circle(.3);
  \draw[fill=white] (2,2) circle (.3);
  \draw[fill=white] (2,0) circle (.3);
  \draw[fill=white](-1,-1) circle (.3);
  \draw[fill=white] (0,2) circle (.3);
  \draw[fill=white] (0,0) circle (.3);
  \draw[fill=white](-1,1) circle (.3);
  \path[fill=white] (1,1) circle (.3);
  \path[fill=white] (-1,2) circle (.3);
  \path[fill=white] (-1,0) circle (.3);
  \node at (-2,0) {$\alpha$};
  \node at (-2,2) {$\beta$};
  \node at (-1,3) {$\gamma$};
  \node at (2,2) {$\delta$};
  \node at (2,0) {$\epsilon$};
  \node at (-1,-1) {$\eta$};
  \node at (-1,2) {$T_j$};
  \node at (-1,0) {$T_i$};
  \node at (1,1) {$R_{i,j}$};
  \end{tikzpicture}}\end{array}.
\end{align}

In particular, the edge labels on the $R$-vertices must also be decorated spins. We assume a further condition: \emph{conservation of decorated spin}. Namely, the pair of decorated spins entering $R_{ij}$ from the right equals the pair of decorated spins exiting to the left, in some order. The produces the vertices in Figure \ref{general-charged-R-vertices}. Each $R$-vertex depends on a pair of row indices $(i,j)$, where $i$ is the index of the top right and bottom left edges, and $j$ is the index of the bottom right and top left edges. We suppress this notation for readability.

\begin{figure}[h]
\begin{center}
\scalebox{.85}{$
\begin{array}{c@{\hspace{8pt}}c@{\hspace{8pt}}c@{\hspace{5pt}}c@{\hspace{5pt}}c@{\hspace{8pt}}c@{\hspace{8pt}}c@{\hspace{5pt}}c}
\toprule
\vx{a_1^{(i)}}&\vx{a_2^{(i)}}(a)&\vx{b_1^{(i)}} & \vx{b_2^{(i)}}(a) & \vx{c_1^{(i)}}&\vx{c_2^{(i)}}\\
\midrule
\begin{tikzpicture}
\coordinate (a) at (-.75, 0);
\coordinate (b) at (0, .75);
\coordinate (c) at (.75, 0);
\coordinate (d) at (0, -.75);
\coordinate (aa) at (-.75,.5);
\coordinate (cc) at (.75,.5);
\draw (a)--(0,0);
\draw (b)--(0,0);
\draw (c)--(0,0);
\draw (d)--(0,0);
\draw[fill=white] (a) circle (.25);
\draw[fill=white] (b) circle (.25);
\draw[fill=white] (c) circle (.25);
\draw[fill=white] (d) circle (.25);
\node at (0,1) { };
\node at (a) {$+$};
\node at (b) {$+$};
\node at (c) {$+$};
\node at (d) {$+$};
\end{tikzpicture}
&
\begin{tikzpicture}
\coordinate (a) at (-.75, 0);
\coordinate (b) at (0, .75);
\coordinate (c) at (.75, 0);
\coordinate (d) at (0, -.75);
\coordinate (aa) at (-.75,.5);
\coordinate (cc) at (.75,.5);
\draw (a)--(0,0);
\draw (b)--(0,0);
\draw (c)--(0,0);
\draw (d)--(0,0);
\draw[fill=white] (a) circle (.25);
\draw[fill=white] (b) circle (.25);
\draw[fill=white] (c) circle (.25);
\draw[fill=white] (d) circle (.25);
\node at (0,1) { };
\node at (a) {$-$};
\node at (b) {$-$};
\node at (c) {$-$};
\node at (d) {$-$};
\node at (aa) {$a+1$};
\node at (cc) {$a$};
\end{tikzpicture}
&
\begin{tikzpicture}
\coordinate (a) at (-.75, 0);
\coordinate (b) at (0, .75);
\coordinate (c) at (.75, 0);
\coordinate (d) at (0, -.75);
\coordinate (aa) at (-.75,.5);
\coordinate (cc) at (.75,.5);
\draw (a)--(0,0);
\draw (b)--(0,0);
\draw (c)--(0,0);
\draw (d)--(0,0);
\draw[fill=white] (a) circle (.25);
\draw[fill=white] (b) circle (.25);
\draw[fill=white] (c) circle (.25);
\draw[fill=white] (d) circle (.25);
\node at (0,1) { };
\node at (a) {$+$};
\node at (b) {$-$};
\node at (c) {$+$};
\node at (d) {$-$};
\end{tikzpicture}
&
\begin{tikzpicture}
\coordinate (a) at (-.75, 0);
\coordinate (b) at (0, .75);
\coordinate (c) at (.75, 0);
\coordinate (d) at (0, -.75);
\coordinate (aa) at (-.75,.5);
\coordinate (cc) at (.75,.5);
\draw (a)--(0,0);
\draw (b)--(0,0);
\draw (c)--(0,0);
\draw (d)--(0,0);
\draw[fill=white] (a) circle (.25);
\draw[fill=white] (b) circle (.25);
\draw[fill=white] (c) circle (.25);
\draw[fill=white] (d) circle (.25);
\node at (0,1) { };
\node at (a) {$-$};
\node at (b) {$+$};
\node at (c) {$-$};
\node at (d) {$+$};
\node at (aa) {$a+1$};
\node at (cc) {$a$};
\end{tikzpicture}
&
\begin{tikzpicture}
\coordinate (a) at (-.75, 0);
\coordinate (b) at (0, .75);
\coordinate (c) at (.75, 0);
\coordinate (d) at (0, -.75);
\coordinate (aa) at (-.75,.5);
\coordinate (cc) at (.75,.5);
\draw (a)--(0,0);
\draw (b)--(0,0);
\draw (c)--(0,0);
\draw (d)--(0,0);
\draw[fill=white] (a) circle (.25);
\draw[fill=white] (b) circle (.25);
\draw[fill=white] (c) circle (.25);
\draw[fill=white] (d) circle (.25);
\node at (0,1) { };
\node at (a) {$-$};
\node at (b) {$+$};
\node at (c) {$+$};
\node at (d) {$-$};
\node at (aa) {$1$};
\end{tikzpicture}
&
\begin{tikzpicture}
\coordinate (a) at (-.75, 0);
\coordinate (b) at (0, .75);
\coordinate (c) at (.75, 0);
\coordinate (d) at (0, -.75);
\coordinate (aa) at (-.75,.5);
\coordinate (cc) at (.75,.5);
\draw (a)--(0,0);
\draw (b)--(0,0);
\draw (c)--(0,0);
\draw (d)--(0,0);
\draw[fill=white] (a) circle (.25);
\draw[fill=white] (b) circle (.25);
\draw[fill=white] (c) circle (.25);
\draw[fill=white] (d) circle (.25);
\node at (0,1) { };
\node at (a) {$+$};
\node at (b) {$-$};
\node at (c) {$-$};
\node at (d) {$+$};
\node at (cc) {$0$};
\end{tikzpicture}
\\
   \bottomrule
\end{array}$}
\end{center}
\caption{A set of admissible vertices for the six-vertex model with charge.}
    \label{general-charged-rectangular-vertices}
\end{figure}
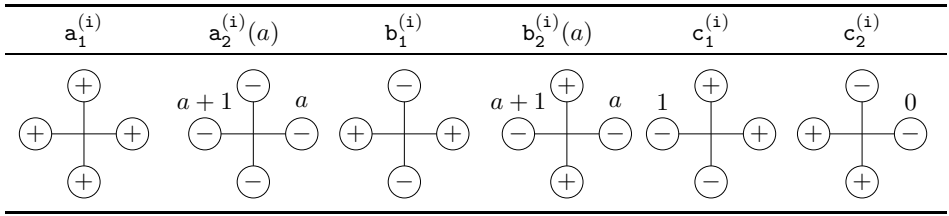

\begin{figure}[h]
\centering
\scalebox{.95}{$\begin{array}{c@{\hspace{10pt}}c@{\hspace{10pt}}c@{\hspace{15pt}}c@{\hspace{10pt}}c@{\hspace{10pt}}c@{\hspace{10pt}}c@{\hspace{10pt}}}
\toprule
\vx{A}_1 & \vx{A}_2(k,m) & \vx{A}^\times_2(k,m) & \vx{B}_1(k) & \vx{B}_2(k) & \vx{C}_1(k) & \vx{C}_2(k) \\
\midrule
\begin{tikzpicture}[scale=0.7]
\draw (0,0) to [out = 0, in = 180] (2,2);
\draw (0,2) to [out = 0, in = 180] (2,0);
\draw[fill=white] (0,0) circle (.35);
\draw[fill=white] (0,2) circle (.35);
\draw[fill=white] (2,2) circle (.35);
\draw[fill=white] (2,0) circle (.35);
\node at (0,0) {$+$};
\node at (0,2) {$+$};
\node at (2,2) {$+$};
\node at (2,0) {$+$};
\end{tikzpicture}
&
\begin{tikzpicture}[scale=0.7]
\draw (0,0) to [out = 0, in = 180] (2,2);
\draw (0,2) to [out = 0, in = 180] (2,0);
\draw[fill=white] (0,0) circle (.35);
\draw[fill=white] (0,2) circle (.35);
\draw[fill=white] (2,2) circle (.35);
\draw[fill=white] (2,0) circle (.35);
\node at (0,0) {$-$};
\node at (0,2) {$-$};
\node at (2,2) {$-$};
\node at (2,0) {$-$};
\node at (0,0.7) {$m$};
\node at (2,0.7) {$m$};
\node at (0,2.7) {$k$};
\node at (2,2.7) {$k$};
\end{tikzpicture}
&
\begin{tikzpicture}[scale=0.7]
\draw (0,0) to [out = 0, in = 180] (2,2);
\draw (0,2) to [out = 0, in = 180] (2,0);
\draw[fill=white] (0,0) circle (.35);
\draw[fill=white] (0,2) circle (.35);
\draw[fill=white] (2,2) circle (.35);
\draw[fill=white] (2,0) circle (.35);
\node at (0,0) {$-$};
\node at (0,2) {$-$};
\node at (2,2) {$-$};
\node at (2,0) {$-$};
\node at (0,0.7) {$k$};
\node at (2,0.7) {$m$};
\node at (0,2.7) {$m$};
\node at (2,2.7) {$k$};
\end{tikzpicture}
&
\begin{tikzpicture}[scale=0.7]
\draw (0,0) to [out = 0, in = 180] (2,2);
\draw (0,2) to [out = 0, in = 180] (2,0);
\draw[fill=white] (0,0) circle (.35);
\draw[fill=white] (0,2) circle (.35);
\draw[fill=white] (2,2) circle (.35);
\draw[fill=white] (2,0) circle (.35);
\node at (0,0) {$+$};
\node at (0,2) {$-$};
\node at (2,2) {$+$};
\node at (2,0) {$-$};
\node at (2,0.7) {$k$};
\node at (0,2.7) {$k$};
\end{tikzpicture}
&
\begin{tikzpicture}[scale=0.7]
\draw (0,0) to [out = 0, in = 180] (2,2);
\draw (0,2) to [out = 0, in = 180] (2,0);
\draw[fill=white] (0,0) circle (.35);
\draw[fill=white] (0,2) circle (.35);
\draw[fill=white] (2,2) circle (.35);
\draw[fill=white] (2,0) circle (.35);
\node at (0,0) {$-$};
\node at (0,2) {$+$};
\node at (2,2) {$-$};
\node at (2,0) {$+$};
\node at (0,0.7) {$k$};
\node at (2,2.7) {$k$};
\end{tikzpicture}
&
\begin{tikzpicture}[scale=0.7]
\draw (0,0) to [out = 0, in = 180] (2,2);
\draw (0,2) to [out = 0, in = 180] (2,0);
\draw[fill=white] (0,0) circle (.35);
\draw[fill=white] (0,2) circle (.35);
\draw[fill=white] (2,2) circle (.35);
\draw[fill=white] (2,0) circle (.35);
\node at (0,0) {$-$};
\node at (0,2) {$+$};
\node at (2,2) {$+$};
\node at (2,0) {$-$};
\node at (0,0.7) {$k$};
\node at (2,0.7) {$k$};
\end{tikzpicture}
&
\begin{tikzpicture}[scale=0.7]
\draw (0,0) to [out = 0, in = 180] (2,2);
\draw (0,2) to [out = 0, in = 180] (2,0);
\draw[fill=white] (0,0) circle (.35);
\draw[fill=white] (0,2) circle (.35);
\draw[fill=white] (2,2) circle (.35);
\draw[fill=white] (2,0) circle (.35);
\node at (0,0) {$+$};
\node at (0,2) {$-$};
\node at (2,2) {$-$};
\node at (2,0) {$+$};
\node at (0,2.7) {$k$};
\node at (2,2.7) {$k$};
\end{tikzpicture}
\\
   \bottomrule
\end{array}$}
    \caption{A set of $R$-vertices for the six-vertex model with charge.}
    \label{general-charged-R-vertices}
\end{figure}
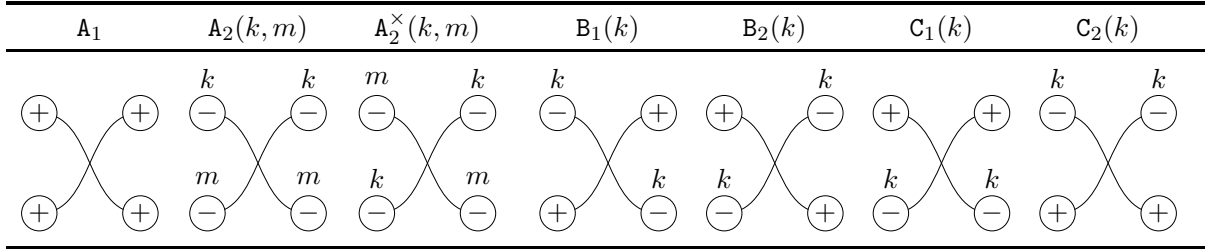

The solvability computation (\ref{YBE-diagram}) was done by hand. It consists of 20 cases, all choices of boundary spins such that $\alpha,\beta$, and $\gamma$ have the same number of $-$ spins as $\delta, \epsilon$, and $\eta$, and many of these cases break up into subcases based on charge. From the resulting equations, we can solve the model and determine conditions on its solvability.

The equations are given in Appendix \ref{appendix-equations}. Here, we present the results. The interested reader can check that the stated conditions are indeed necessary to satisfy the equations in Appendix \ref{appendix-equations}, and that the solutions given do indeed satisfy those conditions.

Let $n>1$. First, we require that \begin{align}\frac{\vx{a_1^{(i)}}\vx{a_2^{(i)}}(k)}{\vx{b_1^{(i)}}\vx{b_2^{(i)}}(k)}, \hspace{20pt} \frac{\vx{a_1^{(i)}}\vx{a_2^{(i)}}(0)}{\vx{c_1^{(i)}}\vx{c_2^{(i)}}}, \hspace{20pt} \text{and} \hspace{20pt} \frac{\vx{b_1^{(i)}}\vx{b_2^{(i)}}(0)}{\vx{c_1^{(i)}}\vx{c_2^{(i)}}} \label{independence-condition}\end{align} are independent of $i$. The first of these conditions along with one of the other two imply the third. Additionally, they imply that \[\Delta := \frac{\vx{a_1^{(i)}}\vx{a_2^{(i)}}(0) + \vx{b_1^{(i)}}\vx{b_2^{(i)}}(0) - \vx{c_1^{(i)}}\vx{c_2^{(i)}}}{2\sqrt{\vx{a_1^{(i)}}\vx{a_2^{(i)}}(0)\vx{b_1^{(i)}}\vx{b_2^{(i)}}(0)}}\] is independent of $i$ i.e. constant.

There are now two cases. If $\Delta\ne 0$, then the model is solvable if and only if \begin{align} \prod_{p=0}^{n-1} \vx{a_1^{(i)}}\vx{b_2^{(j)}}(p) = \prod_{p=0}^{n-1} \vx{a_1^{(j)}}\vx{b_2^{(i)}}(p) \label{non-free-fermion-charge-equation}\end{align} holds, and the $R$-vertex weights are given in Table \ref{R-vertex-weights-other}. Note that while the model is solvable in this case, it is not a very interesting solution. In particular, the weights of $\vx{B}_1(k), \vx{B}_2(k)$, and $\vx{A}_2^\times(k,m), k\ne m$ are all 0.

If $\Delta=0$, the zero charge free fermion condition holds, and the following additional condition is equivalent to solvability \[\frac{\vx{a_1^{(i)}}\vx{a_2^{(i)}}(k)\vx{a_1^{(i)}}\vx{a_2^{(i)}}(-k)}{\vx{b_1^{(i)}}\vx{b_2^{(i)}}(k)\vx{b_1^{(i)}}\vx{b_2^{(i)}}(-k)} = -\frac{\vx{a_1^{(i)}}\vx{a_2^{(i)}}(0)}{\vx{b_1^{(i)}}\vx{b_2^{(i)}}(0)}.\] Both sides of this equation are independent of $i$, and when simplified, it reduces to the charge condition (\ref{charge-condition}). The $R$-vertex weights for this case are given in Table \ref{R-vertex-weights-free-fermion}.

In fact, the conditions $\Delta=0$ (for all $i$), (\ref{charge-condition}), and the first condition in (\ref{independence-condition}) imply the rest of (\ref{independence-condition}). This is equivalent to each row of the lattice model satisfying the $\mathcal{F}$-free-fermion condition for \emph{the same} metaplectic Fock space $\mathcal{F}$.

The following theorem summarizes these results.

\begin{theorem} \label{charge-solvability-theorem}
The six-vertex model with charge is solvable in precisely two cases: \begin{enumerate}
    \item The rectangular weights satisfy the $\mathcal{F}$-free-fermion condition for some fixed metaplectic Fock space $\mathcal{F}$. The $R$-vertex weights are given in Table \ref{R-vertex-weights-free-fermion}.
    \item (\ref{independence-condition}) and (\ref{non-free-fermion-charge-equation}) hold and $\Delta\ne 0$. The $R$-vertex weights are given in Table \ref{R-vertex-weights-other}.
\end{enumerate}
\end{theorem}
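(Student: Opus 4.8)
The plan is to work directly from the Yang--Baxter equation (\ref{YBE-diagram}) with the rectangular vertices of Figure \ref{general-charged-rectangular-vertices} and the $R$-vertices of Figure \ref{general-charged-R-vertices}, treating the $R$-vertex weights as unknowns and the rectangular weights as fixed parameters. First I would enumerate the boundary data $(\alpha,\beta,\gamma,\delta,\epsilon,\eta)$ for which both sides of (\ref{YBE-diagram}) can be nonzero: conservation of decorated spin forces $\alpha,\beta,\gamma$ to carry the same number of $-$ spins as $\delta,\epsilon,\eta$, which gives the twenty cases, each splitting further according to the charges on the $-$ edges. Expanding each partition function over the internal edge (and its charge) produces, for every case, a polynomial identity relating the rectangular weights and the $R$-weights; these are precisely the equations collected in Appendix \ref{appendix-equations}.

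Next I would extract the necessary conditions from this system. Comparing the cases in which at most one particle line passes through the $R$-vertex forces the three ratios in (\ref{independence-condition}) to be independent of the row index $i$, and hence $\Delta$ to be constant in $i$. At this point the system decouples: one block of equations determines the $R$-weights $\vx{A}_1,\vx{C}_1(k),\vx{C}_2(k),\vx{A}_2(k,m)$ up to an overall normalization, while the block coming from cases with two particle lines through the $R$-vertex at differing charges constrains the rectangular weights themselves and bifurcates. Either the two-particle $R$-weights $\vx{B}_1(k),\vx{B}_2(k)$ and $\vx{A}^\times_2(k,m)$ with $k\ne m$ are permitted to be nonzero, which forces $\Delta=0$ together with the charge condition (\ref{charge-condition}); or they must all vanish, and the surviving equations reduce to (\ref{non-free-fermion-charge-equation}) with $\Delta$ possibly nonzero. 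In the first branch one checks, using $\Delta=0$ (equivalently (\ref{zero-charge-free-fermion-condition})), (\ref{charge-condition}), and the first ratio in (\ref{independence-condition}), that the remaining parts of (\ref{independence-condition}) follow automatically and that the weights are exactly the $\mathcal{F}$-free-fermion weights for the metaplectic Fock space $\mathcal{F}$ recovered from (\ref{F-charge-condition}); this is case (1), with $R$-weights read off as Table \ref{R-vertex-weights-free-fermion}. In the second branch the explicit weights of Table \ref{R-vertex-weights-other} survive; this is case (2).

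For sufficiency I would reverse the argument: substitute the $R$-weights of Table \ref{R-vertex-weights-free-fermion} (resp. Table \ref{R-vertex-weights-other}) into every equation of Appendix \ref{appendix-equations} and verify each identity under the $\mathcal{F}$-free-fermion condition (resp. under (\ref{independence-condition}) and (\ref{non-free-fermion-charge-equation}) with $\Delta\ne 0$), using the relation $g(a)g(-a)=-g(0)$ to collapse the charge-dependent terms in the first case. The degenerate case $n=1$ is treated separately and more easily: the charge condition is vacuous, Figure \ref{general-charged-rectangular-vertices} reduces to the ordinary six-vertex model, and the statement collapses to the classical free fermion solvability criterion.

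The main obstacle is organizational rather than conceptual: the twenty boundary cases with their charge subcases yield a large and highly redundant system, and the subtle point is structuring the necessity argument so that the split into $\Delta=0$ and $\Delta\ne 0$ emerges from the equations uniformly in $n$, rather than being rediscovered case by case. Tracking which equations enforce independence of $i$, which merely solve for $R$-weights, and which are genuine constraints on the rectangular weights is where the real care is needed; the algebra internal to each case is routine.
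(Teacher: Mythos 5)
Your proposal follows essentially the same route as the paper: enumerate the twenty spin-conservation cases of the Yang--Baxter equation (with charge subcases) to obtain the system in Appendix \ref{appendix-equations}, extract row-independence of the ratios in (\ref{independence-condition}), and split on whether the two-particle $R$-weights $\vx{B}_1,\vx{B}_2,\vx{A}_2^\times$ vanish, which is the $\Delta\ne 0$ versus $\Delta=0$ dichotomy, then verify the two tables of $R$-weights by substitution. The paper presents exactly this computation (deferring the verification of the equations to the reader), so there is nothing substantively different to compare.
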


\begin{table}
\begin{center}
\begin{tabular}{ |c|c| }
 \hline
 Vertex & Boltzmann weight  \\\hline 
 $\vx{A}_1$ & $\vx{a_1^{(i)}}\vx{a_2^{(j)}}(0) \cdot \prod_{p=1}^{n-1} \vx{a_1^{(i)}}\vx{b_2^{(j)}}(p) + \vx{b_1^{(j)}}\vx{b_2^{(i)}}(0) \cdot \prod_{p=1}^{n-1} \vx{a_1^{(j)}}\vx{b_2^{(i)}}(p)$ \\\hline
 $\vx{A}_2(k,k)$ & $\vx{a_1^{(j)}}\vx{a_2^{(i)}}(0) \prod_{p=1}^{n-1} \vx{a_1^{(j)}}\vx{b_2^{(i)}}(p) + \vx{b_1^{(i)}}\vx{b_2^{(j)}}(0) \prod_{p=1}^{n-1} \vx{a_1^{(i)}}\vx{b_2^{(j)}}(p)$ \\\hline
 $\vx{A}_2(k,m), k<m$ & $\frac{\vx{c_1^{(j)}}\vx{c_2^{(j)}}\vx{a_2^{(i)}}(0)}{\vx{a_2^{(j)}}(0)} \prod_{p=1}^{m-1} \vx{a_1^{(j)}}\vx{b_2^{(i)}}(p) \prod_{p=m}^{n-1} \vx{a_1^{(i)}}\vx{b_2^{(j)}}(p) \prod_{p=0}^{k-1} \frac{\vx{a_1^{(i)}}\vx{b_2^{(j)}}(p)}{\vx{a_1^{(j)}}\vx{b_2^{(i)}}(p)}$ \\\hline $\vx{A}_2(k,m), k>m$ & $\frac{\vx{c_1^{(j)}}\vx{c_2^{(j)}}\vx{b_1^{(i)}}}{\vx{b_1^{(j)}}} \prod_{p=1}^{k-1} \vx{a_1^{(i)}}\vx{b_2^{(j)}}(p) \prod_{p=k}^{n-1} \vx{a_1^{(j)}}\vx{b_2^{(i)}}(p) \prod_{p=0}^{m-1} \frac{\vx{a_1^{(j)}}\vx{b_2^{(i)}}(p)}{\vx{a_1^{(i)}}\vx{b_2^{(j)}}(p)}$ \\\hline $\vx{A}^\times_2(k,m), k\ne m$ & \begin{tabular}{@{}c@{}} $\frac{\vx{a_2^{(i)}}(k-m)}{\vx{b_2^{(i)}}(k-m)} \left(\prod_{p=0}^{n-1} \vx{a_1^{(j)}}\vx{b_2^{(i)}}(p) - \prod_{j=0}^{n-1} \vx{a_1^{(i)}}\vx{b_2^{(j)}}(p)\right)$ \\ $= \frac{\vx{b_2^{(j)}}(m-k)}{\vx{a_1^{(j)}}(m-k)} \left(\vx{a_2^{(j)}}(0)\vx{b_1^{(i)}}\prod_{p=1}^{n-1} \vx{a_1^{(i)}}\vx{b_2^{(j)}}(p) - \vx{a_2^{(i)}}(0)\vx{b_1^{(j)}}\prod_{p=1}^{n-1} \vx{a_1^{(j)}}\vx{b_2^{(i)}}(p)\right)$\end{tabular} \\\hline
 $\vx{B}_1(k)$ & $\vx{a_2^{(j)}}(0)\vx{b_1^{(i)}} \cdot \prod_{p=1}^{n-1} \vx{a_1^{(i)}}\vx{b_2^{(j)}}(p) - \vx{a_2^{(i)}}(0)\vx{b_1^{(j)}} \cdot \prod_{p=1}^{n-1} \vx{a_1^{(j)}}\vx{b_2^{(i)}}(p)$  \\\hline
 $\vx{B}_2(k)$ & $\prod_{p=0}^{n-1} \vx{a_1^{(j)}}\vx{b_2^{(i)}}(p) - \prod_{p=0}^{n-1} \vx{a_1^{(i)}}\vx{b_2^{(j)}}(p)$  \\\hline
 $\vx{C}_1(k)$ & $\vx{c_1^{(i)}}\vx{c_2^{(j)}} \cdot \prod_{p=1}^{k-1} \vx{a_1^{(j)}}\vx{b_2^{(i)}}(p) \cdot \prod_{p=k}^{n-1} \vx{a_1^{(i)}}\vx{b_2^{(j)}}(p)$  \\\hline
 $\vx{C}_2(k)$ & $\vx{c_1^{(j)}}\vx{c_2^{(i)}} \cdot \prod_{p=1}^{k-1} \vx{a_1^{(i)}}\vx{b_2^{(j)}}(p) \cdot \prod_{p=k}^{n-1} \vx{a_1^{(j)}}\vx{b_2^{(i)}}(p)$  \\
 \hline
\end{tabular}
\end{center}
\caption{A set of $R$-vertex weights for the generalized free fermion case (Theorem \ref{charge-solvability-theorem}(a)). For vertices $\vx{C}_1(k)$ and $\vx{C}_2(k)$, $k$ is taken to be $1\le k\le n$, while for vertices $\vx{A}_2(k,m)$, $k$ and $m$ are taken to be $0\le k,m\le n$. The formulas for vertices $\vx{A}_2(k,m)$ and $\vx{A}^\times_2(k,m)$ hold when $k\ne m$ modulo $n$. In the case where $k=m$, both vertices equal $\vx{A}_2(k,k)$. Note that $\vx{B}_1(k), \vx{B}_2(k)$, and $\vx{A}_2(k,k)$ are independent of $k$.}
\label{R-vertex-weights-free-fermion}
\end{table}

\begin{table}
\begin{center}
\begin{tabular}{ |c|c| }
 \hline
 Vertex & Boltzmann weight  \\\hline 
 $\vx{A}_1$ & $\frac{\vx{a_1^{(j)}}\vx{a_2^{(j)}}(0) + \vx{b_1^{(j)}}\vx{b_2^{(j)}}(0)}{\vx{a_1^{(j)}}\vx{b_2^{(j)}}(0)} \cdot \prod_{p=0}^{n-1} \vx{a_1^{(i)}}\vx{b_2^{(j)}}(p)$ \\\hline
 $\vx{A}_2(k,k)$ & $\frac{\vx{a_1^{(i)}}\vx{a_2^{(i)}}(0) + \vx{b_1^{(i)}}\vx{b_2^{(i)}}(0)}{\vx{a_1^{(i)}}\vx{b_2^{(i)}}(0)} \cdot \prod_{p=0}^{n-1} \vx{a_1^{(i)}}\vx{b_2^{(j)}}(p)$ \\\hline
 $\vx{A}_2(k,m), k<m$ & $\frac{\vx{c_1^{(j)}}\vx{c_2^{(j)}}\vx{a_2^{(i)}}(0)}{\vx{a_2^{(j)}}(0)} \prod_{p=1}^{m-1} \vx{a_1^{(j)}}\vx{b_2^{(i)}}(p) \prod_{p=m}^{n-1} \vx{a_1^{(i)}}\vx{b_2^{(j)}}(p) \prod_{p=0}^{k-1} \frac{\vx{a_1^{(i)}}\vx{b_2^{(j)}}(p)}{\vx{a_1^{(j)}}\vx{b_2^{(i)}}(p)}$ \\\hline $\vx{A}_2(k,m), k>m$ & $\frac{\vx{c_1^{(j)}}\vx{c_2^{(j)}}\vx{b_1^{(i)}}}{\vx{b_1^{(j)}}} \prod_{p=1}^{k-1} \vx{a_1^{(i)}}\vx{b_2^{(j)}}(p) \prod_{p=k}^{n-1} \vx{a_1^{(j)}}\vx{b_2^{(i)}}(p) \prod_{p=0}^{m-1} \frac{\vx{a_1^{(j)}}\vx{b_2^{(i)}}(p)}{\vx{a_1^{(i)}}\vx{b_2^{(j)}}(p)}$ \\\hline $\vx{A}^\times_2(k,m), k\ne m$ & $0$ \\\hline
 $\vx{B}_1(k)$ & $0$  \\\hline
 $\vx{B}_2(k)$ & $0$  \\\hline
 $\vx{C}_1(k)$ & $\vx{c_1^{(i)}}\vx{c_2^{(j)}} \cdot \prod_{p=1}^{k-1} \vx{a_1^{(j)}}\vx{b_2^{(i)}}(p) \cdot \prod_{p=k}^{n-1} \vx{a_1^{(i)}}\vx{b_2^{(j)}}(p)$  \\\hline
 $\vx{C}_2(k)$ & $\vx{c_1^{(j)}}\vx{c_2^{(i)}} \cdot \prod_{p=1}^{k-1} \vx{a_1^{(i)}}\vx{b_2^{(j)}}(p) \cdot \prod_{p=k}^{n-1} \vx{a_1^{(j)}}\vx{b_2^{(i)}}(p)$  \\
 \hline
\end{tabular}
\end{center}
\caption{A set of $R$-vertex weights for the non-free-fermion case (Theorem \ref{charge-solvability-theorem}(b)). The same charge conventions are used as in Table \ref{R-vertex-weights-free-fermion}. These vertex weights are precisely the same as those in Table \ref{R-vertex-weights-free-fermion} when we impose the additional condition (\ref{non-free-fermion-charge-equation}).}
\label{R-vertex-weights-other}
\end{table}

\begin{remark}
A comparison of Theorem \ref{charge-solvability-theorem} and the solvability criterion for the six-vertex model given in \cite[Theorem~1]{BBF-Schur-polynomials-arxiv} gives evidence of the utility of the Hamiltonian perspective. The equations in Appendix \ref{appendix-equations} and the process of solving them depend on the condition $n>1$ so that we have at least one non-zero charge, and indeed the criteria given by Theorem \ref{charge-solvability-theorem}(b) do not descend to the six-vertex ($n=1$) case.

However, for generalized free fermion models (Theorem \ref{charge-solvability-theorem}(a)), our criteria do in fact still hold when setting $n=1$. The Hamiltonian perspective explains why this happens. Theorem \ref{charged-Delta-lattice-Hamiltonian} and its proof in Section \ref{proof-of-quantum-main-result-section} do still hold when $n=1$, and so the connection of solvability to Hamiltonian operators explains why the solution does descend in this case.
\end{remark}

\begin{remark}
See \cite{n-color-ice} for subsequent work with a similar solvability criterion for the more general setting of lattice models with \emph{color}. It would be interesting to see whether the solvability conditions in that paper have any relation to (some generalization of) Hamiltonian operators.
\end{remark}

\section{Proof of Theorem \ref{charged-Delta-lattice-Hamiltonian}} \label{proof-of-quantum-main-result-section}

In this section, we prove Theorem \ref{charged-Delta-lattice-Hamiltonian}. We cannot use the same approach to prove this theorem that we used in the $n=1$ case, as Wick's theorem is not available to us in this context, so we will use an induction argument due to Brubaker, Buciumas, Bump, and Gustafsson \cite[\S~4]{BBBG-Hamiltonian}. Our proof closely mirrors theirs, with generalizations in certain places, and so we will sometimes refer to their proof for steps that are identical.

We will prove the one-row case first, and as a simple scaling gets us the proper powers of $A_i$ and $B_i$, we set them equal to 1 for convenience.

\begin{proposition}
\label{charged-Delta-one-row}
Theorem \ref{charged-Delta-lattice-Hamiltonian}(a) holds in the case $N=1$, $A_1=B_1=1$.
\end{proposition}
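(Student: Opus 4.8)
The plan is to follow the induction strategy of Brubaker--Buciumas--Bump--Gustafsson \cite[\S 4]{BBBG-Hamiltonian}, adapted to our more general Boltzmann weights. We want to show that, for $N=1$ and $A_1=B_1=1$, the one-row transfer matrix $T^q$ of $\mathfrak{S}^q$ acts on strict partitions exactly as $e^{H_+}$ with parameters given by (\ref{charged-Delta-Hamiltonian-parameter}); equivalently $Z(\mathfrak{S}^q_{\lambda/\mu}) = \langle\mu|e^{H_+}|\lambda\rangle$ for all strict $\lambda,\mu$. Since both sides vanish unless $\ell(\lambda)=\ell(\mu)$ and unless the configurations are ``nested'' appropriately, the content is a matching of weighted sums over the intermediate states. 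First I would set up the combinatorics of a single row: a state of $\mathfrak{S}^q_{\lambda/\mu}$ is determined by how particles of $\mu+\rho$ travel leftward (in multiples of $n$ columns, as forced by the charge bookkeeping) to the positions $\lambda+\rho$, and the Boltzmann weight of such a state factors into contributions from $\vx{b_2^{(i)}}(a)$ vertices (traversed columns) and $\vx{c_1^{(i)}},\vx{c_2^{(i)}},\vx{a_2^{(i)}}(a)$ vertices (where particles enter, exit, or cross). On the Hamiltonian side, the wedge action of $J_k$ displaces a single wedge factor $u_m \mapsto u_{m-kn}$ (note the $kn$), and the normal-ordering relations of the metaplectic Fock space $\mathcal{F}(\mathfrak{S}^q)$ reorder the result; the key is that the structure function $g$ of $\mathcal{F}(\mathfrak{S}^q)$ is determined from the weights precisely by (\ref{F-charge-condition}), which is why the generalized free fermion condition must enter.

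The key steps, in order: (1) Prove the base case $\lambda=\mu=\rho$ (empty partitions after $\rho$-shift), where $Z=1$ and $\langle\rho|e^{H_+}|\rho\rangle = 1$, and the one-particle case $\lambda=(r+p)$, $\mu=(r)$ after $\rho$-shift — here only one family of states contributes and the generating function $\sum_p Z(\mathfrak{S}^q_{(p+r)/(r)}) t^p$ is a geometric-type sum in $f,h,x_1,y_1$ whose logarithm matches $S(t) = \sum s_k t^k$ with $s_k$ as in (\ref{charged-Delta-Hamiltonian-parameter}); this is the charged analogue of Lemma \ref{one-particle-lemma}. (2) Set up the induction on the number of particles (equivalently on $\ell$): following \cite[\S 4]{BBBG-Hamiltonian}, peel off the leftmost particle. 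Decompose both $Z(\mathfrak{S}^q_{\lambda/\mu})$ and $\langle\mu|e^{H_+}|\lambda\rangle$ according to the trajectory of the particle occupying the largest part; on the lattice side this amounts to splitting a state at the column where that trajectory lives, and on the Fock side it amounts to commuting the relevant current-operator contributions past the first wedge factor using the normal-ordering relations. (3) Show the resulting recursions coincide term-by-term: the reordering coefficients produced by the wedge relations (involving $g$, $q^2-1$, and powers of $v=q^2$) must equal the ratios of Boltzmann weights produced when a path crosses another path in the lattice model, and this is exactly where the $\mathcal{F}$-charge condition (\ref{F-charge-condition}) and the zero-charge free fermion condition (\ref{zero-charge-free-fermion-condition}) are used — the former to identify $g(a)$ with $\vx{a_1}\vx{a_2}(a)/(\vx{b_1}\vx{b_2}(a))$, the latter to make the ``diagonal'' $\vx{c_1}\vx{c_2}$ contribution consistent. (4) Conclude by the induction and then note part (b): if the weights are not generalized free fermionic, the one-particle computation in step (1) already forces a specific relationship among $f,h$ (and hence among the weights) that fails, so no choice of $s_k^{(j)}$ works — this mirrors the proof of Theorem \ref{Delta-lattice-Hamiltonian}(b).

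The main obstacle I expect is step (3): verifying that the combinatorial reordering in the metaplectic wedge space — where a single current operator application can produce an infinite series of normally-ordered wedges with coefficients $g(l-m), (q^2-1)g(l-m), q^2(q^2-1)g(l-m),\dots$ — matches the lattice-model bookkeeping of a path threading through an arbitrary number of other paths and ghost/charge crossings. Getting the $n$-periodicity to line up (particles move $kn$ columns, charges are mod $n$, the series in the wedge relation terminates exactly when wedges become normally ordered) requires careful index tracking, and this is the place where following \cite[\S 4]{BBBG-Hamiltonian} closely pays off while still demanding genuine new verification because our $\vx{b_2^{(i)}}(a)$ and $\vx{a_2^{(i)}}(a)$ carry the general functions $f(a), h(a)$ rather than the specific metaplectic Gauss-sum weights. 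Once the one-row identity is established, the general-$N$ case of Theorem \ref{charged-Delta-lattice-Hamiltonian}(a) follows by the same branching argument as in the proof of Proposition \ref{Reduced-Delta-lattice-Hamiltonian}, since both $\mathfrak{S}^q$ and $e^{H_+}=e^{\phi_N}\cdots e^{\phi_1}$ branch compatibly over intermediate strict partitions.
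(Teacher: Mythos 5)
Your proposal correctly identifies the base ingredients (the one‑particle generating‑function computation that pins down $s_k$ as in (\ref{charged-Delta-Hamiltonian-parameter}), and the role of (\ref{F-charge-condition}) in tying $g$ to the weights), but the core of your argument — step (3) — is exactly the computation that the actual proof is engineered to avoid, and you have not shown how to carry it out. You propose to induct on the number of particles by ``peeling off'' the leftmost wedge factor and matching, term by term, the infinite reordering series produced by the twisted normal‑ordering relations (coefficients $g(l-m)$, $(q^2-1)$, powers of $q^2$, truncating only when wedges become normally ordered) against the lattice‑model bookkeeping of one path threading through the others. You name this as ``the main obstacle'' but offer no mechanism for resolving it; since $J_k$ acts as a sum over all wedge slots and the reordering entangles them, $e^{H_+}$ does not act particle‑by‑particle in any clean way, and the asserted coincidence of recursions is a claim, not an argument. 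Without it the induction does not close. This is a genuine gap.

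The paper's proof takes a different route that sidesteps the wedge relations entirely on the Fock side. The induction is effectively on the column range, with base case $M<n$ (where no particle can move, so both sides are $\delta_{\lambda\mu}$), and the inductive step conjugates by the creation‑operator combination $\rho_k^*(t)=\psi_k^*-t\psi_{k-n}^*$. On the Hamiltonian side one only needs the single‑term commutator $[J_m,\psi_j^*]=\psi_{j-mn}^*$ to get $e^{H_+}\rho_k^*(\zeta)e^{-H_+}=\rho_k^*(\tau)$ (Lemma \ref{rho-H-commutation-lemma}); no normal‑ordering series ever appears. On the lattice side the matching identity $\hat T_k\rho_k^*(\zeta)=\rho_k^*(\tau)\hat T_k$ is a \emph{finite} verification over the $(n+1)$‑column window from $k-n$ to $k$ (Lemma \ref{rho-T-hat-commutation-lemma}, the two $16$‑row tables), which is where the generalized free fermion condition actually gets used. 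If you want to salvage your plan, you should replace your step (2)--(3) with this conjugation argument; otherwise you must actually prove the term‑by‑term matching of the reordering series, which is a substantially harder and essentially new computation.
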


Let $x=x_1, y=y_1$.  Suppose $\mathfrak{S}^q$ satisfies the generalized free fermionic condition. and let $\mathcal{F} = \mathcal{F}(\mathfrak{S}^q)$ and $g$ be the corresponding function $g(a) = h(a)/f(a)$. 

If $k-n\le s\le k$, define \[\zeta := x^n \prod_{a=0}^{n-1} f(a), \hspace{20pt} \tau := -\frac{g(0)\zeta y}{x}, \hspace{20pt} \zeta_s = x^{s-k+n}\prod_{k-n\le a\le s-1} f(a).\]

The following lemma is the base case of Proposition \ref{charged-Delta-one-row}.

\begin{lemma}
If $M<n$, then Proposition \ref{charged-Delta-one-row} holds.
\end{lemma}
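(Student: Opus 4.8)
The plan is to establish the base case $M < n$ of Proposition~\ref{charged-Delta-one-row} by a direct enumeration of admissible states, mirroring the structure of Lemma~\ref{one-particle-lemma} and Lemma~\ref{one-row-lemma} from the classical case but now accounting for the charge statistic. First I would observe that when $M < n$, the charge bookkeeping becomes trivial in a crucial way: since there are at most $n$ columns, a $-$ spin that enters a row on the left as a horizontal edge and must exit must travel fewer than $n$ columns, so in any admissible state there is at most one ``moving'' particle per row, and in fact the $\vx{a_2}$ and $\vx{b_2}$ vertices that appear are forced into a single contiguous block whose charge labels run through $0,1,\ldots$ consecutively. Concretely, if $\lambda = (r+p)$ and $\mu = (r)$ as in the one-particle situation, then $\mathfrak{S}^q_{\lambda/\mu}$ has a unique admissible state when $0 \le p < n$ (and with $M<n$ forcing $p$ into this range automatically modulo the trivial boundary cases), and its weight is a product of an explicit string of $f(a)$'s and a single power of $x$, together with the $\vx{c_1}$/$\vx{c_2}$ contribution $f(0)x + h(0)y$ when $p \ge 1$. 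This should give $Z(\mathfrak{S}^q_{(p)/(0)}) = (f(0)x + h(0)y)\,x^{p-1}\prod_{a=1}^{p-1} f(a)$ for $1 \le p < n$, and $1$ for $p=0$.

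Next I would match this against the Hamiltonian side. The relevant object is $\langle (0) | e^{H_+} | (p) \rangle = h_p$, and by the analogue of equation~(\ref{h_k-as-Hamiltonian}) adapted to the metaplectic Fock space (using the Heisenberg action $J_k \cdot (u_{m_1}\wedge\cdots) = \sum_{i\ge 0}(u_{m_1}\wedge\cdots\wedge u_{m_i - kn}\wedge\cdots)$ with its modified commutator), I would compute the generating function $\tilde{H}(t) = \sum_p h_p t^p$ and compare it to the geometric-series generating function $H(t) = 1 + \sum_{p\ge 1} Z(\mathfrak{S}^q_{(p)/(0)}) t^p$ coming from the lattice model. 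For $M<n$ the sum only runs up to $p = M$, but the point of the base case is precisely that these low-order coefficients pin down $s_1, \ldots, s_M$. Summing the geometric series gives $H(t) = \frac{1 + (f(0)x + h(0)y - x)t}{1 - xt}$ up to the truncation, and then $\log H(t) = \log(1 + (h(0)y + (f(0)-1)x)t) - \log(1 - xt)$, whose Taylor coefficients produce $n! s_n = (n-1)!\big(x^n + (-1)^{n-1}(\text{something})^n\big)$; here the ``something'' must be reorganized using the charge condition $g(a)g(-a) = -g(0)$ and the definitions of $\zeta, \tau, \zeta_s$ to land exactly on the claimed formula~(\ref{charged-Delta-Hamiltonian-parameter}). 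I would phrase the conclusion as: the truncated identity forces the $s_k^{(1)}$ to agree with~(\ref{charged-Delta-Hamiltonian-parameter}) for $1 \le k \le M$, and conversely that choice of parameters reproduces the partition function.

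The main obstacle I anticipate is the careful handling of the charge-weighted $f(a)$ products and getting the signs and the power of $g(0)$ exactly right when converting between the lattice-model expression and the logarithmic derivative of $H(t)$. In the classical case the factor $x+y$ split cleanly into $x$ and $y$; here $f(0)x + h(0)y$ must be rewritten as $f(0)(x + g(0)^{-1}\cdot g(0) y)$-style manipulations, and the string of intermediate charges contributes $\prod_{a=1}^{p-1} f(a)$, which only telescopes nicely into the closed form $\zeta$ (with $\prod_{a=0}^{n-1} f(a)$) once we reach $p = n$; for $M < n$ we never see the full product, so I must be careful to state the base-case conclusion only for the parameters $s_1,\ldots,s_M$ and defer the full identification to the inductive step. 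A secondary technical point is verifying the degenerate boundary cases ($p < 0$, or $r$ such that columns run off the grid), where both sides vanish; these are routine but should be mentioned. I expect the remaining inductive step of Proposition~\ref{charged-Delta-one-row} (increasing $M$) to follow the Brubaker--Buciumas--Bump--Gustafsson argument \cite[\S 4]{BBBG-Hamiltonian} closely, so the base case is where the genuinely new computation lives.
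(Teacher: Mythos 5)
There is a genuine gap here: your proposed computation misreads what the charge statistic does to the model, and as a result you are computing weights of states that are not admissible. In the charged model $\mathfrak{S}^q$ (Figure \ref{Delta-charged-vertex-weights}), a particle leaves a $\vx{c_1}$ vertex travelling left with charge $1$, the charge increments by one at each $\vx{b_2}$ vertex it crosses, and it can only turn upward at a $\vx{c_2}$ vertex if it arrives with charge $0 \bmod n$. Hence every horizontal displacement in a row is forced to be a positive multiple of $n$; this is exactly the ``particles travel a multiple of $n$ columns'' remark at the start of Section \ref{charged-models-section}. Consequently, for $1\le p<n$ the model $\mathfrak{S}^q_{(r+p)/(r)}$ has \emph{no} admissible states, and your claimed evaluation $Z(\mathfrak{S}^q_{(p)/(0)})=(f(0)x+h(0)y)\,x^{p-1}\prod_{a=1}^{p-1}f(a)$ is not the partition function of anything in this model. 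The same constraint appears on the Fock-space side: the current operator $J_k$ displaces a particle by $kn$ (not $k$) positions, so $\langle(r)|e^{H_+}|(r+p)\rangle=0$ for $0<p<n$ as well. Your plan to extract $s_1,\dots,s_M$ from the coefficients of $t^1,\dots,t^M$ of a generating function therefore extracts nothing: when $M<n$ neither side sees any of the $s_k$.

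The correct (and much shorter) argument is the one the paper gives: with $M<n$ there are at most $n$ columns, so no particle has room to move a full $n$ columns, and every admissible state sends each particle straight up; thus $Z(\mathfrak{S}^q_{\lambda/\mu})=\delta_{\lambda\mu}$ (with $A_1=B_1=1$). Likewise $e^{H_+}$ acts as the identity on the relevant basis vectors because every nontrivial term involves some $J_k$ with $k\ge 1$, which moves a particle at least $n$ spots, so $\langle\mu|e^{H_+}|\lambda\rangle=\delta_{\lambda\mu}$ too. The base case is trivial and carries no information about the parameters; the ``genuinely new computation'' you were looking for lives in the inductive step, namely the commutation relations of $\rho_k^*(\zeta)$ with $e^{H_+}$ and with the truncated transfer matrix $\hat{T}_k$ (Lemmas \ref{rho-H-commutation-lemma} and \ref{rho-T-hat-commutation-lemma}), not in the base case.
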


\begin{proof}
Since there are at most $n$ vertices in the lattice model, there is not enough room for a particle to travel from one column to another. Thus, $Z(\mathfrak{S}^q_{\lambda/\mu}) = 1$ if $\lambda=\mu$, and otherwise is zero. Similarly, any current operator $J_k, k\ge 1$ will send a particle at least $n$ spaces to the left. Therefore, $\langle\mu|e^{H_+}|\lambda\rangle = 1$ if $\lambda=\mu$, and otherwise is zero, so the two sides are equal.
\end{proof}

For the inductive step, we will introduce an operator $\rho^*_j(t)$ that has a similar relationship to both the partition function and the Hamiltonian.

Let $\psi_j^*$ be the creation operator, $\psi_j^*\cdot \underline{u} := u_j\wedge \underline{u}$. A deletion operator is not straightforward to define in the $q\ne -1$ case, so instead we will compute the relationship between the actions of $\psi_j^*$ and $e^{H_+}$.

Let $\psi^*(t) = \sum_{j\in\Z} \psi_j^* t^j$. Recall from (\ref{S-equals-log-H}) that \[S(t^n) = \sum_{k\ge 1} s_k t^{kn} = \log\left(\sum_{m\ge 0} h_m t^{mn}\right) = \log\left(H(t^n)\right).\]

\begin{lemma}
If (\ref{charged-Delta-Hamiltonian-parameter}) holds and $N=1$, \begin{align} H(t) = 1 + \sum_{k\ge 1} (1 - \tau) \zeta^{k-1} t^k. \label{H(t)-charge-equation}\end{align}
\end{lemma}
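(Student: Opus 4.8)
The plan is to compute $H(t) = \sum_{m\geq 0} h_m t^m$ directly, where $h_m = \langle (0)|e^{H_+}|(m)\rangle$ by (\ref{h_k-as-Hamiltonian}), using the relation $S(t^n) = \log H(t^n)$ from (\ref{S-equals-log-H}). First I would recall that the Hamiltonian parameters are specified by (\ref{charged-Delta-Hamiltonian-parameter}) with $N=1$, which in the notation $\zeta = x^n\prod_{a=0}^{n-1}f(a)$ and $\tau = -g(0)\zeta y/x$ simplifies to
\[ s_k = \frac{1}{k}\left(\zeta^k + (-1)^{k-1}(g(0))^k y^k x^{(n-1)k}\left(\prod_{a=0}^{n-1}f(a)\right)^k\right) = \frac{1}{k}\left(\zeta^k + (-1)^{k-1}\left(\frac{g(0)\zeta y}{x}\right)^k\right) = \frac{1}{k}\left(\zeta^k + (-1)^{k-1}(-\tau)^k\right).\]
So $S(u) = \sum_{k\geq 1}\frac{1}{k}\zeta^k u^k + \sum_{k\geq 1}\frac{(-1)^{k-1}}{k}(-\tau)^k u^k = -\log(1-\zeta u) + \log(1-\tau u)$, hence $\exp(S(u)) = \frac{1-\tau u}{1-\zeta u}$.

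Next, I would use $H(t^n) = \exp(S(t^n))$, which was established as (\ref{S-equals-log-H}); but since $H(t) = \sum_{m\geq 0}h_m t^m$ and by the same geometric-series argument as in Lemma \ref{one-particle-lemma} (the computation is purely formal in the single variable), we get $H(t) = \frac{1-\tau t}{1-\zeta t}$. Expanding as a geometric series:
\[ H(t) = (1-\tau t)\sum_{j\geq 0}\zeta^j t^j = 1 + \sum_{k\geq 1}\zeta^k t^k - \sum_{k\geq 1}\tau\zeta^{k-1}t^k = 1 + \sum_{k\geq 1}(\zeta^k - \tau\zeta^{k-1})t^k = 1 + \sum_{k\geq 1}(1-\tau)\zeta^{k-1}t^k,\]
using $\zeta^k = \zeta\cdot\zeta^{k-1}$ wait—more carefully, $\zeta^k - \tau\zeta^{k-1} = \zeta^{k-1}(\zeta - \tau)$, which is not obviously $(1-\tau)\zeta^{k-1}$ unless $\zeta = 1$. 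So the correct reading must be that the intended identity absorbs $\zeta$ into the power of $t$, i.e. one should track that $H(t)$ as a function of $t$ (not $t^n$) has coefficients $h_k$ only in degrees divisible by... no. I would need to recheck the indexing convention: since $J_k$ displaces by $kn$ units in $q$-Fock space, $h_m$ is nonzero only when the single particle moves a multiple of $n$, so really $H(t) = \sum_{m}h_m t^{m}$ with $h_m$ supported on $m \in n\mathbb{Z}_{\geq 0}$, and rescaling $t^n \mapsto t$ gives the stated form. The cleanest route: substitute $u = t^n$ in $\exp(S(u)) = \frac{1-\tau u}{1-\zeta u}$ and relabel, matching the claim's convention where $H(t) = \sum_{k\geq 0} h_k t^k$ indexes by particle-count-of-$n$-steps; then $H(t) = \frac{1-\tau t}{1-\zeta t} = 1 + \sum_{k\geq 1}(\zeta-\tau)\zeta^{k-1}t^k$, and since we additionally know $h_0 = 1$ forces the degree-0 term and the factored form is exact, the paper's $(1-\tau)$ must come from an additional normalization $\zeta^k \to \zeta^{k-1}$ baked into the definition of $h_k$ via the $\zeta_s$ scaling.

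The main obstacle I anticipate is precisely this bookkeeping of the power of $\zeta$ versus the power of $t$: reconciling the $q$-Fock space convention (where current operators shift by $n$ and so $h_m$ is naturally indexed by the number of $n$-blocks traversed) with the generating-function variable, and verifying that the single-particle partition function $Z(\mathfrak{S}^q_{(p)/(0)})$ computed directly from Figure \ref{Delta-charged-vertex-weights} (one $\vx{c_1}$ vertex, a string of $\vx{b_2}$ vertices carrying the charges $f(a)x$ cyclically, one $\vx{c_2}$ vertex) produces exactly the coefficient $(1-\tau)\zeta^{k-1}$, which pins down the normalization. Concretely, I would compute that single admissible state's weight, observe the product of $f(a)$'s over a full cycle gives the factor $\zeta = x^n\prod_a f(a)$ per $n$ columns traversed, handle the boundary $\vx{c_1},\vx{c_2}$ vertices (contributing $f(0)x + h(0)y = f(0)(x - \tau x/(g(0)\cdot\ldots))$, i.e. the $(1-\tau)$ up to the overall $\zeta^{0}$ normalization), and then match term-by-term with the geometric expansion. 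Once the base single-particle case is verified against both the lattice and the formula $h_k = \exp(S)|_{\text{coeff}}$, equation (\ref{H(t)-charge-equation}) follows immediately; the rest is the routine geometric-series manipulation already modeled on Lemma \ref{one-particle-lemma}.
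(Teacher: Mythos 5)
Your computation is correct and is essentially the paper's own argument run in the forward direction: from (\ref{charged-Delta-Hamiltonian-parameter}) with $N=1$ one gets $s_k = \tfrac{1}{k}(\zeta^k - \tau^k)$, hence $S(t) = \log(1-\tau t) - \log(1-\zeta t)$, $H(t) = \tfrac{1-\tau t}{1-\zeta t}$, and the geometric expansion gives coefficient $(\zeta-\tau)\zeta^{k-1}$ on $t^k$. The discrepancy you flagged is real, and you should have committed to it rather than trying to explain it away: the $(1-\tau)$ in the statement is a typo for $(\zeta-\tau)$. The paper's proof makes the same slip in its first display, since $1 + \tfrac{(1-\tau)t}{1-\zeta t} = \tfrac{1-\tau t}{1-\zeta t}$ holds only when $\zeta=1$; everything downstream confirms the $(\zeta-\tau)$ version (Lemma \ref{rho-H-commutation-lemma} uses $h_j = s_1\zeta^{j-1}$ with $s_1 = \zeta-\tau$ and concludes with $\zeta - h_1 = \tau$, and the unique admissible state of $\mathfrak{S}^q_{(mn)/(0)}$ has weight $(f(0)x+h(0)y)\,x^{mn-1}\bigl(\prod_a f(a)\bigr)^m/f(0) = \zeta^m - \tau\zeta^{m-1}$, matching $h_m = (\zeta-\tau)\zeta^{m-1}$).

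Your closing speculation is the only genuinely off-base part: there is no hidden normalization of $h_k$, and $\zeta_s$ plays no role here (it enters only in the lattice-side tables of Lemma \ref{rho-T-hat-commutation-lemma}). The one bookkeeping point you actually need is the one the paper states just before Lemma \ref{rho-H-commutation-lemma}: since $J_k$ shifts by $kn$, the generating function is taken in $u=t^n$, so $h_m = \langle(0)|e^{H_+}|(mn)\rangle$ and $H(u)=\exp(S(u))$ directly; no further rescaling is involved. With that, your derivation is complete, and the correct conclusion is $H(t) = 1 + \sum_{k\ge 1}(\zeta-\tau)\zeta^{k-1}t^k$.
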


\begin{proof}
We start with (\ref{H(t)-charge-equation}), and derive (\ref{charged-Delta-Hamiltonian-parameter}).

Assuming (\ref{H(t)-charge-equation}), we have \[H(t) = 1 + \frac{(1 - \tau)t}{1-\zeta t} = \frac{1 - \tau t}{1-\zeta t},\] so \[\log H(t) = \log(1-\tau t) - \log(1-\zeta t)\] and \[\left.\frac{d^k}{dt^k}(\log H(t))\right|_{t=0} = (k-1)!(\zeta^k - \tau^k),\] so \[s_k = \frac{1}{k} (\zeta^k - \tau^k),\] as in (\ref{charged-Delta-Hamiltonian-parameter}). All the steps are reversible, so (\ref{charged-Delta-Hamiltonian-parameter}) implies (\ref{H(t)-charge-equation}).
\end{proof}

\begin{lemma} \label{rho-H-commutation-lemma}
\begin{enumerate}
\item[(a)] \[e^{H_+} \psi^*(t) e^{-H_+} =  H(t^n)\psi^*(t).\]
\item[(b)] Let $\rho_k^*(t) = \psi_k^* - t\psi_{j-n}^*$. Then, $e^{H_+} \rho_k^*(\zeta)e^{-H} = \rho_k^*(\tau)$.
\end{enumerate}
\end{lemma}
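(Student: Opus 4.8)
# Proof Proposal for Lemma \ref{rho-H-commutation-lemma}

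The plan is to prove (a) by the standard vertex-operator technique of computing the commutator $[H_+, \psi^*(t)]$ and exponentiating, and then to deduce (b) as an immediate algebraic consequence of (a) together with the geometric-series form of $H(t)$ from the previous lemma.

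For part (a), I would first record the action of the current operators $J_k$ on the creation operators. Using the Heisenberg action $J_k\cdot(u_{m_1}\wedge u_{m_2}\wedge\ldots) = \sum_{i\ge 0}(u_{m_1}\wedge\ldots\wedge u_{m_i-kn}\wedge\ldots)$ and the definition $\psi_j^*\cdot\underline{u} = u_j\wedge\underline{u}$, one checks that $[J_k,\psi_j^*] = \psi_{j-kn}^*$ for $k\ge 1$ (the extra term from differentiating past the newly inserted $u_j$ is exactly $u_{j-kn}\wedge\underline{u}$). Packaging this into generating functions gives $[J_k,\psi^*(t)] = t^{kn}\psi^*(t)$, hence $[H_+,\psi^*(t)] = \sum_{k\ge 1} s_k t^{kn}\,\psi^*(t) = S(t^n)\,\psi^*(t)$. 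Since $S(t^n)$ is a scalar (a formal power series in $t$), $\psi^*(t)$ is an eigenvector of $\mathrm{ad}_{H_+}$ up to this scalar, so conjugation by $e^{H_+}$ multiplies it by $\exp(S(t^n))$. By \eqref{S-equals-log-H} this exponential is exactly $H(t^n)$, giving $e^{H_+}\psi^*(t)e^{-H_+} = H(t^n)\psi^*(t)$. The one subtlety to handle carefully is convergence/well-definedness: the sums are infinite, but because each $J_k$ shifts indices by $kn$ and the wedges are eventually standard, only finitely many terms contribute when applied to any basis vector, so the formal manipulations are legitimate — I would remark on this rather than belabor it, as it parallels the classical argument.

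For part (b), I would extract the coefficient identity from (a). Writing $e^{H_+}\psi_k^* e^{-H_+} = \sum_{m\ge 0} h_m\,\psi_{k-mn}^*$ by comparing coefficients of $t^k$ on both sides of (a) (using $H(t^n) = \sum_m h_m t^{mn}$), I then compute $e^{H_+}\rho_k^*(\zeta)e^{-H_+} = e^{H_+}(\psi_k^* - \zeta\psi_{k-n}^*)e^{-H_+} = \sum_{m\ge 0} h_m\psi_{k-mn}^* - \zeta\sum_{m\ge 0} h_m\psi_{k-n-mn}^*$. Reindexing the second sum and combining, the coefficient of $\psi_{k-mn}^*$ becomes $h_m - \zeta h_{m-1}$ for $m\ge 1$, and $h_0 = 1$ for $m=0$. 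Now I invoke the previous lemma's formula $H(t) = \dfrac{1-\tau t}{1-\zeta t}$, equivalently $h_m = (1-\tau)\zeta^{m-1}$ for $m\ge 1$; this gives $h_m - \zeta h_{m-1} = (1-\tau)\zeta^{m-1} - \zeta(1-\tau)\zeta^{m-2} = 0$ for $m\ge 2$, while for $m=1$ it equals $h_1 - \zeta h_0 = (1-\tau) - \zeta$... but I should double-check the intended normalization: since $\rho_k^*(\tau) = \psi_k^* - \tau\psi_{k-n}^*$, I need the surviving terms to be precisely $\psi_k^*$ (from $m=0$) and $-\tau\psi_{k-n}^*$ (from $m=1$), so I need $h_1 - \zeta h_0 = -\tau$, i.e. $h_1 = \zeta - \tau$; this is consistent with $H(t) = (1-\tau t)/(1-\zeta t) = 1 + (\zeta-\tau)t + \ldots$, so indeed $h_1 = \zeta - \tau$. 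Thus $e^{H_+}\rho_k^*(\zeta)e^{-H_+} = \psi_k^* - \tau\psi_{k-n}^* = \rho_k^*(\tau)$, as claimed.

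The main obstacle I anticipate is getting the commutator $[J_k,\psi_j^*]$ exactly right — in particular the sign and the shift by $kn$ rather than $k$ — since the $q$-deformation of Fock space and the precise conventions for the wedge action matter here; but once that single identity is pinned down, everything else is formal generating-function bookkeeping and an appeal to the closed form of $H(t)$ already established. I would also want to note explicitly that part (b) only uses the \emph{shape} of $H(t)$ as a ratio of two linear polynomials, which is why the specific values $\zeta$ and $\tau$ appear symmetrically.
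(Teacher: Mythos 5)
Your proposal is correct and follows essentially the same route as the paper: compute $[J_k,\psi_j^*]=\psi_{j-kn}^*$, deduce $[H_+,\psi^*(t)]=S(t^n)\psi^*(t)$ and exponentiate for (a), then extract coefficients and use the telescoping cancellation $h_m-\zeta h_{m-1}=0$ for $m\ge 2$ together with $h_1-\zeta h_0=-\tau$ for (b). Your side-check that $h_1=\zeta-\tau$ (via $H(t)=(1-\tau t)/(1-\zeta t)$) is exactly the identity $\zeta-h_1=\tau$ the paper invokes at the end, and it correctly resolves the normalization.
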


\begin{proof}

\begin{enumerate}
\item[(a)] Observe that $[J_m,\psi_j^*] = \psi^*_{j-mn}$. Thus, \[[H, \psi^*(t)] = \sum_{m\ge 1}\sum_{j\in\Z} s_m t^j \psi_{j-mn}^* = \sum_{m\ge 1} s_mt^{mn} \psi^*(t) = S(t^n) \psi^*(t) = \log\left(H(t^n)\right) \psi^*(t).\] Then, (a) follows from the Baker-Campbell-Hausdoff Theorem.

\item[(b)] Recall that $h_0 = 1$, and for $j\ge 1, h_j = s_1\zeta^{j-1}$. By matching coefficients in part (a), \[e^{H_+} \psi_k^* e^{-H_+} = \psi_k^* + \sum_{j\ge 1} \psi_{k-jn}^* h_j,\] so \begin{align*} e^{H_+}\rho_k^*(\zeta)e^{-H_+} &= \psi_k^* + \sum_{j\ge 1} \psi_{k-jn}^* h_j - \zeta\psi_{k-n}^* - \sum_{j\ge 1} \psi_{k-n-jn}^* \zeta h_j \\&= \psi_k^* + \sum_{j\ge 1} \psi_{k-jn}^* h_j - \zeta\psi_{k-n}^* + \psi^*_{k-n}h_1 - \sum_{j\ge 1} \psi_{k-jn}^* h_j \\&= \psi_k^* + (h_1-\zeta)\psi_{k-n}^*.\end{align*} Finally, note that $\zeta-h_1 = \tau$.
\end{enumerate}
\end{proof}

We'll do a similar conjugation on the lattice model side. To do this, we'll need a column-restricted version $\hat{T}_k$ of the row transfer matrix $T^q$. We let $\hat{T}_k$ be the operator such that $\langle\mu|\hat{T}_k|\lambda\rangle$ is the  partition function of the following lattice model \[\scalebox{1.0}{
\begin{tikzpicture}
  \coordinate (aa) at (1,1);
  \coordinate (ab) at (1,2);
  \coordinate (ac) at (1,3);
  \coordinate (ba) at (2,1);
  \coordinate (bb) at (2,2);
  \coordinate (bc) at (2,3);
  \coordinate (ca) at (3,1);
  \coordinate (cb) at (3,2);
  \coordinate (cc) at (3,3);
  \coordinate (da) at (4,1);
  \coordinate (db) at (4,2);
  \coordinate (dc) at (4,3);
  \coordinate (ea) at (5,1);
  \coordinate (eb) at (5,2);
  \coordinate (ec) at (5,3);
  \coordinate (fa) at (6,1);
  \coordinate (fb) at (6,2);
  \coordinate (fc) at (6,3);
  \coordinate (ga) at (7,1);
  \coordinate (gb) at (7,2);
  \coordinate (gc) at (7,3);
  \coordinate (ha) at (8,1);
  \coordinate (hb) at (8,2);
  \coordinate (hc) at (8,3);
  \coordinate (ia) at (9,1);
  \coordinate (ib) at (9,2);
  \coordinate (ic) at (9,3);
  \coordinate (ja) at (10,1);
  \coordinate (jb) at (10,2);
  \coordinate (jc) at (10,3);
  \coordinate (ka) at (11,1);
  \coordinate (kb) at (11,2);
  \coordinate (kc) at (11,3);
  \draw (ab)--(kb);
  \draw (ba)--(bc);
  \draw (da)--(dc);
  \draw (ha)--(hc);
  \draw (ja)--(jc);
  \draw[fill=white] (ba) circle (.4);
  \draw[fill=white] (da) circle (.35);
  \draw[fill=white] (ha) circle (.35);
  \draw[fill=white] (ja) circle (.35);
  \draw[fill=white] (bc) circle (.4);
  \draw[fill=white] (dc) circle (.35);
  \draw[fill=white] (hc) circle (.35);
  \draw[fill=white] (jc) circle (.35);
  \draw[fill=white] (ab) circle (.35);
  \draw[fill=white] (cb) circle (.35);
  \draw[fill=white] (eb) circle (.35);
  \draw[fill=white] (gb) circle (.35);
  \draw[fill=white] (ib) circle (.35);
  \draw[fill=white] (kb) circle (.35);
  \path[fill=white] (bb) circle (.45);
  \path[fill=white] (fb) circle (.4);
  \path[fill=white] (jb) circle (.3);
  \node at (fc) {$\ldots$};
  \node at (fb) {$\ldots$};
  \node at (fa) {$\ldots$};
  \node at (kb) {$+$};
  \node at (jb) {$k$};
  \node at (bb) {{\scriptsize $k-n$}};
  \node at (jc) {{\small $\delta_k$}};
  \node at (bc) {{\footnotesize $\delta_{k-n}$}};
  \node at (ja) {{\small $\epsilon_k$}};
  \node at (ba) {{\footnotesize $\epsilon_{k-n}$}};
  \draw [->,>=stealth] (10,3.7)--(2,3.7);
  \node at (6,4) {$\mu$};
  \draw [->,>=stealth] (10,0.3)--(2,0.3);
  \node at (6,0) {$\lambda$};
\end{tikzpicture}},\]

where \[\epsilon_i = \begin{cases} -, &  \text{if $\lambda$ has a part of size $i$},\\ +, & \text{otherwise,}\end{cases} \hspace{20pt} \delta_i = \begin{cases} -, &  \text{if $\mu$ has a part of size $i$},\\ +, & \text{otherwise.}\end{cases}\] The leftmost spin is undetermined, and has at most a unique possibility. Colloquially, $\hat{T}_k$ is the partition function of columns $k-n$ through $k$ of the one row lattice model. $\mathfrak{S}^q_{\lambda/\mu}$, 

Let us define several quantities that will be useful in the proof of the next lemma, some of which depend on the spins $\epsilon_{k-n},\ldots,\epsilon_k$ associated to $\lambda$. Let $\gamma = \frac{f(0)x + h(0)y}{f(0)x} = 1 + g(0)\frac{y}{x}$. Given integers $k$ and $s$ where $k\ge s > k-n$ let \[G := \prod_{\substack{k-n+1\le i\le k-1 \\ \epsilon_i=-}} \frac{g(k-i)\cdot y}{x}, \hspace{20pt} G_s := \prod_{\substack{k-n+1\le i\le s-1 \\ \epsilon_i=-}} \frac{g(s-i)\cdot y}{x}, \hspace{20pt} G'_s := \prod_{\substack{k-n+1\le i\le k-1 \\ \epsilon_i=- \\ i\ne s}} g(k-i).\]

\begin{lemma} \label{rho-T-hat-commutation-lemma}
For all strict partitions $\lambda,\mu$, \[\langle\mu|\hat{T}_k\rho_k^*(\zeta)|\lambda\rangle = \langle\mu|\rho_k^*(\tau)\hat{T}_k|\lambda\rangle.\]
\end{lemma}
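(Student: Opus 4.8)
The plan is to prove Lemma~\ref{rho-T-hat-commutation-lemma} by direct computation, expanding both sides in terms of the Boltzmann weights of the $n+1$ columns (columns $k-n$ through $k$) and matching. The operator $\rho_k^*(t) = \psi_k^* - t\psi_{k-n}^*$ acts on the bottom boundary $\lambda$: it modifies whether columns $k$ and $k-n$ carry particles. So $\langle\mu|\hat T_k\rho_k^*(\zeta)|\lambda\rangle$ is a sum of (at most) two partition functions of the $(n+1)$-column strip, one for each term in $\rho_k^*(\zeta)$, with the appropriate sign and $\zeta$-coefficient, and with the bottom spins $\epsilon_k$, $\epsilon_{k-n}$ adjusted accordingly (and a Koszul sign from reordering the wedge, which I expect to be absorbed into the conventions already fixed in the excerpt). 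Likewise $\langle\mu|\rho_k^*(\tau)\hat T_k|\lambda\rangle$ decomposes into two partition functions with the \emph{top} spins $\delta_k$, $\delta_{k-n}$ adjusted and coefficients involving $\tau$. So the identity reduces to a finite collection of scalar identities among products of Boltzmann weights, indexed by the possible values of $(\epsilon_k,\epsilon_{k-n},\delta_k,\delta_{k-n})$ together with the charges on the interior $-$-spins.

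First I would set up the bookkeeping: fix the interior spins $\epsilon_{k-n+1},\dots,\epsilon_{k-1}$ (these are the same on both sides, being neither $k$ nor $k-n$), and observe that a particle entering column $k$ or $k-n$ must travel leftward within the strip in multiples of $n$; since the strip has exactly $n+1$ columns, the only options are: it stays put (weight a $\vx{b_1}$ or $\vx{a_1}$ column) or it travels the full $n$ columns from $k$ to $k-n$, passing through the interior columns with $\vx{b_2}(a)$ vertices. This severely restricts the admissible states, and the partition functions become explicit products of $x_i$, $y_i$, $f(a)$, $h(a)$ over the interior charges — this is where the quantities $\zeta$, $\tau$, $\gamma$, $G$, $G_s$, $G'_s$, $\zeta_s$ defined just before the lemma come in. I would then compute $\langle\mu|\hat T_k|\lambda\rangle$ (with all four of $\epsilon_k,\epsilon_{k-n},\delta_k,\delta_{k-n}$ ranging over $\{+,-\}$ subject to particle conservation mod the interior occupancy) as a case list, and separately compute the effect of pre- and post-composing with $\rho_k^*$.

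The case analysis naturally splits on the total number of particles among columns $\{k-n,\dots,k\}$ on the bottom versus top boundary; conservation forces these to be equal, and within each sector there are only a handful of configurations. The key algebraic facts I expect to need are (i) the factorization $H(t) = (1-\tau t)/(1-\zeta t)$ from the preceding lemma, i.e. $h_j = s_1\zeta^{j-1} = (1-\tau)\zeta^{j-1}$ for $j\ge 1$ and $h_0=1$, which is precisely what makes the two-term operator $\rho_k^*$ (rather than a longer tail) close up under conjugation; (ii) the generalized free fermion condition, in the form of the zero-charge relation $\vx{a_1}\vx{a_2}(0) + \vx{b_1}\vx{b_2}(0) = \vx{c_1}\vx{c_2}$ and the charge relation $g(a)g(-a) = -g(0)$, which is what collapses sums over ``where the particle turns'' (the sums defining $G_s$, $\zeta_s$, $G'_s$) into the clean closed forms appearing on the right-hand side; and (iii) the identity $\zeta - h_1 = \tau$ already isolated in the proof of Lemma~\ref{rho-H-commutation-lemma}, which is the scalar shadow of this whole computation. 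Since the structure of the argument is borrowed from \cite[\S4]{BBBG-Hamiltonian}, I would handle the cases that are formally identical to theirs by citation, and carry out in detail only those where the charge-dependent weights $f(a),h(a)$ (and the nontrivial $g$) genuinely generalize the metaplectic Gauss-sum weights of \cite{BBBG-Hamiltonian}.

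The main obstacle will be the bookkeeping of signs and charges in the case where a particle traverses the full strip: one must track the charge accumulated on the horizontal edges (which runs through all residues $0,1,\dots,n-1$ as the particle passes the interior columns), correctly account for the ``turning point'' column where a $\vx{c_1}$/$\vx{c_2}$ pair or a $\vx{c_2}$-then-$\vx{c_1}$ pattern occurs, and verify that the resulting weighted sum telescopes using $g(a)g(-a)=-g(0)$ to match the factor $(1-\tau)$ or $(\zeta-h_1)$ produced on the Hamiltonian side. Getting the normalization of $\zeta$ (namely $\zeta = x^n\prod_{a=0}^{n-1}f(a)$) to emerge exactly — rather than off by a power of $x$ or a product of $f$'s — is the delicate point, and it is forced by insisting that the one-column-at-a-time accounting agrees with the single-step $J_1$ action; I would verify this on the smallest nontrivial case ($\lambda,\mu$ differing by moving one particle from column $k$ to column $k-n$) first, then bootstrap.
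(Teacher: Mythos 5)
Your proposal takes essentially the same route as the paper: the paper proves the lemma by exactly the case analysis you describe, tabulating $\langle\mu|\hat T_k\psi_k^*|\lambda\rangle$, $-\zeta\langle\mu|\hat T_k\psi_{k-n}^*|\lambda\rangle$, $\langle\mu|\psi_k^*\hat T_k|\lambda\rangle$, and $-\tau\langle\mu|\psi_{k-n}^*\hat T_k|\lambda\rangle$ over all sixteen patterns of $(\epsilon_k,\epsilon_{k-n},\delta_k,\delta_{k-n})$, in the two scenarios where the interior spins all agree or disagree at a single column $s$, and verifying entry by entry (using the quantities $\gamma$, $G$, $G_s$, $G'_s$, $\zeta_s$) that the two sums coincide. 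This is precisely the computation you outline, so there is no essential difference in approach.
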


\begin{proof}
We prove this by cases. The result follows because in every row of the following two tables, the sum of the first two columns is equal to the sum of the last two columns.

First, suppose that for all $k>i>k-n$, $\epsilon_i=\delta_i$. The following table gives the relevant expressions for all choices of the spins in columns $k$ and $k-n$.

\begin{center}
\begin{tabular}{ |c||c||c|c|c|c| } 
 \hline
 $(\epsilon_k, \epsilon_{k-n}, \delta_k, \delta_{k-n})$ & $\langle\eta|\hat{T}|\xi\rangle$ & $\langle\eta|\hat{T}\psi_k^*|\xi\rangle$ & $-\zeta\langle\eta|\hat{T}\psi_{k-n}^*|\xi\rangle$ & $\langle\eta|\psi_k^*\hat{T}|\xi\rangle$ & $-\tau\langle\eta|\psi_{k-n}^*\hat{T}|\xi\rangle$  \\\hline 
 $(+,+,+,+)$&1&$\gamma\zeta G$&$-\gamma\zeta G$&0&0\\\hline
 $(+,+,+,-)$&0&$\gamma\zeta G$&$-\zeta G$&0&$-\tau G$\\\hline
 $(+,+,-,+)$&0&1&0&1&0\\\hline
 $(+,-,+,+)$&$\gamma$&0&0&0&0\\\hline
 $(-,+,+,+)$&$\gamma\zeta G$&0&0&0&0\\\hline
 $(+,+,-,-)$&0&0&0&0&0\\\hline
 $(+,-,+,-)$&1&$-\gamma\tau G$&0&0&$-\gamma\tau G$\\\hline
 $(+,-,-,+)$&0&$\gamma$&0&$\gamma$&0\\\hline
 $(-,+,+,-)$&$\gamma\zeta G$&0&$-\gamma\tau\zeta G^2 $&0&$-\gamma\tau\zeta G^2$\\\hline
 $(-,+,-,+)$&1&0&$\gamma\zeta G$&$\gamma\zeta G$&0\\\hline
 $(-,-,+,+)$&0&0&0&0&0\\\hline
 $(+,-,-,-)$&0&1&0&1&0\\\hline
 $(-,+,-,-)$&0&0&$\zeta G$&$\gamma\zeta G$&$\tau G$\\\hline
 $(-,-,+,-)$&$-\gamma\tau G$&0&0&0&0\\\hline
 $(-,-,-,+)$&$\gamma$&0&0&0&0\\\hline
 $(-,-,-,-)$&1&0&0&$-\gamma\tau G$&$\gamma\tau G$\\\hline
\end{tabular}
\end{center}

Now, suppose that for a unique $k>s>k-n$, $\epsilon_s=-, \delta_s = +$. The next table enumerates these cases.
\begin{center}
\scalebox{0.8}{
\begin{tabular}{ |c||c||c|c|c|c| } 
 \hline
 $(\epsilon_k, \epsilon_{k-n}, \delta_k, \delta_{k-n})$ & $\langle\eta|\hat{T}|\xi\rangle$ & $\langle\eta|\hat{T}\psi_k^*|\xi\rangle$ & $-\zeta\langle\eta|\hat{T}\psi_{k-n}^*|\xi\rangle$ & $\langle\eta|\psi_k^*\hat{T}|\xi\rangle$ & $-\tau\langle\eta|\psi_{k-n}^*\hat{T}|\xi\rangle$  \\\hline 
 $(+,+,+,+)$&$\gamma G_s\zeta_s$&0&0&0&0\\\hline
 $(+,+,+,-)$&0&0&$-\gamma\tau G_sG_s'\zeta_s$&0&$-\gamma \tau G_sG_s'\zeta_s$\\\hline
 $(+,+,-,+)$&0&$\gamma G_s\zeta_s$&0&$\gamma G_s\zeta_s$&0\\\hline
 $(+,-,+,+)$&0&0&0&0&0\\\hline
 $(-,+,+,+)$&0&0&0&0&0\\\hline
 $(+,+,-,-)$&0&0&0&0&0\\\hline
 $(+,-,+,-)$&$\gamma G_s\zeta_s g(s-k) yx^{-1}$&0&0&0&0\\\hline
 $(+,-,-,+)$&0&0&0&0&0\\\hline
 $(-,+,+,-)$&0&0&0&0&0\\\hline
 $(-,+,-,+)$&$\gamma G_s\zeta_s$&0&0&0&0\\\hline
 $(-,-,+,+)$&0&0&0&0&0\\\hline
 $(+,-,-,-)$&0&$\gamma G_s\zeta_sg(s-k) yx^{-1}$&0&$\gamma G_s\zeta_s g(s-k) yx^{-1}$&0\\\hline
 $(-,+,-,-)$&0&0&$\gamma\tau G_sG_s'\zeta_s$&0&$\gamma \tau G_sG_s'\zeta_s$\\\hline
 $(-,-,+,-)$&0&0&0&0&0\\\hline
 $(-,-,-,+)$&0&0&0&0&0\\\hline
 $(-,-,-,-)$&$\gamma G_s\zeta_s g(s-k) yx^{-1}$&0&0&0&0\\\hline
\end{tabular}}
\end{center}

\end{proof}

\begin{lemma} \label{rho-T-commutation-lemma}
$\langle\eta|T\rho_k^*(\zeta)|\xi\rangle = \langle\eta|\rho_k^*(\tau)T|\xi\rangle$.
\end{lemma}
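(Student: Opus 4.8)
The plan is to deduce the assertion from the ``windowed'' commutation relation of Lemma \ref{rho-T-hat-commutation-lemma} by threading the auxiliary horizontal line through the remaining columns. Throughout one regards $T = T^q$ as an operator on the metaplectic Fock space $\mathcal{F}(\mathfrak{S}^q)$, so that $T\rho_k^*(\zeta) = \rho_k^*(\tau)T$ is an equality of operators; hence it suffices to compare matrix elements $\langle\eta|T\rho_k^*(\zeta)|\xi\rangle = \langle\eta|\rho_k^*(\tau)T|\xi\rangle$ for all strict partitions $\xi,\eta$. Expanding $\rho_k^*(\zeta)=\psi_k^*-\zeta\psi_{k-n}^*$ and $\rho_k^*(\tau)=\psi_k^*-\tau\psi_{k-n}^*$, each side becomes a finite $\C$-linear combination of partition functions of one-row charged models $\mathfrak{S}^q_{\bullet/\bullet}$, hence is well defined; moreover, as in the discussion following Theorem \ref{charged-Delta-lattice-Hamiltonian}, these partition functions do not change when the column count $M+1$ is increased (extra columns only add $\vx{a_1}$ vertices, of weight $A_i = 1$), so $M$ may be taken as large as convenient.

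Next I would decompose the one-row transfer matrix along the horizontal line into three pieces, $T = T^{(L)}\circ \hat{T}_k\circ T^{(R)}$, where $T^{(R)}$ is assembled from the columns to the right of column $k$, the middle factor $\hat{T}_k$ is the operator of columns $k-n,\ldots,k$ from Lemma \ref{rho-T-hat-commutation-lemma}, and $T^{(L)}$ is assembled from the columns to the left of column $k-n$, with the auxiliary edge glued across the two seams. The key structural point is that $T^{(R)}$ acts only on the fermionic modes of columns $>k$ and on the auxiliary horizontal space --- hence on modes disjoint from $\{k,\,k-n\}$ --- so it commutes with $\rho_k^*(\zeta)$, the fermionic anticommutation signs being trivial on the matrix elements that actually contribute because of the fixed $+$ side boundary conditions together with conservation of the vertical spin; symmetrically, $T^{(L)}$ acts only on the modes of columns $<k-n$ and on the auxiliary space, so it commutes with $\rho_k^*(\tau)$. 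All genuine interaction between $\rho_k^*$ and the lattice is concentrated in the window $[k-n,k]$, and is exactly Lemma \ref{rho-T-hat-commutation-lemma}: $\hat{T}_k\,\rho_k^*(\zeta) = \rho_k^*(\tau)\,\hat{T}_k$. Chaining these identities,
\begin{align*}
T\rho_k^*(\zeta) &= T^{(L)}\hat{T}_k T^{(R)}\rho_k^*(\zeta) = T^{(L)}\hat{T}_k\rho_k^*(\zeta) T^{(R)} \\
&= T^{(L)}\rho_k^*(\tau)\hat{T}_k T^{(R)} = \rho_k^*(\tau)T^{(L)}\hat{T}_k T^{(R)} = \rho_k^*(\tau)T,
\end{align*}
which is the claim. (Equivalently, one can phrase this as an induction on the number of columns outside the window, each step either peeling off a trivial column to the right, where $\rho_k^*$ commutes on the nose, or extending the window to the left and re-applying Lemma \ref{rho-T-hat-commutation-lemma}.)

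The step I expect to be the main obstacle is making the decomposition $T = T^{(L)}\circ\hat{T}_k\circ T^{(R)}$ rigorous: one must realize the one-row transfer matrix as an ordered product of column operators on $\mathcal{F}\otimes(\text{auxiliary space})$, check that cutting at the two seams reproduces precisely the boundary data assumed for $\hat{T}_k$ in Lemma \ref{rho-T-hat-commutation-lemma} (in particular the leftmost horizontal spin with ``at most a unique possibility''), and verify that the signs incurred when sliding $\psi_k^*$ and $\psi_{k-n}^*$ past the column operators of $T^{(R)}$ and $T^{(L)}$ cancel on admissible states --- i.e.\ that the extra wedge factor $u_k$ or $u_{k-n}$ can always be inserted in the correct position without sign. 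A secondary point is to confirm that Lemma \ref{rho-T-hat-commutation-lemma}, whose proof tabulates the generic configurations of $(\epsilon_i,\delta_i)$ inside the window, covers (or makes vanish) the remaining configurations, so that the windowed identity holds as a bona fide operator identity and not merely for special $\xi,\eta$.
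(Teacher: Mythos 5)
Your overall strategy is the one the paper intends: its entire proof of this lemma is the single sentence ``this follows from Lemma \ref{rho-T-hat-commutation-lemma} by the same argument as \cite[Proposition~4.4]{BBBG-Hamiltonian},'' and that argument is exactly the localization of $\rho_k^*$ to the window of columns $[k-n,k]$ that you describe. So the outline is right, and you have correctly identified where the work lies. However, the load-bearing step as you justify it would fail.

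The problem is the claim that $T^{(R)}$ ``acts only on the fermionic modes of columns $>k$ \ldots hence commutes with $\rho_k^*(\zeta)$.'' In $\mathfrak{S}^q$ particles travel up and to the \emph{left}, so a particle entering the bottom of a column $j>k$ can exit the top of a column $j'\le k$: the seam edge immediately to the right of column $k$ can carry a $-$ spin, and the columns to the right of the window are not block-diagonal on the modes they index. What actually makes the localization work is not an operator commutation but the determinism of the one-row model: with the right boundary fixed to $+$ and all vertical spins prescribed by $\lambda$ and $\mu$, conservation of decorated spin determines every horizontal edge by propagation from right to left, so there is at most one admissible state and its weight factors as (columns $<k-n$) times (window) times (columns $>k$). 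The right-seam spin and charge are determined by $\lambda_{>k}$ and $\mu_{>k}$ alone, and the left-seam spin forced by admissibility by $\lambda_{<k-n}$ and $\mu_{<k-n}$ alone; since $\psi_k^*$ and $\psi_{k-n}^*$ only alter columns inside $[k-n,k]$, the two outer factors and the seam data are literally identical in every term on both sides and cancel, which is the correct replacement for your commutation step (and it also absorbs the wedge-ordering signs you worry about). A residual point you would still have to address after this reduction: Lemma \ref{rho-T-hat-commutation-lemma} and its tables are stated with the right edge of the window fixed to $+$, but configurations with a $-$ right seam do contribute nontrivially (for instance $n=1$, $k=2$, $\lambda=(3)$, $\mu=(1,0)$, where the particle starting at column $3$ crosses the seam), so the windowed identity must either be extended to a $-$ entering spin of each admissible charge or those cases handled separately.
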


\begin{proof}
This follows from Lemma \ref{rho-T-hat-commutation-lemma} by the same argument as \cite[Proposition~4.4]{BBBG-Hamiltonian}.
\end{proof}

\begin{proof}[Proof of Proposition \ref{charged-Delta-one-row}]
This follows from Propositions \ref{rho-H-commutation-lemma} and \ref{rho-T-commutation-lemma} by the same argument as the proof of Theorem A in \cite{BBBG-Hamiltonian}.
\end{proof}

The rest of the proof is similar to the proof of Proposition \ref{Reduced-Delta-lattice-Hamiltonian}. Let $\mathfrak{S}^{q,N}_{\lambda/\mu}$ denote the usual $\mathfrak{S}$ lattice model with $N$ rows. Recall that $H_+ = \sum_{j=1}^N \phi_j$.

\begin{proof}[Proof of Theorem \ref{charged-Delta-lattice-Hamiltonian}]
For part a, \begin{align*} \prod_{i=1}^N A_i^{M+1} B_i^{\ell(\lambda)} \langle\mu|e^{H_+}|\lambda\rangle &= \prod_{i=1}^N A_i^{M+1} B_i^{\ell(\lambda)}\langle\mu|e^{\phi_N}\ldots e^{\phi_1}|\lambda\rangle \\&= \sum_{\nu_1,\ldots,\nu_{N-1}} \langle\mu|e^{\phi_N}|\nu_{N-1}\rangle \langle\nu_{N-1}|e^{\phi_{N-1}}|\nu_{N-2}\rangle\ldots \langle\nu_1|e^{\phi_1}|\lambda\rangle \\&= \sum_{\nu_1,\ldots,\nu_{N-1}} Z(\mathfrak{S}^{q,1}_{\nu_{N-1}/\mu})\ldots Z(\mathfrak{S}^{q,1}_{\lambda/\nu_1}) \\&= Z(\mathfrak{S}^{q,N}_{\lambda/\mu}).\end{align*}

To see that the solution (\ref{charged-Delta-Hamiltonian-parameter}) for $s_k^{(j)}$ is unique, note that the partition function $Z(\mathfrak{S}^q_{(mn)/(0)})$ depends only on $s_k^{(j)}$ for $1\le k\le m$.

For part b, we need to show that both the free fermion condition and the charge condition (assuming $n>1$) are required in order to satisfy (\ref{charged-Delta-Lattice-Hamiltonian-correspondence-equation}). Consider the model $\ba{\mathfrak{S}}^q$, which is equal to $\mathfrak{S}^q$ but with unspecified Boltzmann weights $\vx{a_1^{(i)}}, \vx{a_2^{(i)}}(a), \vx{b_1^{(i)}}, \vx{b_2^{(i)}}(a), \vx{c_1^{(i)}}, \vx{c_2^{(i)}}$. By rescaling the weights as we do above, we can assume without loss of generality that $\vx{a_1^{(i)}} = \vx{b_1^{(i)}} = \vx{c_2^{(i)}} = 1$ for all $i$. Then we assume there exist Hamiltonian parameters $s_k^{(j)}, k\ge 1, 1\le j \le N$ such that \begin{align}Z(\ba{\mathfrak{S}}^q_{\lambda/\mu}) = {\Large{*}} \cdot \langle\mu|e^{H_+}|\lambda\rangle \hspace{20pt} \text{for all strict partitions $\lambda,\mu$ and all $M,N$.} \label{general-charged-correspondence-equation}\end{align}

For the zero charge free fermion condition, we observe that if $M$ and all the parts of $\lambda$ and $\mu$ are multiples of $n$, then there are no vertices of type $\vx{a_2^{(i)}}(a)$ where $a\ne 0$. This means that both the lattice model and Hamiltonian reduce to the $n=1$ case. Namely, let $\mathfrak{S}_{red}$ be the lattice model with the following weights and boundary conditions \[(\vx{a_1^{(i)}})_{red} := (\vx{a_1^{(i)}})^n, \hspace{10pt} (\vx{a_2^{(i)}})_{red} := \vx{a_2^{(i)}}(0)\prod_{a=1}^N \vx{b_2^{(i)}}(a), \hspace{10pt} (\vx{b_1^{(i)}})_{red} = \vx{b_1^{(i)}}(\vx{a_1^{(i)}})^{n-1},\] \[(\vx{b_2^{(i)}})_{red} = \vx{b_2^{(i)}}(0)\prod_{a=1}^N \vx{b_2^{(i)}}(a), \hspace{10pt} (\vx{c_1^{(i)}})_{red} = \vx{c_1^{(i)}}\prod_{a=1}^N \vx{b_2^{(i)}}(a), \hspace{10pt} (\vx{c_2^{(i)}})_{red} = \vx{c_2^{(i)}}(\vx{a_1^{(i)}})^{n-1}.\] \[M_{red} := \frac{M}{n}, \hspace{10pt} \lambda_{red} := \left(\frac{\lambda_1}{n}, \hspace{10pt} \frac{\lambda_2}{n} \ldots\right), \hspace{10pt} \mu_{red} := \left(\frac{\mu_1}{n}, \frac{\mu_2}{n} \ldots\right).\] In this case, (\ref{general-charged-correspondence-equation}) is satisfied precisely when $(\mathfrak{S}_{red})_{\lambda_{red}/\mu_{red}}$ and $H_{red} : =  \sum_{k\ge 1} s_kJ_k$ satisfy (\ref{General-Lattice-Hamiltonian-correspondence-equation}), so $\mathfrak{S}_{red}$ must satisfy the free fermion condition. The reduction hasn't changed the partition function or $\tau$ function, so thus $\mathfrak{S}^q$ must also satisfy the free fermion condition.

For the charge condition, if $0<p<n$, then \[\langle (p,0)|e^{H_+}|(n,p)\rangle = \langle v_p\wedge v_0\ldots|e^{H_+}|v_n\wedge v_p\wedge\ldots\rangle = g(n-p)\langle v_p\wedge v_0\ldots|e^{H_+}|v_p\wedge v_n\wedge\ldots\rangle = g(n-p)s_1,\] while \[Z(\ba{\mathfrak{S}}^q_{(n,p)/(p,0)}) = \frac{\vx{a_2^{(1)}}(n-p)\vx{c_2^{(1)}}}{\vx{b_2^{(1)}}(n-p)\vx{b_2^{(1)}}(0)}\left(\prod_{i=1}^n \vx{b_2^{(1)}}(a)\right).\] Meanwhile, \[\langle (0)|e^{H_+}|(n)\rangle = s_1 \hspace{20pt} \text{and} \hspace{20pt} Z(\ba{\mathfrak{S}}^q_{(n)/(0)}) = \frac{\vx{c_2^{(1)}}}{\vx{b_2^{(1)}}(0)}\left(\prod_{i=1}^n \vx{b_2^{(1)}}(a)\right),\] so we must have $\frac{\vx{a_2^{(1)}}(a)}{\vx{b_2^{(1)}}(a)} = g(a)$ for all $0<a<n$. Combining this with the condition $g(a)g(-a) = -g(0)$ gives the charge condition for the first row. For the other weights, use the branching rules \[Z(\ba{\mathfrak{S}}^q_{\lambda/\mu}) = \sum_\nu Z(\ba{\mathfrak{S}}^{q,N-1}_{\lambda/\nu}) Z(\ba{\mathfrak{S}}^{q,1}_{\nu/\mu}) \] \[\langle\mu|e^{H_+}|\lambda\rangle = \sum_\nu \langle\mu|e^{\phi_N}|\nu\rangle\langle\nu|e^{\phi_{N-1}}\cdots e^{\phi_1}|\lambda\rangle\] and induction.
\end{proof}

Let $\mathcal{F}$ be a Fock space, and let $\mathcal{F}_*$ equal $\mathcal{F}$ as a vector space, but with wedge relations defined by $g_*(a) := (g(-a))^{-1}$. In particular, we are sending $q\mapsto q^{-1}$. We'll write $|\lambda\rangle_*$ and $\langle\lambda|_*$ to refer to basis vector in $\mathcal{F}_*$ and its dual, and any current operator $J_k$ acting on $\mathcal{F}_*$ or its dual is assumed to be the relevant current operator. Then we have

\begin{theorem} \label{charged-Gamma-lattice-Hamiltonian}
\leavevmode
\begin{enumerate}
    \item[(a)] The equation \begin{align}Z(\mathfrak{S}^{*,q}_{\lambda/\mu}) = \prod_{i=1}^{N} A_i^{-(M+1)} B_i^{-\ell(\lambda)} \cdot \langle\lambda|_*e^{H_-}|\mu\rangle_* \hspace{20pt} \text{for all strict partitions $\lambda,\mu$ and all $M,N$.} \label{charged-Gamma-Lattice-Hamiltonian-correspondence-equation}\end{align} holds precisely when the weights of $\mathfrak{S}^{*,q}$ satisfy the generalized free fermion condition and for all $k\ge 1, j\in [1,N]$, \begin{align} s_{-k}^{(j)} = \frac{1}{k} \left(y_i^{nk} \left(\prod_{a=0}^{n-1} h(a)\right)^k  + (-1)^{k-1}(g(0))^{-k}x_i^k y_i^{(n-1)k}\left(\prod_{a=0}^{n-1} h(a)\right)^k\right). \label{charged-Gamma-Hamiltonian-parameter}\end{align}
    \item[(b)] If the Boltzmann weights are not generalized free fermionic, (\ref{charged-Gamma-Lattice-Hamiltonian-correspondence-equation}) does not hold for any choice of the $s_k^{(j)}$.
\end{enumerate}
\end{theorem}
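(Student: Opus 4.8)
The plan is to reduce Theorem \ref{charged-Gamma-lattice-Hamiltonian} to Theorem \ref{charged-Delta-lattice-Hamiltonian}, exactly as Theorem \ref{Gamma-lattice-Hamiltonian} was reduced to Theorem \ref{Delta-lattice-Hamiltonian} in the $n=1$ case. The three ingredients are: the geometric identity of Proposition \ref{charged-second-Gamma-Delta-relationship} relating $Z(\mathfrak{S}^{*,q})$ to $Z(\mathfrak{S}^{q})$ under $\boldsymbol{x}\leftrightarrow\boldsymbol{y}$, $\boldsymbol{A}\mapsto\boldsymbol{A}^{-1}$, $\boldsymbol{B}\mapsto\boldsymbol{B}^{-1}$, $(f,h)\mapsto(\ba h,\ba f)$; the already-established matching of Theorem \ref{charged-Delta-lattice-Hamiltonian}; and a duality identity for $q$-Fock space Hamiltonians identifying $\langle\lambda|_{*}e^{H_-}|\mu\rangle_{*}$ computed in $\mathcal{F}_{*}$ with $\langle\mu|e^{H_+}|\lambda\rangle$ computed in $\mathcal{F}$ after the corresponding parameter substitution.

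First I would record the parameter bookkeeping: substituting $\boldsymbol{x}\leftrightarrow\boldsymbol{y}$ and $(f,h)\mapsto(\ba h,\ba f)$ into (\ref{charged-Delta-Hamiltonian-parameter}), then replacing $g$ by $g_{*}$ — equivalently $q\mapsto q^{-1}$, so $g(0)\mapsto g(0)^{-1}$ — produces exactly the formula (\ref{charged-Gamma-Hamiltonian-parameter}) for $s_{-k}^{(j)}$, using $\prod_{a=0}^{n-1}\ba h(a)=\prod_{a=0}^{n-1}h(a)$ after reindexing modulo $n$. In particular the factor $(g(0))^{k}$ produced by Theorem \ref{charged-Delta-lattice-Hamiltonian} on the right-hand side becomes the factor $(g(0))^{-k}$ demanded by (\ref{charged-Gamma-Hamiltonian-parameter}) precisely because of the passage $\mathcal{F}\rightsquigarrow\mathcal{F}_{*}$. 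With this in hand, part (a) follows from the chain
\begin{align*}
Z(\mathfrak{S}^{*,q}_{\lambda/\mu}(\boldsymbol{x},\boldsymbol{y},\boldsymbol{A},\boldsymbol{B},f,h))
&= Z(\mathfrak{S}^{q}_{\lambda/\mu}(\boldsymbol{y},\boldsymbol{x},\boldsymbol{A}^{-1},\boldsymbol{B}^{-1},\ba h,\ba f))\\
&= \prod_{i=1}^{N}A_i^{-(M+1)}B_i^{-\ell(\lambda)}\cdot\langle\mu|e^{H_+}|\lambda\rangle\,\Big|_{\text{subst.}}\\
&= \prod_{i=1}^{N}A_i^{-(M+1)}B_i^{-\ell(\lambda)}\cdot\langle\lambda|_{*}e^{H_-}|\mu\rangle_{*},
\end{align*}
where the first equality is Proposition \ref{charged-second-Gamma-Delta-relationship}, the second is Theorem \ref{charged-Delta-lattice-Hamiltonian}, and the third is the $q$-Fock duality identity. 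Part (b) is the same argument run backwards: a failure of generalized free fermionicity of the $\mathfrak{S}^{*,q}$ weights translates, via Proposition \ref{charged-second-Gamma-Delta-relationship}, into a failure for the corresponding $\mathfrak{S}^{q}$ weights, and Theorem \ref{charged-Delta-lattice-Hamiltonian}(b) then obstructs (\ref{charged-Gamma-Lattice-Hamiltonian-correspondence-equation}) for every choice of the $s_{-k}^{(j)}$.

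The main obstacle is the $q$-Fock duality identity itself, since, unlike the classical case (Lemma \ref{Hamiltonian-duality-lemma}), there is no Clifford algebra in which to compute matrix coefficients of the current operators directly. The approach I would take is to show that replacing the wedge function $g$ by $g_{*}(a)=(g(-a))^{-1}$, which is exactly $q\mapsto q^{-1}$, together with reversing the roles of the two ends of a normally ordered wedge, intertwines the raising action of $J_k$ on $\mathcal{F}$ with the lowering action of $J_{-k}$ on $\mathcal{F}_{*}$ at the level of matrix coefficients, $\langle\mu|J_k|\lambda\rangle=\langle\lambda|_{*}J_{-k}|\mu\rangle_{*}$; this is compatible with $[J_k,J_l]=k\,\tfrac{1-v^{n|k|}}{1-v^{|k|}}\,\delta_{k,-l}$ because that scalar, built from $v=q^{2}$, enters only through the free parameters $s_{\pm k}$ and can be absorbed. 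One then extends multiplicatively to $e^{H_{\pm}}$. Should the wedge-relation computation prove unwieldy, an alternative is to bypass it entirely by rerunning the inductive $\rho^{*}$-conjugation argument of Lemmas \ref{rho-H-commutation-lemma}--\ref{rho-T-commutation-lemma} directly for $\mathfrak{S}^{*,q}$ inside $\mathcal{F}_{*}$, with $\zeta,\tau$ replaced by their $h$-side analogues; this is more laborious but strictly parallel to the $\mathfrak{S}^{q}$ case already carried out.
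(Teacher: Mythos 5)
Your proposal is correct and follows essentially the same route as the paper, which proves the theorem in one line by citing Proposition \ref{charged-second-Gamma-Delta-relationship} together with Theorem \ref{charged-Delta-lattice-Hamiltonian}. Your additional care with the $q$-Fock duality identity $\langle\lambda|_{*}e^{H_-}|\mu\rangle_{*}=\langle\mu|e^{H_+}|\lambda\rangle\big|_{\text{subst.}}$ fills in a step the paper leaves implicit (it is exactly why $\mathcal{F}_{*}$ is defined with $g_{*}(a)=(g(-a))^{-1}$ immediately before the theorem), and your sketch of it is consistent with the paper's setup.
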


\begin{proof}
This follows from Theorem \ref{charged-Delta-lattice-Hamiltonian} and Proposition \ref{charged-second-Gamma-Delta-relationship}.
\end{proof}

\section{Supersymmetric LLT Polynomials} \label{charged-partition-function-section}

In this final section, we prove two main results. The first result (Theorem \ref{Partition-Function-is-Supersymmetric-LLT}) is that our charged partition function is a supersymmetric LLT polynomials, and the second (Theorem \ref{super-LLT-Cauchy}) is a Cauchy identity for supersymmetric LLT polynomials. Although our partition functions do not give all specializations of supersymmetric LLT polynomials, the Cauchy identity only uses Hamiltonian operators and is therefore fully general.

\subsection{The partition function of the charged models}

Let \[L_+(\boldsymbol{x}) = \sum_{k=1}^\infty \sum_{j=1}^N \frac{1}{k}x_j^k J_k.\] Brubaker, Buciumas, Bump, and Gustafsson showed \cite[Theorem~5.8]{BBBG-Hamiltonian} that the supersymmetric LLT polynomial can be expressed as \begin{align} \mathcal{G}_{\lambda/\mu}[\boldsymbol{x}|\boldsymbol{y}] := \mathcal{G}_{\lambda/\mu}[\boldsymbol{x}|\boldsymbol{y}; q] = \langle\mu+\rho|e^{L_+(\boldsymbol{x}^n)-L_+(\boldsymbol{y}^n)}|\lambda+\rho\rangle. \label{super-LLT-formula}\end{align}

Let $F = \prod_{a=0}^{n-1} f(a)$ and $H = \prod_{a=0}^{n-1} h(a)$. Then, \[H_+ = \sum_{k=1}^\infty \sum_{j=1}^N\frac{1}{k} (x_i^nF)^k  -  \sum_{k=1}^\infty \sum_{j=1}^N\frac{1}{k} (-g(0)x_i^{n-1}y_iF)^k = L(\boldsymbol{\theta})- L(\boldsymbol{\pi}),\] and \[H_- = \sum_{k=1}^\infty \sum_{j=1}^N\frac{1}{k} (y_i^nH)^k  -  \sum_{k=1}^\infty \sum_{j=1}^N\frac{1}{k} (-(g(0))^{-1}y_i^{n-1}x_iH)^k = L(\boldsymbol{\eta})- L(\boldsymbol{\tau}),\] where \[\theta_i = x_i^nF, \hspace{10pt} \pi_i = -g(0) x_i^{n-1}y_i F, \hspace{10pt} \eta_i = w_i^nH, \hspace{10pt} \tau_i = -g_*(0) w_i^{n-1}z_i H.\] Therefore, by Theorems \ref{charged-Delta-lattice-Hamiltonian} and \ref{charged-Gamma-lattice-Hamiltonian}, we have

\begin{theorem} \label{Partition-Function-is-Supersymmetric-LLT}
\[Z(\mathfrak{S}^q_{\lambda+\rho/\mu+\rho}) = \prod_{i=1}^N A_i^{M+1} B_i^{\ell(\lambda)} \mathcal{G}_{\lambda/\mu}(\boldsymbol{\theta}|\boldsymbol{\pi}; q),\] where $\theta_i = x_i^nF$ and $\pi_i = -g(0) x_i^{n-1}y_i F$, and similarly \[Z(\mathfrak{S}^{*,q}_{\lambda+\rho/\mu+\rho}) = \prod_{i=1}^N A_i^{-(M+1)} B_i^{-\ell(\lambda)} \mathcal{G}_{\lambda/\mu}(\boldsymbol{\eta}|\boldsymbol{\tau}; q^{-1}),\] where $\eta_i = w_i^nH$ and $\tau_i = -g_*(0) w_i^{n-1}z_i H$.
\end{theorem}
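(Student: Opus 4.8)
The statement is a corollary of Theorems \ref{charged-Delta-lattice-Hamiltonian} and \ref{charged-Gamma-lattice-Hamiltonian} together with the Hamiltonian presentation (\ref{super-LLT-formula}) of supersymmetric LLT polynomials, so the plan is simply to match parameters. First I would treat the $\mathfrak{S}^q$ case. The models $\mathfrak{S}^q$ are by construction generalized free fermionic, so Theorem \ref{charged-Delta-lattice-Hamiltonian}(a) applies and gives
\[ Z(\mathfrak{S}^q_{\lambda+\rho/\mu+\rho}) = \prod_{i=1}^N A_i^{M+1}B_i^{\ell(\lambda)}\,\langle\mu+\rho|e^{H_+}|\lambda+\rho\rangle, \]
with $H_+$ the Hamiltonian attached to the metaplectic Fock space $\mathcal{F}(\mathfrak{S}^q)$ and with parameters given explicitly by (\ref{charged-Delta-Hamiltonian-parameter}). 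It then remains only to identify the pairing on the right with $\mathcal{G}_{\lambda/\mu}(\boldsymbol{\theta}|\boldsymbol{\pi};q)$.

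To do this I would regroup the parameters, exactly as in the computation recorded just before the theorem: summing (\ref{charged-Delta-Hamiltonian-parameter}) over $j$ and recognizing the two summands as $k$-th power sums shows $H_+ = L_+(\boldsymbol{\theta}) - L_+(\boldsymbol{\pi})$ with $\theta_i = x_i^n F$ and $\pi_i = -g(0)\,x_i^{n-1}y_i F$, where $F = \prod_{a=0}^{n-1} f(a)$. Feeding this into formula (\ref{super-LLT-formula}), applied on $\mathcal{F}(\mathfrak{S}^q)$, identifies $\langle\mu+\rho|e^{H_+}|\lambda+\rho\rangle = \mathcal{G}_{\lambda/\mu}(\boldsymbol{\theta}|\boldsymbol{\pi};q)$, which is the first formula. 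For $\mathfrak{S}^{*,q}$ I would run the identical argument using Theorem \ref{charged-Gamma-lattice-Hamiltonian}(a); the one new feature is that $e^{H_-}$ acts on the dual metaplectic Fock space $\mathcal{F}_*$, so $q$ is replaced by $q^{-1}$ and $g(0)$ by $g_*(0) = g(0)^{-1}$, and regrouping (\ref{charged-Gamma-Hamiltonian-parameter}) gives $H_- = L_-(\boldsymbol{\eta}) - L_-(\boldsymbol{\tau})$ with $\eta_i = w_i^n H$, $\tau_i = -g_*(0)\,w_i^{n-1}z_i H$, and $H = \prod_{a=0}^{n-1} h(a)$; then (\ref{super-LLT-formula}) on $\mathcal{F}_*$ produces $\mathcal{G}_{\lambda/\mu}(\boldsymbol{\eta}|\boldsymbol{\tau};q^{-1})$ together with the stated factor $\prod_i A_i^{-(M+1)}B_i^{-\ell(\lambda)}$.

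Because the substantive results are already in hand, there is no conceptual obstacle; the only delicate part is the bookkeeping. I would be careful to (i) track the signs and the powers of $F$, $H$, and $g(0)^{\pm 1}$ when rewriting each term of (\ref{charged-Delta-Hamiltonian-parameter}) and (\ref{charged-Gamma-Hamiltonian-parameter}) as $\pm\tfrac{1}{k}(\cdot)^k$, so that in particular the $(n-1)$st rather than the $n$th power of $x_i$ (resp. $w_i$) appears in $\pi_i$ (resp. $\tau_i$); (ii) check that the $\rho$-shift built into (\ref{super-LLT-formula}) matches the one in Theorems \ref{charged-Delta-lattice-Hamiltonian} and \ref{charged-Gamma-lattice-Hamiltonian}, so that $\ell(\lambda+\rho)=\ell(\lambda)$ and the exponents of $B_i$ agree; and (iii) observe that the right-hand side of (\ref{super-LLT-formula}) depends on the metaplectic data only through $g(0) = -q^2$, the remaining twist data $f,h$ having been absorbed into $\boldsymbol{\theta},\boldsymbol{\pi}$ (resp. $\boldsymbol{\eta},\boldsymbol{\tau}$) — this absorption is precisely what allows the Brubaker--Buciumas--Bump--Gustafsson formula, a priori stated for one particular metaplectic Fock space, to be used for every generalized free fermionic model.
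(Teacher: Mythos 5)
Your proposal is correct and follows exactly the paper's argument: the paper likewise derives the theorem by combining Theorems \ref{charged-Delta-lattice-Hamiltonian} and \ref{charged-Gamma-lattice-Hamiltonian} with the regrouping $H_+ = L_+(\boldsymbol{\theta})-L_+(\boldsymbol{\pi})$, $H_- = L_-(\boldsymbol{\eta})-L_-(\boldsymbol{\tau})$ and the formula (\ref{super-LLT-formula}) of Brubaker--Buciumas--Bump--Gustafsson. Your bookkeeping cautions (signs, powers of $F$, $H$, $g(0)^{\pm1}$, and the $\rho$-shift) are exactly the points the paper's computation implicitly handles.
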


By varying $\boldsymbol{x}$ and $\boldsymbol{y}$, we obtain almost every value of the supersymmetric LLT polynomial as a partition function of $\mathfrak{S}^q$ and $\mathfrak{S}^{*,q}$. The exceptions are the values $\theta_i=0$ or $\pi_i=0$, since setting $x_i=0, y_i=0$, $f(a)=0$, or $h(a)=0$ causes the model to degenerate, and Theorem \ref{charged-Delta-lattice-Hamiltonian} no longer holds in this setting. Unfortunately, the case $\boldsymbol{\pi}=0$ of the classical LLT polynomials is one of these exceptional cases.

\subsection{Cauchy identity}

Using results from previous sections, one can prove some similar facts involving charged lattice models, $q$-Fock space and supersymmetric LLT polynomials to those proved in Sections \ref{corollaries-section} and \ref{boundary-conditions-section} about uncharged models, classical Fock space, and supersymmetric Schur polynomials. These include Fock space operators for general side boundary conditions, as well as branching, Pieri, and Cauchy identities. As these proofs take similar forms to those in Sections \ref{corollaries-section} and \ref{boundary-conditions-section}, we will only prove the Cauchy identity, and leave the rest to the interested reader.

A similar Cauchy identity for specializations are proved by Lam \cite{Lam-LLT} for LLT polynomials and Brubaker, Buciumas, Bump, and Gustafsson \cite{BBBG-Hamiltonian} for metaplectic symmetric functions. Our proof technique is similar to the proofs of those results.

See also Curran, Frechette, Yost-Wolff, Zhang, and Zhang \cite{CFYWZZ-super-LLT} for an interesting exploration of this Cauchy identity in the context of lattice models. Our Proposition \ref{super-LLT-Cauchy} is similar to their Corollary 3.3.

Let $L_+(\boldsymbol{x}|\boldsymbol{y}) := L_+(\boldsymbol{x}^n) - L_+(\boldsymbol{y}^n)$. Similarly, let \[L_-(\boldsymbol{x}|\boldsymbol{y}) := L_-(\boldsymbol{x}^n) - L_-(\boldsymbol{y}^n), \hspace{20pt} \text{where} \hspace{20pt} L_-(\boldsymbol{x}) = \sum_{k=1}^\infty \sum_{j=1}^N \frac{1}{k}x_j^k J_{-k}.\] Note that by adjointness \cite[Proposition~4.9]{BBBG-Hamiltonian}, we also have \begin{align*}\mathcal{G}_{\lambda/\mu}[\boldsymbol{x}|\boldsymbol{y}] = \langle\lambda+\rho|e^{L_-(\boldsymbol{x}|\boldsymbol{y})}|\mu+\rho\rangle.\end{align*}

Let \[\Omega(\boldsymbol{x}|\boldsymbol{y}; \boldsymbol{z}|\boldsymbol{w}) := \prod_{t=0}^{n-1}\prod_{i,j} \frac{(1-v^tx_i^nw_j^n)(1-v^ty_i^nz_j^n)}{(1-v^tx_i^nz_j^n)(1-v^ty_i^nw_j^n)}.\]

\begin{proposition}[Supersymmetric LLT Cauchy identity] \label{super-LLT-Cauchy}
For any strict partitions $\lambda$ and $\mu$, \begin{align}\sum_\nu \mathcal{G}_{\lambda/\nu}[\boldsymbol{x}|\boldsymbol{y}] \mathcal{G}_{\mu/\nu}[\boldsymbol{z}|\boldsymbol{w}] = \Omega(\boldsymbol{x}|\boldsymbol{y}; \boldsymbol{z}|\boldsymbol{w}) \sum_\nu \mathcal{G}_{\nu/\mu}[\boldsymbol{x}|\boldsymbol{y}] \mathcal{G}_{\nu/\lambda}[\boldsymbol{z}|\boldsymbol{w}], \label{supersymmetric-LLT-Cauchy-identity-equation}\end{align} where the sums are over all strict partitions $\nu$.
\end{proposition}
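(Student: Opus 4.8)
The plan is to transplant the proof of the classical Cauchy identity (Proposition~\ref{Hamiltonian-Cauchy-identity}) to the metaplectic $q$-Fock space, replacing the Heisenberg commutator $[J_m,J_n]=m\delta_{m,-n}$ by its deformation $[J_k,J_l]=k\cdot\frac{1-v^{n|k|}}{1-v^{|k|}}\delta_{k,-l}$. The essential inputs are the two operator presentations of the supersymmetric LLT polynomial: $\mathcal{G}_{\lambda/\mu}[\boldsymbol{x}|\boldsymbol{y}] = \langle\mu+\rho|e^{L_+(\boldsymbol{x}|\boldsymbol{y})}|\lambda+\rho\rangle$ from (\ref{super-LLT-formula}), and its adjoint form $\mathcal{G}_{\lambda/\mu}[\boldsymbol{x}|\boldsymbol{y}] = \langle\lambda+\rho|e^{L_-(\boldsymbol{x}|\boldsymbol{y})}|\mu+\rho\rangle$.

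First I would evaluate the matrix element $\langle\mu+\rho|\,e^{L_-(\boldsymbol{z}|\boldsymbol{w})}e^{L_+(\boldsymbol{x}|\boldsymbol{y})}\,|\lambda+\rho\rangle$ in two ways. Inserting a resolution of the identity $\sum_\nu |\nu+\rho\rangle\langle\nu+\rho|$ between the two exponentials and applying the two presentations of $\mathcal{G}$ (the adjoint one to the $e^{L_-}$ factor) produces $\sum_\nu \mathcal{G}_{\lambda/\nu}[\boldsymbol{x}|\boldsymbol{y}]\,\mathcal{G}_{\mu/\nu}[\boldsymbol{z}|\boldsymbol{w}]$, the left-hand side of (\ref{supersymmetric-LLT-Cauchy-identity-equation}); this is legitimate as an identity of formal power series, exactly as in the classical case, since $\{|\nu+\rho\rangle\}$ is a basis of the relevant charge sector and $e^{L_\pm}$ preserves it. On the other hand, $[L_+(\boldsymbol{x}|\boldsymbol{y}),L_-(\boldsymbol{z}|\boldsymbol{w})]$ is a scalar, so the Weyl commutation relation $e^Ae^B=e^{[A,B]}e^Be^A$ lets me reorder the two exponentials at the cost of a scalar; inserting a resolution of the identity into the reordered product $e^{L_+(\boldsymbol{x}|\boldsymbol{y})}e^{L_-(\boldsymbol{z}|\boldsymbol{w})}$ and again using both presentations of $\mathcal{G}$ yields, up to that scalar, $\sum_\nu \mathcal{G}_{\nu/\mu}[\boldsymbol{x}|\boldsymbol{y}]\,\mathcal{G}_{\nu/\lambda}[\boldsymbol{z}|\boldsymbol{w}]$.

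It then remains to identify the scalar with $\Omega(\boldsymbol{x}|\boldsymbol{y};\boldsymbol{z}|\boldsymbol{w})$. Writing $L_+(\boldsymbol{x}|\boldsymbol{y})=L_+(\boldsymbol{x}^n)-L_+(\boldsymbol{y}^n)$ and $L_-(\boldsymbol{z}|\boldsymbol{w})=L_-(\boldsymbol{z}^n)-L_-(\boldsymbol{w}^n)$, the commutator breaks into four signed pieces, and a typical piece evaluates, via $[J_k,J_{-k}]=k\cdot\frac{1-v^{nk}}{1-v^{k}}$, to $\sum_{i,j}\sum_{k\ge 1}\frac{1}{k}(x_i^nz_j^n)^k\cdot\frac{1-v^{nk}}{1-v^k}$. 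Expanding $\frac{1-v^{nk}}{1-v^k}=1+v^k+\cdots+v^{(n-1)k}$ and summing $\sum_{k\ge 1}\frac{X^k}{k}=-\log(1-X)$ turns this into $\sum_{t=0}^{n-1}\sum_{i,j}\log(1-v^t x_i^n z_j^n)$; assembling the four signed pieces gives $\log\Omega$. Equating the two evaluations proves the identity.

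The main obstacle I expect is purely bookkeeping: getting the orientation of the Weyl relation together with the four signs coming from the $L_+(\boldsymbol{x}^n)-L_+(\boldsymbol{y}^n)$ and $L_-(\boldsymbol{z}^n)-L_-(\boldsymbol{w}^n)$ structure to land precisely on $\Omega$, rather than on $\Omega^{-1}$ or on the variant with $\boldsymbol{z}$ and $\boldsymbol{w}$ transposed. This should be pinned down by cross-checking against the analogous sign in the exponential factor of Propositions~\ref{Hamiltonian-Cauchy-identity} and \ref{supersymmetric-Cauchy-identity} and against the $n=1$ specialization; everything else is formal and parallels the classical argument.
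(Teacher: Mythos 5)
Your proposal is correct and follows essentially the same route as the paper's proof: evaluate $\langle\mu+\rho|e^{L_-(\boldsymbol{z}|\boldsymbol{w})}e^{L_+(\boldsymbol{x}|\boldsymbol{y})}|\lambda+\rho\rangle$ two ways, reorder the exponentials via the deformed Heisenberg commutator $[J_k,J_{-k}]=k\frac{1-v^{nk}}{1-v^k}$, and identify the resulting scalar with $\Omega$ by expanding $\frac{1-v^{nk}}{1-v^k}=\sum_{t=0}^{n-1}v^{tk}$ and summing the logarithm series. The sign bookkeeping you flag resolves exactly as in Proposition \ref{Hamiltonian-Cauchy-identity}, so there is nothing further to add.
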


Note that both $\mathcal{G}_{\lambda/\nu}$ and $\mathcal{G}_{\nu/\lambda}$ are zero unless $|\lambda|-|\nu|$ is a multiple of $n$.

\begin{proof}

We evaluate the Hamiltonian $\langle\mu+\rho|e^{L_-(\boldsymbol{z}|\boldsymbol{w})}e^{L_+(\boldsymbol{x}|\boldsymbol{y})}|\lambda+\rho\rangle$ in two ways. First, \[\langle\mu+\rho|e^{L_-(\boldsymbol{z}|\boldsymbol{w})}e^{L_+(\boldsymbol{x}|\boldsymbol{y})}|\lambda+\rho\rangle = \sum_\nu \langle\mu+\rho|e^{L_-(\boldsymbol{z}|\boldsymbol{w})}|\nu+\rho\rangle \langle \nu+\rho|e^{L_+(\boldsymbol{w}|\boldsymbol{y})}|\lambda+\rho\rangle = \sum_\nu \mathcal{G}_{\lambda/\nu}[\boldsymbol{x}|\boldsymbol{y}]\mathcal{G}_{\mu/\nu}[\boldsymbol{z}|\boldsymbol{w}].\]
Next, we apply the commutation relations between $L_+$ and $L_-$. Let $s_k^{(j)} = x_j^{nk} - y_j^{nk}$, $t_{-k}^{(j)} = z_j^{nk} - w_j^{nk}$, and $s_k = \sum_j s_k^{(j)}$, $t_{-k} = \sum_j t_{-k}^{(j)}$. Recall that \[[J_k,J_l] = k\cdot \frac{1-v^{n|k|}}{1-v^{|k|}} \delta_{k,-l}.\] Then, \begin{align*}\langle\mu+\rho|e^{L_-(\boldsymbol{z}|\boldsymbol{w})}e^{L_+(\boldsymbol{x}|\boldsymbol{y})}|\lambda+\rho\rangle &= \exp\left(\sum_{k\ge 1} k \frac{1-v^{nk}}{1-v^k}\cdot s_k t_{-k}\right) \cdot \langle\mu+\rho|e^{L_+(\boldsymbol{x}|\boldsymbol{y})}e^{L_-(\boldsymbol{z}|\boldsymbol{w})}|\lambda+\rho\rangle \\&= \prod_{i,j} \exp\left(\sum_{k\ge 1} k \frac{1-v^{nk}}{1-v^k}\cdot s_k^{(i)}t_{-k}^{(j)}\right)\cdot\sum_\nu \mathcal{G}_{\nu/\mu}[\boldsymbol{x}|\boldsymbol{y}]\mathcal{G}_{\nu/\lambda}[\boldsymbol{z}|\boldsymbol{w}].\end{align*}

Now, \begin{align*} \sum_{k\ge 1} k \frac{1-v^{nk}}{1-v^k}\cdot s_k^{(i)}t_{-k}^{(j)} &= \sum_{k\ge 1} \frac{1}{k} \frac{1-v^{nk}}{1-v^k}\cdot (x_i^{nk} - y_i^{nk})(z_j^{nk} - w_j^{nk}) \\&= \sum_{t=0}^{n-1}\sum_{k\ge 1} \frac{1}{k} v^{tk}(x_i^{nk}z_j^{nk} - x_i^{nk}w_j^{nk} - y_i^{nk}z_j^{nk} + y_i^{nk}w_j^{nk}) \\&= \log\prod_{t=0}^{n-1} \frac{(1-v^tx_i^nw_j^n)(1-v^ty_i^nz_j^n)}{(1-v^tx_i^nz_j^n)(1-v^ty_i^nw_j^n)},\end{align*} and combining this with the first equation gives (\ref{supersymmetric-LLT-Cauchy-identity-equation}).
\end{proof}

If we specialize $x_i^n\mapsto x_i^nF, y_i^n\mapsto -g(0)x_i^{n-1}y_iF, z_i^n\mapsto w_i^nH$, and $w_i^n\mapsto -g_*(0)w_i^{n-1}z_iH$, then we obtain a Cauchy identity for our partition functions $Z(\mathfrak{S}^q)$ and $Z(\mathfrak{S}^{*,q})$.

If we instead let $y_i\mapsto 0, w_i\mapsto 0$, then we obtain the Cauchy identity for classical LLT polynomials \cite[Theorem~26]{Lam-LLT}, while if we let $y_i^n\mapsto vx_i^n, w_i^n\mapsto vz_i^n$, we obtain the Cauchy identity for metaplectic symmetric functions \cite[Theorem~5.10]{BBBG-Hamiltonian}. Both of those results are given for the special case $\lambda=\mu=\delta$, where $\delta$ is an \emph{$n$-core partition}.

\appendix

\section{Charged Model Equations for Solvability} \label{appendix-equations}

Below, we list the inequivalent equations obtained from (\ref{YBE-diagram}), varying $\alpha,\beta,\gamma,\delta,\epsilon,\eta$ across all decorated spins. Charges $k$ and $m$ are taken modulo $n$. Solving these equations gives the conditions in Theorem \ref{charge-solvability-theorem}.

\[\vx{B}_1(k) = \vx{B}_1(k+1), \hspace{20pt} \vx{B}_2(k) = \vx{B}_2(k+1), \hspace{20pt} \forall k;\]
Set $\vx{B}_1 := \vx{B}_1(k), \vx{B}_2 := \vx{B}_2(k)$ (independent of $k$).
\[\frac{\vx{C}_1(k+1)}{\vx{C}_1(k)} = \frac{\vx{a_1^{(j)}}\vx{b_2^{(i)}}(k)}{\vx{a_1^{(i)}}\vx{b_2^{(j)}}(k)} = \frac{\vx{a_2^{(i)}}(k)\vx{b_1^{(j)}}}{\vx{a_2^{(j)}}(k)\vx{b_1^{(i)}}}, \hspace{20pt} \forall k\ne 0;\]
\[\frac{\vx{C}_2(k+1)}{\vx{C}_2(k)} = \frac{\vx{a_1^{(i)}}\vx{b_2^{(j)}}(k)}{\vx{a_1^{(j)}}\vx{b_2^{(i)}}(k)} = \frac{\vx{a_2^{(j)}}(k)\vx{b_1^{(i)}}}{\vx{a_2^{(i)}}(k)\vx{b_1^{(j)}}}, \hspace{20pt} \forall k\ne 0;\]
\[\vx{A}_2^\times(k+1,m+1) = \vx{A}_2^\times(k,m), \hspace{20pt} \forall k,m;\]
\[\frac{\vx{A}_2^\times(0,m)}{\vx{B}_1} = \frac{\vx{b_2^{(j)}}(m)}{\vx{a_2^{(j)}}(m)}, \hspace{20pt} \forall m\ne 0;\]
\[\frac{\vx{A}_2^\times(k,0)}{\vx{B}_2} =  \frac{\vx{a_2^{(i)}}(k)}{\vx{b_2^{(i)}}(k)}, \hspace{20pt} \forall k\ne 0;\]
\[\frac{\vx{A}_2(k+1,m+1)}{\vx{A}_2(k,m)} = \frac{\vx{b_2^{(j)}}(k)\vx{b_2^{(i)}}(m)}{\vx{b_2^{(i)}}(k)\vx{b_2^{(j)}}(m)} = \frac{\vx{a_2^{(j)}}(k)\vx{a_2^{(i)}}(m)}{\vx{a_2^{(i)}}(k)\vx{a_2^{(j)}}(m)}, \hspace{20pt} \forall k,m;\]
Set $\vx{A}_2 := \vx{A}_2(k,k) = \vx{A}^\times_2(k,k)$ (independent of $k$).
\[\frac{\vx{A}_2(k,0)}{\vx{C}_2(k+1)} = \frac{\vx{a_2^{(i)}}(k)\vx{c_2^{(j)}}}{\vx{a_2^{(j)}}(k)\vx{c_2^{(i)}}}, \hspace{20pt} \frac{\vx{A}_2(k+1,1)}{\vx{C}_2(k)} = \frac{\vx{b_2^{(j)}}(k)\vx{c_1^{(i)}}}{\vx{b_2^{(i)}}(k)\vx{c_1^{(j)}}}, \hspace{20pt} \forall k\ne 0;\]
\[\frac{\vx{A}_2(0,k)}{\vx{C}_1(k+1)} = \frac{\vx{b_2^{(j)}}(k)\vx{c_2^{(i)}}}{\vx{b_2^{(i)}}(k)\vx{c_2^{(j)}}}, \hspace{20pt} \frac{\vx{A}_2(1,k+1)}{\vx{C}_1(k)} = \frac{\vx{a_2^{(i)}}(k)\vx{c_1^{(j)}}}{\vx{a_2^{(j)}}(k)\vx{c_1^{(i)}}}, \hspace{20pt} \forall k\ne 0;\]
\[\frac{\vx{C}_2(1)}{\vx{C}_1(0)} = \frac{\vx{c_1^{(j)}}\vx{c_2^{(i)}}}{\vx{c_1^{(i)}}\vx{c_2^{(j)}}}, \hspace{20pt} \frac{\vx{C}_1(1)}{\vx{C}_2(0)} = \frac{\vx{c_1^{(i)}}\vx{c_2^{(j)}}}{\vx{c_1^{(j)}}\vx{c_2^{(i)}}}, \hspace{20pt} \forall k\ne 0;\]
\[\vx{C}_2(1)\vx{b_2^{(i)}}(0)\vx{a_1^{(j)}} = \vx{b_2^{(j)}}(0)\vx{a_1^{(i)}}\vx{C}_2(0) + \vx{c_1^{(j)}}\vx{c_2^{(i)}}\vx{B}_2;\]
\[\vx{C}_1(1)\vx{a_1^{(i)}}\vx{b_2^{(j)}}(0) + \vx{B}_2\vx{c_1^{(i)}}\vx{c_2^{(j)}} = \vx{a_1^{(j)}}\vx{b_2^{(i)}}(0)\vx{C}_1(0);\]
\[\vx{C}_1(1)\vx{b_1^{(i)}}\vx{a_2^{(j)}}(0) = \vx{b_1^{(j)}}\vx{a_2^{(i)}}(0)\vx{C}_1(0) + \vx{c_2^{(j)}}\vx{c_1^{(i)}}\vx{B}_1;\]
\[\vx{C}_2(1)\vx{a_2^{(i)}}(0)\vx{b_1^{(j)}} + \vx{B}_1\vx{c_2^{(i)}}\vx{c_1^{(j)}} = \vx{a_2^{(j)}}(0)\vx{b_1^{(i)}}(0)\vx{C}_2(0);\]
\[\vx{A}_1\vx{c_2^{(i)}}\vx{a_1^{(j)}} = \vx{c_2^{(j)}}\vx{a_1^{(i)}}\vx{C}_2(0) + \vx{b_1^{(j)}}\vx{c_2^{(i)}}\vx{B}_2;\]
\[\vx{C}_1(1)\vx{a_1^{(i)}}\vx{c_1^{(j)}} + \vx{B}_2\vx{c_1^{(i)}}\vx{b_1^{(j)}} = \vx{a_1^{(j)}}\vx{c_1^{(i)}}(0)\vx{A}_1;\]
\[\vx{A}_1\vx{b_1^{(i)}}\vx{c_2^{(j)}} = \vx{c_2^{(j)}}\vx{a_1^{(i)}}\vx{B}_2 + \vx{b_1^{(j)}}\vx{c_2^{(i)}}\vx{C}_1(0);\]
\[\vx{B}_1\vx{a_1^{(i)}}\vx{c_1^{(j)}} + \vx{C}_2(1)\vx{c_1^{(i)}}\vx{b_1^{(j)}} = \vx{c_1^{(j)}}\vx{b_1^{(i)}}\vx{A}_1;\]
\[\vx{A}_2\vx{c_1^{(i)}}\vx{a_2^{(j)}}(0) = \vx{c_1^{(j)}}\vx{a_2^{(i)}}(0)\vx{C}_1(0) + \vx{b_2^{(j)}}(0)\vx{c_1^{(i)}}\vx{B}_1;\]
\[\vx{C}_2(1)\vx{a_2^{(i)}}(0)\vx{c_2^{(j)}} + \vx{B}_1\vx{c_2^{(i)}}\vx{b_2^{(j)}}(0) = \vx{a_2^{(j)}}(0)\vx{c_2^{(i)}}\vx{A}_2;\]
\[\vx{A}_2\vx{b_2^{(i)}}(0)\vx{c_1^{(j)}} = \vx{b_2^{(j)}}(0)\vx{c_1^{(i)}}\vx{C}_2(0) + \vx{c_1^{(j)}}\vx{a_2^{(i)}}(0)\vx{B}_2;\]
\[\vx{C}_1(1)\vx{c_2^{(i)}}\vx{b_2^{(j)}}(0) + \vx{B}_2\vx{a_2^{(i)}}(0)\vx{c_2^{(j)}} = \vx{c_2^{(j)}}\vx{b_2^{(i)}}(0)\vx{A}_2;\]

\bibliographystyle{siam}
\bibliography{bibliography.bib}

\end{document}